\numberwithin{equation}{section}
\newtheorem{theorem}{Theorem}[section]
\newtheorem{lemma}[theorem]{Lemma}
\newtheorem{proposition}[theorem]{Proposition}
\newtheorem{corollary}[theorem]{Corollary}
\newtheorem{remark}[theorem]{Remark}
\newtheorem{definition}[theorem]{Definition}
\newcommand{\R}{\mathbb{R}}
\newcommand{\N}{\mathbb{N}}
\newcommand{\cH}{\mathcal{H}}
\newcommand{\cP}{\mathcal{P}}
\newcommand{\cC}{\mathcal{C}}
\newcommand{\cL}{\mathcal{L}}
\newcommand{\stab}{\mathrm{stab}}
\newcommand{\meta}{\mathrm{meta}}
\begin{document}


\title{Metastability on the hierarchical lattice}

\author{Frank den Hollander, Oliver Jovanovski}

\date{\today}

\begin{abstract}
We study metastability for Glauber spin-flip dynamics on the $N$-dimensional 
hierarchical lattice with $n$ hierarchical levels. Each vertex carries an Ising spin 
that can take the values $-1$ or $+1$. Spins interact with an external magnetic 
field $h>0$. Pairs of spins interact with each other according to a ferromagnetic pair 
potential $\vec{J}=\{J_i\}_{i=1}^n$, where $J_i>0$ is the strength of the interaction 
between spins at hierarchical distance $i$. Spins flip according to a 
Metropolis dynamics at inverse temperature $\beta$. In the limit as $\beta\to\infty$, 
we analyse the crossover time from the metastable state $\boxminus$ (all spins 
$-1$) to the stable state $\boxplus$ (all spins $+1$). Under the assumption that 
$\vec{J}$ is non-increasing, we identify the mean transition time up to a multiplicative 
factor $1+o_\beta(1)$. On the scale of its mean, the transition time is exponentially 
distributed. We also identify the set of configurations representing the gate for the 
transition. For the special case where $J_i = \tilde{J}/N^i$, $1 \leq i \leq n$, with 
$\tilde{J}>0$ the relevant formulas simplify considerably. Also the hierarchical 
mean-field limit $N\to\infty$ can be analysed in detail. 
\end{abstract}

\maketitle



\section{Introduction}
\label{S1}

Interacting particle systems evolving according to a \emph{Metropolis dynamics} 
associated with an energy functional, called the \emph{Hamiltonian}, may end up 
being trapped for a long time near a state that is a local minimum but not a global 
minimum. Just how long it takes for the system to escape from the energy valley 
around a local minimum and reach the global minimum depends on how deep 
this valley is. The deepest local minima are called \emph{metastable states}, the 
global minimum is called the \emph{stable state}. While being trapped near a 
metastable state, the system is said to be in a quasi-equilibrium. The transition 
to the stable state marks the relaxation of the system to equilibrium. To describe 
this relaxation, it is of interest to compute the transition time and to identify the 
set of critical configurations the system has to cross in order to achieve the 
transition. The critical configurations constitute the lowest saddle points in 
the energy landscape encountered along paths that achieve the crossover.

Metastability for interacting particle systems on \emph{lattices} has been 
studied intensively in the past three decades. Various different approaches 
have been proposed, which are summarised in the monographs by Olivieri 
and Vares~\cite{OV05}, Bovier and den Hollander~\cite{BdH15}. Recently, 
there has been interest in metastability for interacting particle systems on 
\emph{random graphs}, which is much more challenging because the 
transition time depends in a delicate manner on the realisation of the graph.

In the present paper we are interested in metastability for Glauber spin-flip 
dynamics on the $N$-dimensional \emph{hierarchical lattice} at low temperature. 
We obtain a full description of both the transition time and the set of critical 
configurations representing the gate for the transition. Our results are part 
of a larger enterprise in which the goal is to understand metastability on large 
graphs. The hierarchical lattice is interesting because it has a non-trivial 
geometric structure and allows for a rich variability in the choice of the interaction 
parameters. 

The outline of the paper is as follows. In Section~\ref{S1.1} we recall the definition 
of Glauber spin-flip dynamics on an arbitrary finite connected graph. In Section~\ref{S1.2} 
we recall the basic geometric definitions that are needed for the description of 
metastability and recall three key theorems from the literature that are valid in the limit 
of low temperature. These theorems, which are based on a certain \emph{key hypothesis}
but are otherwise model-independent, state that the mean transition time equals 
$[1+o_\beta(1)]\,K^\star\,e^{\beta\Gamma^\star}$, with $\beta$ the inverse temperature, 
and that the gate for the transition is $\cC^\star$, where $(\Gamma^\star,\cC^\star,K^\star)$ 
is a model-dependent triple. The theorems also show that the spectral gap of the generator 
of the dynamics scales like the inverse of the mean transition time and that the transition 
time divided by its mean is exponentially distributed asymptotically. In Section~\ref{S1.3} 
we recall that the prefactor $K^\star$ is given by a variational formula. In Section~\ref{S1.4} 
we define the hierarchical lattice. In Section~\ref{S1.5} we verify the key hypothesis for 
Glauber spin-flip dynamics on the hierarchical lattice and state \emph{five assumptions} 
on the interaction parameters. In Section~\ref{S1.6} we state our main theorems, which 
identify the triple $(\Gamma^\star,\cC^\star,K^\star)$ for the hierarchical lattice subject to 
these assumptions. In Section~\ref{S1.7} we close with a discussion and point to related 
literature. The proofs of the main theorems are given in Sections~\ref{S2}--\ref{S4}. The 
framework that is recalled in Sections~\ref{S1.1}--\ref{S1.3} is taken from Bovier and den 
Hollander~\cite[Chapter 16]{BdH15}.


\subsection{Ising model and Glauber spin-flip dynamics}
\label{S1.1}

Given a finite connected graph $G=\left(V,E\right)$, let $\Omega=\{-1,+1\}^{V}$ be 
the set of configurations $\sigma=\left\{ \sigma(v)\colon\, v\in V\right\}$ that assigns to each 
vertex $v\in V$ a spin-value $\sigma(v)\in\{-1,+1\}$. Two configurations that will be of 
particular interest to us are those where all spins point up, respectively, down: 
\begin{equation}
\boxplus\equiv+1,\qquad \boxminus\equiv-1.
\end{equation}
For $\beta \geq 0$, playing the role of \emph{inverse temperature}, we define the 
Gibbs measure 
\begin{equation}
\mu_{\beta}\left(\sigma\right) = \frac{1}{Z_{\beta}}\, e^{-\beta\cH\left(\sigma\right)},
\qquad \sigma\in\Omega,
\label{eq:Gibbs}
\end{equation}
where $\cH\colon\,\Omega\to\mathbb{R}$ is the \emph{Hamiltonian} that assigns an 
energy to each configuration given by
\begin{equation}
\cH\left(\sigma\right) = -\frac{1}{2} \sum_{(v,w)\in E} J_{\left(v,w\right)}\,
\sigma(v)\sigma(w)
-\frac{h}{2} \sum_{v\in V} \sigma(v),
\qquad \sigma\in\Omega,
\label{eq:hamiltonian}
\end{equation}
where $\vec{J}=\{J_e\} _{e \in E}$ is the \emph{ferromagnetic pair potential} acting along 
edges, satisfying $J_e \geq 0$ for all $e\in E$, and $h>0$ is the \emph{external magnetic 
field}. 

For two configurations $\sigma,\eta\in\Omega$, we write $\sigma\sim\eta$ when $\sigma$ and $\eta$ 
agree at all but one vertex. A transition from $\sigma$ to $\eta$ corresponds to a flip of a single
spin, and is referred to as an \emph{allowed move}. Glauber spin-flip dynamics on $\Omega$ 
is the continuous-time Markov process $(\sigma_t)_{t \geq 0}$ defined by the transition rates 
\begin{equation}
c_{\beta}\left(\sigma,\eta\right)=\begin{cases}
e^{-\beta[\cH\left(\eta\right)-\cH\left(\sigma\right)]_{+}}, & \sigma\sim\eta,\\
0, & \mbox{otherwise}.
\end{cases}
\end{equation}
The Gibbs measure in $\eqref{eq:Gibbs}$ is the \emph{reversible equilibrium} of this dynamics. 
We write $P_\sigma^{G,\beta}$ to denote the law of $(\sigma_t)_{t \geq 0}$ given $\sigma_{0}=\sigma$, 
$\cL^{G,\beta}$ to denote the associated generator, and $\lambda^{G,\beta}$ to denote the 
principal eigenvalue of $\cL^{G,\beta}$. The upper indices $G,\beta$ exhibit the dependence 
on the underlying graph $G$ and the interaction strength $\beta$ between neighbouring spins. 
For $A\subseteq\Omega$, we write 
\begin{equation}
\tau_A = \inf\big\{t>0\colon\,\sigma_t \in A,\,\exists\,0<s<t\colon\,\sigma_s \neq \sigma_0\big\}
\end{equation}
to denote the first hitting time of the set $A$ after the starting configuration is left.


\subsection{Metastability}
\label{S1.2}

To describe the metastable behaviour of our dynamics we need the following 
geometric definitions.

\begin{definition}
{\rm (a)} The communication height between two distinct configurations $\sigma,\eta\in\Omega$ is 
\begin{equation}
\Phi(\sigma,\eta) = \min_{\gamma\colon\,\sigma\to\eta}\max_{\xi\in\gamma}\cH(\xi),
\end{equation}
where the minimum is taken over all paths $\gamma\colon\,\sigma\to\eta$ consisting of 
allowed moves only. The communication height between two non-empty disjoint sets 
$A,B\subset\Omega$ is 
\begin{equation}
\Phi(A,B) = \min_{\sigma\in A,\eta\in B} \Phi(\sigma,\eta).
\end{equation}
{\rm (b)} The stability level of $\sigma\in\Omega$ is 
\begin{equation}
V_{\sigma} = \min_{ {\eta\in\Omega:} \atop {\cH(\eta)<\cH(\sigma)}} \Phi(\sigma,\eta)-\cH(\sigma).
\label{eq:Stability}
\end{equation}
{\rm (c)} The set of stable configurations is 
\begin{equation}
\Omega_{\stab} = \left\{\sigma\in\Omega\colon\,\cH(\sigma) = \min_{\eta\in\Omega}\cH(\eta)\right\}.
\end{equation}
{\rm (d)} The set of metastable configurations is 
\begin{equation}
\Omega_{\meta} = \left\{\sigma\in\Omega\backslash\Omega_{\stab}\colon\, 
V_\sigma = \max_{\eta\in\Omega\backslash\Omega_{\stab}} V_\eta\right\}.
\end{equation}
\end{definition}

It is easy to check that $\Omega_\stab=\{\boxplus\}$ for all $G$ because $h>0$ and 
$J_e \geq 0$ for all $e \in E$. In general, $\Omega_\meta$ is not a singleton. In order 
to proceed, we need the following \emph{key hypothesis}:
\begin{equation}
\mathrm{(H)} \quad \Omega_\meta = \{\boxminus\}.
\label{eq:Hhyp}
\end{equation}
Hypothesis (H) states that $\left\lbrace \boxminus, \boxplus \right\rbrace$ is a metastable pair. The energy barrier between $\boxminus$ and $\boxplus$ is 
\begin{equation}
\Gamma^\star = \Phi(\boxminus,\boxplus)-\cH(\boxminus),
\label{eq:defGammastar}
\end{equation}
which is a key quantity for the description of the metastable behaviour of our dynamics. 
We will say that a path $\gamma\colon\,\boxminus\to\boxplus$ is an \emph{optimal path}
when 
\begin{equation}
\max_{\eta\in\gamma} \cH(\eta) 
= \Phi\left(\boxminus,\boxplus\right). 
\label{eq:optimal-path}
\end{equation}

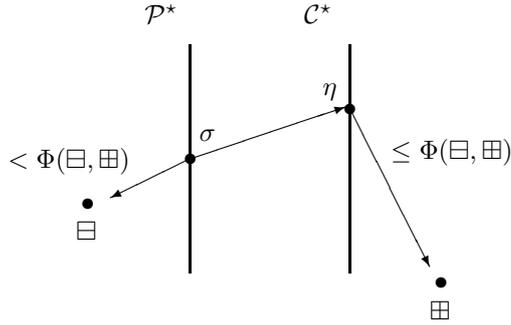
\begin{figure}[htbp]
\vspace{-0.5cm}
\begin{center}
\setlength{\unitlength}{0.3cm}
\begin{picture}(15,15)(0,-1)
{\thicklines
\qbezier(0,0)(0,5)(0,10)
\qbezier(7,0)(7,5)(7,10) 
}
\put(-2,11){$\cP^{\star}$}
\put(5,11){$\cC^\star$}
\put(0.4,5.8){$\sigma$}
\put(5.8,7.8){$\eta$}
\put(-5,1.4){$\boxminus$}
\put(10.5,-2.1){$\boxplus$} 
\put(-8,4.6){$<\Phi(\boxminus,\boxplus)$}
\put(8.8,5){$\leq \Phi(\boxminus,\boxplus)$} 
\put(0,5){\circle*{.45}}
\put(7,7.2){\circle*{.45}}
\put(-4.5,3){\circle*{.45}}
\put(11,-.5){\circle*{.45}}
\put(0,5){\vector(3,1){6.8}}
\put(7,7.2){\vector(1,-2){3.5}}
\put(0,5){\vector(-2,-1){3.5}}
\end{picture}
\end{center}
\vspace{0.1cm}
\caption{\small Schematic picture of the protocritical set and the critical set.}
\label{fig-protocrcr}
\end{figure}

\begin{definition}
\label{prcrcrdef}
Let $(\cP^{\star},\cC^{\star})$ be the unique maximal subset of $\Omega\times\Omega$
with the following properties (see Fig.~{\rm \ref{fig-protocrcr}}):
\begin{enumerate} 
\item 
$\forall\,\sigma\in\cP^\star\,\exists\,\eta\in\cC^\star\colon\,\sigma\sim\eta$,\\ 
$\forall\,\eta\in\cC^\star\,\exists\,\sigma\in\cP^\star\colon\,\eta\sim\sigma$. 
\item 
$\forall\,\sigma\in\cP^\star\colon\,\Phi(\sigma,\boxminus)<\Phi(\sigma,\boxplus)$. 
\item 
$\forall\sigma\,\in\cC^\star\,\exists\,\gamma\colon\,\sigma\to\boxplus\colon\,\\ 
{\rm (i)} \max_{\eta\in\gamma} \cH(\eta) \leq \Phi(\boxminus,\boxplus)$.\\ 
{\rm (ii)} $\gamma \cap\{\eta\in\Omega\colon\,\Phi(\eta,\boxminus)
<\Phi(\eta,\boxplus)\} = \emptyset$. 
\end{enumerate}
\end{definition}

\noindent
Think of $\cP^{\star}$ as the set of configurations where the dynamics, on its way from 
$\boxminus$ to $\boxplus$, is `almost at the top', and of $\cC^{\star}$ as the set of 
configurations where it is `at the top and capable of crossing over'. We refer to 
$\cP^{\star}$ as the \emph{protocritical set} and to $\cC^{\star}$ as the \emph{critical
set}. Uniqueness follows from the observation that if $(\cP_{1}^{\star},\cC_{1}^{\star})$
and $(\cP_{2}^{\star},\cC_{2}^{\star})$ both satisfy conditions (1)--(3), then so does 
$(\cP_{1}^{\star}\cup\cP_{2}^{\star},\cC_{1}^{\star}\cup\cC_{2}^{\star})$. Note that 
\begin{equation} 
\begin{array}{lll} 
&\cH(\sigma)<\Phi(\boxminus,\boxplus) 
&\forall\,\sigma \in \cP^\star,\\[0.1cm] 
&\cH(\sigma)=\Phi(\boxminus,\boxplus) 
&\forall\,\sigma \in \cC^\star. 
\end{array}
\end{equation}

It is shown in Bovier and den Hollander \cite[Chapter 16]{BdH15}
that \emph{subject to hypothesis} (H) the following three theorems
hold.

\begin{theorem} 
\label{thm:critdrop} 
$\lim_{\beta\to\infty} P^{G,\beta}_\boxminus(\tau_{\cC^\star}<\tau_\boxplus 
\mid \tau_\boxplus < \tau_\boxminus)=1$. 
\end{theorem}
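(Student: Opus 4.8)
The statement asserts that, conditional on the rare event that the dynamics goes from $\boxminus$ to $\boxplus$ before returning to $\boxminus$, it passes through the critical set $\cC^\star$ with probability tending to $1$ as $\beta\to\infty$. This is a purely model-independent statement that holds subject to hypothesis (H), so the plan is to work entirely with the geometric definitions of Section~\ref{S1.2} and the potential-theoretic machinery recalled there. The key point is that $\cC^\star$ is, by Definition~\ref{prcrcrdef}, precisely the set of configurations at the communication height $\Phi(\boxminus,\boxplus)$ that lie on optimal paths and are ``on the $\boxplus$-side'' of the gate. I would prove the contrapositive: any path from $\boxminus$ to $\boxplus$ that avoids $\cC^\star$ must either exceed the communication height $\Phi(\boxminus,\boxplus)$ somewhere, or else it never leaves the ``$\boxminus$-side'' region $\{\eta:\Phi(\eta,\boxminus)<\Phi(\eta,\boxplus)\}$ before reaching $\boxplus$ --- and the latter is impossible since $\boxplus$ itself satisfies $\Phi(\boxplus,\boxminus)=\Phi(\boxminus,\boxplus)>\cH(\boxplus)$ hence is not strictly on the $\boxminus$-side. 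Paths that strictly exceed $\Phi(\boxminus,\boxplus)$ have probabilistic cost $e^{-\beta(\Gamma^\star+\delta)}$ for some $\delta>0$, which is negligible compared to the cost $e^{-\beta\Gamma^\star}$ of the conditioning event, and this is where the large-deviation estimates for Metropolis dynamics enter.

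Concretely, the steps are as follows. First, set up the standard estimates: under (H), the probability $P^{G,\beta}_\boxminus(\tau_\boxplus<\tau_\boxminus)$ is of order $e^{-\beta\Gamma^\star}$ up to subexponential corrections, by the capacity-based formula for this quantity (the Dirichlet principle gives matching upper and lower bounds, both dominated by the saddle height $\Gamma^\star$). Second, fix a ``forbidden'' set $D=\Omega\setminus(\{\boxminus,\boxplus\}\cup\cC^\star)$ and decompose the trajectory on the event $\{\tau_\boxplus<\tau_\boxminus\}$ according to whether it hits $\cC^\star$ or not; the latter means the trajectory goes from $\boxminus$ to $\boxplus$ through $D$ only. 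Third, show that any such $D$-path has communication height strictly above $\Phi(\boxminus,\boxplus)$: if it stayed at or below the communication height it would, by the maximality in Definition~\ref{prcrcrdef}, have to intersect $\cC^\star$ (one uses that $(\cP^\star,\cC^\star)$ absorbs any other candidate pair, and that an optimal $D$-path bounded by $\Phi(\boxminus,\boxplus)$ together with its ``entry point'' into the $\boxplus$-side would generate such a candidate pair, contradicting maximality unless it already met $\cC^\star$). Fourth, convert the strict height gap into an exponential bound: $P^{G,\beta}_\boxminus(\tau_\boxplus<\tau_\boxminus,\ \tau_{\cC^\star}>\tau_\boxplus)\leq e^{-\beta(\Gamma^\star+\delta)}\,e^{o(\beta)}$ for some $\delta>0$, again by a capacity/Dirichlet-form argument restricted to paths avoiding $\cC^\star$. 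Dividing by $P^{G,\beta}_\boxminus(\tau_\boxplus<\tau_\boxminus)\geq e^{-\beta\Gamma^\star}\,e^{o(\beta)}$ gives a conditional probability bounded by $e^{-\beta\delta}\,e^{o(\beta)}\to 0$, which is exactly the complementary event of the claim.

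The main obstacle is the purely combinatorial/geometric Step 3: verifying that every path from $\boxminus$ to $\boxplus$ whose maximal energy does not exceed $\Phi(\boxminus,\boxplus)$ must hit $\cC^\star$. This is where hypothesis (H) and the \emph{maximality} clause in Definition~\ref{prcrcrdef} do the real work, and one has to argue carefully that a counterexample path would allow one to enlarge the pair $(\cP^\star,\cC^\star)$. The probabilistic Steps 1 and 4 are routine applications of the potential-theoretic estimates for Metropolis dynamics at low temperature (capacities, the Dirichlet principle, and the observation that a single ``excess'' saddle of height $\Gamma^\star+\delta$ along a path contributes a factor $e^{-\beta\delta}$); since the paper explicitly recalls the framework of \cite[Chapter~16]{BdH15}, I would cite those estimates rather than reprove them, and concentrate the exposition on the geometry. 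One subtlety to flag is the precise role of the definition of $\tau_A$ (first hitting \emph{after leaving the start}), which matters because $\boxminus\notin\cC^\star$ and one must ensure the trajectory genuinely leaves $\boxminus$ before the comparison of hitting times is meaningful; this is handled automatically since on $\{\tau_\boxplus<\tau_\boxminus\}$ the path has already left $\boxminus$.
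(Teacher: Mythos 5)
The paper does not prove Theorem~\ref{thm:critdrop} itself: it is quoted, subject to hypothesis (H), from Bovier and den Hollander \cite[Chapter 16]{BdH15}, and your outline reconstructs precisely the argument given there --- the last-exit decomposition showing that any path from $\boxminus$ to $\boxplus$ staying at or below $\Phi(\boxminus,\boxplus)$ must, by the maximality clause in Definition~\ref{prcrcrdef}, enter the $\boxplus$-side through a configuration of $\cC^\star$, combined with the capacity/Dirichlet bounds showing that trajectories forced above the saddle by some $\delta>0$ cost an extra factor $e^{-\beta\delta}$ relative to the conditioning event. Your proposal is correct and takes essentially the same route as the cited proof.
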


\begin{theorem} 
\label{thm:nucltime} 
There exists a $K^\star \in (0,\infty)$ such that 
\begin{equation} 
\lim_{\beta\to\infty} e^{-\beta\Gamma^\star}\,E^{G,\beta}_\boxminus(\tau_\boxplus) 
= K^\star. 
\end{equation} 
\end{theorem}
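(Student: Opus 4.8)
\emph{Proof strategy.} This is one of the three model-independent consequences of hypothesis~(H), so the natural route is the potential-theoretic approach of Bovier and den Hollander~\cite[Chapter~16]{BdH15}. Write $\cE(f,f)=\tfrac12\sum_{\sigma\sim\eta}\mu_\beta(\sigma)\,c_\beta(\sigma,\eta)\,[f(\sigma)-f(\eta)]^2$ for the Dirichlet form of the dynamics, note that the conductances $\mu_\beta(\sigma)\,c_\beta(\sigma,\eta)=Z_\beta^{-1}\,e^{-\beta\max\{\cH(\sigma),\cH(\eta)\}}$ are symmetric, let $h_{\boxminus,\boxplus}$ be the equilibrium potential (harmonic off $\{\boxminus,\boxplus\}$, equal to $1$ at $\boxminus$ and $0$ at $\boxplus$), and set $\mathrm{cap}(\boxminus,\boxplus)=\cE(h_{\boxminus,\boxplus},h_{\boxminus,\boxplus})$. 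The starting point is the standard capacity--hitting-time identity, valid because $\boxminus,\boxplus$ are singletons:
\begin{equation}
E^{G,\beta}_\boxminus(\tau_\boxplus)=\frac{1}{\mathrm{cap}(\boxminus,\boxplus)}\sum_{\sigma\in\Omega}\mu_\beta(\sigma)\,h_{\boxminus,\boxplus}(\sigma).
\end{equation}
It then suffices to estimate numerator and denominator separately, each up to a factor $1+o_\beta(1)$.

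\emph{The numerator.} I would show $\sum_{\sigma}\mu_\beta(\sigma)\,h_{\boxminus,\boxplus}(\sigma)=\mu_\beta(\boxminus)\,[1+o_\beta(1)]$. The lower bound is trivial since $h_{\boxminus,\boxplus}(\boxminus)=1$. For the upper bound one uses $0\le h_{\boxminus,\boxplus}\le1$ and splits $\Omega$ into the ``$\boxminus$-valley'' (configurations whose communication height to $\boxminus$ is strictly below $\Gamma^\star+\cH(\boxminus)$), whose total Gibbs mass is $\mu_\beta(\boxminus)\,[1+o_\beta(1)]$ because $\boxminus$ is a strict local minimum of $\cH$, and its complement. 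On the complement one applies the a priori bound $h_{\boxminus,\boxplus}(\sigma)\le\mathrm{cap}(\sigma,\boxminus)/\mathrm{cap}(\sigma,\boxplus)$ together with crude exponential capacity estimates; hypothesis~(H), i.e.\ the fact that $\boxminus$ is the \emph{unique} non-stable configuration of maximal stability level, is precisely what guarantees that every such $\sigma$ either carries negligible Gibbs mass or satisfies $\Phi(\sigma,\boxplus)<\Phi(\sigma,\boxminus)$ and hence has $h_{\boxminus,\boxplus}(\sigma)$ exponentially small, so that its contribution is $o_\beta(\mu_\beta(\boxminus))$.

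\emph{The denominator.} I would sandwich $\mathrm{cap}(\boxminus,\boxplus)$ between matching bounds of the form $C\,Z_\beta^{-1}\,e^{-\beta\Phi(\boxminus,\boxplus)}\,[1+o_\beta(1)]$. For the upper bound, the Dirichlet principle $\mathrm{cap}(\boxminus,\boxplus)\le\cE(f,f)$ is applied to a test function $f$ equal to $1$ on the $\boxminus$-valley, $0$ on the $\boxplus$-valley, and making its entire transition across the critical set $\cC^\star$; since every $\sigma\in\cC^\star$ has $\cH(\sigma)=\Phi(\boxminus,\boxplus)$ while every other configuration at which $f$ varies has strictly lower energy, only the edges incident to $\cC^\star$ contribute to leading exponential order. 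For the lower bound, the dual variational principle (Thomson's principle, or its Berman--Konsowa refinement) is applied to a unit flow from $\boxminus$ to $\boxplus$ supported on optimal paths passing through $\cP^\star$ and $\cC^\star$, whose dissipated energy is again of order $Z_\beta\,e^{\beta\Phi(\boxminus,\boxplus)}$. The delicate point is that the two constants coincide: this forces a precise description of which edges of $\Omega$ straddle the saddle, of how $\cP^\star$ connects to $\cC^\star$, and of the absence of competing saddles at height $\Phi(\boxminus,\boxplus)$, and the resulting common constant $C$ is exactly the inverse of the variational quantity $K^\star$ recalled in Section~\ref{S1.3}. In the hierarchical-lattice setting this is the step that requires the detailed analysis (and the five assumptions on $\vec{J}$) carried out in Sections~\ref{S2}--\ref{S4}; at the abstract level, $C\in(0,\infty)$ follows from finiteness of $\Omega$ together with the matching exponential rates.

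\emph{Conclusion and main obstacle.} Combining the three steps with $\mu_\beta(\boxminus)=Z_\beta^{-1}e^{-\beta\cH(\boxminus)}$, the partition function cancels and
\begin{equation}
e^{-\beta\Gamma^\star}\,E^{G,\beta}_\boxminus(\tau_\boxplus)=\frac{e^{-\beta\cH(\boxminus)}\,e^{\beta\Phi(\boxminus,\boxplus)}}{C}\;e^{-\beta\Gamma^\star}\,[1+o_\beta(1)]=\frac{1}{C}\,[1+o_\beta(1)],
\end{equation}
using $\Gamma^\star=\Phi(\boxminus,\boxplus)-\cH(\boxminus)$. Hence the limit exists and equals $K^\star=1/C\in(0,\infty)$. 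Everything except the denominator step is soft; the real obstacle is the matching of the upper and lower capacity bounds, since pinning down the prefactor $K^\star$ amounts to understanding the full geometry of the energy landscape at the top of the barrier.
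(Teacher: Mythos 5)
Your proposal is correct and follows essentially the same route as the paper, which does not reprove this statement but invokes it from Bovier and den Hollander~\cite[Chapter 16]{BdH15} subject to hypothesis (H): there the proof is exactly the potential-theoretic argument you sketch, namely the identity $E^{G,\beta}_\boxminus(\tau_\boxplus)=\mathrm{cap}(\boxminus,\boxplus)^{-1}\sum_\sigma\mu_\beta(\sigma)h_{\boxminus,\boxplus}(\sigma)$, the estimate that the harmonic sum is $\mu_\beta(\boxminus)[1+o_\beta(1)]$, and matching Dirichlet/Thomson capacity bounds whose common constant is the variational quantity $1/K^\star$ of Section~\ref{S1.3}. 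The only caveat is that in general the test function must also be held constant on the wells $\{S_k\}$ at the top (the constants $C_k$ in \eqref{eq:variationalform}) rather than transitioning solely across $\cC^\star$, a point your sketch glosses over but which is subsumed in your appeal to the general variational formula.
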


\begin{theorem} 
\label{thm:explaw} 
{\rm (a)} $\lim_{\beta\to\infty} \lambda^{G,\beta}\,E^{G,\beta}_\boxminus(\tau_\boxplus)=1$.\\ 
{\rm (b)} $\lim_{\beta\to\infty} P^{G,\beta}_\boxminus(\tau_\boxplus/ E^{G,\beta}_\boxminus
(\tau_\boxplus)>t) = e^{-t}$ for all $t \geq 0$. 
\end{theorem}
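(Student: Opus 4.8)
I would follow the potential-theoretic route of \cite[Chapter~16]{BdH15}, exploiting that Theorem~\ref{thm:nucltime} already controls the quantity that drives both parts. Write $\cE_\beta(f,f)=\tfrac12\sum_{\sigma\sim\eta}\mu_\beta(\sigma)\,c_\beta(\sigma,\eta)\,[f(\sigma)-f(\eta)]^2$ for the Dirichlet form of $\cL^{G,\beta}$, let $h^\star_\beta$ be the equilibrium potential between $\boxminus$ and $\boxplus$ (harmonic off $\{\boxminus,\boxplus\}$, with $h^\star_\beta(\boxminus)=1$, $h^\star_\beta(\boxplus)=0$), and set $\mathrm{cap}_\beta(\boxminus,\boxplus)=\cE_\beta(h^\star_\beta,h^\star_\beta)$. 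The two analytic inputs are the hitting-time identity
\[
E^{G,\beta}_\boxminus(\tau_\boxplus)=\frac{1}{\mathrm{cap}_\beta(\boxminus,\boxplus)}\sum_{\eta\in\Omega}\mu_\beta(\eta)\,h^\star_\beta(\eta)
\]
and the Dirichlet and Thomson variational principles for $\mathrm{cap}_\beta$. Under hypothesis~(H) there is a single metastable valley $A\ni\boxminus$, with $\mu_\beta(A)=[1+o_\beta(1)]\,\mu_\beta(\boxminus)$, on which $h^\star_\beta=1-o_\beta(1)$; hence $\sum_\eta\mu_\beta(\eta)\,h^\star_\beta(\eta)=[1+o_\beta(1)]\,\mu_\beta(\boxminus)$ and $E^{G,\beta}_\boxminus(\tau_\boxplus)=[1+o_\beta(1)]\,\mu_\beta(\boxminus)/\mathrm{cap}_\beta(\boxminus,\boxplus)$, so Theorem~\ref{thm:nucltime} is equivalent to the sharp asymptotics $\mathrm{cap}_\beta(\boxminus,\boxplus)=[1+o_\beta(1)]\,\mu_\beta(\boxminus)\,K^{\star-1}e^{-\beta\Gamma^\star}$.

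For part~(a), recall that $\lambda^{G,\beta}$ is the principal (nonzero) eigenvalue of $\cL^{G,\beta}$, i.e.\ its spectral gap, and introduce the principal eigenvalue $\lambda^\boxplus_\beta$ of $\cL^{G,\beta}$ with Dirichlet condition at $\boxplus$, together with the associated quasi-stationary distribution $\nu_\beta$. Testing the Rayleigh quotient $\cE_\beta(f,f)/\mathrm{Var}_{\mu_\beta}(f)$ with $f=h^\star_\beta$ gives $\lambda^{G,\beta}\le\mathrm{cap}_\beta(\boxminus,\boxplus)/\mathrm{Var}_{\mu_\beta}(h^\star_\beta)=[1+o_\beta(1)]\,\mathrm{cap}_\beta(\boxminus,\boxplus)/\mu_\beta(\boxminus)$ (using $\mathrm{Var}_{\mu_\beta}(h^\star_\beta)=[1+o_\beta(1)]\,\mu_\beta(\boxminus)$), whence $\lambda^{G,\beta}E^{G,\beta}_\boxminus(\tau_\boxplus)\le1+o_\beta(1)$. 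For the matching lower bound I would use: (i) $\lambda^\boxplus_\beta=1/E^{G,\beta}_{\nu_\beta}(\tau_\boxplus)$ exactly, and $E^{G,\beta}_{\nu_\beta}(\tau_\boxplus)=[1+o_\beta(1)]\,E^{G,\beta}_\boxminus(\tau_\boxplus)$ because $\nu_\beta$ concentrates at the bottom of $A$ and the hitting time of $\boxplus$ from there equals that from $\boxminus$ up to $1+o_\beta(1)$; and (ii) $\lambda^{G,\beta}=[1+o_\beta(1)]\,\lambda^\boxplus_\beta$, because under~(H) the only exponentially small mode of the full chain is the $\boxminus\!\leftrightarrow\!\boxplus$ transition, the reverse rate $\boxplus\to\boxminus$ being smaller by the exponentially small factor $\mu_\beta(\boxminus)/\mu_\beta(\boxplus)$. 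Combining (i) and (ii) gives $\lim_{\beta\to\infty}\lambda^{G,\beta}E^{G,\beta}_\boxminus(\tau_\boxplus)=1$.

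For part~(b), the key is that $\nu_\beta$, being quasi-stationary for the chain killed at $\boxplus$, satisfies $P^{G,\beta}_{\nu_\beta}(\tau_\boxplus>t)=e^{-\lambda^\boxplus_\beta t}$ \emph{exactly}. It then suffices to show that, started from $\boxminus$, the chain reaches quasi-equilibrium inside $A$ long before escaping: there is a scale $T_\beta$ with $\lambda^\boxplus_\beta T_\beta\to0$ such that $P^{G,\beta}_\boxminus(\tau_\boxplus\le T_\beta)=o_\beta(1)$ and the law of $\sigma_{T_\beta}$ conditioned on $\{\tau_\boxplus>T_\beta\}$ is within $o_\beta(1)$ of $\nu_\beta$ in total variation --- both because the relaxation time of the dynamics restricted to $A$ is exponentially smaller than $e^{\beta\Gamma^\star}$. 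The Markov property then yields $P^{G,\beta}_\boxminus(\tau_\boxplus>t)=[1+o_\beta(1)]\,e^{-\lambda^\boxplus_\beta t}$ for every fixed $t$ on the scale of $E^{G,\beta}_\boxminus(\tau_\boxplus)$; substituting $t\mapsto t\,E^{G,\beta}_\boxminus(\tau_\boxplus)$ and invoking part~(a) (so that $\lambda^\boxplus_\beta E^{G,\beta}_\boxminus(\tau_\boxplus)\to1$) gives $P^{G,\beta}_\boxminus(\tau_\boxplus/E^{G,\beta}_\boxminus(\tau_\boxplus)>t)\to e^{-t}$. Equivalently, one can decompose $\tau_\boxplus$ into a geometric number of short failed excursions back to $\boxminus$ plus one successful excursion, and use that a rescaled geometric random variable converges to an exponential.

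The main obstacle, shared by both parts, is the separation of time scales: proving that the dynamics restricted to the metastable valley $A$ relaxes on a time scale exponentially shorter than $e^{\beta\Gamma^\star}$ (equivalently, $\lambda^\boxplus_\beta$ is isolated from the rest of the Dirichlet spectrum by a gap exponentially larger than itself), together with the localization estimates that make $h^\star_\beta$, the principal Dirichlet eigenfunction, and $\nu_\beta$ all $1+o_\beta(1)$-concentrated on $A$ with explicitly controlled errors. This is precisely the content of the model-independent machinery of \cite[Chapter~16]{BdH15}, which applies once hypothesis~(H) is available; consequently, for Glauber dynamics on the hierarchical lattice the only thing that genuinely has to be done is to verify~(H), which is carried out in Section~\ref{S1.5}. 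Given~(H), Theorems~\ref{thm:critdrop}, \ref{thm:nucltime} and~\ref{thm:explaw} all follow, and the remaining, model-dependent, work of the paper is the explicit identification of the triple $(\Gamma^\star,\cC^\star,K^\star)$.
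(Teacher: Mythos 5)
Your strategy is correct and coincides with the paper's treatment: the paper does not prove Theorem~\ref{thm:explaw} itself but invokes the model-independent potential-theoretic machinery of \cite[Chapter 16]{BdH15}, which is exactly the argument you outline (capacity asymptotics for the mean hitting time, Rayleigh quotient and Dirichlet eigenvalue comparison for the spectral gap, quasi-stationarity plus separation of time scales for the exponential law). The only model-specific input, as you correctly note, is the verification of hypothesis~(H), which is Theorem~\ref{thrm:hyp-H}.
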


\begin{figure}[htbp]
\vspace{1.3cm}
\begin{center}
\setlength{\unitlength}{0.4cm}
\begin{picture}(8,6)(4,0)
\put(0,0){\line(11,0){11}}
\put(0,0){\line(0,9){9}}
\qbezier[25](3.3,3.8)(3.3,1.9)(3.3,0)
\qbezier[30](4.8,5.6)(4.8,3)(4.8,0)
\qbezier[35](6.8,3.0)(6.8,1.5)(6.8,0)
\qbezier[30](0,5.7)(2.4,5.7)(4.8,5.7)
\qbezier[25](0,3.9)(1.65,3.9)(3.3,3.9)
{\thicklines
\qbezier(2,8)(3,2)(4,5)
\qbezier(4,5)(5,7)(6,4)
\qbezier(6,4)(7,2)(8,5)
}
\put(-.3,4.8){\vector(0,1){.9}}
\put(-.3,4.8){\vector(0,-1){.9}}
\put(-1.6,4.4){$\Gamma^\star$}
\put(2.9,-1){$\boxminus$}
\put(4.55,-1){$\cC^\star$}
\put(6.5,-1){$\boxplus$}
\put(11.5,-.3){$\sigma$}
\put(-1.3,9.5){$\cH(\sigma)$}
\put(3.3,4){\circle*{.35}}
\put(4.8,5.75){\circle*{.35}}
\put(6.8,3.20){\circle*{.35}} 
\end{picture}
\end{center}
\vspace{0.3cm}
\caption{\small Schematic picture of $\cH$, $\boxminus$, $\boxplus$, $\Gamma^\star$ 
and $\cC^\star$. Lemma~\ref{lem:variational-lemma} shows that $1/K^\star$ is in essence 
proportional to $\vert\cC^\star\vert$.}
\label{fig-abstractdoublewell}
\vspace{0.cm}
\end{figure}

\noindent
The proofs of Theorems~\ref{thm:critdrop}--\ref{thm:explaw} in \cite{BdH15} do not rely 
on the details of the graph $G$, provided it is finite, connected and non-oriented. For 
concrete choices of $G$, the task is to verify hypothesis (H) and to identify the triple 
\begin{equation} 
\label{eq:triple} 
\big(\Gamma^\star,\cC^\star,K^\star\big). 
\end{equation} 
A schematic picture of the role of these quantities is given in Fig.~\ref{fig-abstractdoublewell}.


\subsection{Variational formula for the prefactor}
\label{S1.3}

The prefactor $K^{\star}$ in Theorem~\ref{thm:nucltime} is given by a variational formula
(see \cite[Lemma 16.17]{BdH15}):
\begin{equation}
\frac{1}{K^\star} = \min_{C_1,\ldots,C_I} \min_{ {f\colon\,S^\star\to [0,1]:} \atop 
{f|_{S_\boxminus} \equiv 1,\, f|_{S_\boxplus} \equiv 0,\, f|_{S_k}=C_k} }
\frac{1}{2}\sum_{\sigma,\eta\in S^\star}\,
\mathbf{1}_{\{\sigma\sim\eta\}}\,
\left[f\left(\sigma\right)-f\left(\eta\right)\right]^{2}.
\label{eq:variationalform}
\end{equation}
Here, $\left\{S_k\right\} _{k=1}^I$ is the unique sequence of maximally connected 
disjoint sets $S_k\subseteq \Omega$ defined by
\begin{equation}
\sigma \in S_k \quad \Longleftrightarrow \quad 
\cH\left(\sigma\right)<\Phi\left(\boxminus,\boxplus\right),
\,\Phi\left(\sigma,\boxminus\right)=\Phi\left(\sigma,\boxplus\right)
=\Phi\left(\boxminus,\boxplus\right).
\label{eq:wells}
\end{equation}
Think of $\left\{S_k\right\} _{k=1}^I$ as `wells at the top' (see Fig.~\ref{fig-wells}). 
The sets $S_\boxminus,S_\boxplus$ are defined by 
\begin{equation}
\begin{aligned}
S_\boxminus &= \left\{\sigma\in\Omega\colon\,
\Phi\left(\sigma,\boxminus\right)<\Phi\left(\boxminus,\boxplus\right)\right\},\\
S_\boxplus &= \left\{\sigma\in\Omega\colon\,
\Phi\left(\sigma,\boxplus\right)<\Phi\left(\boxminus,\boxplus\right)\right\},
\end{aligned}
\end{equation}
and are to be thought of as the `valleys' around $\boxminus$ and $\boxplus$. 
The set $S^\star$ is defined by
\begin{equation}
S^\star = \left\{\sigma\in\Omega\colon\,\Phi\left(\sigma,\boxminus\right) \vee 
\Phi\left(\sigma,\boxplus\right) \leq  \Phi\left(\boxminus,\boxplus\right)\right\}, 
\end{equation}
i.e., the maximally connected set with energy $\leq \Phi(\boxminus,\boxplus)$
containing $\boxminus$ and $\boxplus$. Note that $\left\{S_k\right\} _{k=1}^I$,
$S_\boxminus, S_\boxplus \subseteq S^\star$. 

\begin{figure}[htbp]
\vspace{0.5cm}
\begin{center}
\setlength{\unitlength}{0.25cm}
\begin{picture}(15,5)(-3,-1)
{\thicklines
\qbezier(0,0)(0.5,5)(1,5)
\qbezier(1,5)(2,5)(3,5)
\qbezier(3,5)(3.5,4)(4,5)
\qbezier(4,5)(5,5)(6,5)
\qbezier(6,5)(6.5,4)(7,5)
\qbezier(7,5)(8,5)(9,5) 
\qbezier(9,5)(9.5,0)(10,-2)
}
\put(-.4,-1.2){$\boxminus$}
\put(9.6,-3.2){$\boxplus$}
\put(-1.9,2){$S_{\boxminus}$}
\put(10,2){$S_{\boxplus}$}
\put(0.8,5){\circle*{0.6}}
\put(0.5,5.7){$\cC^{\star}$}
\put(2.8,5.7){$S_1$}
\end{picture}
\vspace{.5cm}
\end{center}
\caption{\small Schematic picture of the wells $\{S_k\}_{k=1}^I$. Note that $\cC^{\star}
\subseteq S \backslash (S_\boxminus \cup S_\boxplus)$.}
\label{fig-wells}
\end{figure}

The variational problem in \eqref{eq:variationalform} has the interpretation 
of the \emph{capacity} between $S_\boxminus$ and $S_\boxplus$ for 
\emph{simple random walk} on $S^\star$ jumping at rate 1 after the sets 
$\left\{S_k\right\} _{k=1}^I,S_\boxminus,S_\boxplus$ are wired. If we impose 
\emph{additional constraints} on the optimal paths and their behaviour near 
the set $\cC^{\star}$, then \eqref{eq:variationalform} simplifies considerably, 
as is shown in the following lemma.

\begin{lemma}
\label{lem:variational-lemma}
Suppose that there exists a $k^{\star}\in\mathbb{N}$ such that the following are 
true:\\ 
(i) $\cC^\star=\lbrace \sigma \in S^{\star}\colon \vert \sigma\vert = k^{\star}\rbrace$.\\ 
(ii) For all $\sigma\in\cC^{\star}$ the sets 
\begin{equation}
\label{Udefs}
\begin{aligned}
&U_{\sigma}^{-} = \left\{ \eta \in S^{\star}\colon\,\eta\sim\sigma,\,\left|\eta\right|
= \left|\sigma\right|-1\right\},\\
&U_{\sigma}^{-} = \left\{ \eta \in S^{\star}\colon\,\eta\sim\sigma,\,\left|\eta\right|
= \left|\sigma\right|+1\right\},
\end{aligned} 
\end{equation}
satisfy 
\begin{equation}
\Phi\left(\eta,\boxminus\right)<\Phi\left(\boxminus,\boxplus\right) 
\quad \forall\,\eta\in U_{\sigma}^{-},
\qquad \Phi\left(\eta,\boxplus\right)<\Phi\left(\boxminus,\boxplus\right)  
\quad \forall\,\eta\in U_{\sigma}^{+}.
\label{eq:condition-lemma}
\end{equation}
Then \eqref{eq:variationalform} simplifies to 
\begin{equation}
\frac{1}{K^\star} = \sum_{\sigma\in\cC^{\star}}
\frac{|U_{\sigma}^{-}|\,|U_{\sigma}^{+}|}{|U_{\sigma}^{-}|+|U_{\sigma}^{+}|}.
\label{eq:reduced-varform-0}
\end{equation}
\end{lemma}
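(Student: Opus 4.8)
The plan is to read the right-hand side of \eqref{eq:variationalform} as the capacity (effective-conductance) problem on $S^\star$ that the text already alludes to, and to use hypotheses (i)--(ii) to show that $\cC^\star$ is a ``clean cut'': after the wiring, every edge that carries current runs from some $\sigma\in\cC^\star$ directly into $S_\boxminus$ or directly into $S_\boxplus$, so that the Dirichlet form decouples into one scalar optimisation per $\sigma\in\cC^\star$. Abbreviate $\Phi^\star:=\Phi(\boxminus,\boxplus)$. The first and main step is to pin down the adjacency structure of $S^\star$. Since every $\sigma\in S^\star$ satisfies $\Phi(\sigma,\boxminus)\vee\Phi(\sigma,\boxplus)\le\Phi^\star$ and $\cH(\sigma)\le\Phi(\sigma,\boxminus)\wedge\Phi(\sigma,\boxplus)$, one first records the disjoint decomposition $S^\star=S_\boxminus\sqcup S_\boxplus\sqcup\big(\bigsqcup_{k=1}^{I}S_k\big)\sqcup\cC^\star$ (split according to whether $\Phi(\sigma,\boxminus)<\Phi^\star$, or $\Phi(\sigma,\boxplus)<\Phi^\star$, or $\Phi(\sigma,\boxminus)=\Phi(\sigma,\boxplus)=\Phi^\star$, and, in the last case, whether $\cH(\sigma)<\Phi^\star$ or $\cH(\sigma)=\Phi^\star$). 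Next, using the elementary estimate $\Phi(A,C)\le\Phi(A,B)\vee\Phi(B,C)$ together with the bound $\cH(\xi)<\Phi^\star$ valid for every $\xi\in S_\boxminus\cup S_\boxplus\cup\bigcup_k S_k$, one checks that no allowed move joins two distinct members of the family $\{S_\boxminus,S_\boxplus,S_1,\dots,S_I\}$. Finally, hypotheses (i)--(ii) control the moves incident to $\cC^\star$: since $|\sigma|=k^\star$ for all $\sigma\in\cC^\star$, no two elements of $\cC^\star$ are adjacent, and any $\eta\in S^\star$ with $\eta\sim\sigma$ has $|\eta|\in\{k^\star-1,k^\star+1\}$, hence $\eta\in U_\sigma^-\cup U_\sigma^+$, so \eqref{eq:condition-lemma} forces $\eta\in S_\boxminus$ when $\eta\in U_\sigma^-$ and $\eta\in S_\boxplus$ when $\eta\in U_\sigma^+$. (Connectedness of $S^\star$ then also forces $I=0$, so the outer minimum over $C_1,\dots,C_I$ in \eqref{eq:variationalform} is vacuous.)

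Granting this structure, the second step is immediate: any admissible $f$ is constant on each of $S_\boxminus$ (value $1$), $S_\boxplus$ (value $0$) and $S_k$ (value $C_k$), so every edge with both endpoints inside one of these sets contributes $0$, and by Step~1 the only remaining edges are, for each $\sigma\in\cC^\star$, the $|U_\sigma^-|$ moves from $\sigma$ into $S_\boxminus$ and the $|U_\sigma^+|$ moves from $\sigma$ into $S_\boxplus$. Cancelling the factor $\tfrac12$ against the double counting of ordered pairs then gives
\begin{equation}
\tfrac12\sum_{\sigma,\eta\in S^\star}\mathbf{1}_{\{\sigma\sim\eta\}}\big[f(\sigma)-f(\eta)\big]^2
=\sum_{\sigma\in\cC^\star}\Big(|U_\sigma^-|\,\big(1-f(\sigma)\big)^2+|U_\sigma^+|\,f(\sigma)^2\Big),
\end{equation}
which depends on $f$ only through the values $f(\sigma)\in[0,1]$, $\sigma\in\cC^\star$ (and not at all on the $C_k$), and these enter in separate summands.

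The third step is a one-variable optimisation: for $a,b\ge 0$ not both zero, $x\mapsto a(1-x)^2+bx^2$ is minimised over $\R$ at $x^\star=a/(a+b)\in[0,1]$ with value $ab/(a+b)$, so the constraint $x\in[0,1]$ is inactive; taking $a=|U_\sigma^-|$, $b=|U_\sigma^+|$ and summing over $\sigma\in\cC^\star$ yields \eqref{eq:reduced-varform-0}. (In fact $U_\sigma^-,U_\sigma^+\neq\emptyset$ for each $\sigma\in\cC^\star$: from Definition~\ref{prcrcrdef}, $\sigma$ has a neighbour in $\cP^\star\subseteq S_\boxminus$, which by Step~1 must lie in $U_\sigma^-$, and it has a path to $\boxplus$ of height $\le\Phi^\star$ avoiding $S_\boxminus$, whose first move must lie in $U_\sigma^+$; in any case the identity is robust under the convention $ab/(a+b)=0$ when $ab=0$.) The only genuinely non-routine ingredient is Step~1 — establishing that $\cC^\star$ is a clean cut and that its neighbours in $S^\star$ land exactly in $S_\boxminus$ and $S_\boxplus$ — after which Steps~2 and 3 are mechanical.
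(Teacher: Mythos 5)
Your route is the same as the paper's: discard all edges except those running from each $\sigma\in\cC^\star$ directly into the two valleys, then minimise $|U_\sigma^-|(1-x)^2+|U_\sigma^+|x^2$ separately in each variable $x=f(\sigma)$, which gives the parallel-resistance value $|U_\sigma^-|\,|U_\sigma^+|/(|U_\sigma^-|+|U_\sigma^+|)$; your Steps 2--3 are exactly the computation behind \eqref{varfor1} (your squared expression is the intended one there), and most of Step 1 is also sound: no allowed move joins $S_\boxminus$, $S_\boxplus$ and the wells to one another, no two elements of $\cC^\star$ are adjacent because they all have volume $k^\star$, and \eqref{eq:condition-lemma} places $U_\sigma^-$ inside $S_\boxminus$ and $U_\sigma^+$ inside $S_\boxplus$.

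The gap is the opening claim of Step 1, namely that $S^\star=S_\boxminus\sqcup S_\boxplus\sqcup\big(\bigsqcup_{k}S_k\big)\sqcup\cC^\star$, i.e.\ that every $\sigma\in S^\star$ with $\cH(\sigma)=\Phi(\boxminus,\boxplus)$ belongs to $\cC^\star$. This does not follow from (i)--(ii): (i) only identifies $\cC^\star$ with the volume-$k^\star$ slice of $S^\star$, and (ii) only constrains neighbours of $\cC^\star$, so a priori $S^\star$ may contain configurations at energy $\Phi(\boxminus,\boxplus)$ of volume $\neq k^\star$ (for instance a dead-end plateau hanging off $S_\boxplus$, with no neighbour in $\cP^\star$ and, by (ii), necessarily no neighbour in $\cC^\star$), and such configurations lie in none of your four classes. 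Your Step 2 identity and the parenthetical ``connectedness forces $I=0$'' both rely on this decomposition, so as written the argument is incomplete; in fact this is precisely the structural input the paper imports as its first bullet ($S^\star\setminus[S_\boxminus\cup S_\boxplus]=\cC^\star$, cf.\ Remark~\ref{rem:prefactor-lemma} and \cite[Section 17.5]{BdH15}) rather than deriving it from (i)--(ii). A repair within your framework: the lower bound is free, since for any admissible $f$ you may drop all terms except the stars around the $\sigma\in\cC^\star$, whose endpoints are pinned at $1$ and $0$ by (ii); for the matching upper bound, use the maximality in Definition~\ref{prcrcrdef} to show that no connected component $D$ of $S^\star\setminus(S_\boxminus\cup S_\boxplus\cup\cC^\star)$ can touch both valleys (if $\eta\in D$ had a neighbour $a\in S_\boxminus$ and $D$ also touched $S_\boxplus$, then $(\cP^\star\cup\{a\},\cC^\star\cup\{\eta\})$ would satisfy conditions (1)--(3), forcing $\eta\in\cC^\star$, a contradiction), and since $D$ cannot touch $\cC^\star$ either, you may take $f$ constant on $D$ equal to the value of the unique valley it attaches to; such pieces then carry no current, and the minimum collapses to your Step 2 expression.
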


\begin{proof}
The proof is analogous to that in \cite[Section 17.5]{BdH15}. The variational problem 
in \eqref{eq:variationalform} simplifies because of the following two facts that are specific 
to Glauber dynamics:
\begin{itemize}
\item
$S^\star\backslash [S_\boxminus \cup S_\boxplus]
= \cC^\star$, i.e.,
there are no wells inside $\cC^\star$.
\item 
There are no allowed moves within $\cC^\star$, i.e., critical configurations cannot 
transform into each other via single spin-flips.
\end{itemize} 
Consequently, \eqref{eq:variationalform} reduces to
\begin{equation}
\label{varfor1}
\frac{1}{K^\star} = \min_{h\colon\,\cC^\star\to [0,1]} \sum_{\sigma \in \cC^\star}
[1-h(\sigma)]\, 2|U^-_\sigma| + [h(\sigma)]\, 2|U^+_\sigma|, 
\end{equation}
where $U^-_\sigma$ and $U^+_\sigma$ consist of the configurations in $S_\boxminus$ 
and $S_\boxplus$, respectively, that can reached from $\sigma \in \cC^\star$ by a 
single spin-flip. The solution of \eqref{varfor1} is computed easily to obtain 
\eqref{eq:reduced-varform-0}
\end{proof}

\begin{remark}
\label{rem:prefactor-lemma} 
{\rm An immediate consequence of the additional assumptions in 
Lemma~\ref{lem:variational-lemma} is that $I = 0$ (`no wells at the top') 
and that all configurations in $S^{\star}$ that are neighbours of configurations in $\cC^{\star}$ 
have an energy that is strictly below $\Phi(\boxminus,\boxplus)$ (`the top is not flat'). 
Consequently, only transitions from $\cC^\star$ to $S_\boxminus$ and $S_\boxplus$ 
(`down from the top') contribute to the prefactor (see Fig.~\ref{fig-wellsalt}).}
\end{remark}

\begin{figure}[htbp]
\vspace{0.5cm}
\begin{center}
\setlength{\unitlength}{0.25cm}
\begin{picture}(15,5)(-3,-1)
\qbezier[20](0,0)(0.5,5)(1,5)
\qbezier[30](1,5)(5.5,0)(6,-2)
\put(-.5,-1.4){$\boxminus$}
\put(5.7,-3.3){$\boxplus$}
\put(-2.1,2){$S_\boxminus$}
\put(4.3,2){$S_\boxplus$}
\put(0.8,5){\circle*{0.3}}
\put(0.5,5.7){$\cC^{\star}$}
\end{picture}
\vspace{.5cm}
\end{center}
\caption{\small Configurations in $\cC^{\star}$ are strict maxima in the energy profile 
of an optimal path. No plateau or wells are present.}
\label{fig-wellsalt}
\end{figure}
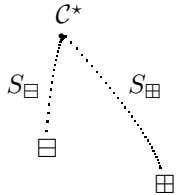

\subsection{The hierarchical lattice}
\label{S1.4}

Let $N\in\mathbb{N}\backslash\{1\}$, and define the \emph{$N$-dimensional 
hierarchical lattice} $\Lambda_{N}$ to be the metric space $\left(\mathbb{N},d\right)$ 
with $\N$ the set of positive integers and $d$ the ultrametric defined by 
\begin{equation}
d\left(a,b\right)
=\max\left\{ k\in\N_0\colon\, a\,\mathrm{mod}\,N^{k} \neq 
b\,\mathrm{mod}\,N^{k}\right\},\quad a,b\in \mathbb{N}, 
\label{eq:metric-d}
\end{equation}
which is called the \emph{hierarchical distance}. We say that $A\subseteq\mathbb{N}$ is a 
$k$-block of $\Lambda_{N}$ when $|A|=N^{k}$ and $d\left(a,b\right)\leq k$ for all $a,b\in A$. 
In particular, we define $\Lambda_{N}^{n}$ to be the $n$-block
\begin{equation}
\label{orderdef}
\Lambda_{N}^{n}=\left\{1,2,\ldots,N^{n}\right\}, 
\end{equation}
which is the $N$-dimensional hierarchical lattice with $n$ \emph{hierarchical levels} (see 
Fig.~\ref{fig:boxes}).

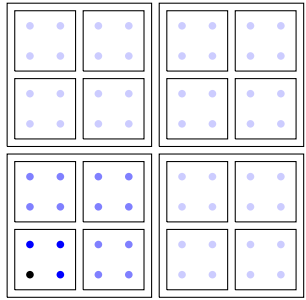
\begin{figure}[htbp]
\vspace{0.2cm}
\begin{tikzpicture} [scale=0.5]
\draw (0.1,0.1) -- (3.9,0.1) -- (3.9,3.9) -- (0.1,3.9) -- (0.1,0.1); 		
\draw (0.3,0.3) -- (1.9,0.3) -- (1.9,1.9) -- (0.3,1.9) -- (0.3,0.3); 		
\draw (2.1,0.3) -- (3.7,0.3) -- (3.7,1.9) -- (2.1,1.9) -- (2.1,0.3); 		
\draw (2.1,2.1) -- (3.7,2.1) -- (3.7,3.7) -- (2.1,3.7) -- (2.1,2.1); 		
\draw (0.3,2.1) -- (1.9,2.1) -- (1.9,3.7) -- (0.3,3.7) -- (0.3,2.1); 	
\draw (4.1,0.1) -- (7.9,0.1) -- (7.9,3.9) -- (4.1,3.9) -- (4.1,0.1); 		
\draw (0.3+4,0.3) -- (1.9+4,0.3) -- (1.9+4,1.9) -- (0.3+4,1.9) -- (0.3+4,0.3); 		
\draw (2.1+4,0.3) -- (3.7+4,0.3) -- (3.7+4,1.9) -- (2.1+4,1.9) -- (2.1+4,0.3); 		
\draw (2.1+4,2.1) -- (3.7+4,2.1) -- (3.7+4,3.7) -- (2.1+4,3.7) -- (2.1+4,2.1); 		
\draw (0.3+4,2.1) -- (1.9+4,2.1) -- (1.9+4,3.7) -- (0.3+4,3.7) -- (0.3+4,2.1); 	
\draw (0.1,4.1) -- (3.9,4.1) -- (3.9,7.9) -- (0.1,7.9) -- (0.1,4.1); 		
\draw (0.3,0.3+4) -- (1.9,0.3+4) -- (1.9,1.9+4) -- (0.3,1.9+4) -- (0.3,0.3+4); 		
\draw (2.1,0.3+4) -- (3.7,0.3+4) -- (3.7,1.9+4) -- (2.1,1.9+4) -- (2.1,0.3+4); 		
\draw (2.1,2.1+4) -- (3.7,2.1+4) -- (3.7,3.7+4) -- (2.1,3.7+4) -- (2.1,2.1+4); 		
\draw (0.3,2.1+4) -- (1.9,2.1+4) -- (1.9,3.7+4) -- (0.3,3.7+4) -- (0.3,2.1+4); 	
\draw (4.1,4.1) -- (7.9,4.1) -- (7.9,7.9) -- (4.1,7.9) -- (4.1,4.1); 		
\draw (0.3+4,0.3+4) -- (1.9+4,0.3+4) -- (1.9+4,1.9+4) -- (0.3+4,1.9+4) -- (0.3+4,0.3+4); 		
\draw (2.1+4,0.3+4) -- (3.7+4,0.3+4) -- (3.7+4,1.9+4) -- (2.1+4,1.9+4) -- (2.1+4,0.3+4); 		
\draw (2.1+4,2.1+4) -- (3.7+4,2.1+4) -- (3.7+4,3.7+4) -- (2.1+4,3.7+4) -- (2.1+4,2.1+4); 		
\draw (0.3+4,2.1+4) -- (1.9+4,2.1+4) -- (1.9+4,3.7+4) -- (0.3+4,3.7+4) -- (0.3+4,2.1+4);
\fill[black!] (0.7,0.7) circle (0.1cm); 	
\fill[blue!100] (1.5,0.7) circle (0.1cm); 	
\fill[blue!100] (1.5,1.5) circle (0.1cm); 	
\fill[blue!100] (0.7,1.5) circle (0.1cm);
\fill[blue!50] (0.7+1.8,0.7) circle (0.1cm); 	
\fill[blue!50] (1.5+1.8,0.7) circle (0.1cm); 	
\fill[blue!50] (1.5+1.8,1.5) circle (0.1cm); 	
\fill[blue!50] (0.7+1.8,1.5) circle (0.1cm);
\fill[blue!50] (0.7,0.7+1.8) circle (0.1cm); 	
\fill[blue!50] (1.5,0.7+1.8) circle (0.1cm); 	
\fill[blue!50] (1.5,1.5+1.8) circle (0.1cm); 	
\fill[blue!50] (0.7,1.5+1.8) circle (0.1cm);
\fill[blue!50] (0.7+1.8,0.7+1.8) circle (0.1cm); 	
\fill[blue!50] (1.5+1.8,0.7+1.8) circle (0.1cm); 	
\fill[blue!50] (1.5+1.8,1.5+1.8) circle (0.1cm); 	
\fill[blue!50] (0.7+1.8,1.5+1.8) circle (0.1cm);
\fill[blue!20] (0.7+4,0.7) circle (0.1cm); 	
\fill[blue!20] (1.5+4,0.7) circle (0.1cm); 	
\fill[blue!20] (1.5+4,1.5) circle (0.1cm); 	
\fill[blue!20] (0.7+4,1.5) circle (0.1cm);
\fill[blue!20] (0.7+1.8+4,0.7) circle (0.1cm); 	
\fill[blue!20] (1.5+1.8+4,0.7) circle (0.1cm); 	
\fill[blue!20] (1.5+1.8+4,1.5) circle (0.1cm); 	
\fill[blue!20] (0.7+1.8+4,1.5) circle (0.1cm);
\fill[blue!20] (0.7+4,0.7+1.8) circle (0.1cm); 	
\fill[blue!20] (1.5+4,0.7+1.8) circle (0.1cm); 	
\fill[blue!20] (1.5+4,1.5+1.8) circle (0.1cm); 	
\fill[blue!20] (0.7+4,1.5+1.8) circle (0.1cm);
\fill[blue!20] (0.7+1.8+4,0.7+1.8) circle (0.1cm); 
\fill[blue!20] (1.5+1.8+4,0.7+1.8) circle (0.1cm); 	
\fill[blue!20] (1.5+1.8+4,1.5+1.8) circle (0.1cm); 
\fill[blue!20] (0.7+1.8+4,1.5+1.8) circle (0.1cm);
\fill[blue!20] (0.7,0.7+4) circle (0.1cm); 	
\fill[blue!20] (1.5,0.7+4) circle (0.1cm); 	
\fill[blue!20] (1.5,1.5+4) circle (0.1cm); 	
\fill[blue!20] (0.7,1.5+4) circle (0.1cm);
\fill[blue!20] (0.7+1.8,0.7+4) circle (0.1cm); 	
\fill[blue!20] (1.5+1.8,0.7+4) circle (0.1cm); 	
\fill[blue!20] (1.5+1.8,1.5+4) circle (0.1cm); 	
\fill[blue!20] (0.7+1.8,1.5+4) circle (0.1cm);
\fill[blue!20] (0.7,0.7+1.8+4) circle (0.1cm); 	
\fill[blue!20] (1.5,0.7+1.8+4) circle (0.1cm); 	
\fill[blue!20] (1.5,1.5+1.8+4) circle (0.1cm); 	
\fill[blue!20] (0.7,1.5+1.8+4) circle (0.1cm);
\fill[blue!20] (0.7+1.8,0.7+1.8+4) circle (0.1cm); 	
\fill[blue!20] (1.5+1.8,0.7+1.8+4) circle (0.1cm); 	
\fill[blue!20] (1.5+1.8,1.5+1.8+4) circle (0.1cm); 
\fill[blue!20] (0.7+1.8,1.5+1.8+4) circle (0.1cm);
\fill[blue!20] (0.7+4,0.7+4) circle (0.1cm); 	
\fill[blue!20] (1.5+4,0.7+4) circle (0.1cm); 	
\fill[blue!20] (1.5+4,1.5+4) circle (0.1cm); 	
\fill[blue!20] (0.7+4,1.5+4) circle (0.1cm);
\fill[blue!20] (0.7+1.8+4,0.7+4) circle (0.1cm); 	
\fill[blue!20] (1.5+1.8+4,0.7+4) circle (0.1cm); 	
\fill[blue!20] (1.5+1.8+4,1.5+4) circle (0.1cm); 	
\fill[blue!20] (0.7+1.8+4,1.5+4) circle (0.1cm);
\fill[blue!20] (0.7+4,0.7+1.8+4) circle (0.1cm); 	
\fill[blue!20] (1.5+4,0.7+1.8+4) circle (0.1cm); 	
\fill[blue!20] (1.5+4,1.5+1.8+4) circle (0.1cm); 	
\fill[blue!20] (0.7+4,1.5+1.8+4) circle (0.1cm);
\fill[blue!20] (0.7+1.8+4,0.7+1.8+4) circle (0.1cm); 	
\fill[blue!20] (1.5+1.8+4,0.7+1.8+4) circle (0.1cm); 	
\fill[blue!20] (1.5+1.8+4,1.5+1.8+4) circle (0.1cm); 	
\fill[blue!20] (0.7+1.8+4,1.5+1.8+4) circle (0.1cm);
\end{tikzpicture}
\caption{\small Schematic representation of $\Lambda^{3}_{4}$. The distance from 
the vertex in the lower-left corner to any vertex in the lower-left 1-block different from 
that vertex equals 1, to any vertex in the lower-left 2-block that is not in the lower-left 
1-block equals 2, and to any vertex in the lower-left 3-block that is not in the lower-left 
2-block equals 3. Note that, with this interpretation, for any two vertices $v$ and $w$ 
the size of the smallest box containing both $v$ and $w$ is $N^{d\left(v,w\right)}$.}
\label{fig:boxes}
\end{figure}

The set $\Lambda_{N}^{n}$ is the underlying graph from which we build our state space 
$\Omega=\{-1,+1\}^{\Lambda_{N}^{n}}$.  We may alternatively write $\Lambda^{n}_{N}
=\left\lbrace v_{1},\ldots,v_{N^n}\right\rbrace$ with $v_{a}$ the vertex corresponding to 
the integer $a$. Note that $d(v_{a},v_{b})=d(a,b)$. We define $\gamma\colon\,\boxminus
\to\boxplus$ to be the path $\gamma=\left(\gamma_{0},\ldots,\gamma_{N^{n}}\right)$, 
where $\gamma_{k}$ is the configuration with $\gamma_{k}\left(v_a\right)=+1$ for $a\leq k$ 
and $\gamma_{k}\left(v_a\right)=-1$ for $a>k$, i.e., spins are flipped upward in the order 
in which they are labelled. We refer to $\gamma$ as the \emph{reference path}, and it will 
play a crucial role in our analysis.

Whenever convenient, we may think of $\Omega$ as the power set of $\Lambda_{N}^n$ 
and of configurations $\sigma\in\Omega$ as subsets of $\Lambda_{N}^{n}$. Thus, 
we may identify a configuration $\sigma\in\{-1,+1\}^{\Lambda_{N}^{n}}$ with the set 
$\{v\in\Lambda_{N}^{n}\colon\,\sigma(v)=$ $+1\}$ and its flipped image 
$\overline{\sigma}$ with the set $\{ v\in\Lambda_{N}^{n}\colon\,\sigma(v)=-1\}$. 

To define the interaction, we make $\Lambda_{N}^{n}$ into a complete graph by 
placing an edge between all pairs $v,w\in\Lambda_{N}^{n}$ with $v\neq w$. The 
ferromagnetic pair potential between such pairs equals $J_{d\left(v,w\right)}$, where
\begin{equation}
\vec{J}=\left\{J_i\right\} _{i=1}^{n}
\end{equation} 
is chosen such that $J_i > 0$ for $1\leq i\leq n$. Hence the Hamiltonian in \eqref{eq:hamiltonian} 
becomes 
\begin{equation}
\cH\left(\sigma\right)=-\frac{1}{2}\sum_{ {v,w\in\Lambda_N^n:} \atop {v \neq w} }
J_{d\left(v,w\right)}\,\sigma(v)\sigma(w)
-\frac{h}{2}\sum_{v\in\Lambda_N^n} \sigma(v).
\label{eq:Hamiltonian}
\end{equation}


\subsection{Hypothesis and Assumptions}
\label{S1.5}

We want to apply the theory behind Theorems~\ref{thm:critdrop}--\ref{thm:explaw}, 
for which we need to verify Hypothesis (H) in \eqref{eq:Hhyp}. In the sequel we will need
five assumptions on the interaction parameters of our model.

\medskip\noindent
{\bf Assumption (A1):}
 \begin{eqnarray}
\left(1-\frac{1}{N}\right) \sum_{i=1}^{n}J_{i}N^{i} >h. 
\label{eq:A0-MetastableRegime}
\end{eqnarray}
(A1) guarantees that $\boxminus$ is a local minimum and corrresponds to the range of
parameters for which the system is in the \emph{metastable regime}. 

\begin{theorem}
\label{thrm:hyp-H} 
Suppose that $\vec{J}$ is monotone, i.e. either non-increasing or non-decreasing, and that {\rm (A1)} holds. Then hypothesis {\rm (H)} is verified. 
\end{theorem}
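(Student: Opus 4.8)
The plan is to verify Hypothesis (H) by a direct analysis of stability levels. Since $h>0$ and the $J_i$ are positive we have $\Omega_\stab=\{\boxplus\}$, while $\cH(\boxminus)-\cH(\boxplus)=h\,N^n>0$ gives $\boxminus\notin\Omega_\stab$; hence $\Omega_\meta=\{\boxminus\}$ is equivalent to the single inequality $V_\sigma<V_\boxminus$ for all $\sigma\in\Omega\setminus\{\boxminus,\boxplus\}$, and the proof splits into computing $V_\boxminus$ and bounding $V_\sigma$ for every other non-stable configuration.

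Both parts hinge on the \emph{energy profile} $E(\ell)=\min\{\cH(\sigma)-\cH(\boxminus)\colon|\sigma|=\ell\}$, $0\le\ell\le N^n$, where $|\sigma|$ is the number of $+1$-spins. Identifying $\sigma$ with its set $A$ of $+1$-spins, one has $\cH(\sigma)-\cH(\boxminus)=2\,\mathrm{cut}_{\vec J}(A)-h|A|$ with $\mathrm{cut}_{\vec J}(A)=\sum_{v\in A,\,w\notin A}J_{d(v,w)}$, so computing $E(\ell)$ is a weighted isoperimetric problem on $\Lambda_N^n$. This is the only point where monotonicity of $\vec J$ enters: I would prove a \emph{rearrangement lemma} stating that $\mathrm{cut}_{\vec J}$ over sets of fixed size is minimised by the \emph{canonical} configuration obtained by greedily filling hierarchical blocks --- smallest blocks first when $\vec J$ is non-increasing, and in the complementary ``maximally dispersed'' order when $\vec J$ is non-decreasing --- and that these canonical minimisers are \emph{nested}, $m_0\subset m_1\subset\cdots\subset m_{N^n}$. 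The lemma is proved by an exchange argument on pairs of sites, treated separately in the two regimes. From it one extracts the structural facts needed below: $E\ge 0$ on $\{0,\dots,\ell^\star\}$, where $\Gamma^\star:=\max_\ell E(\ell)$ is attained at a first ``critical size'' $\ell^\star\ge 1$ --- here Assumption (A1), which yields $E(1)=2\bigl(1-\tfrac1N\bigr)\sum_iJ_iN^i-h>0$, forces $\ell^\star\ge1$, i.e.\ makes $\boxminus$ a strict local minimum --- and the remaining barrier $\max_{j\ge\ell}E(j)-E(\ell)$ of any non-trivial canonical droplet $m_\ell$ ($1\le\ell\le N^n-1$) is strictly below $\Gamma^\star$, morally because $\Gamma^\star$ is the cost of building the critical droplet from the empty configuration while $m_\ell$ has already paid a strictly positive part of it.

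Granting the lemma, $V_\boxminus$ is immediate. The nested path $\boxminus=m_0\to m_1\to\cdots\to m_{N^n}=\boxplus$ uses allowed moves only and has height $\cH(\boxminus)+\max_\ell E(\ell)=\cH(\boxminus)+\Gamma^\star$, while any path $\boxminus\to\boxplus$ passes through a configuration of every size $\ell$, of energy $\ge\cH(\boxminus)+E(\ell)$, hence has height $\ge\cH(\boxminus)+\Gamma^\star$; so $\Phi(\boxminus,\boxplus)=\cH(\boxminus)+\Gamma^\star$ and $V_\boxminus\le\Gamma^\star$. Conversely, any $\eta$ with $\cH(\eta)<\cH(\boxminus)$ has $E(|\eta|)<0$, hence $|\eta|>\ell^\star$ since $E\ge0$ on $\{0,\dots,\ell^\star\}$, so every path from $\boxminus$ to $\eta$ visits a size-$\ell^\star$ configuration of energy $\ge\cH(\boxminus)+\Gamma^\star$; thus $V_\boxminus=\Gamma^\star$. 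Now take $\sigma\in\Omega\setminus\{\boxminus,\boxplus\}$. If $\sigma$ is not a local minimum, one downhill spin-flip gives $V_\sigma=0<\Gamma^\star$. If $\sigma$ is a local minimum, the local-field inequalities ($\sum_{w\ne v}J_{d(v,w)}\sigma(w)\ge-h/2$ on $+1$-spins and $\le-h/2$ on $-1$-spins) together with the rearrangement lemma identify $\sigma$, up to exchange moves, with a canonical droplet $m_\ell$, $1\le\ell\le N^n-1$; continuing along the nested path, $\sigma=m_\ell\to m_{\ell+1}\to\cdots\to\boxplus$ has height $\cH(\boxminus)+\max_{j\ge\ell}E(j)$, so $V_\sigma\le\Phi(\sigma,\boxplus)-\cH(\sigma)\le\max_{j\ge\ell}E(j)-E(\ell)<\Gamma^\star$ by the structural fact above. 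In all cases $V_\sigma<\Gamma^\star=V_\boxminus$, which is (H).

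The main obstacle is the rearrangement lemma and the attendant classification of the local minima: the canonical --- hence the critical --- droplet has a genuinely different shape in the two monotonicity regimes (compact blocks for non-increasing $\vec J$, maximally dispersed spins for non-decreasing $\vec J$), so the greedy filling order, the explicit form of $E(\ell)$ and the escape paths have to be set up case by case, and one must check that the strict inequality $V_\sigma<\Gamma^\star$ is preserved in the worst case, namely a $\sigma$ that is itself a large, nearly critical droplet lying just below $\cH(\boxminus)$ in energy. The remaining ingredients --- the bookkeeping with the block sizes $N^i-N^{i-1}$, the check that (A1) makes $\boxminus$ a strict local minimum, and the evaluation of $\Phi(\boxminus,\boxplus)$ along the nested path --- are routine.
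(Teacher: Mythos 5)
The first half of your plan (the rearrangement lemma, the nested canonical path, and $V_\boxminus=\Phi(\boxminus,\boxplus)-\cH(\boxminus)=\Gamma^\star$) is sound and corresponds to the paper's Lemma~\ref{lem: unif opt. path} on uniformly optimal paths. The gap is in the step that bounds $V_\sigma$ for a local minimum $\sigma\notin\{\boxminus,\boxplus\}$. You claim that the local-field inequalities plus the rearrangement lemma identify $\sigma$ ``up to exchange moves'' with a canonical droplet $m_\ell$, and then you run the nested path as if it started at $\sigma$. Two things break. First, the exchange (block-switching) moves are \emph{not} single spin-flips, hence not allowed moves of the dynamics; knowing that $\sigma$ and $m_\ell$ are related by exchanges gives no control whatsoever on $\Phi(\sigma,\cdot)$, and converting an exchange into a spin-flip path forces you to flip spins down and back up through intermediate configurations whose energies are exactly what you would need to bound. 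Second, the classification itself is unproven and is not routine: a local minimum of $\cH$ need not be energy-minimal in its volume class (think of two well-separated sub-blocks, each individually stable), so the rearrangement lemma, which only compares configurations of \emph{equal} volume, does not identify the local minima. There is also the smaller issue that $\max_{j\ge\ell}E(j)-E(\ell)<\Gamma^\star$ requires $E(\ell)>0$ at every volume $\ell$ where a nontrivial local minimum can sit, which you only justify ``morally''.

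The paper avoids all of this by never classifying local minima. For an \emph{arbitrary} $\sigma\in\Omega\setminus\{\boxminus,\boxplus\}$ it picks $v_j\notin\sigma$ and $v_i\in\sigma$, translates the uniformly optimal reference path so that $\gamma(1)=v_j$ and $\gamma(2)=v_i$, and follows the spin-flip path $k\mapsto\sigma\cup\gamma_k$. The key estimate is $\cH(\sigma\cup\gamma_k)-\cH(\sigma)<\cH(\gamma_k)-\cH(\boxminus)$, which holds because $1\le|\sigma\cap\gamma_k|<k$ for $k\ge2$ and because uniform optimality gives $\cH(\gamma_k\cap\sigma)\ge\cH(\gamma_{|\gamma_k\cap\sigma|})>\cH(\boxminus)$. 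Stopping at the first $k'$ with $\cH(\gamma_{k'})\le\cH(\boxminus)$ (which exists, and $k'\ge2$ by (A1)) yields a configuration of energy strictly below $\cH(\sigma)$, reached at height strictly less than $\Gamma^\star$ above $\cH(\sigma)$, uniformly in $\sigma$. This single comparison is the idea your proposal is missing; I would replace the local-minimum classification and the exchange-move argument with it.
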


\noindent
We will see from the proof of Theorem ~\ref{thrm:hyp-H} that without (A1) there are no local minima in the energy landscape.

Our main task is to identify the triplet $\left(\Gamma^{\star},\cC^{\star},K^{\star}\right)$
in \eqref{eq:triple}. To do so, we require \emph{four assumptions} on $\vec{J}$, which 
we list below.

\medskip\noindent
{\bf Assumption (A2):}
\begin{eqnarray}
&\mathrm{(a)} & \exists\,\delta>0,\, M\in\mathbb{N}\colon\,\quad
1-\delta\geq\left\lceil \hat{s}\right\rceil -\hat{s} \geq \delta\quad \forall\,N\geq M,
\label{eq:A1noninteger}\\
&\mathrm{(b)} & \liminf_{N\to\infty}
\left|\sum_{i=\hat{m}+1}^{n}J_{i}N^{i}-h\right|>0,\nonumber 
\end{eqnarray}
where
\begin{eqnarray}
\hat{m} &=& \max\left\{ 0\leq m\leq n-1\colon\,\left(1-\frac{1}{N}\right)
\sum_{i=m+1}^{n}J_{i}N^{i}>h\right\},
\label{eq:defm}\\
\hat{s} &=& \frac{N}{2}(J_{\hat{m}+1}N^{\hat{m}+1})^{-1}\left[\left(1-\frac{1}{N}\right)
\sum_{i=\hat{m}+1}^{n}J_{i}N^{i}-h\right].
\label{eq:sineq}
\end{eqnarray}
(A2)(a) guarantees that $\hat{s}$ is not an integer when $N$ is sufficiently large, 
and does not approach an integer either as $N\to\infty$. (A2)(b) guarantees that the 
interaction is not `conspiring' to allow $|\sum_{i=\hat{m}+1}^{n} J_{i}N^{i}-h|$ to 
vanish as $N\to\infty$. Both assumptions are made to avoid certain degeneracies. 
These would not pose an essential problem, but would complicate our analysis 
unnecessarily. 

\medskip\noindent
{\bf Assumption (A3):}
\begin{equation}
\begin{aligned}
&\mbox{ For all } 1 \leq k \leq N^{\hat{m}} \mbox{ with } N\mbox{-ary 
decomposition } k=a_{\hat{m}-1} N^{\hat{m}-1}+\ldots+a_{0}\colon \\
&\lim_{N\to\infty} \frac{\sum\limits_{i=0}^{\hat{m}-1}J_{i+1}N^{i}
\Big[(N-a_{i}-1)\Big(\sum\limits_{j=0}^{i} a_{j}N^{j}\Big)
+a_{i}\Big(N^{i}-\sum\limits_{j=0}^{i-1} a_{j}N^{j}\Big)\Big]
+k\sum\limits_{i=\hat{m}+1}^{n} J_{i}N^{i}}{\lceil \hat{s}\rceil 
(2\hat{s}-\lceil \hat{s}\rceil +1) J_{\hat{m}+1}N^{2\hat{m}}} = 0.
\label{eq:A2}
\end{aligned}
\end{equation}
This assumption has a somewhat unappealing form. Its purpose is to ensure that, in the 
limit as $N\to\infty$, the energy along optimal paths fluctuates by relatively small amounts 
over short distances. We will see that it is satisfied when $J_i =o(N^{-i+1})$ as $N\to\infty$. 

\medskip\noindent
{\bf Assumption (A4):}
\begin{equation}
\frac{J_{i+1}}{J_{i}} = O\left(\frac{1}{N}\right) \qquad \forall\,1 \leq i \leq \hat{m}.
\label{eq:A3decayrateofJi}
\end{equation}
This assumption guarantees that the total interaction between a given spin and all the 
spins at a given hierarchical level remains bounded as $N\to\infty$.

\medskip\noindent
{\bf Assumption (A5):}
\begin{equation}
\mbox{No linear combination of \ensuremath{J_1,\ldots,J_n} is
a multiple of \ensuremath{h}.}
\label{eq:AssumptionUniquemax}
\end{equation}
This assumption again avoids certain degeneracies, and is valid for all but countably 
many choices of $h$ and $\vec{J}$.


\subsection{Main theorems}
\label{S1.6}

We are now ready to state our main results. The seven theorems and two corollaries 
given below identify the triple in \eqref{eq:triple}, consisting of the communication 
height $\Gamma^\star$, the set of critical configurations $\cC^\star$ and the prefactor
$K^\star$. Formulas simplify as more constraints are placed on $\vec{J}$. 


\medskip\noindent
\paragraph{$\bullet$ {\bf Communication height}} 

Recall the definition of $\Gamma^\star$ in \eqref{eq:defGammastar}.

\begin{theorem}
\label{thm:Gamma-case1}
Suppose that $\vec{J}$ is non-increasing and that {\rm (A1)} and {\rm (A3)} hold. Then 
\begin{equation}
\Gamma^{\star} = \left[1+o_{N}\left(1\right)\right]\,\frac{1}{4}
\left(J_{\hat{m}+1}\right)^{-1}\left(\sum_{i=\hat{m}+1}^{n} J_{i}N^{i}-h\right)^{2}, \quad N \to \infty.
\label{eq:thrm-Gamma value}
\end{equation}
\end{theorem}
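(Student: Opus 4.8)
The plan is to compute $\Gamma^\star = \Phi(\boxminus,\boxplus) - \cH(\boxminus)$ by analysing the energy profile along configurations built up from $\boxminus$ by flipping spins upward, exploiting the ultrametric structure of $\Lambda_N^n$. The starting point is the observation — which I expect to be established earlier or in a companion lemma — that among all configurations $\sigma$ with a fixed number $|\sigma|=k$ of $+1$-spins, the energy $\cH(\sigma)$ is minimised by choosing the $+1$-spins to fill up blocks greedily from the bottom of the hierarchy: first complete a $1$-block, then a second $1$-block in the same $2$-block, and so on, before moving to a new higher-order block. This monotonicity of $\vec J$ is exactly what makes the greedy (nested-block) filling optimal, since flipping spins that are mutually close costs less interaction energy than spreading them out. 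So the first step is to reduce the computation of $\Phi(\boxminus,\boxplus)$ to the one-dimensional problem of maximising, over $0\le k\le N^n$, the energy $\cH(\gamma^{\mathrm{greedy}}_k)-\cH(\boxminus)$ along the greedy filling path, and then to argue (using Assumption (A3)) that the fluctuations of the energy within each $\hat m$-block are negligible on the scale $J_{\hat m+1}N^{2\hat m}$, so that only the coarse profile — counting how many full $\hat m$-blocks have been flipped — matters to leading order.

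The second step is to write down this coarse energy profile explicitly. If $s$ denotes the number of completed $\hat m$-blocks that have been flipped (so $|\sigma|\approx sN^{\hat m}$), then flipping the spins of one further $\hat m$-block changes the energy by a term coming from (i) the magnetic field, contributing $-hN^{\hat m}$ per block (gain), (ii) the interaction at levels $\le\hat m$ inside the block, which is an $s$-independent constant negligible on our scale by (A3), and (iii) the interaction at levels $>\hat m$ between this block and the already-flipped / not-yet-flipped blocks. Term (iii) is where the quadratic in $s$ appears: the $s$-th block interacts ferromagnetically with the $s-1$ blocks below it (already $+1$, so flipping reduces energy) and with the remaining blocks above (still $-1$, so flipping raises energy), giving a net increment proportional to $\big(\sum_{i=\hat m+1}^n J_iN^i\big)\big(N^{\hat m} - 2sN^{\hat m} + \text{l.o.t.}\big)$ — a discrete derivative that is positive for small $s$ and negative for large $s$. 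Summing these increments yields a profile of the form $\cH(\sigma)-\cH(\boxminus) \approx c_1 s(s+1)\cdot\tfrac12 \cdot 2J_{\hat m+1}N^{2\hat m}/N \cdot(\cdots) - c_2 s$, i.e. a downward parabola in $s$ whose maximum is attained at $s = \hat s$ as defined in~\eqref{eq:sineq}; indeed $\hat s$ is precisely the value of $s$ at which the per-block increment changes sign, which is why the definition of $\hat s$ has the shape it does. The definition of $\hat m$ in~\eqref{eq:defm} guarantees $0 < \hat s < N$, so the maximum is interior and occurs at a partially-filled level-$\hat m$ stage, not at level $\hat m+1$ or higher.

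The third step is to evaluate the profile at its maximum. Since $\hat s$ need not be an integer, the true maximum over integer $s$ is at $s=\lceil\hat s\rceil$ (or, within a block, at a configuration with $\lceil\hat s\rceil$ blocks minus a controlled number of spins); Assumption (A2)(a) keeps $\hat s$ bounded away from integers so this rounding is unambiguous and stable as $N\to\infty$. Plugging $s=\lceil\hat s\rceil$ into the parabola and simplifying — collecting the linear and quadratic contributions — produces the expression $\tfrac14 J_{\hat m+1}^{-1}\lceil\hat s\rceil(2\hat s - \lceil\hat s\rceil + 1)(\cdots)$, and a short algebraic manipulation using the definition of $\hat s$ rewrites $\lceil\hat s\rceil(2\hat s-\lceil\hat s\rceil+1)$ in terms of $\big(\sum_{i=\hat m+1}^n J_iN^i - h\big)$ modulo lower-order corrections, yielding~\eqref{eq:thrm-Gamma value}. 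The $[1+o_N(1)]$ absorbs (a) the neglected sub-$\hat m$ interaction fluctuations, controlled by (A3); (b) the $O(N^{\hat m})$ boundary spins involved in passing between a value of $|\sigma|$ that is a multiple of $N^{\hat m}$ and the true optimiser; and (c) the replacement of $\lceil\hat s\rceil(2\hat s-\lceil\hat s\rceil+1)$ by $(2\hat s)^{-1}$ times a square, which is legitimate precisely because $\hat s$ is of order $N\gg1$.

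The main obstacle I anticipate is Step 1: rigorously proving that the greedy nested-block filling realises the min–max communication height, i.e. that no path from $\boxminus$ to $\boxplus$ can stay strictly below the greedy profile's maximum. This requires a lower bound $\Phi(\boxminus,\boxplus)\ge\max_k\cH(\gamma^{\mathrm{greedy}}_k)$, which cannot follow merely from exhibiting a good path; one must show that any path is forced through a configuration of energy at least this high. The natural route is an isoperimetric/combinatorial argument: for every $k$, among configurations with $k$ up-spins the minimal energy is $\cH(\gamma^{\mathrm{greedy}}_k)$ (by a block-rearrangement inequality driven by monotonicity of $\vec J$), and every path must pass through some configuration of each cardinality $k$, hence through one of energy $\ge \min_{|\sigma|=k}\cH(\sigma)=\cH(\gamma^{\mathrm{greedy}}_k)$; taking $k$ to be the argmax closes the bound. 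Making the rearrangement inequality precise on the ultrametric — handling the interplay of all $n$ interaction scales simultaneously — is the technical heart of the argument, and (A3) will be needed again to discard the intra-block terms in the final asymptotic.
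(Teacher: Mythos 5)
Your proposal follows essentially the same route as the paper: the reduction to the greedy nested-block filling path via a block-rearrangement/switching inequality (the paper's Lemma~\ref{lem:switching} and Lemma~\ref{lem: unif opt. path}, with the lower bound on $\Phi(\boxminus,\boxplus)$ coming from the fact that every path visits every cardinality), the concave/parabolic coarse profile over multiples of $N^{\hat m}$ peaking at $\lceil\hat s\rceil$ with value $\lceil\hat s\rceil(2\hat s-\lceil\hat s\rceil+1)J_{\hat m+1}N^{2\hat m}$, and the use of (A3) together with the shift-increment bound to discard intra-block fluctuations. The step you flag as the main obstacle is exactly the technical heart of the paper's argument, and your proposed strategy for it is the one the paper carries out.
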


\begin{corollary}
\label{cor:Gamma-case2}
Suppose that $J_i=\tilde{J}_i/N^i$ with $\tilde{J}_i=o(N)$ and that 
{\rm (A2)(b)} holds. Then {\rm (A3)} holds and 
\begin{equation}
\Gamma^{\star} = \left[1+o_{N}\left(1\right)\right]\,\frac{1}{4}(\tilde{J}_{\hat{m}+1})^{-1}
\left(\sum_{i=\hat{m}+1}^{n}\tilde{J}_{i}-h\right)^{2}N^{\hat{m}+1}, \quad N \to \infty.
\end{equation}
\end{corollary}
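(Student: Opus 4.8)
\textbf{Proof plan for Corollary~\ref{cor:Gamma-case2}.}

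The plan is to derive the corollary directly from Theorem~\ref{thm:Gamma-case1} by substituting the special form $J_i=\tilde J_i/N^i$ into the various quantities and checking the hypotheses of that theorem. The logical skeleton is: (1) show the hypotheses of Theorem~\ref{thm:Gamma-case1} are met, namely that $\vec J$ is non-increasing, that (A1) holds, and that (A3) holds; (2) compute $\hat m$ and $J_{\hat m+1}$ in terms of the $\tilde J_i$; (3) plug into \eqref{eq:thrm-Gamma value} and simplify.

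First I would address the hypotheses. Note that with $J_i=\tilde J_i/N^i$ one has $J_iN^i=\tilde J_i$, so the sum $\sum_{i=m+1}^n J_iN^i=\sum_{i=m+1}^n\tilde J_i$ is \emph{independent of $N$}, and $(1-1/N)\to 1$. Hence for $N$ large the definition \eqref{eq:defm} of $\hat m$ stabilises to $\hat m=\max\{0\le m\le n-1:\sum_{i=m+1}^n\tilde J_i>h\}$, and in particular (A1) — which is the statement that this set is nonempty with $m=0$ — must be assumed implicitly (it is the $N\to\infty$ form of (A2)(b) together with positivity); I would state this cleanly, using (A2)(b) to guarantee $\sum_{i=\hat m+1}^n\tilde J_i\ne h$ in the limit so that $\hat m$ is well defined and stable. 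That $\vec J$ is non-increasing is not automatic from $\tilde J_i=o(N)$, so strictly this must be read as an additional standing hypothesis (or one checks $\tilde J_{i+1}/N^{i+1}\le \tilde J_i/N^i$, i.e. $\tilde J_{i+1}\le N\tilde J_i$, which does follow from boundedness of $\tilde J_i$ for $N$ large if the $\tilde J_i$ are, say, eventually comparable — I would simply invoke the blanket assumption that $\vec J$ is non-increasing). The substantive point is that (A3) holds: here I would substitute $J_{i+1}N^i=\tilde J_{i+1}/N$ into the numerator of \eqref{eq:A2}. The bracketed combinatorial factor $[(N-a_i-1)(\sum_{j\le i}a_jN^j)+a_i(N^i-\sum_{j<i}a_jN^j)]$ is $O(N^{\hat m})$ uniformly in $k\le N^{\hat m}$ (each term is a product of something $O(N)$ with something $O(N^{\hat m-1})$, resp.\ $O(N)$ with $O(N^{i})$ and $i\le\hat m-1$), and there are $\hat m$ terms each weighted by $J_{i+1}N^i=\tilde J_{i+1}/N=o(1)$; the second piece $k\sum_{i>\hat m}\tilde J_i$ is $O(N^{\hat m})$. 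Meanwhile the denominator is $\lceil\hat s\rceil(2\hat s-\lceil\hat s\rceil+1)J_{\hat m+1}N^{2\hat m}=\Theta(\tilde J_{\hat m+1}N^{\hat m-1}\cdot N^{\hat m})=\Theta(N^{2\hat m-1}\tilde J_{\hat m+1})$, using $J_{\hat m+1}=\tilde J_{\hat m+1}/N^{\hat m+1}$ and the fact that $\hat s$ converges to a finite non-integer limit by (A2)(b) (so $\lceil\hat s\rceil$ and $2\hat s-\lceil\hat s\rceil+1$ are bounded and bounded away from $0$). Hence the ratio is $O(N^{\hat m}/(N^{2\hat m-1}\tilde J_{\hat m+1}))=O(N^{1-\hat m}/\tilde J_{\hat m+1})$, which tends to $0$ provided $\hat m\ge 1$; the case $\hat m=0$ makes the sum in the numerator empty and needs a separate trivial check (numerator $=k\sum_{i\ge 1}\tilde J_i=O(1)$ against a denominator of constant order times $\tilde J_1/N$, hence the ratio is $O(N)$... here I would need to recheck the normalisation — in fact when $\hat m=0$ the left side of (A3) is vacuous or the denominator should be read with $N^{2\hat m}=1$, and one must use $\tilde J_1$ growing; I expect this edge case is exactly why (A2)(b) and the $o(N)$ bound are imposed, and I would treat it by noting $\hat m=0$ corresponds to the ``trivial'' landscape handled separately, or absorb it into the hypothesis). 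I would write this estimate carefully as it is the one genuine computation.

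Finally, having verified (A1) and (A3), Theorem~\ref{thm:Gamma-case1} applies and gives
\[
\Gamma^\star=[1+o_N(1)]\,\tfrac14(J_{\hat m+1})^{-1}\Big(\sum_{i=\hat m+1}^n J_iN^i-h\Big)^2.
\]
Now substitute $J_{\hat m+1}=\tilde J_{\hat m+1}/N^{\hat m+1}$, so $(J_{\hat m+1})^{-1}=N^{\hat m+1}/\tilde J_{\hat m+1}$, and $\sum_{i=\hat m+1}^n J_iN^i-h=\sum_{i=\hat m+1}^n\tilde J_i-h$, obtaining exactly
\[
\Gamma^\star=[1+o_N(1)]\,\tfrac14(\tilde J_{\hat m+1})^{-1}\Big(\sum_{i=\hat m+1}^n\tilde J_i-h\Big)^2 N^{\hat m+1},
\]
which is the claimed formula. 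The main obstacle is not conceptual but bookkeeping: making the verification of (A3) airtight uniformly over all $1\le k\le N^{\hat m}$ and correctly handling the boundary case $\hat m=0$ (and more generally being precise about which of ``$\vec J$ non-increasing'' and the $N$-stability of $\hat m$ are hypotheses versus consequences). I would flag in the proof that the only nontrivial input is the $O(\cdot)$ estimate above, everything else being substitution.
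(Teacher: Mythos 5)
Your overall route is exactly the paper's: verify (A3), invoke Theorem~\ref{thm:Gamma-case1} (in the form of Corollary~\ref{cor:Gammastar1}), and substitute $J_{\hat{m}+1}N^{\hat{m}+1}=\tilde{J}_{\hat{m}+1}$ and $\sum_i J_iN^i=\sum_i\tilde{J}_i$; the final substitution step is correct, and your remark that (A1) and monotonicity of $\vec{J}$ must be carried along as standing hypotheses is fair. However, the one step you yourself single out as ``the genuine computation'' --- the verification of (A3) --- is wrong at its centre. From \eqref{eq:sineq}, $\hat{s}=\tfrac{N}{2\tilde{J}_{\hat{m}+1}}\bigl[(1-\tfrac1N)\sum_{i>\hat{m}}\tilde{J}_i-h\bigr]$; by (A2)(b) together with the definition of $\hat{m}$ the bracket has a positive lower bound for large $N$, and $\tilde{J}_{\hat{m}+1}=o(N)$, so $\hat{s}\to\infty$ --- it does \emph{not} converge to a finite non-integer limit (that is the content of (A2)(a) about the fractional part, not about boundedness). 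The factor $\lceil\hat{s}\rceil(2\hat{s}-\lceil\hat{s}\rceil+1)\geq\hat{s}^2=\Theta(N^2/\tilde{J}_{\hat{m}+1}^2)$ is precisely what makes the denominator of \eqref{eq:A2} large: it equals $\Theta(N^2/\tilde{J}_{\hat{m}+1}^2)\cdot\tilde{J}_{\hat{m}+1}N^{\hat{m}-1}=\Theta(N^{\hat{m}+1}/\tilde{J}_{\hat{m}+1})$. If the $\hat{s}$-factor really were $\Theta(1)$, the denominator would be only $\Theta(\tilde{J}_{\hat{m}+1}N^{\hat{m}-1})$ and the ratio would diverge, so (A3) would fail; the order $\Theta(N^{2\hat{m}-1}\tilde{J}_{\hat{m}+1})$ you then write down follows from neither reading, and your final bound $O(N^{1-\hat{m}}/\tilde{J}_{\hat{m}+1})$, together with the spurious trouble it creates at $\hat{m}=0$ and (implicitly) $\hat{m}=1$, is therefore not a proof.

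The correct estimate is uniform in $\hat{m}\geq0$ and $1\leq k\leq N^{\hat{m}}$ and needs no case split. The bracketed combinatorial factor in \eqref{eq:A2} is $O(N^{i+2})$, not $O(N^{i+1})$: the term $\sum_{j\leq i}a_jN^j$ can be of order $N^{i+1}$. After weighting by $J_{i+1}N^i=\tilde{J}_{i+1}/N$ and summing over $i\leq\hat{m}-1$, the first piece of the numerator is $O\bigl(N^{\hat{m}}\sum_i\tilde{J}_i\bigr)$, and $k\sum_{i>\hat{m}}J_iN^i\leq N^{\hat{m}}\sum_i\tilde{J}_i$ likewise. Dividing by the denominator $\geq cN^{\hat{m}+1}/\tilde{J}_{\hat{m}+1}$ yields $O\bigl(\tilde{J}_{\hat{m}+1}\sum_i\tilde{J}_i/N\bigr)$, which is the paper's estimate in \eqref{eq:A2 check1} and the display following it, and which vanishes when the $\tilde{J}_i$ are bounded. (A caveat that applies to the paper as well as to any repair of your argument: under the bare hypothesis $\tilde{J}_i=o(N)$ one really needs $\tilde{J}_{\hat{m}+1}\sum_i\tilde{J}_i=o(N)$ for this ratio to vanish, so the clean statement is for bounded, e.g.\ $N$-independent, $\tilde{J}_i$.)
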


Our next result gives a formula for $\Gamma^{\star}$ when $J_i=\tilde{J}/N^{i}$ for 
some $\tilde{J}>0$. Let
\begin{equation}
\mathbb{I} = \left\lbrace \left(m,s\right)\colon\, 0\leq m\leq n-1,\,1\leq s\leq N-1\right\rbrace 
\cup \left\lbrace\left(n-1,N\right)\right\rbrace \subseteq \N^2,
\label{eq:indexI}
\end{equation}
and for $\left(m,s\right)\in \mathbb{I}$ define
\begin{equation}
h^{\left(m,s\right)} = \tilde{J}\left[\left(1-\frac{1}{N}\right)\left(n-m\right)
-\left(s-1\right)\frac{1}{N}\right] \in \left[0,\tilde{J}\left(1-\frac{1}{N}\right)n\right].
\label{eq:hdagger}
\end{equation}

\begin{theorem}
\label{thm:Gamma-case3}
Suppose that $J_i=\tilde{J}/N^{i}$ for some $\tilde{J}>0$. Let $\left(m,s\right)\in \mathbb{I}$ 
be such that $h$ satisfies
\begin{equation}
h^{\left(m,s\right)}\leq h < h^{\left(m,s-1\right)}.
\end{equation}
{\rm (1)} If $N$ is odd, then
\begin{equation}
\begin{aligned}
\Gamma^{\star} 
&= \frac{\tilde{J}}{4N}\left[N^{m}\left(2s\left(N-\frac{s}{2}
+s\,\mathrm{mod}\,2\right)-N-s\,\mathrm{mod}\,2\right)+N-2s
-\left(-1\right)^{s\,\mathrm{mod}\,2}\right]\\
&\qquad +\frac{1}{2}\left[\tilde{J}\left(1-\frac{1}{N}\right)\left(n-m-1\right)-h\right]
\left(N^{m}\big(s-s\,\mathrm{mod}\,2\right)+1\big). 
\label{eq:thrm-Gamma-case3a}
\end{aligned}
\end{equation}
{\rm (2)} If $N$ is even and $s$ is odd, then
\begin{equation}
\Gamma^{\star} = \Gamma_{m,s}^{\star}
\end{equation}
with
\begin{eqnarray}
\Gamma_{m,s}^{\star} 
& = & \frac{\tilde{J}}{2}N^{-m\,\mathrm{mod}\,2} (A_m-1)
+ \tilde{J}\left[\frac{1}{2}B_{m}-N^{m\,\mathrm{mod}\,2} A_{m}\right](N-s)\\
& + & \tilde{J}\left[\frac{N}{4}B_{m}
- N^{m\,\mathrm{mod}\,2}A_{m}
+ N^{m-1}\left(\frac{s-1}{2}\right)\left(N-\frac{s-1}{2}\right)\right] \nonumber \\
& + & \left[\left(\frac{s-1}{2}\right)N^{m}+\frac{N}{2}B_{m}
-N^{1+m\,\mathrm{mod}\,2}A_{m}\right]
\left[\tilde{J}\left(1-\frac{1}{N}\right)\left(n-m-1\right)-h\right], \nonumber
\label{eq:thrm-Gamma-case3b}
\end{eqnarray}
where $A_{m}=\left(\frac{N^{m-m\mathrm{mod}2}-1}{N^{2}-1}\right)$
and $B_{m}=\left(\frac{N^{m}-1}{N-1}\right)$.\\
{\rm (3)} If $N$ is even and $s$ is even, then
\begin{equation}
\begin{aligned}
&\Gamma^{\star}=\Gamma_{m,s-1}^{\star} + \left(h^{\left(m,s-1\right)}-h\right)
\left[sN^{m}-\left(\frac{s-1}{2}\right)N^{m}-\left(\frac{N}{2}\right)
B_{m}+N^{1+m\,\mathrm{mod}\,2}
A_{m}\right].
\label{eq:thrm-Gamma-case3c}
\end{aligned}
\end{equation}
\end{theorem}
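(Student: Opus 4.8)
\textbf{Proof proposal for Theorem~\ref{thm:Gamma-case3}.}

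The plan is to derive the three formulas as an explicit specialisation of Theorem~\ref{thm:Gamma-case1} (or rather of the full structure that underlies it), combined with a careful count of the energy cost of the optimal droplet when $J_i=\tilde J/N^i$. The first step is to recall from the general theory that $\Gamma^\star$ is attained along the reference path $\gamma$: since $\vec J$ is non-increasing (here even strictly decreasing, $J_{i+1}/J_i=1/N$), the earlier analysis tells us that an optimal path grows the $+1$-region block by block, filling complete $k$-blocks before opening new ones. Concretely, the optimal configuration at the top consists of some number of completely filled $\hat m$-blocks together with one partially filled $\hat m$-block, and within that partial block the same nested structure one level down, and so on. The index $(m,s)\in\mathbb I$ encodes exactly this: $m$ is the critical hierarchical level and $s$ counts how many $m$-blocks (within the relevant $(m{+}1)$-block) are occupied at the saddle. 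The threshold condition $h^{(m,s)}\le h<h^{(m,s-1)}$ determines which $(m,s)$ is critical, and one checks using Assumption (A1) (and the definition \eqref{eq:hdagger}) that the intervals $[h^{(m,s)},h^{(m,s-1)})$ for $(m,s)\in\mathbb I$ tile $[0,\tilde J(1-\tfrac1N)n)$, so exactly one $(m,s)$ applies.

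The second step is the energy bookkeeping. For a configuration $\sigma$ identified with a subset $A\subseteq\Lambda_N^n$, rewrite the Hamiltonian \eqref{eq:Hamiltonian} in terms of $|A|$ and the ``interface'' quantities $\sum_{i} J_i N^i \cdot(\text{number of cross-pairs at hierarchical distance }i)$. With $J_i=\tilde J/N^i$ each level contributes $\tilde J$ times a purely combinatorial count of pairs $\{v,w\}$ with $d(v,w)=i$, one in $A$ and one not. For the nested-block configuration described above, this count is a sum of telescoping binomial-type terms; carrying it out level by level produces the quantities $A_m=\frac{N^{m-m\bmod 2}-1}{N^2-1}$ and $B_m=\frac{N^m-1}{N-1}$, which are just the geometric sums $\sum_{j} N^{2j}$ and $\sum_j N^j$ that appear when one recursively fills half of each sub-block (the ``half'' being where the parity cases $N$ odd / $N$ even, $s$ odd / $s$ even come from: when $N$ is even one cannot split an $m$-block into two equal halves of opposite sign for free, so the optimal interior profile differs). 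One then maximises the resulting explicit function of the configuration over the single remaining free parameter — how far the innermost partial block is filled — and the maximiser is the half-filled (or nearly half-filled) block, which is precisely why the ``$s\bmod 2$'' and ``$(s-1)/2$'' corrections appear. Case (1) is the clean odd-$N$ computation; case (2) is the analogous even-$N$, odd-$s$ computation where the innermost split is balanced; case (3) is obtained from case (2) by the observation that going from $s-1$ (odd) to $s$ (even) filled $m$-blocks adds exactly one more $m$-block of $+1$ spins on top of the $\Gamma^\star_{m,s-1}$ configuration, whose marginal energy cost is $(h^{(m,s-1)}-h)$ times the number of newly created interface pairs — hence the additive correction in \eqref{eq:thrm-Gamma-case3c}.

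The third step is to promote ``optimal along $\gamma$ among nested-block configurations'' to ``$\Phi(\boxminus,\boxplus)-\cH(\boxminus)$'', i.e.\ to rule out cheaper saddles of a different shape. Here one invokes the same isoperimetric argument that proves Theorem~\ref{thm:critdrop} and Theorem~\ref{thm:Gamma-case1}: among all subsets of a given size, the nested-block (``hierarchically convex'') subsets minimise the interaction-energy penalty, because at every level the pair potential $J_i N^i=\tilde J$ is constant and the number of bad pairs is minimised by keeping each block as full or as empty as possible. A convexity/exchange argument — if some $i$-block is only partially filled while another is neither full nor empty, moving spins to consolidate strictly decreases $\cH$ at fixed $|A|$ — reduces the optimisation over all of $\Omega$ to the finite-dimensional one already solved, so the max over the reference path equals the true communication height and no $o_N(1)$ error is incurred (note that, unlike Theorem~\ref{thm:Gamma-case1}, this theorem claims an \emph{exact} identity, which is possible precisely because (A3)-type fluctuations vanish identically when all $J_iN^i$ coincide).

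The step I expect to be the main obstacle is the second one: organising the multi-level combinatorial count so that it collapses into the compact closed forms involving $A_m$ and $B_m$, and correctly tracking the parity cases. The recursion is straightforward in principle — at level $m$ one has $s$ blocks to place among $N$, below that the partial block recurses — but the bookkeeping of cross-pairs at each of the $n$ hierarchical distances, together with the magnetic-field term and the shift by $\cH(\boxminus)$, is where sign errors and off-by-one errors lurk; making the telescoping visible (rather than brute-forcing each case) is the real content. The rest is routine given the general framework recalled in Sections~\ref{S1.1}--\ref{S1.3} and the monotone-$\vec J$ geometry established for Theorem~\ref{thrm:hyp-H} and Theorem~\ref{thm:Gamma-case1}.
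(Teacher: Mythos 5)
Your overall architecture (reduce to the reference path via the exchange/switching argument of Lemma~\ref{lem:switching} and Lemma~\ref{lem: unif opt. path}, then maximise the explicit energy along that path, noting that the identity is exact because the path is uniformly optimal) is the same as the paper's, and your step 3 is sound. The genuine gap is in your step 2, which is precisely where the paper spends all of Sections~\ref{S4.1}--\ref{S4.3}: you assert, rather than prove, that the maximiser of $k\mapsto\cH(h;\gamma_k)$ over all $N^n$ indices is the ``(nearly) half-filled'' index, treating the problem as a one-parameter optimisation with an obvious answer. In fact the maximiser's full $N$-ary decomposition (all digits $a_{m},a_{m-1},\ldots,a_0$, with the parity-dependent values of Theorem~\ref{thm:Cstar-case3}) must be identified, and for $h$ strictly inside $[h^{(m,s)},h^{(m,s-1)})$ the energy profile is \emph{not} symmetric, so ``half-filled'' is not self-evident; note also that for $s$ even the maximiser is tied to $s-1$, not $s$. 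The paper's route is: restrict to $[0,sN^m]$ (Lemma~\ref{lem:maxisinsN}), establish the exact symmetries of the profile at the special fields $h^{(m,s)}$ (Lemmas~\ref{lem: symmetry} and \ref{lem:2ndsymmetries}), combine them with the concavity of Lemma~\ref{lem:concave} via Lemma~\ref{lem:concavesymmetricsequence} and an induction over hierarchical distance (Propositions~\ref{prop:hs-max-location}--\ref{prop:hs-max-location-even}), and then transfer to general $h$ by the linear-tilt identity \eqref{eq:differenthdifference}, using \emph{both} endpoint fields $h^{(m,s-1)}$ and $h^{(m,s)}$ to control the profile on the two sides of the candidate maximiser (Proposition~\ref{prop:h-general-max-location}). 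None of this machinery, nor any substitute for it, appears in your proposal, and without it the closed forms with $A_m$, $B_m$ and the parity corrections cannot be derived.

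A second, concrete error is your explanation of case (3): the correction $(h^{(m,s-1)}-h)[\cdots]$ in \eqref{eq:thrm-Gamma-case3c} is \emph{not} the cost of ``adding one more $m$-block of $+1$ spins'' times a number of newly created interface pairs. It comes from the identity \eqref{eq:differenthdifference}: changing the field from $h^{(m,s-1)}$ to $h$ tilts the whole profile linearly in the volume, so the bracket is a volume-type quantity (essentially $sN^m$ minus the maximiser's location, i.e.\ a count of spins), and $(h^{(m,s-1)}-h)$ multiplies it as a field difference, not as an energy per pair. Your proposed mechanism would not reproduce the stated bracket, so case (3) would fail as argued. (A minor further slip: at the saddle roughly $\lceil(s-1)/2\rceil$ of the $m$-blocks are occupied, not $s$ of them.)
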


\begin{corollary}
\label{cor:Gamma-case4} 
Suppose that $J_i=\tilde{J}/N^i$ for some $\tilde{J}>0$. Let $\alpha\in\left(0,1\right)$ 
and $0\leq m\leq n-1$ be such that $h=\tilde{J}\left(n-m-\alpha\right)$. Then 
\begin{equation}
\Gamma^{\star}=\left[1+o_{N}\left(1\right)\right]\,\frac{\tilde{J}}{4}\,\alpha^{2}\,N^{m+1}.
\label{eq:thrm-Gamma-case4}
\end{equation}
\end{corollary}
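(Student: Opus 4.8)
The plan is to derive the asymptotics directly from the explicit formulas of Theorem~\ref{thm:Gamma-case3}, specialised to the case $J_i = \tilde J/N^i$ with $h = \tilde J(n-m-\alpha)$ for a fixed $\alpha \in (0,1)$ and fixed $0 \le m \le n-1$. The first step is to locate the pair $(m',s) \in \mathbb{I}$ with $h^{(m',s)} \le h < h^{(m',s-1)}$ to which Theorem~\ref{thm:Gamma-case3} applies. Using \eqref{eq:hdagger}, $h^{(m',s)} = \tilde J[(1-\tfrac1N)(n-m') - (s-1)\tfrac1N]$, one checks that for $N$ large the hypothesis $h = \tilde J(n-m-\alpha)$ forces $m' = m$ and pins down $s$ via $s - 1 \le \alpha N + O(1) < s$, i.e. $s = \lceil \alpha N \rceil + O(1)$; in particular $s/N \to \alpha$ as $N \to \infty$. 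The key structural consequence is that the last bracket in each of the three formulas, namely $\tilde J(1-\tfrac1N)(n-m-1) - h = \tilde J(n-m-1) - \tilde J(n-m-\alpha) + o(1) = \tilde J(\alpha - 1) + o(1)$, stays bounded, while the quantity multiplying it grows only like $s\,N^m \sim \alpha N^{m+1}$; combined with the fact that $A_m = O(N^{m-2})$ and $B_m = O(N^{m-1})$, this shows that the ``second line'' contributions are of order $O(N^{m+1})$ but — after the cancellations below — subleading or of matching order that must be tracked carefully.

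The second step is the actual asymptotic extraction. I would treat the odd-$N$ case \eqref{eq:thrm-Gamma-case3a} first, since it is cleanest: the dominant term inside the first bracket is $N^m \cdot 2s(N - s/2) \sim N^m \cdot 2\alpha N(N - \alpha N/2) = 2\alpha(1-\tfrac{\alpha}{2})N^{m+2}$, so that term contributes $\tfrac{\tilde J}{4N} \cdot 2\alpha(1-\tfrac{\alpha}{2})N^{m+2} = \tfrac{\tilde J}{2}\alpha(1-\tfrac{\alpha}{2})N^{m+1}$; the second line contributes $\tfrac12 \cdot \tilde J(\alpha-1) \cdot s N^m + o(N^{m+1}) = \tfrac{\tilde J}{2}(\alpha-1)\alpha N^{m+1} + o(N^{m+1})$. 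Adding these, $\tfrac{\tilde J}{2}\alpha N^{m+1}\big[(1 - \tfrac{\alpha}{2}) + (\alpha - 1)\big] = \tfrac{\tilde J}{2}\alpha N^{m+1} \cdot \tfrac{\alpha}{2} = \tfrac{\tilde J}{4}\alpha^2 N^{m+1}$, which is exactly \eqref{eq:thrm-Gamma-case4}. For even $N$ one repeats the computation with \eqref{eq:thrm-Gamma-case3b} (when $s$ is odd) or \eqref{eq:thrm-Gamma-case3c} (when $s$ is even); in the even-odd sub-case the leading term comes from $\tilde J N^{m-1}\big(\tfrac{s-1}{2}\big)\big(N - \tfrac{s-1}{2}\big) \sim \tilde J N^{m-1} \cdot \tfrac{\alpha N}{2}\cdot N(1 - \tfrac{\alpha}{2}) = \tfrac{\tilde J}{2}\alpha(1-\tfrac{\alpha}{2})N^{m+1}$, and again the last-bracket contribution $[\tfrac{s-1}{2}N^m + \cdots]\cdot[\tilde J(\alpha-1) + o(1)] \sim \tfrac{\tilde J}{2}\alpha(\alpha-1)N^{m+1}$ combines to give the same $\tfrac{\tilde J}{4}\alpha^2 N^{m+1}$; the even-even sub-case differs from the even-odd one by the correction term $(h^{(m,s-1)} - h)[\,\cdots\,]$, and since $h^{(m,s-1)} - h = O(1/N)$ while the bracket is $O(N^{m+1})$, that correction is $O(N^m) = o(N^{m+1})$ and does not affect the leading order.

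The main obstacle is bookkeeping rather than conceptual: in the even-$N$ formulas the terms involving $A_m$ and $B_m$ are individually of order $N^{m+1}$ (for $B_m$) but enter with coefficients that must combine with $(N-s)$ and with the $o_N(1)$-sized last bracket, so one has to verify that no $N^{m+1}$-order term has been dropped and that all such terms reorganise into the single coefficient $\tfrac{\alpha^2}{4}$. I would handle this by writing $s = \alpha N + \theta$ with $\theta = \theta(N) \in [0,1)$ bounded, substituting, and collecting powers of $N$ symbolically: the $N^{m+2}$ coefficients must cancel identically (they do, by construction of $\mathbb I$ and the definition of $s$ via $h^{(m,s)} \le h$), the $N^{m+1}$ coefficient is the content of the corollary, and everything else is absorbed into $o_N(1)$. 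A useful sanity check throughout is that all three formulas of Theorem~\ref{thm:Gamma-case3} must agree to leading order across the boundary values of $s$, which provides redundancy one can exploit to catch arithmetic slips. Once the $N^{m+1}$ coefficient is confirmed to equal $\tilde J \alpha^2/4$ in every parity sub-case, \eqref{eq:thrm-Gamma-case4} follows.
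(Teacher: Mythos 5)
Your proposal is correct and follows essentially the route the paper takes (implicitly, via the discussion in Section~\ref{S1.7}): pin down $(m,s)\in\mathbb{I}$ from $h^{(m,s)}\leq h<h^{(m,s-1)}$ so that $s/N\to\alpha$, note that the last bracket $\tilde J(1-\tfrac1N)(n-m-1)-h\to\tilde J(\alpha-1)$, and extract the $N^{m+1}$ coefficient from the explicit formulas of Theorem~\ref{thm:Gamma-case3}; your cancellation $(1-\tfrac{\alpha}{2})+(\alpha-1)=\tfrac{\alpha}{2}$ in each parity sub-case checks out. For completeness, a one-line alternative is to apply Corollary~\ref{cor:Gamma-case2} with $\tilde J_i\equiv\tilde J$ (so $\hat m=m$ and $\sum_{i=\hat m+1}^{n}\tilde J_i-h=\tilde J\alpha$), which yields $\tfrac14\tilde J^{-1}(\tilde J\alpha)^2N^{m+1}$ directly.
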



\bigskip\noindent
\paragraph{$\bullet$ {\bf Critical configurations}}

Recall the definition of $\cC^{\star}$ in Definition~\ref{prcrcrdef}. Recall from Section~\ref{S1.4}
that every integer $a\in\Lambda_{N}^{n}$ corresponds to a vertex $v_a$ in such a way
that $d\left(a,b\right)=d\left(v_a,v_b\right)$, and that $\gamma\colon\,\boxminus\to\boxplus$ 
is the reference path $\gamma=\left(\gamma_{0},\ldots,\gamma_{N^{n}}\right)$, where $\gamma_{k}$ 
is the configuration with $\gamma_{k}\left(v_a\right)=+1$ for $a\leq k$ and $\gamma_{k}
\left(v_a\right)=-1$ for $a>k$. If $\vec{J}$ is monotone, then $\gamma$ is an optimal path
as defined in \eqref{eq:optimal-path}.

\begin{theorem}
\label{thm:Cstar-case1} 
Suppose that $\vec{J}$ is strictly monotone. Then there exists a $1\leq M\leq N^{n}$ 
such that $\cC^{\star}$ is the set of isometric translations of $\gamma_{M}$. Furthermore,
if {\rm (A1)},  {\rm (A2)} and {\rm (A4)} hold, then the $N$-ary decomposition $M=a_{n-1}N^{n-1}
+\ldots+a_{0}$ 
satisfies 
\begin{equation}
\lim_{N\to\infty} \frac{1}{N}\sum_{i=0}^{n-1}\left|a_{i}-\eta_{i}\right|=0,
\label{eq:thrm - K decomp}
\end{equation}
where the coordinates $\eta_{0},\ldots,\eta_{n-1}$ are as follows: $\eta_{i}=0$ for $\hat{m}< i 
\leq n-1$, $\eta_{\hat{m}}=\left\lceil \hat{s}\right\rceil$, and $\eta_{\hat{m}-1},\ldots,\eta_{0}$ 
are defined recursively in \eqref{eq:zetamhat-1} and \eqref{eq:zetai+1} below.
\end{theorem}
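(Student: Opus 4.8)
\textbf{Proof plan for Theorem~\ref{thm:Cstar-case1}.}

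The plan is to combine the fact (stated just before the theorem) that the reference path $\gamma$ is optimal when $\vec J$ is monotone with a careful analysis of where the energy $\cH(\gamma_k)$ is maximised along $k\in\{0,\dots,N^n\}$. First I would observe that, by strict monotonicity of $\vec J$, among all configurations $\sigma$ with a given cardinality $|\sigma|=k$ the energy $\cH(\sigma)$ is minimised exactly by the configurations whose $+1$-spins form a $k$-``prefix'' of a hierarchically nested family of blocks --- equivalently, an isometric translate of $\gamma_k$ --- because clustering the up-spins into the smallest possible blocks maximises the ferromagnetic gain $\sum_{v\ne w}J_{d(v,w)}\sigma(v)\sigma(w)$, and a strict inequality arises whenever $\vec J$ is strictly monotone. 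This reduces the computation of $\Phi(\boxminus,\boxplus)$ to the one-dimensional problem of maximising $k\mapsto\cH(\gamma_k)$, and shows that every optimal path must pass through an isometric translate of the maximiser(s) $\gamma_M$. A short argument using Definition~\ref{prcrcrdef} (conditions (2) and (3)) then identifies $\cC^\star$ as precisely the set of those translates of $\gamma_M$: translates of $\gamma_{M-1}$ lie strictly below the barrier and communicate more easily with $\boxminus$ (so they sit in $S_\boxminus$, not in $\cC^\star$), and translates of $\gamma_{M+1}$ likewise sit in $S_\boxplus$, while $\gamma_M$ itself has $\Phi(\cdot,\boxminus)=\Phi(\cdot,\boxplus)=\Phi(\boxminus,\boxplus)$; uniqueness/maximality of $(\cP^\star,\cC^\star)$ then forces equality. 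Here one must check that the maximiser $M$ is unique up to the combinatorial symmetry; Assumption (A5) (no linear combination of the $J_i$ is a multiple of $h$) is exactly what rules out a tie between $\cH(\gamma_k)$ and $\cH(\gamma_{k+1})$, so that the maximiser is a single value $M$ and there is no flat plateau at the top.

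For the asymptotic location of $M$ --- the $N$-ary digit estimate \eqref{eq:thrm - K decomp} --- I would write $\cH(\gamma_k)-\cH(\gamma_0)$ explicitly in terms of the $N$-ary digits $a_0,\dots,a_{n-1}$ of $k$, using \eqref{eq:Hamiltonian}: flipping up the $k$-th block-prefix pays a magnetic gain $-hk$ and costs an interaction penalty equal to a sum, over each level $i$, of $J_{i+1}N^{i}$ times a quadratic expression in the digit $a_i$ and the lower partial sum $\sum_{j<i}a_jN^j$ (this is precisely the numerator appearing in Assumption (A3)). The discrete derivative $\cH(\gamma_{k+1})-\cH(\gamma_k)$ is then a piecewise-affine function of the digits, and $M$ is the last $k$ at which this increment is still positive. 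Solving the resulting digit-by-digit optimisation from the top level downwards: the top digits $a_i$ for $\hat m<i\le n-1$ must vanish (turning on a block of level $>\hat m$ is never favourable, by the definition \eqref{eq:defm} of $\hat m$); the digit $a_{\hat m}$ equals $\lceil\hat s\rceil$ by the definition \eqref{eq:sineq} of $\hat s$ together with (A2)(a), which keeps $\hat s$ bounded away from integers; and then $a_{\hat m-1},\dots,a_0$ are obtained recursively by the same marginal-balance condition at each lower level, which is what the recursion \eqref{eq:zetamhat-1}--\eqref{eq:zetai+1} encodes. Assumptions (A3) and (A4) enter to control the error terms: (A4) keeps the per-level interaction $O(1)$ as $N\to\infty$, and (A3) guarantees that the sub-$\hat m$ contributions to the energy are of lower order than the leading term $\lceil\hat s\rceil(2\hat s-\lceil\hat s\rceil+1)J_{\hat m+1}N^{2\hat m}$, so that perturbing $M$ by the amount $\sum_i|a_i-\eta_i|=o(N)$ changes $\cH(\gamma_M)$ only negligibly, consistent with optimality.

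The main obstacle I expect is the lower-level digit analysis and its error control, i.e.\ step (3). Showing that the optimal prefix within a single $\hat m$-block genuinely organises itself into nested sub-blocks (so that the clustering lemma applies recursively at every scale, not just the top one) requires a second, nested application of the strict-monotonicity rearrangement argument, and one must be careful that the magnetic term $-h$ and the interactions from levels $\ge\hat m+1$ (which act uniformly within the $\hat m$-block and contribute the $k\sum_{i>\hat m}J_iN^i$ term in (A3)) do not destroy this structure. Quantifying the discrepancy between the true digits $a_i$ and the ideal $\eta_i$ as $o(N)$ --- rather than $O(1)$ --- is delicate because a single digit can differ by $O(1)$ while the claim is about the normalised sum $\frac1N\sum_i|a_i-\eta_i|$; the saving grace is that (A3) makes the objective so flat near the optimum on the lower levels that near-optimal digit choices are genuinely forced to cluster near $\eta_i$. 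I would handle this by a two-sided comparison: an upper bound on $\Gamma^\star$ from the explicit path through $\gamma_M$ with $M$ having digits $\eta_i$, and a matching lower bound showing any prefix with digits far from $\eta_i$ (in the normalised-sum sense) has strictly larger energy at its peak, both controlled via (A3).
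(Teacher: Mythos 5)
Your plan follows essentially the same route as the paper: strict monotonicity plus the block-switching/rearrangement argument gives (strict) optimality of the reference path (Lemma~\ref{lem: unif opt. path}), uniqueness of the maximum (via (A5), which the paper's own discussion also invokes) identifies $\cC^\star$ as the isometric translates of $\gamma_M$, and the digits of $M$ are obtained by the same top-down marginal-balance recursion with error control through (A2)--(A4) as in Lemma~\ref{lem:locmaxima}. The only slips are cosmetic: the clustering claim is the non-increasing branch (which is all that Section~\ref{S3} treats and all that (A4) permits), and in your final two-sided comparison the prefixes with digits far from the $\eta_i$ should have strictly \emph{smaller} energy than the peak (hence cannot be the argmax), not larger.
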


\noindent
By isometric translation we mean any bijection $\phi\colon\,\Lambda_{N}^{n}
\to\Lambda_{N}^{n}$ such that $d\left(v_a,v_b\right)=d\left(\phi\left(v_a\right),
\phi\left(v_b\right)\right)$, $1\leq a,b\leq N^{n}$.

\begin{theorem}
\label{thm:Cstar-case2} 
Suppose that $\vec{J}$ is strictly monotone and that $J_i=\tilde{J}_i/N^i$ 
with $\tilde{J}_i=o\left(N\right)$. If {\rm (A1)},  {\rm (A2)} and {\rm (A4)}  hold, then the 
coordinates $\eta_{0},\ldots,\eta_{n-1}$ in Theorem~{\rm \ref{thm:Cstar-case1}} 
are as follows:
\begin{equation}
\eta_{i} = 
\begin{cases}
0, & \hat{m}< i \leq n-1,\\
\left\lceil \hat{s}\right\rceil, & i=\hat{m},\\
\frac{N}{2}, & i=\hat{m}-1,\\
\frac{N}{2}\left[\sum_{j=1}^{i+1}
\left(\frac{\tilde{J}_{\hat{m}-i+j}}{\tilde{J}_{\hat{m}-i}}\right)
\left(1-\frac{2\eta_{\hat{m}-i}}{N}\right)+\sum_{j=2}^{n-\hat{m}}
\left(\frac{\tilde{J}_{\hat{m}+j}}{\tilde{J}_{\hat{m}-i}}\right)
-\frac{h}{\tilde{J}_{\hat{m}-i}}+1\right], & 1\leq i\leq\hat{m}-1.
\end{cases}
\end{equation}
 
\end{theorem}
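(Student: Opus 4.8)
The plan is to start from the characterization in Theorem~\ref{thm:Cstar-case1}, which already tells us that $\cC^\star$ consists of the isometric translates of a single configuration $\gamma_M$ along the reference path, and that the $N$-ary digits $a_i$ of $M$ are, in the limit $N\to\infty$, $\tfrac1N\sum_i|a_i-\eta_i|\to 0$, where $\eta_i$ are certain recursively defined coordinates. Thus the entire task reduces to extracting, under the extra hypothesis $J_i=\tilde J_i/N^i$ with $\tilde J_i=o(N)$, the explicit solution of the recursion \eqref{eq:zetamhat-1}--\eqref{eq:zetai+1} (which is referenced but, in this excerpt, not yet displayed). First I would unwind that recursion: the abstract statement of Theorem~\ref{thm:Cstar-case1} says $\eta_i=0$ for $\hat m<i\le n-1$ and $\eta_{\hat m}=\lceil\hat s\rceil$, and these two values carry over verbatim, so the content is entirely in the lower digits $\eta_{\hat m-1},\dots,\eta_0$.

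The key computational step is to rewrite $\hat s$ from \eqref{eq:sineq} under the scaling $J_i=\tilde J_i/N^i$. Since $J_i N^i=\tilde J_i$, one gets $\hat s=\tfrac N2(\tilde J_{\hat m+1})^{-1}[(1-\tfrac1N)\sum_{i=\hat m+1}^n\tilde J_i-h]$, which is the natural ``continuous'' location of the critical droplet boundary in the $(\hat m+1)$-st level. The digit $\eta_{\hat m-1}=N/2$ should then emerge from the observation that, once the correct number $\lceil\hat s\rceil$ of $\hat m$-blocks has been (partially) filled, the marginal energy cost of adding a spin inside the current $\hat m$-block is governed by the within-block interaction $J_{\hat m}$, and the magnetic field term $h$ together with the already-committed higher-level interactions is, to leading order in $N$ (using $\tilde J_i=o(N)$), negligible compared with $J_{\hat m}N^{\hat m}\cdot(\text{number of already-flipped spins in that block})$; balancing the up-gradient and down-gradient of $\cH$ along $\gamma$ at the saddle therefore forces the block to be exactly half-filled, i.e. $\eta_{\hat m-1}=N/2+o(N)$. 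For the general digit $1\le i\le\hat m-1$ I would iterate this ``half-filling modulo a correction'' argument: at the $i$-th level one sets the derivative of the energy profile of $\gamma$ to zero, collects the contributions of the interactions $J_{\hat m-i+1},\dots,J_{\hat m}$ (rescaled by $\tilde J_{\hat m-i}$, which is why the ratios $\tilde J_{\hat m-i+j}/\tilde J_{\hat m-i}$ appear), the tail interactions $J_{\hat m+2},\dots,J_n$ (the second sum), and the field term $h/\tilde J_{\hat m-i}$, and solves the resulting affine equation for $\eta_i$; the factor $(1-2\eta_{\hat m-i}/N)$ records how filling the deeper sub-blocks has already changed the local magnetization and hence the effective field seen at level $\hat m-i$. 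Assumption (A4), $\tilde J_{i+1}/\tilde J_i=O(1/N)$, guarantees all these ratios and partial sums are $O(1)$, so the $o(N)$ error terms in Theorem~\ref{thm:Cstar-case1} do not propagate to dominate, and strict monotonicity of $\vec J$ guarantees that the saddle is unique and non-degenerate so that the recursion has a unique solution.

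I expect the main obstacle to be bookkeeping: one must verify that the energy increments along $\gamma$ between consecutive block-filling events are correctly accounted for — in particular that the combinatorial factor counting pairs $(v,w)$ at hierarchical distance exactly $\hat m-i$ with one spin up and one down, as a function of the digit $a_i$, has the quadratic-in-$a_i$ form implicit in the recursion — and that the approximation $\lceil\hat s\rceil$ versus $\hat s$, which is kept sharp at level $\hat m$ (by (A2)(a)) but smoothed to the ``half'' value at lower levels, is consistent. A secondary subtlety is to confirm that no other critical digit configuration competes: here one uses strict monotonicity of $\vec J$, which makes the reference path $\gamma$ (up to isometry) strictly optimal and pins down $M$ uniquely. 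Once the recursion is solved, matching the closed form against the piecewise definition in the statement is routine algebra, so I would present the recursion-unwinding as the heart of the argument and relegate the verification of the combinatorial coefficients to a lemma or to the calculations already carried out in the proof of Theorem~\ref{thm:Gamma-case1} and Theorem~\ref{thm:Cstar-case1}.
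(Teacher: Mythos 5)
Your plan is correct and follows the same route as the paper: the paper's argument is precisely to take the recursion \eqref{eq:zetamhat-1}--\eqref{eq:zetai+1} established in Lemma~\ref{lem:locmaxima} (whose derivation — balancing the discrete increments of $\cH$ along $\gamma$ level by level, with the quadratic pair-counting and concavity from Lemma~\ref{lem:concave} — is exactly the bookkeeping you defer to the proof of Theorem~\ref{thm:Cstar-case1}), substitute $J_i=\tilde J_i/N^i$, and simplify using $\tilde J_i=o(N)$ and (A4) to obtain \eqref{eq:special-case-zetam}--\eqref{eq:special-case-zetam-i}. The only minor imprecision is your heuristic for $\eta_{\hat m-1}=N/2$: it arises not because the field and tail-interaction terms are negligible, but because they cancel exactly against the $(1-2\hat s/N)$ contribution coming from $\eta_{\hat m}=\lceil\hat s\rceil$ sitting at the balance point — though your formal step of solving the affine equation produces this cancellation automatically.
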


\begin{theorem}
\label{thm:Cstar-case3} 
Suppose that $J_i=\tilde{J}/N^i$ for some $\tilde{J}>0$. Let $\left(m,s\right) \in \mathbb{I}$ 
be such that $h$ satisfies 
\begin{equation}
h^{\left(m,s\right)}\leq h<h^{\left(m,s-1\right)}.
\end{equation}
Then $\cC^{\star}$ is the set of all isometric translations of the configuration $\gamma_{M}$, 
where 
\small
\begin{equation}
M=\begin{cases}
\left\lceil \frac{s}{2}N^{m}\right\rceil,  
&N\mbox{ is odd and }s\mbox{ is odd},\\
\left\lceil \left(\frac{s-1}{2}\right)N^{m}\right\rceil +1,
&N\mbox{ is odd and } s \mbox{ is even},\\
\left(\frac{s-1}{2}\right)N^{m}+\sum_{j=1}^{m-1}\left(\frac{N}{2}
-\left(s+j+1\right)\mathrm{mod}\,2\right)N^{m-j}+\frac{N}{2}, 
&N\mbox{ is even and }s\mbox{ is odd},\\
\left(\frac{s-2}{2}\right)N^{m}+\sum_{j=1}^{m-1}\left(\frac{N}{2}
-\left(s+j+1\right)\mathrm{mod}\,2\right)N^{m-j}+\frac{N}{2}, 
&N\mbox{ is even and } s \mbox{ is even}.
\end{cases}
\label{eq: K in Cstar3}
\end{equation}
\normalsize
\end{theorem}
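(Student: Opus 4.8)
The plan is to leverage Theorem~\ref{thm:Cstar-case1}, which already establishes (for strictly monotone $\vec J$, hence in particular for $J_i=\tilde J/N^i$) that $\cC^\star$ is the set of isometric translations of a single configuration $\gamma_M$ along the reference path, and reduce everything to pinning down the value of $M$. So the entire content of Theorem~\ref{thm:Cstar-case3} is the explicit computation of $M$ in the special homogeneous case, and the strategy is purely combinatorial optimisation of the energy $\cH(\gamma_k)$ over $k$.

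First I would compute $\cH(\gamma_k)$ explicitly. Writing $k$ in $N$-ary form $k=a_{n-1}N^{n-1}+\cdots+a_0$, the interaction energy of the ``up'' cluster $\{v_1,\ldots,v_k\}$ decomposes according to hierarchical levels: within each $(i+1)$-block the number of up-up, down-down and up-down pairs at distance exactly $i+1$ is governed by the digit $a_i$ and the partial sums $\sum_{j<i}a_jN^j$, exactly as in the numerator of (A3)/\eqref{eq:A2}. Because $J_i=\tilde J/N^i$, the contribution of level $i+1$ to $\cH(\gamma_k)-\cH(\boxminus)$ carries a factor $\tilde J N^{-(i+1)}$ against a combinatorial count of order $N^{i+1}$, so all levels contribute at comparable orders and no single-level approximation is available — this is why the formulas do not collapse the way Corollary~\ref{cor:Gamma-case4} does. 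I would then recognise that $\Phi(\boxminus,\boxplus)=\max_k\cH(\gamma_k)$ (optimality of $\gamma$, already quoted), so $M$ is an $\arg\max$. The maximiser is found level by level: the magnetic term $-\tfrac h2(2k-N^n)$ is linear and increasing in $k$, while the pair term, being a sum over levels of concave ``parabola-in-each-digit'' contributions, is maximised at each level near the half-filled value $a_i\approx N/2$, with the precise integer choice (and the $\pm1$, the $s\bmod 2$, and the $(s+j+1)\bmod 2$ corrections) dictated by a discrete first-difference computation $\cH(\gamma_{k+1})-\cH(\gamma_k)$. The index $(m,s)\in\mathbb I$ with $h^{(m,s)}\le h<h^{(m,s-1)}$ is exactly the bookkeeping device that records in which ``slab'' of the energy profile the maximum sits: $m$ is the top hierarchical level that is only partially filled and $s$ its (rounded) digit, matching $\eta_{\hat m}=\lceil\hat s\rceil$ from the general theorem once one checks $m=\hat m$ and $s=\lceil\hat s\rceil$ under the given range of $h$.

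Concretely the steps are: (i) derive the closed form for $\cH(\gamma_k)-\cH(\boxminus)$ as a sum over levels plus the field term, using $J_i=\tilde J/N^i$; (ii) compute the increment $\Delta(k):=\cH(\gamma_{k+1})-\cH(\gamma_k)$ and show it is, as a function of $k$, a piecewise-linear/unimodal object changing sign exactly once — here the parity of $N$ matters because when $N$ is even a digit can be exactly $N/2$ (a flat spot, giving the even-$s$/odd-$s$ dichotomy), whereas for $N$ odd the half-filled level is never exact; (iii) solve $\Delta(k)\ge 0>\Delta(k+1)$ for $k$, which yields $M$ after carefully carrying the geometric-series constants $A_m=(N^{m-m\bmod 2}-1)/(N^2-1)$ and $B_m=(N^m-1)/(N-1)$ that accumulate from summing $N/2$-type digits across the lower $m-1$ levels; (iv) match the four cases of \eqref{eq: K in Cstar3} to the four parity combinations, checking in each that the resulting $\gamma_M$ indeed has $|\gamma_M|=M$ with $M$ in the range forced by $(m,s)$, and that $\cH(\gamma_M)=\Gamma^\star+\cH(\boxminus)$ with $\Gamma^\star$ as in Theorem~\ref{thm:Gamma-case3} — consistency between the two theorems is both a sanity check and essentially half the proof, since $\Gamma^\star$ was obtained by the same maximisation.

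The main obstacle is the bookkeeping in step (iii)–(iv): keeping the half-filled digits, their parity-dependent corrections $(s+j+1)\bmod 2$, and the geometric sums straight across all lower levels simultaneously, for all four parity regimes, without sign errors. There is no conceptual difficulty — unimodality of $\Delta(k)$ follows from concavity of each level's contribution in its digit, and optimality of $\gamma$ is already in hand — but the algebra is delicate precisely because, unlike in the $N\to\infty$ asymptotics, here every hierarchical level contributes to the exact maximiser at the same order, so one cannot discard lower-order terms. I would organise this by treating the top partial level (contributing $s$ and $m$) separately from the ``bulk'' of lower half-filled levels, computing the bulk contribution once as a closed-form geometric sum, and then reducing the optimisation over the single remaining digit $s$ at level $m$ to comparing $h$ against the thresholds $h^{(m,s)}$, which is exactly the hypothesis's framing.
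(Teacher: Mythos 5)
Your reduction to Theorem~\ref{thm:Cstar-case1} (so that only the value of $M$ must be computed) is legitimate, but the core of your computation rests on a claim that is false: that the increment $\Delta(k)=\cH(\gamma_{k+1})-\cH(\gamma_k)$ is unimodal in $k$ and changes sign exactly once, so that $M$ can be read off from $\Delta(k)\ge 0>\Delta(k+1)$. The energy profile $k\mapsto\cH(\gamma_k)$ along the reference path is not unimodal; it is a fractal-like landscape with local maxima and minima at every hierarchical scale (this is exactly what Fig.~\ref{figplot} displays), because each time a sub-block is completed the increment jumps up again at the start of the next sub-block. Lemma~\ref{lem:concave} gives concavity only along arithmetic subsequences of step $N^a$ lying inside a single $(a+1)$-block, not along the full sequence, and ``concavity of each level's contribution in its digit'' does not combine into global unimodality, since the level-$i$ contribution depends on the lower digits through the partial sums $\sum_{j<i}a_jN^j$. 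Consequently your criterion picks out every local maximum of the profile and does not, by itself, identify the global one; the whole difficulty of the theorem is precisely to compare local maxima across scales, and your proposal supplies no mechanism for that.

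The paper's proof fills this gap with a different device. For the threshold fields $h=h^{(m,s)}$ the profile restricted to $[0,sN^m]$ is exactly symmetric (Lemma~\ref{lem: symmetry}), and further nested symmetries hold on the sets $S_k$ (Lemma~\ref{lem:2ndsymmetries}); combining symmetry with single-scale concavity via Lemma~\ref{lem:concavesymmetricsequence} and an induction over the hierarchical distance to the candidate maximiser (Propositions~\ref{prop:hs-max-location} and \ref{prop:hs-max-location-even}), together with Lemma~\ref{lem:maxisinsN} to confine the maximum to $[0,sN^m]$, pins down the global maximiser for these special $h$. General $h\in[h^{(m,s)},h^{(m,s-1)})$ is then handled by the sandwich argument based on \eqref{eq:differenthdifference}: the difference of profiles for two fields is linear in $k$, so the maximiser for $h$ is trapped between the maximisers for the two bounding threshold fields (Proposition~\ref{prop:h-general-max-location}). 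If you want to salvage your route you would need a substitute for this global comparison step; note also that checking consistency with $\Gamma^\star$ from Theorem~\ref{thm:Gamma-case3} cannot serve as ``half the proof'', since in the paper $\Gamma^\star$ is itself derived from the location of the maximum, not the other way around.
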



\bigskip\noindent
\paragraph{$\bullet$ {\bf Prefactor}}

We finally turn to the prefactor $K^{\star}$ defined in \eqref{eq:variationalform}. 

\begin{theorem}
\label{thm:Kstar-case1} 
Suppose that $\vec{J}$ is strictly monotone and that {\rm (A1)--(A5)} 
hold. Then 
\begin{equation}
\begin{aligned}
&\frac{1}{K^\star} =  \left[1+o_{N}\left(1\right)\right]\\
&\times \frac{\Big[\sum_{i\in B_{d}}\eta_{i-1}N^{i-1}\Big]
\Big[\sum_{i\in B_{u}}\left(N^{i}-\eta_{i-1}N^{i-1}\right)\Big]}
{\Big[\sum_{i\in B_{d}}\eta_{i-1}N^{i-1}\Big]
+\Big[\sum_{i\in B_{u}}\left(N^{i}-\eta_{i-1}N^{i-1}\right)\Big]}\,
\frac{N^{n-\hat{m}-1}}{N-\eta_{0}}\prod_{i=0}^{\hat{m}}{N \choose \eta_{i}}
\left(N-\eta_{i}\right),
\label{eq:thrm Kstar-case1}
\end{aligned}
\end{equation}
where $\eta_0,\ldots,\eta_{n-1}$ are the coordinates defining the critical configurations in 
Theorem~{\rm \ref{thm:Cstar-case1}}, and the integer sets $B_{d}$ and $B_{u}$ are 
defined in \eqref{eq:Bd Bu} below.
\end{theorem}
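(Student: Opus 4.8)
The plan is to apply Lemma~\ref{lem:variational-lemma} to the hierarchical lattice, so the bulk of the work is to verify its two hypotheses and then to evaluate the resulting sum \eqref{eq:reduced-varform-0}. First I would invoke Theorem~\ref{thm:Cstar-case1}, which (under (A1), (A2), (A4) and strict monotonicity) gives that $\cC^\star$ consists precisely of the isometric translations of a single configuration $\gamma_M$ with $|\gamma_M|=M$; in particular all critical configurations have the same cardinality, so hypothesis (i) of Lemma~\ref{lem:variational-lemma} holds with $k^\star=M$. For hypothesis (ii) I would argue that, because $\vec J$ is strictly monotone, the reference path $\gamma$ is an optimal path and $\gamma_M$ is a \emph{strict} maximum of $\cH$ along it (this is already implicit in the proof of Theorem~\ref{thm:Cstar-case1}): flipping any one of the $M$ up-spins of $\gamma_M$ back down, or any one of the $N^n-M$ down-spins up, strictly lowers $\cH$, and the resulting configuration lies in $S_\boxminus$, respectively $S_\boxplus$. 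This is essentially the statement that ``the top is not flat'' (Remark~\ref{rem:prefactor-lemma}) and uses (A3)/(A5) to rule out accidental energy coincidences.

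Next I would compute $|U^-_\sigma|$ and $|U^+_\sigma|$ for a fixed $\sigma\in\cC^\star$, say $\sigma=\gamma_M$. Writing the $N$-ary decomposition $M=a_{n-1}N^{n-1}+\dots+a_0$, the ``up-region'' of $\gamma_M$ is a union of complete lower-level blocks plus a partially filled block at each level, and flipping an up-spin down stays in $S_\boxminus$ iff that spin sits in a block that is ``essential'' for keeping $\Phi(\cdot,\boxminus)$ below the barrier — this is exactly what singles out the index set $B_d$; symmetrically for down-spins flipped up and the index set $B_u$. Thus $|U^-_\sigma|=\sum_{i\in B_d}\eta_{i-1}N^{i-1}$ and $|U^+_\sigma|=\sum_{i\in B_u}(N^i-\eta_{i-1}N^{i-1})$ up to $1+o_N(1)$ (here $a_i$ is replaced by $\eta_i$ using \eqref{eq:thrm - K decomp}). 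Since every element of $\cC^\star$ is an isometric translate of $\gamma_M$, these two cardinalities are the same for all $\sigma\in\cC^\star$, so \eqref{eq:reduced-varform-0} becomes $|\cC^\star|$ times $\frac{|U^-||U^+|}{|U^-|+|U^+|}$.

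The remaining step is to count $|\cC^\star|$, the number of isometric translations of $\gamma_M$. An isometric translation permutes the $N$ sub-blocks at each hierarchical level, so $|\cC^\star|$ equals the number of distinct configurations obtained by independently choosing, at each level $i$, which $\eta_i$ of the $N$ sub-blocks are (fully or partially) occupied — this produces the product $\prod_{i=0}^{\hat m}\binom{N}{\eta_i}$ — together with the freedom of which particular sub-block is the partially filled one at each level and the $N^{n-\hat m-1}$-fold and $(N-\eta_0)^{-1}$-type combinatorial factors recording the top-level placement and an overcount correction. Assembling $|\cC^\star|\cdot\frac{|U^-||U^+|}{|U^-|+|U^+|}$ then yields \eqref{eq:thrm Kstar-case1}, with the $[1+o_N(1)]$ absorbing the discrepancy between $a_i$ and $\eta_i$.

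The main obstacle is the combinatorial bookkeeping in the last two steps: precisely identifying the index sets $B_d,B_u$ (i.e.\ which partially-filled blocks are ``load-bearing'' for the communication height, which requires a careful comparison of $\Phi(\eta,\boxminus)$ and $\Phi(\eta,\boxplus)$ for each neighbour $\eta$ of $\gamma_M$, using the recursive structure of the $\eta_i$'s and (A3) to control the small energy fluctuations), and then getting the counting of $|\cC^\star|$ exactly right — in particular disentangling the choice of \emph{which} sub-blocks are occupied from the choice of which one is partial, and checking that no two such choices give the same configuration. Everything else — verifying the hypotheses of Lemma~\ref{lem:variational-lemma} and solving the reduced variational problem — is either already available from earlier theorems or is the routine optimisation carried out in the proof of that lemma.
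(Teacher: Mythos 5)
Your overall strategy is the same as the paper's (verify the hypotheses of Lemma~\ref{lem:variational-lemma} via Theorem~\ref{thm:Cstar-case1} and strict optimality, compute $|U^\pm_\sigma|$ through the index sets $B_d,B_u$, multiply by the number of isometric translates of $\gamma_M$), but two steps as you state them are not correct as written. First, the claim that flipping \emph{any} of the $M$ up-spins of $\gamma_M$ down, or \emph{any} of the $N^n-M$ down-spins up, strictly lowers $\cH$ is false, and it contradicts your own second paragraph: if it were true, one would have $|U^-_\sigma|=M$ and $|U^+_\sigma|=N^n-M$ and the sets $B_d,B_u$ would play no role in \eqref{eq:thrm Kstar-case1}. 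In fact only flips of spins at certain hierarchical distances $b$ from the critical vertex lower the energy; this is exactly what the explicit flip-energy formulas \eqref{eq:vertexflip-s2} and \eqref{eq:vertexflip+s2} compute and what the inequalities in \eqref{eq:Bd Bu} encode (in the standard-interaction case, Lemma~\ref{lem:energy sigma_b} shows that for $N\neq 2,4$ only the distance-$1$ flips lower the energy, so the vast majority of flips raise it). The energy-raising neighbours are harmless only because they are excluded from $U^\pm_\sigma$ by the definition \eqref{Udefs}, not because they do not exist.

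Second, the verification of condition \eqref{eq:condition-lemma} is missing. Knowing that a neighbour $\eta$ of $\sigma\in\cC^\star$ has $\cH(\eta)<\Phi(\boxminus,\boxplus)$ does not by itself give $\Phi(\eta,\boxminus)<\Phi(\boxminus,\boxplus)$ (resp.\ $\Phi(\eta,\boxplus)<\Phi(\boxminus,\boxplus)$); a priori $\eta$ could sit in a well at the top. Appealing to Remark~\ref{rem:prefactor-lemma} (``the top is not flat'') is circular, since that remark is a consequence of the hypotheses of Lemma~\ref{lem:variational-lemma}, which is precisely what you are trying to verify. The paper closes this gap with Lemma~\ref{lem:circumventiing-path}: for $\eta=\sigma\setminus\{v_a\}$ off the optimal path one takes $\xi'=\sigma\setminus\{v_b\}$ on the optimal path and shows that the two-step path $\eta\to\eta\cap\xi'\to\xi'$ stays strictly below $\Phi(\boxminus,\boxplus)$, using the comparison $\cH(\eta\cap\xi')-\cH(\eta)\leq\cH(\xi')-\cH(\sigma)<0$; some such argument is needed in your write-up. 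Relatedly, hypothesis (i) of Lemma~\ref{lem:variational-lemma} requires more than ``all critical configurations have the same cardinality $M$'': you must also know that \emph{every} configuration in $S^\star$ of volume $M$ is critical, which in the paper follows from strict optimality of $\gamma$ (Lemma~\ref{lem: unif opt. path}(3), using (A4)--(A5)), since any volume-$M$ configuration that is not an isometric translate of $\gamma_M$ then has energy strictly above $\Phi(\boxminus,\boxplus)$ and so lies outside $S^\star$. With these two points repaired, the remaining counting of $|\cC^\star|$ and the assembly of \eqref{eq:reduced-varform-0} proceed as you outline and agree with the paper.
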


\begin{theorem}
\label{thm:Kstar-case2} 
Suppose that $J_i=\tilde{J}/N^i$ for some $\tilde{J}>0$ and that $h$ satisfies 
\begin{equation}
h^{\left(m,s\right)}<h<h^{\left(m,s-1\right)}
\end{equation}
for some $\left(m,s\right)\in \mathbb{I}$. If $N$ is odd, $N\neq 2,4$ and $m\geq1$, then
\begin{equation}
\frac{1}{K^\star} = a_{0}N^{n-m-2}\prod_{i=0}^{m} {N \choose a_{i}}\left(N-a_{i}\right),
\label{eq:thrm Kstar-case2}
\end{equation}
where $a_{0}=\frac{N-1}{2}+1$, $a_{i}=\frac{N-1}{2}$ for $i=1,\ldots,m-1$, and
$a_{m}=\frac{s-1-\left(s+1\right)\mathrm{mod}2}{2}$.
\end{theorem}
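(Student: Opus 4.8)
The plan is to derive the formula from the reduced variational expression \eqref{eq:reduced-varform-0} of Lemma~\ref{lem:variational-lemma}, using the explicit description of $\cC^\star$ supplied by Theorem~\ref{thm:Cstar-case3}, so that $1/K^\star$ becomes a purely combinatorial quantity which one then evaluates. The first task is to check that, for $J_i=\tilde J/N^i$ and $h$ in the open interval $\big(h^{(m,s)},h^{(m,s-1)}\big)$, the hypotheses of Lemma~\ref{lem:variational-lemma} hold. Hypothesis (i) follows from Theorem~\ref{thm:Cstar-case3} together with the fact that the reference path $\gamma$ and its isometric translations are the only paths from $\boxminus$ to $\boxplus$ that stay at or below $\Phi(\boxminus,\boxplus)$ up to level $M$, so that every $\sigma\in S^\star$ with $|\sigma|=M$ is an isometric translation of $\gamma_M$; here the strict inequalities on $h$ play the role of Assumption~(A5) in excluding energetic ties (`the top is not flat'). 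Hypothesis (ii) asks that every down-neighbour of a critical configuration lie in $S_\boxminus$ and every up-neighbour in $S_\boxplus$, which I would read off from the energy estimates already used to prove Theorems~\ref{thm:Gamma-case3} and~\ref{thm:Cstar-case3}: these show $\cH(\gamma_{M-1})<\Gamma^\star$, $\cH(\gamma_{M+1})<\Gamma^\star$, and that the relevant tails of the reference path sit strictly below the barrier.

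Granting the lemma, \eqref{eq:reduced-varform-0} reads $1/K^\star=\sum_{\sigma\in\cC^\star}\frac{|U^-_\sigma|\,|U^+_\sigma|}{|U^-_\sigma|+|U^+_\sigma|}$. Since every element of $\cC^\star$ is an isometric translation of $\gamma_M$, and isometric translations preserve $\cH$, adjacency, and the sets $S_\boxminus,S_\boxplus,S^\star$, the cardinalities $|U^-_\sigma|$ and $|U^+_\sigma|$ are independent of $\sigma\in\cC^\star$, so the sum collapses to $|\cC^\star|\cdot\frac{|U^-|\,|U^+|}{|U^-|+|U^+|}$. What remains is two counts. For $|\cC^\star|$ I would exploit the block structure of $\gamma_M$: writing the $N$-ary decomposition $M=a_mN^m+\dots+a_0$ --- which, from the $N$-odd case of Theorem~\ref{thm:Cstar-case3}, has $a_i=\tfrac{N-1}{2}$ for $1\le i\le m-1$, $a_0=\tfrac{N-1}{2}+1$, and $a_m=\tfrac{s-1-(s+1)\,\mathrm{mod}\,2}{2}$ --- all up-spins of $\gamma_M$ lie in a single $(m+1)$-block, which can be placed in $N^{n-m-1}$ ways, and inside that block one recursively chooses the full sub-blocks and the unique partially filled sub-block at each level, down to the bottom $1$-block carrying $a_0$ up-spins among its $N$ sites. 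This yields $|\cC^\star|=N^{n-m-1}\binom{N}{a_0}\prod_{i=1}^{m}\binom{N}{a_i}(N-a_i)$. For $|U^\pm|$ I would argue that the only single spin-flips out of $\gamma_M$ that can be completed to a path staying below $\Gamma^\star$ act inside the partially filled $1$-block: flipping one of its $a_0$ up-spins down produces a configuration isometric to $\gamma_{M-1}\in S_\boxminus$, so $|U^-|=a_0$, and flipping one of its $N-a_0$ down-spins up produces a configuration isometric to $\gamma_{M+1}\in S_\boxplus$, so $|U^+|=N-a_0$. Substituting and simplifying $\frac{a_0(N-a_0)}{N}\cdot N^{n-m-1}=a_0N^{n-m-2}(N-a_0)$ gives exactly \eqref{eq:thrm Kstar-case2}.

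The main obstacle is the second count, i.e.\ showing that no single spin-flip out of a critical configuration, other than the $a_0$ removals and $N-a_0$ additions inside the active $1$-block, admits a continuation to $\boxminus$ or $\boxplus$ below $\Gamma^\star$; this is precisely the quantitative content of hypothesis (ii) of Lemma~\ref{lem:variational-lemma}. It requires tracking the change of $\cH$ under a flip at an arbitrary vertex --- which depends, through the ultrametric, on the numbers of up- and down-spins at each hierarchical distance from that vertex --- and then verifying that the resulting configuration is separated from both $\boxminus$ and $\boxplus$ by a barrier at least $\Gamma^\star$. The hypotheses $N$ odd and $m\ge 1$ enter here: $N$ odd makes $a_0=\tfrac{N+1}{2}$ an integer and forces the active $1$-block to be strictly more than half full, while $m\ge 1$ guarantees the presence of the intermediate coordinates $a_1,\dots,a_{m-1}$ and hence the stated product form; both are used to keep the local-field comparisons on the correct side of the strict inequalities coming from $h^{(m,s)}<h<h^{(m,s-1)}$.
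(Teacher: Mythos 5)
Your proposal follows the same route as the paper: reduce to Lemma~\ref{lem:variational-lemma}, read off $\cC^\star$ from Theorem~\ref{thm:Cstar-case3}, count $|\cC^\star|$ by placing the active $(m+1)$-block in $N^{n-m-1}$ ways and choosing the partially filled sub-blocks level by level, and take $|U^-_\sigma|=a_0$, $|U^+_\sigma|=N-a_0$ from flips inside the active $1$-block; your counts and the final simplification agree exactly with the paper's Corollary~\ref{cor:Prefactor simple}. The step you flag as ``the main obstacle'' but do not carry out is precisely the paper's Lemma~\ref{lem:energy sigma_b}, which is the substantive content of the proof: a direct computation, using \eqref{eq:vertexflip-s2} and \eqref{eq:vertexflip+s2} together with the explicit $N$-ary digits $a_i$ of $M$ and the strict bounds $h^{(m,s)}<h<h^{(m,s-1)}$, showing that flipping any vertex at hierarchical distance $b\geq 2$ from $v_M$ \emph{strictly raises} the energy when $N\neq 2,4$ and $m\geq 1$. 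This is slightly cleaner than your formulation — such a neighbour has energy above $\Phi(\boxminus,\boxplus)$ and hence is excluded from $S^\star$ outright, so one never has to rule out ``continuations below $\Gamma^\star$'' — and it is exactly where the hypotheses on $N$ and $m$ are consumed (the case analysis over $b\leq m$, $b=m+1$, and up- versus down-flips produces bounds of the form $b<2$ only for $N$ large enough, which is why $N=2,4$ are excluded and why Proposition~\ref{prop:Prefactor other case} is needed otherwise). As written, your argument is a correct plan with this one verification still owed.
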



\subsection{Discussion}
\label{S1.7}

The theorems and corollaries in Section~\ref{S1.6} provide a full description of
the metastable behaviour of Glauber spin-flip dynamics on the hierarchical 
lattice, for any dimension $N$ and any number of hierarchical levels $n$. The 
formulas are somewhat complicated for general $\vec{J}$, but simplify considerably
as more restrictions are imposed on $\vec{J}$, such as $J_i=\tilde{J}/N^i$,
$1 \leq i \leq n$ and $\tilde{J} > 0$, and in the hierarchical mean-field limit $N\to\infty$. 
The formulas even allow us to investigate the limit $n\to\infty$ towards the infinite 
hierarchical lattice. 

The case of `standard' interaction, defined by $J_{i}=\tilde{J}/N^{i}$ and treated in 
Section ~\ref{S4}, is the easiest to interpret. The magnetic field $h$ defines the integer 
pair $\left(m,s\right)$ through the inequality 
\begin{equation}
\tilde{J}\left[\left(1-\frac{1}{N}\right)\left(n-m\right)-\left(s-1\right)\frac{1}{N}\right]
\leq h<\tilde{J}\left[\left(1-\frac{1}{N}\right)\left(n-m\right)-\left(s-2\right)\frac{1}{N}\right].
\end{equation}
It turns out that the pair $\left(m,s\right)$ captures the size of a critical configuration. Indeed, 
from Theorem \ref{thm:Cstar-case3} we see that if $N$ is odd, then every critical configuration 
is of size $M=\lceil \frac{sN^{m}}{2}\rceil$ when $s$ is odd and $M=\lceil\frac{(s-1)N^{m}}
{2}\rceil $ when $s$ is even, with similar results for $N$ even. In particular, the set of critical 
configurations corresponds precisely to the set of all configurations of said size that are an 
isometric translation of $\gamma_{M}$. 

Equations \eqref{eq:thrm-Gamma-case3a} and \eqref{eq:thrm-Gamma-case3c} in 
Theorem ~\ref{thm:Gamma-case3} are not particularly elegant, but in the hierarchical 
mean-field limit, and with $\alpha\in\left(0,1\right)$ and $1\leq m\leq n-1$ defined through 
the equation $h=\tilde{J}\left(n-m-\alpha\right)$, we find that 
\begin{equation}
\lim_{N\to\infty}\frac{\Gamma^{\star}}{N^{m+1}}=\frac{\tilde{J}\alpha^{2}}{4},
\label{eq:Gamma limit}
\end{equation}
while for $\alpha=0$ we have $\lim_{N\to\infty}\frac{\Gamma^{\star}}{N^m}=\tfrac14\tilde{J}$. 

The prefactor $K^{\star}$ in Theorem~\ref{thm:Kstar-case2} in the hierarchical mean-field 
limit scales like
\begin{equation}
\frac{1}{K^{\star}} \sim \left(\frac{1-\alpha}{2}\right)2^{m\left(N-\frac{1}{2}\right)}
N^{n}{N \choose \alpha N},
\end{equation}
in which the dominant term is exponential in $N$.

Our results are part of a larger enterprise in which the goal is to understand 
metastability on large graphs. Jovanovski~\cite{Jpr} analysed the case of the
\emph{hypercube}, Dommers~\cite{Dpr} the case of the \emph{random regular 
graph}, and Dommers, den Hollander, Jovanovski and Nardi~\cite{DdHJNpr} the 
case of the \emph{configuration model}. Each requires carrying out a detailed 
combinatorial analysis that is model-specific, even though the metastable behaviour 
expressed in Theorems~\ref{thm:critdrop}--\ref{thm:explaw} is universal. For 
lattices like the hypercube and the hierarchical lattice a full identification of the 
triple in \eqref{eq:triple} is possible, while for random graphs like the random regular 
graph and the configuration model so far only the communication height is well 
understood, while the set of critical configurations and the prefactor remain 
somewhat elusive.


\section{Monotone pair potentials}
\label{S2}

In Section~\ref{S2.1} we study the change in energy when all spins in two hierarchical 
blocks are switched (Lemma~\ref{lem:switching} below). In Section~\ref{S2.2} we show 
that the reference path $\gamma$ is an optimal paths for monotone pair potentials  
(Lemma~\ref{lem: unif opt. path} below). In Section~\ref{S2.3} we give the proof of 
Theorem~\ref{thrm:hyp-H}.


\subsection{Energy landscape}
\label{S2.1}

Let $m\leq n-1$, let $U$ be an $m+1$-block in $\Lambda_{N}^{n}$, and let $U_{1}$ and 
$U_{2}$ be two disjoint $m$-blocks in $U$. Suppose that $U_{1}'\subset U_{1}$ is 
a $k$-block in $U_{1}$ and $U_{2}'\subset U_{2}$ is a $k$-block in $U_{2}$, for 
some $k<m$. Let $\sigma\in\Omega$ be any configuration, and let $\sigma'$ be 
the result of switching the values of $\sigma$ at $U_{1}'$ and $U_{2}'$. More 
precisely, let $\varphi\colon\,U_{1}' \to U_{2}'$ be any isometric (with respect 
to $d$) bijection, and set 
\begin{equation}
\sigma'(v)=\begin{cases}
\sigma(v), & v\notin U_{1}'\cup U_{2}',\\
\sigma(\varphi(v)), & v\in U_{1}',\\
\sigma(\varphi^{-1}(v)), & v\in U_{2}'.
\end{cases}
\end{equation}
For $k+1\leq i\leq m$, let $A_{i}=\left\{ x\in U_{1}\cap\overline{\sigma}\colon\, d\left(x,U_{1}
'\right)=i\right\}$ (which is well defined because all $v\in U_{1}'$ are at the same
distance from any $x\in U_{1}\backslash U_{1}'$), $B_{i}=\left\{ x\in U_{1}\cap\sigma
\colon\, d\left(x,U_{1}'\right)=i\right\}$, $C_{i}=\left\{ x\in U_{2}\cap\overline{\sigma}
\colon\, d\left(x,U_{2}'\right)=i\right\}$ and $D_{i}=\left\{ x\in U_{2}\cap\sigma\colon\, 
d\left(x,U_{2}'\right)=i\right\}$. 

\begin{lemma}
\label{lem:switching}
For any $\sigma\in\Omega$,
\begin{equation}
\cH\left(\sigma'\right)-\cH\left(\sigma\right)
=\sum_{i=k+1}^{m}2\left(J_{i}-J_{m+1}\right)\left(\left|A_{i}\right|-\left|C_{i}\right|\right)
\,\left(\left|U_{2}'\cap\sigma\right|-\left|U_{1}'\cap\sigma\right|\right).
\end{equation}
\end{lemma}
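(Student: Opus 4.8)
The plan is to compute $\cH(\sigma')-\cH(\sigma)$ directly from the Hamiltonian \eqref{eq:Hamiltonian} by tracking which pair-interaction terms change when the spins on $U_1'$ and $U_2'$ are swapped. First I observe that the magnetic-field term is unaffected: since $\varphi$ is a bijection, $\sum_v \sigma'(v) = \sum_v \sigma(v)$, so only the pair-interaction part of $\cH$ contributes. Next I classify the pairs $(v,w)$ according to where $v$ and $w$ sit relative to $U_1'$ and $U_2'$. A pair contributes to $\cH(\sigma')-\cH(\sigma)$ only if at least one of $v,w$ lies in $U_1'\cup U_2'$ and the swap actually changes the product $\sigma(v)\sigma(w)$. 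Pairs with both endpoints inside $U_1'$ (or both inside $U_2'$) are unchanged because the swap is an isometry of a block onto a block, so their mutual distances and hence their $J$-values are preserved and the restricted configuration is merely relabelled. Pairs with one endpoint in $U_1'$ and the other in $U_2'$ have $d(v,w)=m+1$ (they sit in different $m$-blocks of the same $(m+1)$-block $U$), and a short bookkeeping shows these also contribute zero net change after summing — or more precisely they contribute a term proportional to $J_{m+1}$ that combines with the rest. The interesting pairs are those with exactly one endpoint in $U_1'\cup U_2'$ and the other outside.

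The heart of the computation is then the following: for $v\in U_1'$ and $w\notin U_1'\cup U_2'$, the distance $d(v,w)$ does not depend on which $v\in U_1'$ we pick (this is exactly why the sets $A_i,B_i,C_i,D_i$ are well defined), and equals some value in $\{k+1,\dots,n\}$. When $w\in U_1\setminus U_1'$ the distance lies in $\{k+1,\dots,m\}$; when $w\in U_2\setminus U_2'$ it equals $m+1$; when $w\notin U$ it exceeds $m+1$. I would split the sum over such $w$ accordingly. For $w$ with $d(v,w)=i\in\{k+1,\dots,m\}$ inside $U_1$, the number of such $w$ carrying $\sigma(w)=-1$ is $|A_i|$ and carrying $\sigma(w)=+1$ is $|B_i|$; symmetrically $|C_i|,|D_i|$ for $U_2$. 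Writing $p_1=|U_1'\cap\sigma|$ and $p_2=|U_2'\cap\sigma|$ for the number of up-spins swapped out of $U_1'$ and into it (i.e. out of $U_2'$), the change in $\sum_{v\in U_1',w} J_{d(v,w)}\sigma(v)\sigma(w)$ is governed by the net change $p_2-p_1$ in the total up-spin on $U_1'$ (and $p_1-p_2$ on $U_2'$). Carrying this through, the contribution of the $A_i/B_i$ block at distance $i$ is proportional to $J_i(|A_i|-|B_i|)(p_2-p_1)$, that of the $C_i/D_i$ block to $J_i(|C_i|-|D_i|)(p_1-p_2)$, the $w\notin U$ terms cancel between $U_1'$ and $U_2'$ because such $w$ is equidistant from both, and the $d=m+1$ terms produce a $J_{m+1}$-proportional piece. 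Collecting everything and using $|A_i|+|B_i|=|C_i|+|D_i|=N^m/N^k - \text{(const)}$ to rewrite $|A_i|-|B_i|$ and $|C_i|-|D_i|$ in terms of $|A_i|,|C_i|$ alone, one arrives at
\[
\cH(\sigma')-\cH(\sigma)
=\sum_{i=k+1}^{m}2\left(J_{i}-J_{m+1}\right)\left(\left|A_{i}\right|-\left|C_{i}\right|\right)\left(p_2-p_1\right),
\]
which is the claimed identity after substituting $p_1=|U_1'\cap\sigma|$ and $p_2=|U_2'\cap\sigma|$.

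The main obstacle is purely organizational rather than conceptual: keeping the sign conventions straight (the Hamiltonian has a $-\tfrac12$ in front, the sum is over ordered or unordered pairs, and the swap moves up-spins in two directions simultaneously) and verifying the cancellations — the $w\notin U$ terms and the internal $U_1'$–$U_1'$ terms — cleanly, so that only the distances $i\le m$ survive and the $m+1$ level enters only through the $-J_{m+1}$ shift. I would handle this by fixing one reference vertex $v_0\in U_1'$, noting that for every $w$ outside $U_1'$ one has $d(v,w)=d(v_0,w)$ for all $v\in U_1'$, and likewise on the $U_2'$ side, so that the sum factorizes as (change in up-spin count on $U_1'$) times (weighted count of neighbours of $U_1'$), plus the mirror term; the $J_{m+1}$ shift then appears naturally when one writes the number of $U_1\setminus U_1'$ neighbours at distance $i$ as complementary to the up-spin count and recombines with the single distance-$(m+1)$ term linking $U_1'$ to $U_2'$ and to $U_2\setminus U_2'$. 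Once the cancellations are in place, the final algebra is routine.
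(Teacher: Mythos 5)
Your proposal is correct and follows essentially the same route as the paper's proof: decompose the pair sum, observe that only the four mixed classes $U_j'\times(U_l\setminus U_l')$ contribute (pairs internal to $U_1'$, $U_2'$, between $U_1'$ and $U_2'$, or involving a vertex outside $U$ all cancel), and then combine the $J_i$- and $J_{m+1}$-weighted contributions using $|A_i|+|B_i|=|C_i|+|D_i|$ together with $|U_j'\cap\overline{\sigma}|=N^k-|U_j'\cap\sigma|$. The only difference is cosmetic — you track changes in spin sums where the paper counts interacting pairs before and after the switch — and your flagged concern about the overall normalization of the Hamiltonian is the right one to keep an eye on, but it does not affect the identity.
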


\begin{proof}
Note that the number of interacting pairs (i.e., pairs $\left(v,w\right)$ such that 
$\sigma(v)=-\sigma(w)$) in $U_{1}'\times U_{2}'$ in $\sigma$ is the 
same as in $\sigma'$. Hence
\begin{equation}
-\sum_{\substack{v\in U_{1}'\\w\in U_{2}'}}
J_{d\left(v,w\right)}\sigma(v)\sigma(w)
= -\sum_{\substack{v\in U_{1}'\\w\in U_{2}'}}
J_{d\left(v,w\right)}\sigma'(v)\sigma'(w).
\end{equation}
The same is true for interacting pairs in $\left(\overline{U_{1}'\cup U_{2}'}\right)
\times\left(\overline{U_{1}'\cup U_{2}'}\right)$, $U_{1}'\times U_{1}'$, 
$U_{2}'\times U_{2}'$, as well as $\overline{U}\times\Lambda_{N}^{n}$, where 
$\overline{U}$ is the complement of $U$. Thus, we only need to consider interacting pairs 
coming from $U_{1}'\times\left(U_{1}\backslash U_{1}'\right)$, $U_{1}'
\times\left(U_{2}\backslash U_{2}'\right)$, $U_{2}'\times\left(U_{2}\backslash 
U_{2}'\right)$ and $U_{2}'\times\left(U_{1}\backslash U_{1}'\right)$. 
The contribution to $\cH\left(\sigma\right)-\cH\left(\boxminus\right)$ of interacting pairs 
in $U_{1}'\times\left(U_{1}\backslash U_{1}'\right)$ is given by 
\begin{equation}
-\sum_{\substack{v\in U_{1}'\\w\in U_{1}\backslash U_{1}'}}
J_{d\left(v,w\right)}\sigma(v)\sigma(w)
=\sum_{i=k+1}^{m}J_{i}\left(\left|A_{i}\right|\left|U_{1}'\cap\sigma\right|
+\left|B_{i}\right|\left|U_{1}'\cap\overline{\sigma}\right|\right).
\end{equation}
Thus by moving the set $U_{1}'\cap\sigma$ from $U_{1}$ to $U_{2}$, this contribution 
is replaced by 
\begin{equation}
-\sum_{\substack{v\in U_{2}'\\w\in U_{1}\backslash U_{1}'}}
J_{d\left(v,w\right)}\sigma'(v)\sigma'(w)
= \sum_{i=k+1}^{m}J_{m+1}\left(\left|A_{i}\right|\left|U_{1}'\cap\sigma\right|
+\left|B_{i}\right|\left|U_{1}'\cap\overline{\sigma}\right|\right).
\end{equation}
Similarly, the contribution to $\cH\left(\sigma\right)-\cH\left(\boxminus\right)$ of interacting 
pairs in $U_{1}'\times\left(U_{2}\backslash U_{2}'\right)$ is given by 
\begin{equation}
\sum_{i=k+1}^{m}J_{m+1}\left(\left|C_{i}\right|\left|U_{1}'\cap\sigma\right|
+\left|D_{i}\right|\left|U_{1}'\cap\overline{\sigma}\right|\right),
\end{equation}
which is subsequently replaced by 
\begin{equation}
\sum_{i=k+1}^{m}J_{i}\left(\left|C_{i}\right|\left|U_{1}'\cap\sigma\right|
+\left|D_{i}\right|\left|U_{1}'\cap\overline{\sigma}\right|\right).
\end{equation}
Similar observations follow for interacting pairs in $U_{2}'\times\left(U_{2}\backslash 
U_{2}'\right)$ and $U_{2}'\times\left(U_{1}\backslash U_{1}'\right)$.
Hence 
\begin{equation}
\begin{aligned}
&\cH\left(\sigma'\right)-\cH\left(\sigma\right)
= \sum_{i=k+1}^{m}\left(J_{i}-J_{m+1}\right)\\
&\times \Big(\left[\left|A_{i}\right|-\left|C_{i}\right|\right]
\left(\left|U_{2}'\cap\sigma\right|-\left|U_{1}'\cap\sigma\right|\right)
+\left[\left|B_{i}\right|-\left|D_{i}\right|\right]
\left(\left|U_{2}'\cap\overline{\sigma}\right|
-\left|U_{1}'\cap\overline{\sigma}\right|\right)\Big).
\end{aligned}
\end{equation}
Noting that $\left|B_{i}\right|+\left|A_{i}\right|=\left(N-1\right)N^{i-1}=\left|D_{i}\right|
+\left|C_{i}\right|$, we get 
\begin{equation}
\begin{aligned}
&\cH\left(\sigma'\right)-\cH\left(\sigma\right)
= \sum_{i=k+1}^{m}\left(J_{i}-J_{m+1}\right)\\
&\times \Big(\left[\left|A_{i}\right|
-\left|C_{i}\right|\right]\left(\left|U_{2}'\cap\sigma\right|
-\left|U_{1}'\cap\sigma\right|\right)
+\left[\left|C_{i}\right|-\left|A_{i}\right|\right]\left(\left|U_{2}'\cap\overline{\sigma}\right|
-\left|U_{1}'\cap\overline{\sigma}\right|\right)\Big)\\
&\qquad\qquad\qquad  = \sum_{i=k+1}^{m}\left(J_{i}-J_{m+1}\right)\\
&\times \Big(\left[\left|A_{i}\right|
-\left|C_{i}\right|\right]\left(\left|U_{2}'\cap\sigma\right|
-\left|U_{1}'\cap\sigma\right|\right)
+\left[\left|A_{i}\right|-\left|C_{i}\right|\right]\left(\left|U_{1}'\cap\overline{\sigma}\right|
-\left|U_{2}'\cap\overline{\sigma}\right|\right)\Big).
\end{aligned}
\end{equation}
Finally, noting that $\left|U_{1}'\cap\overline{\sigma}\right|=N^{k}-\left|U_{1}'
\cap\sigma\right|$ and $\left|U_{2}'\cap\overline{\sigma}\right|=N^{k}-\left|U_{2}'
\cap\sigma\right|$, we complete the proof.
\end{proof}


\subsection{Optimal paths}
\label{S2.2}

Recall the definition of an optimal path from (\ref{eq:optimal-path}). We call a path 
$\gamma\colon\,\boxminus\to\boxplus$, denoted by $\left\{\gamma_{i}\right\} _{i=0}^{M}$ 
for some $M\geq N^{n}$, \emph{uniformly optimal} when, for all $0\leq i\leq M$, 
\begin{equation}
\cH\left(\gamma_{i}\right)=\min_{ {\sigma\in\Omega:} \atop {\left|\sigma\right|
=\left|\gamma_{i}\right|}} \cH\left(\gamma_{i}\right),
\label{eq:unif-optimal}
\end{equation}
and \emph{strictly optimal} when the minimum in the right-hand side of (\ref{eq:optimal-path}) 
is only attained by configurations that belong to some uniformly optimal path. We think of a 
path $\gamma$ between two configurations in $\Omega$ both as a sequence of configurations
denoted by $\left\{\gamma_{i}\right\}_{i=1}^M$ and as a sequence of vertices denoted by 
$\left\{\gamma\left(i\right)\right\}_{i=1}^{M}$, where $\gamma\left(i\right)$ is the single 
vertex in the symmetric difference $\gamma_{i-1}\triangle\gamma_{i}$.

Order the vertices $\left\{v_{i}\right\} _{i=1}^{N^{n}}$ in $\Lambda_N^{n}$ in a natural 
order so that, for all $1\leq k\leq n-1$ and for all $0\leq j\leq N^{n}/N^{k}$, $\{v_{jN^{k}+1},
\ldots,v_{\left(j+1\right)N^{k}}\}$ belong to the same $k$-block. Let 
$\gamma^{\mathrm{MD}}\colon\,\boxminus\to\boxplus$ be the path defined by 
$\gamma^{\mathrm{MD}}\left(i\right)=v_{i}$ for $1\leq i\leq N^{n}$. Let $\gamma^{\mathrm{MI}}
\colon\,\boxminus\to\boxplus$ be defined by $\gamma^{\mathrm{MI}}\left(k\right)
=v_{\theta\left(k\right)}$ and
\begin{equation}
\theta\left(k\right)=1+\sum_{i=0}^{n-1}N^{n-1-i}
\left(\left\lfloor \frac{k-1}{N^{i}}\right\rfloor \mbox{mod}\,N\right).
\end{equation}
Thus, the vertex $\gamma^{\mathrm{MI}}\left(k\right)$ belongs to the $\left(\left(k-1\right)\,
\mbox{mod}\,N\right)^{\mbox{th}}$ $\left(n-1\right)$-block, and within that block it belongs 
to the $(\lfloor\frac{k-1}{N^{2}}\rfloor \mbox{mod}\,N)^{\mbox{th}}$ $(n-2)$-block, etc. We can 
now use Lemma~\ref{lem:switching} to draw the following conclusions.

\begin{lemma}
\label{lem: unif opt. path}
{\rm (1)} If $\vec{J}$ is non-increasing, then $\gamma^{\mathrm{MD}}$ is a uniformly optimal path.\\ 
{\rm (2)} If $\vec{J}$ is non-decreasing, then $\gamma^{\mathrm{MI}}$ is a uniformly optimal path.\\ 
{\rm (3)} If $\vec{J}$ is strictly decreasing or strictly increasing, then $\gamma^{\mathrm{MD}}$ or 
$\gamma^{\mathrm{MI}}$ is strictly optimal. 
\end{lemma}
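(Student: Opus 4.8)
The plan is to reduce the claim about uniform optimality of $\gamma^{\mathrm{MD}}$ (respectively $\gamma^{\mathrm{MI}}$) to a statement purely about minimizing the Hamiltonian over configurations of a fixed magnetization, and then to verify that statement via the switching formula in Lemma~\ref{lem:switching}. The key observation is that since the magnetic-field term $-\tfrac{h}{2}\sum_v\sigma(v)$ depends only on $|\sigma|$, minimizing $\cH$ over $\{\sigma\colon |\sigma|=\ell\}$ is the same as minimizing the pair-interaction term, i.e.\ maximizing the number of aligned ($+,+$ or $-,-$) pairs weighted by $J_{d(v,w)}$ — equivalently, minimizing $\sum_{v,w}J_{d(v,w)}\mathbf{1}_{\{\sigma(v)\ne\sigma(w)\}}$, the weighted ``perimeter'' of the $+$-cluster.

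First I would show that if $\vec J$ is non-increasing, then among all configurations $\sigma$ with $|\sigma|=\ell$ the ones that minimize $\cH$ are exactly those in which the $\ell$ up-spins are packed as tightly as possible into the hierarchical block structure: fill up $0$-blocks (single vertices), then complete a $1$-block, then a $2$-block, and so on, which is precisely the pattern realised by $\gamma^{\mathrm{MD}}_\ell$. The mechanism is a local exchange argument: suppose $\sigma$ is not of this form. Then there exist two disjoint $m$-blocks $U_1,U_2$ inside a common $(m{+}1)$-block such that $U_1$ contains more up-spins ``spread out'' than $U_2$ in a way that is not forced; concretely, one can find $k$-blocks $U_1'\subset U_1$, $U_2'\subset U_2$ (with $k<m$) and an isometric bijection $\varphi$ such that swapping $\sigma$ on $U_1'$ and $U_2'$ does not decrease — and for a suitably chosen swap strictly decreases — $\cH$. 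Lemma~\ref{lem:switching} gives $\cH(\sigma')-\cH(\sigma)=\sum_{i=k+1}^m 2(J_i-J_{m+1})(|A_i|-|C_i|)\,(|U_2'\cap\sigma|-|U_1'\cap\sigma|)$; since $\vec J$ non-increasing forces $J_i-J_{m+1}\ge 0$ for $i\le m$, the sign of the right-hand side is controlled by the two difference factors, and one can always orient the swap (choose which block plays the role of $U_1'$) so that the product is $\le 0$, hence $\cH$ does not increase. Iterating such swaps, any configuration of magnetization $\ell$ can be transformed into a ``tightly packed'' one without ever raising the energy, which shows the packed configuration achieves the minimum; since every $\gamma^{\mathrm{MD}}_i$ is such a packed configuration, part~(1) follows. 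Part~(2) is the mirror image: if $\vec J$ is non-decreasing, then $J_i-J_{m+1}\le 0$ and the energetically favourable thing is to spread the up-spins out maximally, one per distinct large block, which is exactly the order $\gamma^{\mathrm{MI}}$ flips spins.

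For part~(3), with strict monotonicity the factor $J_i-J_{m+1}$ is strictly positive (resp.\ negative) for $i\le m$, so in the local exchange argument any swap that changes the configuration and keeps it within the fixed magnetization class with a nonzero product of difference factors strictly changes $\cH$. This pins down the minimizers of $\cH$ in each magnetization class up to isometric translation, and in particular forces any optimal path — any $\gamma$ attaining $\max_{\eta\in\gamma}\cH(\eta)=\Phi(\boxminus,\boxplus)$ — to pass, at each relevant magnetization level, through a configuration that is an isometric translation of the corresponding $\gamma^{\mathrm{MD}}_i$ (or $\gamma^{\mathrm{MI}}_i$), i.e.\ through a configuration lying on some uniformly optimal path; that is exactly the definition of strictly optimal. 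I expect the main obstacle to be the bookkeeping in the exchange step: one must argue carefully that whenever a fixed-magnetization configuration fails to be the packed (resp.\ spread) one, there genuinely exist nested blocks $U_1'\subset U_1$ and $U_2'\subset U_2$ and an isometric $\varphi$ realising a swap with $(|A_i|-|C_i|)(|U_2'\cap\sigma|-|U_1'\cap\sigma|)$ of the correct sign and strictly nonzero for at least one $i$, and that such swaps can be composed to reach the extremal configuration in finitely many steps without energy increase — a monovariant (e.g.\ a suitable lexicographic ordering of block-occupation vectors) makes this termination precise.
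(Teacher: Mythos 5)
Your proposal follows essentially the same route as the paper: both reduce uniform optimality to minimizing $\cH$ at fixed magnetization and then use the block-switching formula of Lemma~\ref{lem:switching}, with the sign of $J_i-J_{m+1}$ controlled by monotonicity, to transform an arbitrary configuration into the packed configuration $\gamma^{\mathrm{MD}}_{|\sigma|}$ (resp.\ the spread configuration $\gamma^{\mathrm{MI}}_{|\sigma|}$) through energy-non-increasing swaps, with strictness of the swaps giving part (3). The existence-and-termination step you flag as the main obstacle is exactly what the paper makes explicit via its recursive construction of the intermediate configurations $\psi_1,\dots,\psi_n$ and the constructive search for the blocks $\tilde U_a,\tilde U_b$.
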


\begin{proof}
We treat the non-increasing case and the non-decreasing case separately.
 
\bigskip\noindent
\underline{Non-increasing case}: Let $\sigma\in\Omega$ be given. We will construct 
a sequence of configurations $\left\{\psi_{i}\right\} _{i=1}^{n}$, all of volume $\left|\sigma\right|$ 
and with $\psi_{n}=\gamma_{\left|\sigma\right|}^{\mathrm{MD}}$, such that $\cH\left(\sigma\right)\geq
\cH\left(\psi_{1}\right)\geq\ldots\geq\cH\left(\psi_{n}\right)$, and the inequalities being strict 
whenever $\vec{J}$ is strictly decreasing. This will prove the claim for the non-increasing case. 
 
For $1\leq k\leq n$, define $\psi_{k}$ to be the (unique) configuration that satisfies the following 
two conditions:
\begin{itemize}
\item[1.] 
For every $k$-block $U\subset\Lambda_{N}^{n}$, $\left|U\cap\sigma\right|=\left|U\cap\psi_{k}\right|$.
\item[2.] 
For all $i<j$ with $v_{i}$ and $v_{j}$ belonging in the same $k$-block, $v_{j}\in\psi_{k}$ 
implies $v_{i}\in\psi_{k}$.
\end{itemize}
In particular, note that $\psi_{1}$ is obtained from $\sigma$ by ``shifting'' the $+1$ 
values of $\sigma$ found inside every $1$-block as far left as possible (i.e., with the 
lowest possible index) within the same $1$-block. It is obvious that $\cH\left(\psi_{1}\right)
=\cH\left(\sigma\right)$. It is also clear from this recursive definition that $\psi_{n}
=\gamma_{\left|\sigma\right|}^{\mathrm{MD}}$. 

Starting with $\psi_{k}$, we will show how to obtain $\psi_{k+1}$ by a series of transformations 
that are non-increasing in $\cH$. Let $U$ be the first $k+1$ block of $\Lambda_{n}^{N}$, and let $U_{1},
\ldots,U_{N}$ be its $k$-blocks, arranged so that $\left|U_{i}\cap\sigma\right|\geq\left|U_{i+1}
\cap\sigma\right|$. Note that this may be achieved by re-arranging (or re-labeling) the blocks 
$U_{1},\ldots,U_{N}$, and any such re-arranging is an $\cH$-preserving operation. Let 
$a=\min\left\{i\colon\,\left|U_{i}\cap\sigma\right|<N^{k}\right\}$ and $b=\max\left\{i\colon\,
\left|U_{i}\cap\sigma\right|>1\right\}$. Note that if $a=b$, then $U\cap\sigma$ is already 
in the correct form, satisfying the definition of $\psi_{k+1}$. Thus, we may assume that
$a\neq b$. Find a maximal block $\tilde{U}_{b}\subsetneq U_{b}$ with $|\tilde{U}_{b}
\cap\sigma|>0$ such that, for some block of equal size $\tilde{U}_{a}\subsetneq U_{a}$, 
$|\tilde{U}_{b}\cap\sigma|>|\tilde{U}_{a}\cap\sigma|$. To do this, take the 
first $k-1$-block $U_{b}'$ in $U_{b}$ and the last $k-1$-block $U_{a}'$ in $U_{a}$ 
that satisfies $\left|U_{a}'\cap\overline{\sigma}\right|>0$, and check whether $\left|U_{b}
'\cap\sigma\right|>\left|U_{a}'\cap\sigma\right|$. If not, then proceed by taking 
the first $k-2$-block in $U_{b}$, etc. By the definition of $a$ and $b$, this constructive search 
for $\tilde{U}_{a}$ and $\tilde{U}_{b}$ always yields two such blocks. Once these are found,
perform the switching operation in Lemma \ref{lem:switching} on the blocks $\tilde{U}_{a}$ 
and $\tilde{U}_{b}$, and denote the resulting configuration by $\psi_{k}'$ (see 
Fig.~\ref{fig:mapPsi}). Then, by Lemma \ref{lem:switching}, with $s$ denoting the size 
of the blocks $\tilde{U}_{a}$ and $\tilde{U}_{b}$,
\begin{equation}
\cH\left(\psi_{k}'\right)-\cH\left(\psi_{k}\right)
=\sum_{i=s+1}^{k}2\left(J_{i}-J_{k+1}\right)\left[\left|A_{i}\right|-\left|C_{i}\right|\right]
\left(|\tilde{U}_{b}\cap\sigma|-|\tilde{U}_{a}\cap\sigma|\right),
\end{equation}
where we recall that $A_{i}=\{x\in U_{a}\cap\overline{\sigma}\colon\, d(x,\tilde{U}_{a})=i\}$ 
and $C_{i}=\{x\in U_{b}\cap\overline{\sigma}\colon\, d(x,\tilde{U}_{b})=i\}$. By definition, 
we have $|\tilde{U}_{b}\cap\sigma|-|\tilde{U}_{a} \cap\sigma|>0$, and from the monotonicity 
we get that $J_{i}-J_{m+1} \geq 0$. Lastly, by the fact that $\left|U_{a}\cap\sigma\right|
\geq\left|U_{b}\cap\sigma\right|$ and the construction of $\psi_{k}$, as well as the 
definition of $\tilde{U}_{b}$ and $\tilde{U}_{a}$, it also follows that $\left|A_{i}\right|
-\left|C_{i}\right| \leq 0$ for all $s+1\leq i\leq k$. Therefore $\cH\left(\psi_{k}'\right)
-\cH\left(\psi_{k}\right)\leq0$. Repeating this construction until $\min\left\{i\colon\,\left|U_{i}
\cap\sigma\right|<N^{k}\right\} =\max\left\{i\colon\,\left|U_{i}\cap\sigma\right|>1\right\}$ 
(which happens in a finite number of moves), and repeating the same construction for all 
other $k+1$-blocks, we get the configuration $\psi_{k+1}$, and hence $\cH\left(\psi_{k+1}\right)
-\cH\left(\psi_{k}\right) \leq 0$. 

\begin{figure}[h]
\begin{tikzpicture} [scale=0.30] 	
\draw[gray, very thin] (0.0,0.0) -- (8,0) -- (8,8) -- (0,8) -- (0,0); 	
\draw[gray, very thin] (0.1,0.1) -- (3.9,0.1) -- (3.9,3.9) -- (0.1,3.9) -- (0.1,0.1); 		
\draw[gray, very thin] (0.3,0.3) -- (1.9,0.3) -- (1.9,1.9) -- (0.3,1.9) -- (0.3,0.3); 		
\draw[gray, very thin] (2.1,0.3) -- (3.7,0.3) -- (3.7,1.9) -- (2.1,1.9) -- (2.1,0.3); 		
\draw[gray, very thin] (2.1,2.1) -- (3.7,2.1) -- (3.7,3.7) -- (2.1,3.7) -- (2.1,2.1); 		
\draw[gray, very thin] (0.3,2.1) -- (1.9,2.1) -- (1.9,3.7) -- (0.3,3.7) -- (0.3,2.1); 	
\draw[gray, very thin] (4.1,0.1) -- (7.9,0.1) -- (7.9,3.9) -- (4.1,3.9) -- (4.1,0.1); 		
\draw[gray, very thin] (0.3+4,0.3) -- (1.9+4,0.3) -- (1.9+4,1.9) -- (0.3+4,1.9) -- (0.3+4,0.3); 		
\draw[gray, very thin] (2.1+4,0.3) -- (3.7+4,0.3) -- (3.7+4,1.9) -- (2.1+4,1.9) -- (2.1+4,0.3); 		
\draw[gray, very thin] (2.1+4,2.1) -- (3.7+4,2.1) -- (3.7+4,3.7) -- (2.1+4,3.7) -- (2.1+4,2.1); 		
\draw[gray, dashed, very thin] (0.3+4,2.1) -- (1.9+4,2.1) -- (1.9+4,3.7) -- (0.3+4,3.7) -- (0.3+4,2.1); 	
\draw[gray, very thin] (0.1,4.1) -- (3.9,4.1) -- (3.9,7.9) -- (0.1,7.9) -- (0.1,4.1); 		
\draw[gray, very thin] (0.3,0.3+4) -- (1.9,0.3+4) -- (1.9,1.9+4) -- (0.3,1.9+4) -- (0.3,0.3+4); 		
\draw[gray, very thin] (2.1,0.3+4) -- (3.7,0.3+4) -- (3.7,1.9+4) -- (2.1,1.9+4) -- (2.1,0.3+4); 		
\draw[gray, very thin] (2.1,2.1+4) -- (3.7,2.1+4) -- (3.7,3.7+4) -- (2.1,3.7+4) -- (2.1,2.1+4); 		
\draw[gray, very thin] (0.3,2.1+4) -- (1.9,2.1+4) -- (1.9,3.7+4) -- (0.3,3.7+4) -- (0.3,2.1+4); 	
\draw[gray, very thin] (4.1,4.1) -- (7.9,4.1) -- (7.9,7.9) -- (4.1,7.9) -- (4.1,4.1); 		
\draw[gray, very thin] (0.3+4,0.3+4) -- (1.9+4,0.3+4) -- (1.9+4,1.9+4) -- (0.3+4,1.9+4) -- (0.3+4,0.3+4); 		\draw[gray, very thin] (2.1+4,0.3+4) -- (3.7+4,0.3+4) -- (3.7+4,1.9+4) -- (2.1+4,1.9+4) -- (2.1+4,0.3+4); 		\draw[gray, very thin] (2.1+4,2.1+4) -- (3.7+4,2.1+4) -- (3.7+4,3.7+4) -- (2.1+4,3.7+4) -- (2.1+4,2.1+4); 		\draw[gray, very thin] (0.3+4,2.1+4) -- (1.9+4,2.1+4) -- (1.9+4,3.7+4) -- (0.3+4,3.7+4) -- (0.3+4,2.1+4); 	\fill[black!] (0.7,0.7) circle (0.1cm);	 	
\fill[black!] (1.5,0.7) circle (0.1cm); 	
\fill[black!] (1.5,1.5) circle (0.1cm); 	
\fill[black!] (0.7,1.5) circle (0.1cm); 	
\fill[black!] (0.7+1.8,0.7) circle (0.1cm); 	
\fill[black!] (1.5+1.8,0.7) circle (0.1cm); 	
\fill[black!] (1.5+1.8,1.5) circle (0.1cm); 	
\fill[black!] (0.7+1.8,1.5) circle (0.1cm); 	
\fill[black!] (0.7,0.7+1.8) circle (0.1cm); 	
\fill[black!] (1.5,0.7+1.8) circle (0.1cm); 	
\fill[black!] (1.5,1.5+1.8) circle (0.1cm); 	
\fill[black!] (0.7,1.5+1.8) circle (0.1cm); 	
\fill[black!] (0.7+1.8,0.7+1.8) circle (0.1cm); 	
\fill[black!] (1.5+1.8,0.7+1.8) circle (0.1cm); 	
\fill[black!] (1.5+1.8,1.5+1.8) circle (0.1cm); 	
\fill[black!] (0.7+1.8,1.5+1.8) circle (0.1cm); 	
\fill[black!] (0.7+4,0.7) circle (0.1cm); 	
\fill[black!] (1.5+4,0.7) circle (0.1cm); 	
\fill[black!] (1.5+4,1.5) circle (0.1cm); 	
\fill[black!] (0.7+4,1.5) circle (0.1cm); 	
\draw[very thin] (0.7+1.8+4,0.7) circle (0.1cm); 	
\draw[very thin] (1.5+1.8+4,0.7) circle (0.1cm); 	
\draw[very thin] (1.5+1.8+4,1.5) circle (0.1cm); 	
\draw[very thin] (0.7+1.8+4,1.5) circle (0.1cm); 	
\fill[black!] (0.7+4,0.7+1.8) circle (0.1cm); 	
\draw[very thin] (1.5+4,0.7+1.8) circle (0.1cm); 	
\draw[very thin] (1.5+4,1.5+1.8) circle (0.1cm); 	
\fill[black!] (0.7+4,1.5+1.8) circle (0.1cm); 	
\draw[very thin] (0.7+1.8+4,0.7+1.8) circle (0.1cm); 	
\draw[very thin] (1.5+1.8+4,0.7+1.8) circle (0.1cm); 	
\draw[very thin] (1.5+1.8+4,1.5+1.8) circle (0.1cm); 	
\draw[very thin] (0.7+1.8+4,1.5+1.8) circle (0.1cm); 	
\fill[black!] (0.7,0.7+4) circle (0.1cm); 	
\fill[black!] (1.5,0.7+4) circle (0.1cm); 	
\fill[black!] (1.5,1.5+4) circle (0.1cm); 	
\fill[black!] (0.7,1.5+4) circle (0.1cm); 	
\fill[black!] (0.7+1.8,0.7+4) circle (0.1cm); 	
\fill[black!] (1.5+1.8,0.7+4) circle (0.1cm); 	
\fill[black!] (1.5+1.8,1.5+4) circle (0.1cm); 	
\fill[black!] (0.7+1.8,1.5+4) circle (0.1cm); 	
\fill[black!] (0.7,0.7+1.8+4) circle (0.1cm); 	
\fill[black!] (1.5,0.7+1.8+4) circle (0.1cm); 	
\fill[black!] (1.5,1.5+1.8+4) circle (0.1cm); 	
\fill[black!] (0.7,1.5+1.8+4) circle (0.1cm); 	
\fill[black!] (0.7+1.8,0.7+1.8+4) circle (0.1cm); 	
\fill[black!] (1.5+1.8,0.7+1.8+4) circle (0.1cm); 	
\fill[black!] (1.5+1.8,1.5+1.8+4) circle (0.1cm); 	
\fill[black!] (0.7+1.8,1.5+1.8+4) circle (0.1cm); 	
\fill[black!] (0.7+4,0.7+4) circle (0.1cm); 	
\fill[black!] (1.5+4,0.7+4) circle (0.1cm); 	
\fill[black!] (1.5+4,1.5+4) circle (0.1cm); 	
\fill[black!] (0.7+4,1.5+4) circle (0.1cm); 	
\fill[black!] (0.7+1.8+4,0.7+4) circle (0.1cm); 	
\fill[black!] (1.5+1.8+4,0.7+4) circle (0.1cm); 	
\fill[black!] (1.5+1.8+4,1.5+4) circle (0.1cm); 	
\fill[black!] (0.7+1.8+4,1.5+4) circle (0.1cm); 	
\fill[black!] (0.7+4,0.7+1.8+4) circle (0.1cm); 	
\fill[black!] (1.5+4,0.7+1.8+4) circle (0.1cm); 	
\fill[black!] (1.5+4,1.5+1.8+4) circle (0.1cm); 	
\fill[black!] (0.7+4,1.5+1.8+4) circle (0.1cm); 	
\fill[black!] (0.7+1.8+4,0.7+1.8+4) circle (0.1cm); 	
\fill[black!] (1.5+1.8+4,0.7+1.8+4) circle (0.1cm); 	
\fill[black!] (1.5+1.8+4,1.5+1.8+4) circle (0.1cm); 	
\fill[black!] (0.7+1.8+4,1.5+1.8+4) circle (0.1cm); 
\node[text width=3cm] at (3.5,4) {\tiny {$U_{a}$}};    
\begin{scope}[shift={(11,0)}]         
\draw[gray, very thin] (0.0,0.0) -- (8,0) -- (8,8) -- (0,8) -- (0,0); 	
\draw[gray, very thin] (0.1,0.1) -- (3.9,0.1) -- (3.9,3.9) -- (0.1,3.9) -- (0.1,0.1); 		
\draw[gray, dashed, very thin] (0.3,0.3) -- (1.9,0.3) -- (1.9,1.9) -- (0.3,1.9) -- (0.3,0.3); 		
\draw[gray, very thin] (2.1,0.3) -- (3.7,0.3) -- (3.7,1.9) -- (2.1,1.9) -- (2.1,0.3); 		
\draw[gray, very thin] (2.1,2.1) -- (3.7,2.1) -- (3.7,3.7) -- (2.1,3.7) -- (2.1,2.1); 		
\draw[gray, very thin] (0.3,2.1) -- (1.9,2.1) -- (1.9,3.7) -- (0.3,3.7) -- (0.3,2.1); 	
\draw[gray, very thin] (4.1,0.1) -- (7.9,0.1) -- (7.9,3.9) -- (4.1,3.9) -- (4.1,0.1); 		
\draw[gray, very thin] (0.3+4,0.3) -- (1.9+4,0.3) -- (1.9+4,1.9) -- (0.3+4,1.9) -- (0.3+4,0.3); 		
\draw[gray, very thin] (2.1+4,0.3) -- (3.7+4,0.3) -- (3.7+4,1.9) -- (2.1+4,1.9) -- (2.1+4,0.3); 		
\draw[gray, very thin] (2.1+4,2.1) -- (3.7+4,2.1) -- (3.7+4,3.7) -- (2.1+4,3.7) -- (2.1+4,2.1); 		
\draw[gray, very thin] (0.3+4,2.1) -- (1.9+4,2.1) -- (1.9+4,3.7) -- (0.3+4,3.7) -- (0.3+4,2.1); 	
\draw[gray, very thin] (0.1,4.1) -- (3.9,4.1) -- (3.9,7.9) -- (0.1,7.9) -- (0.1,4.1); 		
\draw[gray, very thin] (0.3,0.3+4) -- (1.9,0.3+4) -- (1.9,1.9+4) -- (0.3,1.9+4) -- (0.3,0.3+4); 		
\draw[gray, very thin] (2.1,0.3+4) -- (3.7,0.3+4) -- (3.7,1.9+4) -- (2.1,1.9+4) -- (2.1,0.3+4); 		
\draw[gray, very thin] (2.1,2.1+4) -- (3.7,2.1+4) -- (3.7,3.7+4) -- (2.1,3.7+4) -- (2.1,2.1+4); 		
\draw[gray, very thin] (0.3,2.1+4) -- (1.9,2.1+4) -- (1.9,3.7+4) -- (0.3,3.7+4) -- (0.3,2.1+4); 	
\draw[gray, very thin] (4.1,4.1) -- (7.9,4.1) -- (7.9,7.9) -- (4.1,7.9) -- (4.1,4.1); 		
\draw[gray, very thin] (0.3+4,0.3+4) -- (1.9+4,0.3+4) -- (1.9+4,1.9+4) -- (0.3+4,1.9+4) -- (0.3+4,0.3+4); 		\draw[gray, very thin] (2.1+4,0.3+4) -- (3.7+4,0.3+4) -- (3.7+4,1.9+4) -- (2.1+4,1.9+4) -- (2.1+4,0.3+4); 		\draw[gray, very thin] (2.1+4,2.1+4) -- (3.7+4,2.1+4) -- (3.7+4,3.7+4) -- (2.1+4,3.7+4) -- (2.1+4,2.1+4); 		\draw[gray, very thin] (0.3+4,2.1+4) -- (1.9+4,2.1+4) -- (1.9+4,3.7+4) -- (0.3+4,3.7+4) -- (0.3+4,2.1+4); 	\fill[black!] (0.7,0.7) circle (0.1cm);	 	
\draw[very thin] (1.5,0.7) circle (0.1cm); 	
\fill[black!] (1.5,1.5) circle (0.1cm); 	
\fill[black!] (0.7,1.5) circle (0.1cm); 	
\draw[very thin] (0.7+1.8,0.7) circle (0.1cm); 	
\draw[very thin] (1.5+1.8,0.7) circle (0.1cm); 	
\draw[very thin] (1.5+1.8,1.5) circle (0.1cm); 	
\draw[very thin] (0.7+1.8,1.5) circle (0.1cm); 	
\draw[very thin] (0.7,0.7+1.8) circle (0.1cm); 	
\draw[very thin] (1.5,0.7+1.8) circle (0.1cm); 	
\draw[very thin] (1.5,1.5+1.8) circle (0.1cm); 	
\draw[very thin] (0.7,1.5+1.8) circle (0.1cm); 	
\draw[very thin] (0.7+1.8,0.7+1.8) circle (0.1cm); 	
\draw[very thin] (1.5+1.8,0.7+1.8) circle (0.1cm); 	
\draw[very thin] (1.5+1.8,1.5+1.8) circle (0.1cm); 	
\draw[very thin] (0.7+1.8,1.5+1.8) circle (0.1cm); 	
\draw[very thin] (0.7+4,0.7) circle (0.1cm); 	
\draw[very thin] (1.5+4,0.7) circle (0.1cm); 	
\draw[very thin] (1.5+4,1.5) circle (0.1cm); 	
\draw[very thin] (0.7+4,1.5) circle (0.1cm); 	
\draw[very thin] (0.7+1.8+4,0.7) circle (0.1cm); 	
\draw[very thin] (1.5+1.8+4,0.7) circle (0.1cm); 	
\draw[very thin] (1.5+1.8+4,1.5) circle (0.1cm); 	
\draw[very thin] (0.7+1.8+4,1.5) circle (0.1cm); 	
\draw[very thin] (0.7+4,0.7+1.8) circle (0.1cm); 	
\draw[very thin] (1.5+4,0.7+1.8) circle (0.1cm); 	
\draw[very thin] (1.5+4,1.5+1.8) circle (0.1cm); 	
\draw[very thin] (0.7+4,1.5+1.8) circle (0.1cm); 	
\draw[very thin] (0.7+1.8+4,0.7+1.8) circle (0.1cm); 	
\draw[very thin] (1.5+1.8+4,0.7+1.8) circle (0.1cm); 	
\draw[very thin] (1.5+1.8+4,1.5+1.8) circle (0.1cm); 	
\draw[very thin] (0.7+1.8+4,1.5+1.8) circle (0.1cm); 	
\draw[very thin] (0.7,0.7+4) circle (0.1cm); 	
\draw[very thin] (1.5,0.7+4) circle (0.1cm); 	
\draw[very thin] (1.5,1.5+4) circle (0.1cm); 	
\draw[very thin] (0.7,1.5+4) circle (0.1cm); 	
\draw[very thin] (0.7+1.8,0.7+4) circle (0.1cm); 	
\draw[very thin] (1.5+1.8,0.7+4) circle (0.1cm); 	
\draw[very thin] (1.5+1.8,1.5+4) circle (0.1cm); 	
\draw[very thin] (0.7+1.8,1.5+4) circle (0.1cm); 	
\draw[very thin] (0.7,0.7+1.8+4) circle (0.1cm); 	
\draw[very thin] (1.5,0.7+1.8+4) circle (0.1cm); 	
\draw[very thin] (1.5,1.5+1.8+4) circle (0.1cm); 	
\draw[very thin] (0.7,1.5+1.8+4) circle (0.1cm); 	
\draw[very thin] (0.7+1.8,0.7+1.8+4) circle (0.1cm); 	
\draw[very thin] (1.5+1.8,0.7+1.8+4) circle (0.1cm); 	
\draw[very thin] (1.5+1.8,1.5+1.8+4) circle (0.1cm); 	
\draw[very thin] (0.7+1.8,1.5+1.8+4) circle (0.1cm); 	
\draw[very thin] (0.7+4,0.7+4) circle (0.1cm); 	
\draw[very thin] (1.5+4,0.7+4) circle (0.1cm); 	
\draw[very thin] (1.5+4,1.5+4) circle (0.1cm); 	
\draw[very thin] (0.7+4,1.5+4) circle (0.1cm); 	
\draw[very thin] (0.7+1.8+4,0.7+4) circle (0.1cm); 	
\draw[very thin] (1.5+1.8+4,0.7+4) circle (0.1cm); 	
\draw[very thin] (1.5+1.8+4,1.5+4) circle (0.1cm); 	
\draw[very thin] (0.7+1.8+4,1.5+4) circle (0.1cm); 	
\draw[very thin] (0.7+4,0.7+1.8+4) circle (0.1cm); 	
\draw[very thin] (1.5+4,0.7+1.8+4) circle (0.1cm); 	
\draw[very thin] (1.5+4,1.5+1.8+4) circle (0.1cm); 	
\draw[very thin] (0.7+4,1.5+1.8+4) circle (0.1cm); 	
\draw[very thin] (0.7+1.8+4,0.7+1.8+4) circle (0.1cm); 	
\draw[very thin] (1.5+1.8+4,0.7+1.8+4) circle (0.1cm); 
\draw[very thin] (1.5+1.8+4,1.5+1.8+4) circle (0.1cm); 	
\draw[very thin] (0.7+1.8+4,1.5+1.8+4) circle (0.1cm); 
\node[text width=2cm] at (0.7,4) {\tiny{$\cdots \, U_{b}$}}; 
\end{scope} 
\begin{scope}[shift={(19.5,0)}] 
\draw [thick, ->] (0,4) -- (3,4); 
\end{scope} 
\begin{scope}[shift={(25,0)}] 
\draw[gray, very thin] (0.0,0.0) -- (8,0) -- (8,8) -- (0,8) -- (0,0); 	
\draw[gray, very thin] (0.1,0.1) -- (3.9,0.1) -- (3.9,3.9) -- (0.1,3.9) -- (0.1,0.1); 		
\draw[gray, very thin] (0.3,0.3) -- (1.9,0.3) -- (1.9,1.9) -- (0.3,1.9) -- (0.3,0.3); 		
\draw[gray, very thin] (2.1,0.3) -- (3.7,0.3) -- (3.7,1.9) -- (2.1,1.9) -- (2.1,0.3); 		
\draw[gray, very thin] (2.1,2.1) -- (3.7,2.1) -- (3.7,3.7) -- (2.1,3.7) -- (2.1,2.1); 		
\draw[gray, very thin] (0.3,2.1) -- (1.9,2.1) -- (1.9,3.7) -- (0.3,3.7) -- (0.3,2.1); 	
\draw[gray, very thin] (4.1,0.1) -- (7.9,0.1) -- (7.9,3.9) -- (4.1,3.9) -- (4.1,0.1); 		
\draw[gray, very thin] (0.3+4,0.3) -- (1.9+4,0.3) -- (1.9+4,1.9) -- (0.3+4,1.9) -- (0.3+4,0.3); 		
\draw[gray, very thin] (2.1+4,0.3) -- (3.7+4,0.3) -- (3.7+4,1.9) -- (2.1+4,1.9) -- (2.1+4,0.3); 		
\draw[gray, very thin] (2.1+4,2.1) -- (3.7+4,2.1) -- (3.7+4,3.7) -- (2.1+4,3.7) -- (2.1+4,2.1); 		
\draw[gray, dashed, very thin] (0.3+4,2.1) -- (1.9+4,2.1) -- (1.9+4,3.7) -- (0.3+4,3.7) -- (0.3+4,2.1); 	
\draw[gray, very thin] (0.1,4.1) -- (3.9,4.1) -- (3.9,7.9) -- (0.1,7.9) -- (0.1,4.1); 		
\draw[gray, very thin] (0.3,0.3+4) -- (1.9,0.3+4) -- (1.9,1.9+4) -- (0.3,1.9+4) -- (0.3,0.3+4); 		
\draw[gray, very thin] (2.1,0.3+4) -- (3.7,0.3+4) -- (3.7,1.9+4) -- (2.1,1.9+4) -- (2.1,0.3+4); 		
\draw[gray, very thin] (2.1,2.1+4) -- (3.7,2.1+4) -- (3.7,3.7+4) -- (2.1,3.7+4) -- (2.1,2.1+4); 		
\draw[gray, very thin] (0.3,2.1+4) -- (1.9,2.1+4) -- (1.9,3.7+4) -- (0.3,3.7+4) -- (0.3,2.1+4); 	
\draw[gray, very thin] (4.1,4.1) -- (7.9,4.1) -- (7.9,7.9) -- (4.1,7.9) -- (4.1,4.1); 		
\draw[gray, very thin] (0.3+4,0.3+4) -- (1.9+4,0.3+4) -- (1.9+4,1.9+4) -- (0.3+4,1.9+4) -- (0.3+4,0.3+4); 		\draw[gray, very thin] (2.1+4,0.3+4) -- (3.7+4,0.3+4) -- (3.7+4,1.9+4) -- (2.1+4,1.9+4) -- (2.1+4,0.3+4); 		\draw[gray, very thin] (2.1+4,2.1+4) -- (3.7+4,2.1+4) -- (3.7+4,3.7+4) -- (2.1+4,3.7+4) -- (2.1+4,2.1+4); 		\draw[gray, very thin] (0.3+4,2.1+4) -- (1.9+4,2.1+4) -- (1.9+4,3.7+4) -- (0.3+4,3.7+4) -- (0.3+4,2.1+4); 	\fill[black!] (0.7,0.7) circle (0.1cm);	 	
\fill[black!] (1.5,0.7) circle (0.1cm); 	
\fill[black!] (1.5,1.5) circle (0.1cm); 	
\fill[black!] (0.7,1.5) circle (0.1cm); 	
\fill[black!] (0.7+1.8,0.7) circle (0.1cm); 	
\fill[black!] (1.5+1.8,0.7) circle (0.1cm); 	
\fill[black!] (1.5+1.8,1.5) circle (0.1cm); 	
\fill[black!] (0.7+1.8,1.5) circle (0.1cm); 	
\fill[black!] (0.7,0.7+1.8) circle (0.1cm); 	
\fill[black!] (1.5,0.7+1.8) circle (0.1cm); 	
\fill[black!] (1.5,1.5+1.8) circle (0.1cm); 	
\fill[black!] (0.7,1.5+1.8) circle (0.1cm); 	
\fill[black!] (0.7+1.8,0.7+1.8) circle (0.1cm); 	
\fill[black!] (1.5+1.8,0.7+1.8) circle (0.1cm); 	
\fill[black!] (1.5+1.8,1.5+1.8) circle (0.1cm); 	
\fill[black!] (0.7+1.8,1.5+1.8) circle (0.1cm); 	
\fill[black!] (0.7+4,0.7) circle (0.1cm); 	
\fill[black!] (1.5+4,0.7) circle (0.1cm); 	
\fill[black!] (1.5+4,1.5) circle (0.1cm); 	
\fill[black!] (0.7+4,1.5) circle (0.1cm); 	
\draw[very thin] (0.7+1.8+4,0.7) circle (0.1cm); 	
\draw[very thin] (1.5+1.8+4,0.7) circle (0.1cm); 	
\draw[very thin] (1.5+1.8+4,1.5) circle (0.1cm); 	
\draw[very thin] (0.7+1.8+4,1.5) circle (0.1cm); 	
\fill[black!] (0.7+4,0.7+1.8) circle (0.1cm); 	
\draw[very thin] (1.5+4,0.7+1.8) circle (0.1cm); 	
\fill[black!] (1.5+4,1.5+1.8) circle (0.1cm); 	
\fill[black!] (0.7+4,1.5+1.8) circle (0.1cm); 	
\draw[very thin] (0.7+1.8+4,0.7+1.8) circle (0.1cm); 	
\draw[very thin] (1.5+1.8+4,0.7+1.8) circle (0.1cm); 	
\draw[very thin] (1.5+1.8+4,1.5+1.8) circle (0.1cm); 	
\draw[very thin] (0.7+1.8+4,1.5+1.8) circle (0.1cm); 	
\fill[black!] (0.7,0.7+4) circle (0.1cm); 	
\fill[black!] (1.5,0.7+4) circle (0.1cm); 	
\fill[black!] (1.5,1.5+4) circle (0.1cm); 	
\fill[black!] (0.7,1.5+4) circle (0.1cm); 	
\fill[black!] (0.7+1.8,0.7+4) circle (0.1cm); 	
\fill[black!] (1.5+1.8,0.7+4) circle (0.1cm); 	
\fill[black!] (1.5+1.8,1.5+4) circle (0.1cm); 	
\fill[black!] (0.7+1.8,1.5+4) circle (0.1cm); 	
\fill[black!] (0.7,0.7+1.8+4) circle (0.1cm); 	
\fill[black!] (1.5,0.7+1.8+4) circle (0.1cm); 	
\fill[black!] (1.5,1.5+1.8+4) circle (0.1cm); 	
\fill[black!] (0.7,1.5+1.8+4) circle (0.1cm); 	
\fill[black!] (0.7+1.8,0.7+1.8+4) circle (0.1cm); 	
\fill[black!] (1.5+1.8,0.7+1.8+4) circle (0.1cm); 	
\fill[black!] (1.5+1.8,1.5+1.8+4) circle (0.1cm); 	
\fill[black!] (0.7+1.8,1.5+1.8+4) circle (0.1cm); 	
\fill[black!] (0.7+4,0.7+4) circle (0.1cm); 	
\fill[black!] (1.5+4,0.7+4) circle (0.1cm); 	
\fill[black!] (1.5+4,1.5+4) circle (0.1cm); 	
\fill[black!] (0.7+4,1.5+4) circle (0.1cm); 	
\fill[black!] (0.7+1.8+4,0.7+4) circle (0.1cm); 	
\fill[black!] (1.5+1.8+4,0.7+4) circle (0.1cm); 	
\fill[black!] (1.5+1.8+4,1.5+4) circle (0.1cm); 	
\fill[black!] (0.7+1.8+4,1.5+4) circle (0.1cm); 	
\fill[black!] (0.7+4,0.7+1.8+4) circle (0.1cm); 	
\fill[black!] (1.5+4,0.7+1.8+4) circle (0.1cm); 	
\fill[black!] (1.5+4,1.5+1.8+4) circle (0.1cm); 	
\fill[black!] (0.7+4,1.5+1.8+4) circle (0.1cm); 	
\fill[black!] (0.7+1.8+4,0.7+1.8+4) circle (0.1cm); 	
\fill[black!] (1.5+1.8+4,0.7+1.8+4) circle (0.1cm); 	
\fill[black!] (1.5+1.8+4,1.5+1.8+4) circle (0.1cm); 	
\fill[black!] (0.7+1.8+4,1.5+1.8+4) circle (0.1cm); 
\node[text width=2cm] at (2.1,4) {\tiny{$U_{a}$}};    
\begin{scope}[shift={(11,0)}]         
\draw[gray, very thin] (0.0,0.0) -- (8,0) -- (8,8) -- (0,8) -- (0,0); 	
\draw[gray, very thin] (0.1,0.1) -- (3.9,0.1) -- (3.9,3.9) -- (0.1,3.9) -- (0.1,0.1); 		
\draw[gray, dashed, very thin] (0.3,0.3) -- (1.9,0.3) -- (1.9,1.9) -- (0.3,1.9) -- (0.3,0.3); 		
\draw[gray, very thin] (2.1,0.3) -- (3.7,0.3) -- (3.7,1.9) -- (2.1,1.9) -- (2.1,0.3); 		
\draw[gray, very thin] (2.1,2.1) -- (3.7,2.1) -- (3.7,3.7) -- (2.1,3.7) -- (2.1,2.1); 		
\draw[gray, very thin] (0.3,2.1) -- (1.9,2.1) -- (1.9,3.7) -- (0.3,3.7) -- (0.3,2.1); 	
\draw[gray, very thin] (4.1,0.1) -- (7.9,0.1) -- (7.9,3.9) -- (4.1,3.9) -- (4.1,0.1); 		
\draw[gray, very thin] (0.3+4,0.3) -- (1.9+4,0.3) -- (1.9+4,1.9) -- (0.3+4,1.9) -- (0.3+4,0.3); 		
\draw[gray, very thin] (2.1+4,0.3) -- (3.7+4,0.3) -- (3.7+4,1.9) -- (2.1+4,1.9) -- (2.1+4,0.3); 		
\draw[gray, very thin] (2.1+4,2.1) -- (3.7+4,2.1) -- (3.7+4,3.7) -- (2.1+4,3.7) -- (2.1+4,2.1); 		
\draw[gray, very thin] (0.3+4,2.1) -- (1.9+4,2.1) -- (1.9+4,3.7) -- (0.3+4,3.7) -- (0.3+4,2.1); 	
\draw[gray, very thin] (0.1,4.1) -- (3.9,4.1) -- (3.9,7.9) -- (0.1,7.9) -- (0.1,4.1); 		
\draw[gray, very thin] (0.3,0.3+4) -- (1.9,0.3+4) -- (1.9,1.9+4) -- (0.3,1.9+4) -- (0.3,0.3+4); 		
\draw[gray, very thin] (2.1,0.3+4) -- (3.7,0.3+4) -- (3.7,1.9+4) -- (2.1,1.9+4) -- (2.1,0.3+4); 		
\draw[gray, very thin] (2.1,2.1+4) -- (3.7,2.1+4) -- (3.7,3.7+4) -- (2.1,3.7+4) -- (2.1,2.1+4); 		
\draw[gray, very thin] (0.3,2.1+4) -- (1.9,2.1+4) -- (1.9,3.7+4) -- (0.3,3.7+4) -- (0.3,2.1+4); 	
\draw[gray, very thin] (4.1,4.1) -- (7.9,4.1) -- (7.9,7.9) -- (4.1,7.9) -- (4.1,4.1); 		
\draw[gray, very thin] (0.3+4,0.3+4) -- (1.9+4,0.3+4) -- (1.9+4,1.9+4) -- (0.3+4,1.9+4) -- (0.3+4,0.3+4); 		\draw[gray, very thin] (2.1+4,0.3+4) -- (3.7+4,0.3+4) -- (3.7+4,1.9+4) -- (2.1+4,1.9+4) -- (2.1+4,0.3+4); 		\draw[gray, very thin] (2.1+4,2.1+4) -- (3.7+4,2.1+4) -- (3.7+4,3.7+4) -- (2.1+4,3.7+4) -- (2.1+4,2.1+4); 		\draw[gray, very thin] (0.3+4,2.1+4) -- (1.9+4,2.1+4) -- (1.9+4,3.7+4) -- (0.3+4,3.7+4) -- (0.3+4,2.1+4); 	\fill[black!] (0.7,0.7) circle (0.1cm);	 	
\draw[very thin] (1.5,0.7) circle (0.1cm); 	
\draw[very thin] (1.5,1.5) circle (0.1cm); 	
\fill[black!] (0.7,1.5) circle (0.1cm); 	
\draw[very thin] (0.7+1.8,0.7) circle (0.1cm); 	
\draw[very thin] (1.5+1.8,0.7) circle (0.1cm); 	
\draw[very thin] (1.5+1.8,1.5) circle (0.1cm); 	
\draw[very thin] (0.7+1.8,1.5) circle (0.1cm); 	
\draw[very thin] (0.7,0.7+1.8) circle (0.1cm); 	
\draw[very thin] (1.5,0.7+1.8) circle (0.1cm); 	
\draw[very thin] (1.5,1.5+1.8) circle (0.1cm); 	
\draw[very thin] (0.7,1.5+1.8) circle (0.1cm); 	
\draw[very thin] (0.7+1.8,0.7+1.8) circle (0.1cm); 	
\draw[very thin] (1.5+1.8,0.7+1.8) circle (0.1cm); 	
\draw[very thin] (1.5+1.8,1.5+1.8) circle (0.1cm); 	
\draw[very thin] (0.7+1.8,1.5+1.8) circle (0.1cm); 	
\draw[very thin] (0.7+4,0.7) circle (0.1cm); 	
\draw[very thin] (1.5+4,0.7) circle (0.1cm); 	
\draw[very thin] (1.5+4,1.5) circle (0.1cm); 	
\draw[very thin] (0.7+4,1.5) circle (0.1cm); 	
\draw[very thin] (0.7+1.8+4,0.7) circle (0.1cm); 	
\draw[very thin] (1.5+1.8+4,0.7) circle (0.1cm); 	
\draw[very thin] (1.5+1.8+4,1.5) circle (0.1cm); 	
\draw[very thin] (0.7+1.8+4,1.5) circle (0.1cm); 	
\draw[very thin] (0.7+4,0.7+1.8) circle (0.1cm); 	
\draw[very thin] (1.5+4,0.7+1.8) circle (0.1cm); 	
\draw[very thin] (1.5+4,1.5+1.8) circle (0.1cm); 	
\draw[very thin] (0.7+4,1.5+1.8) circle (0.1cm); 	
\draw[very thin] (0.7+1.8+4,0.7+1.8) circle (0.1cm); 	
\draw[very thin] (1.5+1.8+4,0.7+1.8) circle (0.1cm); 	
\draw[very thin] (1.5+1.8+4,1.5+1.8) circle (0.1cm); 	
\draw[very thin] (0.7+1.8+4,1.5+1.8) circle (0.1cm); 	
\draw[very thin] (0.7,0.7+4) circle (0.1cm); 	
\draw[very thin] (1.5,0.7+4) circle (0.1cm); 	
\draw[very thin] (1.5,1.5+4) circle (0.1cm); 	
\draw[very thin] (0.7,1.5+4) circle (0.1cm); 	
\draw[very thin] (0.7+1.8,0.7+4) circle (0.1cm); 	
\draw[very thin] (1.5+1.8,0.7+4) circle (0.1cm); 	
\draw[very thin] (1.5+1.8,1.5+4) circle (0.1cm); 	
\draw[very thin] (0.7+1.8,1.5+4) circle (0.1cm); 	
\draw[very thin] (0.7,0.7+1.8+4) circle (0.1cm); 	
\draw[very thin] (1.5,0.7+1.8+4) circle (0.1cm); 	
\draw[very thin] (1.5,1.5+1.8+4) circle (0.1cm); 
\draw[very thin] (0.7,1.5+1.8+4) circle (0.1cm); 
\draw[very thin] (0.7+1.8,0.7+1.8+4) circle (0.1cm); 	
\draw[very thin] (1.5+1.8,0.7+1.8+4) circle (0.1cm); 	
\draw[very thin] (1.5+1.8,1.5+1.8+4) circle (0.1cm); 	
\draw[very thin] (0.7+1.8,1.5+1.8+4) circle (0.1cm); 	
\draw[very thin] (0.7+4,0.7+4) circle (0.1cm); 	
\draw[very thin] (1.5+4,0.7+4) circle (0.1cm); 	
\draw[very thin] (1.5+4,1.5+4) circle (0.1cm); 	
\draw[very thin] (0.7+4,1.5+4) circle (0.1cm); 	
\draw[very thin] (0.7+1.8+4,0.7+4) circle (0.1cm); 	
\draw[very thin] (1.5+1.8+4,0.7+4) circle (0.1cm); 	
\draw[very thin] (1.5+1.8+4,1.5+4) circle (0.1cm); 	
\draw[very thin] (0.7+1.8+4,1.5+4) circle (0.1cm); 	
\draw[very thin] (0.7+4,0.7+1.8+4) circle (0.1cm); 	
\draw[very thin] (1.5+4,0.7+1.8+4) circle (0.1cm); 	
\draw[very thin] (1.5+4,1.5+1.8+4) circle (0.1cm); 	
\draw[very thin] (0.7+4,1.5+1.8+4) circle (0.1cm); 	
\draw[very thin] (0.7+1.8+4,0.7+1.8+4) circle (0.1cm); 	
\draw[very thin] (1.5+1.8+4,0.7+1.8+4) circle (0.1cm); 	
\draw[very thin] (1.5+1.8+4,1.5+1.8+4) circle (0.1cm); 	
\draw[very thin] (0.7+1.8+4,1.5+1.8+4) circle (0.1cm); 
\node[text width=2cm] at (0.8,4) {\tiny{$\cdots \, U_{b}$}}; 
\end{scope} 
\end{scope}         
 \end{tikzpicture} 
\caption{\small The transformation $\psi_{k} \to \psi_{k}'$. The blocks $\tilde{U}_{a}$ and 
$\tilde{U}_{b}$ are drawn with a dashed outline. Solid black circles represent elements of 
$\psi_{k}$ (i.e., vertices on which the configuration $\psi_{k}$ takes the value $+1$), while 
blank circles are elements of $\overline{\psi_{k}}$.} 
\label{fig:mapPsi} 
\end{figure}
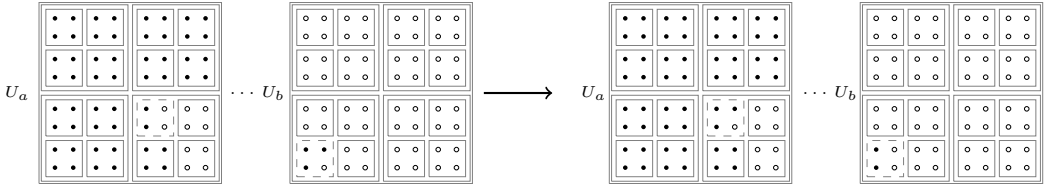

\bigskip\noindent
\underline{Non-decreasing case}: Given a configuration $\sigma$, we again apply 
a series of transformations involving switching and re-arranging of blocks in $\sigma$ 
(all of which are non-increasing in $\cH$) and ending with the configuration 
$\gamma_{\left|\sigma\right|}^{\mathrm{MI}}$. Firstly, through a series of re-arrangements, we 
may assume that $\sigma$ is \emph{left-aligned}: for any $0\leq k\leq n-1$ and any 
$k$-blocks $U_{i}$ and $U_{i+1}$ contained in the same $\left(k+1\right)$-block (a 
lower index on a block implies that it contains vertices that also have a lower index), 
we have $\left|U_{i}\cap\sigma\right|\geq\left|U_{i+1}\cap\sigma\right|$. It is clear that 
these re-arrangements are $\cH$-invariant.

Start with $k=n-1$ and check whether $\left|U_{1}\cap\sigma\right|\geq\left|U_{N}\cap
\sigma\right|+2$. If so, then switch the value at $v_{1}\in U_{1}$ (equal to $+1$) with the value at 
$v_{N^{n}}\in U_{N}$ (equal to $-1$). Denote the result of this switch by $\sigma'$. 
From Lemma~\ref{lem:switching} we have 
\begin{equation}
\cH\left(\sigma'\right)-\cH\left(\sigma\right)=\sum_{i=1}^{n-1}2
\left(J_{i}-J_{n}\right)\left[\left|A_{i}\right|-\left|C_{i}\right|\right]\left(0-1\right).
\end{equation}
Since $\sigma$ is left-aligned, we know that $\left|A_{n-1}\right|\leq\left|C_{n-1}\right|$.
Inductively it follows that $\left|A_{i}\right|\leq\left|C_{i}\right|$ for all $1\leq i\leq n-1$. 
Since, by the monotonicity, we also have $J_{i}-J_{n}\leq0$ for all $1\leq i\leq n-1$, it 
follows that $\cH\left(\sigma'\right)-\cH\left(\sigma\right)\leq0$. 

Next re-arrange $\sigma'$ to make it left-aligned (at no cost in $\cH$), and repeat 
this construction until $\left|U_{N}\cap\sigma\right|\leq\left|U_{1}\cap\sigma\right|\leq
\left|U_{N}\cap\sigma\right|+1$. Note that this takes a finite number of steps. Once this 
is accomplished, resume by recursively repeating the construction for $k=n-2$, within
each $n-1$-block, etc. This terminates with $\gamma_{\left|\sigma\right|}^{\mathrm{MI}}$.
\end{proof}


\subsection{Proof of Theorem \ref{thrm:hyp-H}}
\label{S2.3}

The proof is analogous to that given in \cite[Section 17.3.1]{BdH15}, and relies on the 
existence of a uniformly optimal path.

\begin{proof}
Let $\sigma\in\Omega\backslash\left\lbrace\boxminus,\boxplus\right\rbrace$. Find two vertices $v_{i},v_{j}\in  \Lambda_{N}^{n}$ such 
that $v_{i}\in\sigma$ and $v_{j}\notin\sigma$. By translation invariance, we can 
construct a uniformly optimal reference path $\gamma$ that is a translation (via some $d$-preserving bijection of $\Lambda_{N}^{n}$) of the path 
$\gamma^{\mathrm{MD}}$ in the non-increasing case and $\gamma^{\mathrm{MI}}$ in the non-decreasing case, and that satisfies 
$\gamma\left(1\right)=v_{j}$ and $\gamma\left(2\right)=v_{i}$. Note that in both cases 
\begin{equation}
\begin{aligned}
&\sigma\cap\gamma_{1}=\boxminus,\\
&1\leq\left|\sigma\cap\gamma_{k}\right|<k \quad \forall\, k\geq2.
\label{eq:intersectedpath}
\end{aligned}
\end{equation}
Furthermore, 
\begin{equation}
\cH\left(\sigma\cup\gamma_{1}\right)-\cH\left(\sigma\right)
=\sum_{\substack{w\neq v_{j}\\w\notin\sigma}}
J_{d\left(w,v_{j}\right)}-\sum_{\substack{w\neq v_{j}\\w\in\sigma}}
J_{d\left(w,v_{j}\right)}-h<\sum_{\substack{w\neq v_{j}}}
J_{d\left(w,v_{j}\right)}-h=\cH\left(\gamma_{1}\right)-\cH\left(\boxminus\right)
\label{eq:refpathcompare}
\end{equation}
where we use the fact that $J_{i}>0$, $1\leq i \leq n$. Similarly, if we let $k'=\min\left\{k\in\N\colon\,
\cH\left(\gamma_{k}\right) \leq\cH\left(\boxminus\right)\right\}$, then by (A1) it follows 
that $k'\geq 2$, and so for $2\leq k\leq k'$, 
\begin{equation}
\begin{aligned}
\cH\left(\sigma\cup\gamma_{k}\right)-\cH\left(\sigma\right) 
&= \sum_{\substack{w\in\gamma_{k}\backslash\sigma}}
\sum_{v\notin\sigma\cup\gamma_{k}}J_{d\left(w,v\right)}
-\sum_{w\in\gamma_{k}\backslash\sigma}\sum_{\substack{v\in\sigma}}
J_{d\left(w,v\right)}-h\left|\gamma_{k}\backslash\sigma\right|\\
& \leq \sum_{\substack{w\in\gamma_{k}\backslash\sigma}}
\sum_{v\notin\gamma_{k}}J_{d\left(w,v\right)}
-\sum_{w\in\gamma_{k}\backslash\sigma}\sum_{\substack{v\in\sigma\cap\gamma_{k}}}
J_{d\left(w,v\right)}-h\left|\gamma_{k}\backslash\sigma\right|\\
& = \cH\left(\gamma_{k}\right)-\cH\left(\gamma_{k}\cap\sigma\right) 
\leq \cH\left(\gamma_{k}\right)-\cH\left(\gamma_{\left|\gamma_{k}\cap\sigma\right|}\right)
<\cH\left(\gamma_{k}\right)-\cH\left(\boxminus\right),
\end{aligned}
\label{eq:refpath}
\end{equation}
where the last inequality follows from the fact that $\left|\gamma_{k}\cap\sigma\right|<k$  (by 
(\ref{eq:intersectedpath})) because $\gamma$ is uniformly optimal. Taking $k=k'$, 
we get from (\ref{eq:refpath}) that $\cH\left(\sigma\cup\gamma_{k'}\right)<\cH\left(\sigma\right)$, 
and hence that the stability level $V_{\sigma}$ of $\sigma$ defined in ~\ref{eq:Stability} satisfies
\begin{equation}
V_{\sigma}<\max_{1\leq k\leq k'}
\left\{ 0,\left(\cH\left(\gamma_{k}\right)-\cH\left(\boxminus\right)\right)\right\} \leq\Gamma^{\star}.
\end{equation}
This settles the claim because $V_{\boxminus}=\Gamma^{\star}$.
\end{proof}

\begin{remark}
\label{rem:no local minima} 
{\rm Note that if (A1) is not satisfied, or in other words if 
\begin{equation}
\left(1-\frac{1}{N}\right)\sum_{i=1}^{n}J_{i}N^{i}\leq h,
\label{eq:nometastability}
\end{equation}
then it follows from the inequality in ~\ref{eq:refpathcompare} (note that without (A1) this is not a strict inequality) that
\begin{equation}
\cH\left(\sigma\cup\gamma_{1}\right)-\cH\left(\sigma\right) \leq \cH\left(\gamma_{1}\right)-\cH\left(\boxminus\right) \leq 0,
\end{equation}
and hence $\sigma$ is not a local minimum of $\cH$. Since $\sigma$ is arbitrary, it follows that $\cH$ has no local minima. This again illustrates why assumption (A1) is needed.}
\end{remark}

\section{Non-increasing pair potential}
\label{S3}

In Section~\ref{S3.1} we prove a concavity property for the energy profile 
along the reference path inside hierarchical blocks (Lemma~\ref{lem:concave} 
below). In Section~\ref{S3.2} we show that the flucuations of the energy profile 
inside a hierarchical block are relatively small (Lemma~\ref{lem:shift-increment} 
below) and use this to prove Theorem~\ref{thm:Gamma-case1} in the hierarchical 
mean-field limit (Corollary~\ref{cor:Gammastar1} and Remark~\ref{mhat0} below). 
In Section~\ref{S3.3} we identify the critical configurations and check that 
the conditions in Lemma~\ref{lem:variational-lemma} are satisfied 
(Lemmas~\ref{lem:locmaxima}--\ref{lem:circumventiing-path} below). We use 
these results in Section~\ref{S3.4} to prove Theorem~\ref{thm:Kstar-case1} and 
in Section~\ref{S3.5} to prove Theorems~\ref{thm:Cstar-case1}--\ref{thm:Cstar-case2}. 


\subsection{Concavity along the reference path}
\label{S3.1}

From now on we will only consider the case where $\vec{J}$ is non-increasing. 
We will drop the superscript $\mathrm{MD}$ and denote the uniformly optimal path 
$\gamma^{\mathrm{MD}}$ defined in Section~\ref{S2} by $\gamma$. We observe 
that 
\begin{eqnarray}
\cH\left(\gamma_{k}\right)-\cH\left(\boxminus\right) 
& = & \sum_{i=1}^{k}\sum_{j=k+1}^{N^{n}}J_{d\left(v_{i},v_{j}\right)}-hk,\quad 1\leq k \leq N^{n},
\label{eq:HMDheight}
\end{eqnarray}
and it is not difficult to show that (\ref{eq:HMDheight}) can be written as
\begin{equation}
\begin{aligned}
\cH\left(\gamma_{k}\right)-\cH\left(\boxminus\right)
&=\sum_{i=1}^{n}J_{i}N^{i-1}\Bigg(k\,\mathrm{mod}\,N^{i}
\left(N-\left\lfloor \frac{k}{N^{i-1}}\right\rfloor\mathrm{mod}\,N-1\right)\\
&\qquad\qquad +\left(N^{i-1}-k\,\mathrm{mod}\,N^{i-1}\right)
\left\lfloor \frac{k}{N^{i-1}}\right\rfloor \mathrm{mod}\,N\Bigg)-hk.
\label{eq:HMDheight2}
\end{aligned}
\end{equation}
Hence the communication height between $\boxminus$ and $\boxplus$ is given by 
\begin{equation}
\begin{aligned}
\Gamma^{\star}
&=\max_{1\leq k \leq N^{n}}\Bigg\{ \sum_{i=1}^{n}J_{i}N^{i-1}\Bigg(k\,\mathrm{mod}\,N^{i}
\left(N-\left\lfloor \frac{k}{N^{i-1}}\right\rfloor \mathrm{mod}\,N-1\right)\\
&\qquad\qquad \qquad\qquad+\left(N^{i-1}-k\,\mathrm{mod}\,N^{i-1}\right)
\left\lfloor \frac{k}{N^{i-1}}\right\rfloor \mathrm{mod}\,N\Bigg)-hk\Bigg\}. 
\label{eq:gammastarMD}
\end{aligned}
\end{equation}
However, it is not clear from (\ref{eq:gammastarMD}) how $\Gamma^{\star}$ and the 
energy values along the path $\gamma$ depend on $\vec{J}$. We will therefore derive
$\Gamma^{\star}$ in a different way, obtaining a more insightful expression. 

Note that if $j<k$, then
\begin{equation}
\cH\left(\gamma_{k}\right)-\cH\left(\gamma_{j}\right)
=\sum_{i=j+1}^{k}\left(\sum_{s=k+1}^{N^{n}}J_{d\left(v_{i},v_{s}\right)}
-\sum_{s=1}^{j}J_{d\left(v_{i},v_{s}\right)}\right)-h\left(k-j\right).
\label{eq:genincrement}
\end{equation}
In particular, we observe that, for any $0\leq a\leq n-1$,
\begin{equation}
\cH\left(\gamma_{N^{a}}\right)-\cH\left(\gamma_{0}\right)
=\cH\left(\gamma_{N^{a}}\right)-\cH\left(\boxminus\right)
=\left(N-1\right)N^{a}\sum_{i=a}^{n-1}N^{i}J_{i+1}-hN^{a}.
\label{eq:base-increment}
\end{equation}
We are interested in the global maxima of the energy profile. In order to 
locate where these occur, we analyse the geometric properties of the 
sequence $\{\cH(\gamma_{i})\}_{i=0}^{N^n}$. The following result describes 
concave subsequences that appear in $\{\cH(\gamma_{i})\}_{i=0}^{N^n}$ 
(see Fig.~\ref{figplot-0}) and that will be used repeatedly in Section~\ref{S4} 
to locate the global maxima of the energy landscape.

\begin{lemma}
\label{lem:concave}
Suppose that $k=j+N^{a}$ and $l=k+N^{a}$ for some $a\geq0$ 
and $j\geq0$. Suppose that the three vertices $v_{j}$, $v_{k}$ 
and $v_{l}$ all lie in the same $\left(a+1\right)$-block. Then 
\begin{equation}
\left(\cH\left(\gamma_{k}\right)-\cH\left(\gamma_{j}\right)\right)
-\left(\cH\left(\gamma_{l}\right)-\cH\left(\gamma_{k}\right)\right)
=2J_{a+1}N^{2a}.
\end{equation}
\end{lemma}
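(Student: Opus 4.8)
The plan is to compute both energy increments directly from the general increment formula \eqref{eq:genincrement} and subtract. Since $k=j+N^a$ and $l=k+N^a=j+2N^a$, the block of indices $\{j+1,\dots,k\}$ and the block $\{k+1,\dots,l\}$ are two consecutive $a$-blocks sitting inside the common $(a+1)$-block containing $v_j,v_k,v_l$. I would write
\begin{equation}
\cH(\gamma_k)-\cH(\gamma_j)=\sum_{i=j+1}^{k}\Big(\sum_{s=k+1}^{N^n}J_{d(v_i,v_s)}-\sum_{s=1}^{j}J_{d(v_i,v_s)}\Big)-hN^a,
\label{eq:plan-first}
\end{equation}
\begin{equation}
\cH(\gamma_l)-\cH(\gamma_k)=\sum_{i=k+1}^{l}\Big(\sum_{s=l+1}^{N^n}J_{d(v_i,v_s)}-\sum_{s=1}^{k}J_{d(v_i,v_s)}\Big)-hN^a,
\label{eq:plan-second}
\end{equation}
so the $-hN^a$ terms cancel in the difference and I am left with a purely combinatorial identity about the pair potentials.

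The key observation is a symmetry between the two $a$-blocks $I_1=\{j+1,\dots,k\}$ and $I_2=\{k+1,\dots,l\}$: because they are the two halves (in the $N$-ary sense, two distinct $a$-blocks inside one $(a+1)$-block) of the same $(a+1)$-block, there is a natural index-shift bijection $i\mapsto i+N^a$ from $I_1$ to $I_2$ that preserves hierarchical distances to every vertex outside the $(a+1)$-block, and also preserves distances within each block. I would split each inner sum over $s$ according to whether $v_s$ lies outside the common $(a+1)$-block, inside it but outside $I_1\cup I_2$ (impossible here, since $I_1\cup I_2$ is exactly the $(a+1)$-block of size $2N^a=N^{a+1}$ — wait, $2N^a\ne N^{a+1}$ unless $N=2$), so more carefully: inside the $(a+1)$-block but in neither $I_1$ nor $I_2$, in $I_1$, or in $I_2$. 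The contributions from $s$ outside the $(a+1)$-block cancel between \eqref{eq:plan-first} and \eqref{eq:plan-second} after applying the shift bijection, since such distances are $\ge a+1$ and depend only on which $(a+1)$-block $v_s$ lies in. Likewise the contributions from $s$ in the $(a+1)$-block but outside $I_1\cup I_2$ involve distance exactly $a+1$ to every $i\in I_1\cup I_2$, and these also cancel. What remains is the interaction of $I_1$ and $I_2$ with each other and with themselves: in \eqref{eq:plan-first} the relevant terms are $+\sum_{i\in I_1}\sum_{s\in I_2}J_{d(v_i,v_s)}$ (the $s>k$ part restricted to $I_2$) and $0$ from $\sum_{s=1}^j$ restricted to $I_1\cup I_2$ (empty); in \eqref{eq:plan-second} they are $0$ from $\sum_{s>l}$ and $-\sum_{i\in I_2}\sum_{s\in I_1}J_{d(v_i,v_s)}$ from the $\sum_{s=1}^{k}$ part restricted to $I_1$.

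Therefore the difference reduces to $2\sum_{i\in I_1}\sum_{s\in I_2}J_{d(v_i,v_s)}$. Since every $v_i\in I_1$ and $v_s\in I_2$ lie in distinct $a$-blocks inside the same $(a+1)$-block, $d(v_i,v_s)=a+1$ for all such pairs, so this double sum is $|I_1|\,|I_2|\,J_{a+1}=N^a\cdot N^a\cdot J_{a+1}=J_{a+1}N^{2a}$, giving $2J_{a+1}N^{2a}$ as claimed. The main obstacle, and the only place requiring genuine care, is the bookkeeping in the cancellation argument: one must verify precisely that every term not involving an $I_1$--$I_2$ cross pair cancels, which hinges on the shift bijection $I_1\to I_2$ being an isometry for distances to all vertices outside $I_1\cup I_2$ (true because such distances are $\ge a+1$, hence determined by the $(a+1)$-block membership, which the shift preserves) and that the $\sum_{s=1}^{j}$ and $\sum_{s=l+1}^{N^n}$ ranges contribute nothing relevant. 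Once this is set up cleanly, the rest is immediate.
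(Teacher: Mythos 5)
Your overall strategy --- subtract the two increments from \eqref{eq:genincrement}, cancel everything via the shift $i\mapsto i+N^{a}$, and identify the surplus as the cross-interaction between $I_{1}=\{j+1,\dots,k\}$ and $I_{2}=\{k+1,\dots,l\}$ --- is the same idea as the paper's. The problem is that your implementation rests on the claim that $I_{1}$ and $I_{2}$ are two distinct $a$-blocks, which requires $j\equiv 0\pmod{N^{a}}$. The hypothesis of the lemma only asks that $v_{j},v_{k},v_{l}$ lie in the same $(a+1)$-block and does not force this alignment. Take $N=3$, $n=2$, $a=1$, $j=1$, $k=4$, $l=7$: the $1$-blocks are $\{1,2,3\},\{4,5,6\},\{7,8,9\}$, so $I_{1}=\{2,3,4\}$ and $I_{2}=\{5,6,7\}$ each straddle two $1$-blocks. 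Then $d(v_{4},v_{5})=1\neq a+1$, your claim that every cross-pair is at distance $a+1$ fails, and your final formula would give $2\sum_{i\in I_{1}}\sum_{s\in I_{2}}J_{d(v_i,v_s)}=2(7J_{2}+2J_{1})$, which differs from the correct value $2J_{2}N^{2}=18J_{2}$ unless $J_{1}=J_{2}$. A direct computation confirms that the lemma's conclusion ($18J_{2}$) does hold in this example, so it is your intermediate cancellations that break, not the statement: for instance the vertex $v_{1}$, which lies in the $(a+1)$-block but outside $I_{1}\cup I_{2}$, is at distance $1$, not $a+1=2$, from $v_{2}\in I_{1}$, so the "distance exactly $a+1$" cancellation you invoke for such vertices is false.

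This is not a cosmetic restriction: the paper invokes the lemma in non-aligned situations, e.g.\ in the proof of Proposition~\ref{prop:hs-max-location}, where concavity is needed along progressions $\{z_{0}+tN^{r}\}_{t=0}^{N-1}$ whose base point $z_{0}$ is generally not a multiple of $N^{r}$. The paper's own proof avoids the issue by running the same shift argument vertex by vertex rather than block by block: for each $1\leq s\leq N^{a}$ it compares the distance profile of $v_{j+s}$ (relative to the thresholds $j$ and $k$) with that of $v_{k+s}$ (relative to $k$ and $l$), and shows the two profiles agree at every distance $b\neq a+1$ while differing by exactly $N^{a}$ at distance $a+1$, once in the "ahead" sum and once in the "behind" sum; summing over $s$ gives $2J_{a+1}N^{2a}$. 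That count is insensitive to whether $j$ is a multiple of $N^{a}$. Your argument is a correct proof of the aligned case only; to prove the lemma as stated you need to replace the block-level cancellation with this per-vertex bookkeeping.
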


\begin{proof}
Note that, for any $1\leq s\leq N^{a}$, $b\geq1$, $b\neq a+1$,
\begin{equation}
\left|\left\{t>j+N^{a}\colon\, d\left(v_{j+s},v_{t}\right)=b\right\} \right|
=\left|\left\{t>k+N^{a}\colon\, d\left(v_{k+s},v_{t}\right)=b\right\} \right|,
\end{equation}
while 
\begin{equation}
\left|\left\{t>j+N^{a}\colon\, d\left(v_{j+s},v_{t}\right)=a+1\right\} \right|
= \left|\left\{t>k+N^{a}\colon\, d\left(v_{k+s},v_{t}\right)=a+1\right\} \right|+N^{a}.
\end{equation}
Similarly, for $b\geq1$, $b\neq a+1$, 
\begin{equation}
\left|\left\{t\leq j\colon\, d\left(v_{j+s},v_{t}\right)=b\right\} \right|
=\left|\left\{t\leq k\colon\, d\left(v_{k+s},v_{t}\right)=b\right\} \right|,
\end{equation}
while 
\begin{equation}
\left|\left\{t\leq j\colon\, d\left(v_{j+s},v_{t}\right)=a+1\right\} \right|+N^{a}
=\left|\left\{t\leq k\colon\, d\left(v_{k+s},v_{t}\right)=a+1\right\} \right|.
\end{equation}
Hence, by rewriting the sum in (\ref{eq:genincrement}), we get 
\begin{equation}
\begin{aligned}
&\left(\cH\left(\gamma_{k}^{\mathrm{MD}}\right)-\cH\left(\gamma_{j}^{\mathrm{MD}}\right)\right)
-\left(\cH\left(\gamma_{l}^{\mathrm{MD}}\right)-\cH\left(\gamma_{k}^{\mathrm{MD}}\right)\right)\\
&= \left(\sum_{s=1}^{N^{a}}\sum_{b=1}^{n}J_{b}
\left|\left\{ t>j+N^{a}\colon\, d\left(v_{j+s},v_{t}\right)=b\right\} \right|-\sum_{s=1}^{N^{a}}
\sum_{b=1}^{n}J_{b}\left|\left\{ t\leq j\colon\, d\left(v_{j+s},v_{t}\right)=b\right\} \right|\right)\\
&-\left(\sum_{s=1}^{N^{a}}\sum_{b=1}^{n}J_{b}\left|\left\{ t>k+N^{a}:\, 
d\left(v_{k+s},v_{t}\right)=b\right\} \right|-\sum_{s=1}^{N^{a}}
\sum_{b=1}^{n}J_{b}\left|\left\{ t\leq k\colon\, d\left(v_{k+s},v_{t}\right)=b\right\} \right|\right)\\
& = 2J_{a+1}N^{2a}.
\label{eq:difference-increments}
\end{aligned}
\end{equation}
This shows that the energy profile along the path $\gamma$ is made up of periodic 
segments that are \emph{concave} (see Definition~\ref{def-symmetric&concave} below). 
\end{proof}

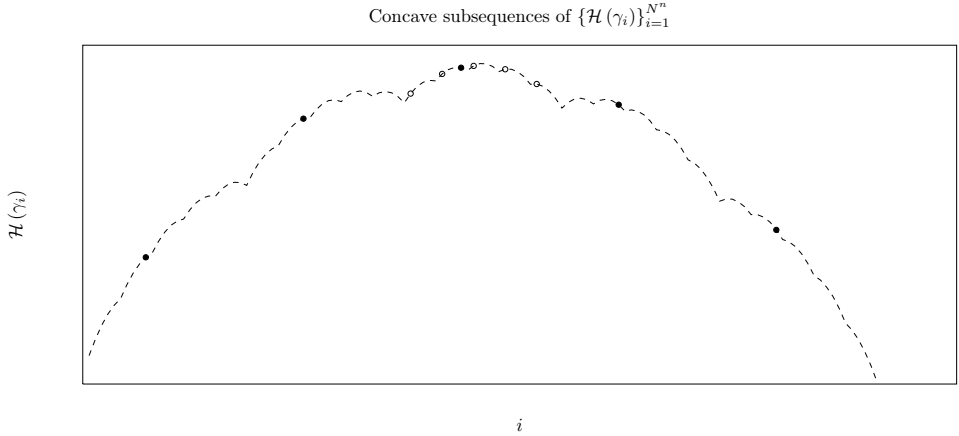
\begin{figure}[htbp]
\begin{tikzpicture} [scale = 0.7]
\begin{axis}[name=plot1,height=8cm,width=18cm,     	
title = {Concave subsequences of $\left\{\cH\left(\gamma_{i}\right)\right\}_{i=1}^{N^n}$}, xlabel = {$i$}, 
ylabel = {$\cH\left(\gamma_{i}\right)$}, ymin = -110, ymax = 1200, xmin = 0, ticks=none,]
\addplot[very thin, dashed, black] table {concavedata.txt};
\draw[fill,black] (axis cs:10,380) circle (0.05cm);
\draw[fill,black] (axis cs:35,916) circle (0.05cm);;
\draw[fill,black] (axis cs:60,1113) circle (0.05cm);
\draw[fill,black] (axis cs:85,970)circle (0.05cm);
\draw[fill,black] (axis cs:110,486)circle (0.05cm);
\draw[black] (axis cs:52,1013) circle (0.05cm);
\draw[black] (axis cs:57,1089) circle (0.05cm);
\draw[black] (axis cs:62,1120) circle (0.05cm);
\draw[black] (axis cs:67,1107) circle (0.05cm);
\draw[black] (axis cs:72,1050) circle (0.05cm);
\end{axis}
\end{tikzpicture}
\caption{\small The solid circles represent a periodic subsequence of 
$\left\{\cH\left(\gamma_{i}\right)\right\}_{i=0}^{N^{n}}$ of period $N^{n-1}$, 
while the hollow circles represent points of period $N^{n-2}$ that are 
contained within the same $\left(n-1\right)$-block.}
\label{figplot-0}
\end{figure}


\subsection{Hierarchical mean-field limit}
\label{S3.2}

The hierarchical mean-field limit corresponds to letting the hierarchical dimension 
$N$ tend to infinity while keeing the hierarchical height $n$ fixed. We will show 
that, under certain assumptions on the rate of decay of the sequence 
$\left\{J_{i}\right\}_{i=1}^{n}$, in the hierarchical mean-field limit the sequence 
$\{\cH(\gamma_{i})\} _{i=0}^{N^n}$ attains its global maximum at a location that 
is close to a multiple (by some factor in $\left\{1,\ldots,N\right\}$) of the largest 
block size where the corresponding configuration has energy larger than 
$\cH\left(\boxminus\right)$. We define this explicitly as follows. 

Recall from (\ref{eq:defm}) that 
\begin{eqnarray}
\hat{m} & = & \max\left\{ 0\leq m\leq n-1\colon\,\left(1-\frac{1}{N}\right)
\sum_{i=m+1}^{n}J_{i}N^{i}>h\right\} \nonumber \\
& = & \max\left\{ 0\leq m\leq n-1\colon\,\cH\left(\gamma_{N^{m}}\right)
\geq\cH\left(\boxminus\right)\right\}, 
\label{eq:defm2}
\end{eqnarray}
where the second line follows from (\ref{eq:base-increment}).  

From Lemma~\ref{lem:concave} it follows that, for all $M>\hat{m}$ and all $1\leq s\leq N-1$, 
$\cH\left(\gamma_{sN^{M}}\right)<\cH\left(\boxminus\right)$. Note also that, by 
Lemma~\ref{lem:concave} and equation (\ref{eq:base-increment}), we define 
\begin{eqnarray}
\alpha_{\hat{m},s} 
&=&\cH\left(\gamma_{sN^{\hat{m}}}\right)-\cH\left(\boxminus\right) 
\nonumber\\
& = & \sum_{i=0}^{s-2}\left(\cH(\gamma_{\left(s-i\right)N^{\hat{m}}})
-\cH(\gamma_{\left(s-i-1\right)N^{\hat{m}}})\right)
+\cH\left(\gamma_{N^{\hat{m}}}\right)-\cH\left(\gamma_{0}\right)
\nonumber \\
& = & s\left[\big(\cH\left(\gamma_{N^{\hat{m}}}\right)-\cH\left(\gamma_{0}\right)\big)
-\left(s-1\right)J_{\hat{m}+1}N^{2\hat{m}}\right]
\nonumber \\
& = & sN^{\hat{m}}\left[\left(1-\frac{1}{N}\right)
\sum_{k=\hat{m}}^{n-1}J_{k+1}N^{k+1}-h-\left(s-1\right)J_{\hat{m}+1}N^{\hat{m}}\right].
\label{eq:alphavalues}
\end{eqnarray}
Increments of values given by (\ref{eq:alphavalues}) are equal to
\begin{eqnarray}
\alpha_{\hat{m},s+1}-\alpha_{\hat{m},s} 
& = & N^{\hat{m}}\left[\left(1-\frac{1}{N}\right)\sum_{k=\hat{m}}^{n-1}J_{k+1}N^{k+1}
-h-2sJ_{\hat{m}+1}N^{\hat{m}}\right].
\label{eq:alphaincrement}
\end{eqnarray}
By the concavity implied by Lemma~\ref{lem:concave}, we have that $\alpha_{\hat{m},s+1}
-\alpha_{\hat{m},s}\leq0$ if and only if $s\geq\hat{s}$, where $\hat{s}$ is defined 
in (\ref{eq:sineq}). Under Assumption (A1)(a) it is easy to see that the sequence 
$\left\{ \cH\left(\gamma_{sN^{n-1}}\right)-\cH\left(\boxminus\right)\right\} _{s=0}^{N}$ attains 
a unique maximum at  $1\leq\left\lceil \hat{s}\right\rceil <N$, with value 
\begin{eqnarray}
\cH(\gamma_{\left\lceil \hat{s}\right\rceil N^{\hat{m}}})
-\cH\left(\boxminus\right) 
& = & \left\lceil \hat{s}\right\rceil \left(2\hat{s}
-\left\lceil \hat{s}\right\rceil +1\right)J_{\hat{m}+1}N^{2\hat{m}}.
\label{eq:maxHsubpath}
\end{eqnarray}
Furthermore, we claim that for any $N<t\leq N^{n-\hat{m}}$, $\cH(\gamma_{tN^{\hat{m}}})
<\cH(\gamma_{\left\lceil \hat{s}\right\rceil N^{\hat{m}}})$. Indeed, define $\bar{d}=
d(v_{\left\lceil \hat{s}\right\rceil N^{\hat{m}}},v_{tN^{\hat{m}}})>\hat{m}$, and note that 
$tN^{\hat{m}}=\eta N^{\dot{d}}+sN^{\hat{m}}$ for some $0\leq\eta,s<N$. Hence 
\begin{equation}
\begin{aligned}
\cH\left(\gamma_{tN^{\hat{m}}}\right)-\cH\left(\boxminus\right)
& = \cH(\gamma_{\eta N^{\bar{d}}})-\cH\left(\boxminus\right)
+\cH\left(\gamma_{tN^{\hat{m}}}\right)-\cH(\gamma_{\eta N^{\bar{d}}})\\
& \leq \cH\left(\gamma_{tN^{\hat{m}}}\right)-\cH(\gamma_{\eta N^{\bar{d}}})\\
& = sN^{\hat{m}}\left[\left(1-\frac{1}{N}\right)\sum_{k=\hat{m}+1}^{n}J_{k}N^{k}
-h-\left(s-1\right)J_{\hat{m}+1}N^{\hat{m}}-\eta J_{\bar{d}+1}N^{\dot{d}}\right]\\
& < \cH\left(\gamma_{sN^{\hat{m}}}\right)-\cH\left(\boxminus\right)
\leq\cH(\gamma_{\left\lceil \hat{s}\right\rceil N^{\hat{m}}})-\cH\left(\boxminus\right),
\end{aligned}
\label{eq:allatlevelm}
\end{equation}
where the first inequality follows from the definition of $\hat{m}$ and the fact that $\bar{d}>\hat{m}$. 

We next show that fluctuations in energy $\left|\cH\left(\gamma_{i}\right)-\cH\left(\gamma_{j}\right)
\right|$ for $\left|i-j\right|\leq N^{\hat{m}}$ are relatively small compared to  
$\cH\left(\gamma_{\left\lceil \hat{s}\right\rceil }\right)-\cH\left(\boxminus\right)$.

\begin{lemma}
\label{lem:shift-increment}
Let $k=\sum_{i=0}^{s} a_{i}N^{i}$ with $0\leq a_{i}\leq N-1$, and let $M=\sum_{i=t}^{n-1} a_{i}N^{i}$ with $0\leq b_{i}\leq N-1$ and $n-1\geq t>s$. Then 
\begin{equation}
\cH\left(\gamma_{M+k}\right)-\cH\left(\gamma_{M}\right)
\leq\cH\left(\gamma_{k}\right)-\cH\left(\boxminus\right)
\end{equation}
and 
\begin{equation}
\left|\cH\left(\gamma_{M+k}\right)-\cH\left(\gamma_{M}\right)\right|
\leq\left|\cH\left(\gamma_{k}\right)-\cH\left(\boxminus\right)\right|+hk.
\end{equation}
\end{lemma}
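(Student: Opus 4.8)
The plan is to decompose the energy increment $\cH(\gamma_{M+k})-\cH(\gamma_M)$ according to the hierarchical distance between the newly flipped vertices $\{v_{M+1},\ldots,v_{M+k}\}$ and all other vertices, and to compare term by term with the analogous decomposition of $\cH(\gamma_k)-\cH(\boxminus)$. The key structural point is that since $M$ is a multiple of $N^{s+1}$ (because $k=\sum_{i=0}^s a_iN^i<N^{s+1}$ and $M=\sum_{i=t}^{n-1}b_iN^i$ with $t>s$), the vertices $v_{M+1},\ldots,v_{M+k}$ occupy exactly the ``first $k$ positions'' inside a fresh $(s+1)$-block, just as $v_1,\ldots,v_k$ do inside the first $(s+1)$-block. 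Hence, for each $1\le j\le k$ and each level $b\le s+1$, the number of vertices at distance $b$ from $v_{M+j}$ that lie among $\{v_{M+1},\ldots,v_{M+k}\}$ is the same as the number of vertices at distance $b$ from $v_j$ lying among $\{v_1,\ldots,v_k\}$; likewise the ``upward'' part (distances $>s+1$) is purely a function of which $(s+1)$-block we sit in.

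Concretely, writing $\cH(\gamma_{M+k})-\cH(\gamma_M)=\sum_{j=1}^{k}\big(\sum_{v\notin\gamma_{M+k}}J_{d(v_{M+j},v)}-\sum_{v\in\gamma_M}J_{d(v_{M+j},v)}-h\big)$ as in \eqref{eq:genincrement}, I would split each inner term into the contribution from levels $\le s+1$ (the ``local'' part, identical to the corresponding term in $\cH(\gamma_k)-\cH(\boxminus)$) and the contribution from levels $>s+1$. For $\cH(\gamma_k)-\cH(\boxminus)$ the high-level part is $\sum_{j=1}^k\sum_{i=s+2}^{n}J_i(N-1)N^{i-1}$, i.e.\ every high-level neighbour is currently $-1$ and contributes positively. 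For $\cH(\gamma_{M+k})-\cH(\gamma_M)$ the high-level part is smaller: some of the $(s+1)$-blocks at distance $>s+1$ from $v_{M+j}$ have already been flipped to $+1$ (they are contained in $\gamma_M$), so their contribution is negative rather than positive, and in any case is bounded above by the all-$-1$ value. This gives the first inequality $\cH(\gamma_{M+k})-\cH(\gamma_M)\le\cH(\gamma_k)-\cH(\boxminus)$. For the second inequality, the same decomposition gives a lower bound: the high-level part of $\cH(\gamma_{M+k})-\cH(\gamma_M)$ is at least $-\sum_{j=1}^k\sum_{i=s+2}^{n}J_i(N-1)N^{i-1}$ (worst case: every high-level neighbour already $+1$), while the local part equals that of $\cH(\gamma_k)-\cH(\boxminus)$ minus the $-hk$ term reshuffling; combining the two bounds and adding back the magnetic term yields $|\cH(\gamma_{M+k})-\cH(\gamma_M)|\le |\cH(\gamma_k)-\cH(\boxminus)|+hk$, the extra $hk$ absorbing the difference between the local parts (which includes $-hk$) and the possibility that $\cH(\gamma_k)-\cH(\boxminus)$ is itself small or negative while the high-level swing is of order $\sum_i J_i(N-1)N^{i-1}\cdot k$.

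The main obstacle is bookkeeping the high-level contributions correctly: one must verify that the set of $(s+1)$-blocks lying in $\gamma_M$ at a given distance $b>s+1$ from $v_{M+j}$ is independent of $j\in\{1,\ldots,k\}$ (it is, since all of $v_{M+1},\ldots,v_{M+k}$ lie in one $(s+1)$-block) and then check that replacing ``all such blocks are $-1$'' by ``some are $+1$'' can only decrease the contribution, and that the decrease is at most twice the all-$-1$ value in absolute terms. It is also worth noting that the hypothesis as printed has a small typo ($a_i$ should read $b_i$ in the definition of $M$), but this does not affect the argument. Everything else is the term-by-term comparison sketched above, using $J_i>0$ throughout.
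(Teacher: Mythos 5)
Your decomposition is exactly the one the paper uses: split the increment by hierarchical distance, observe that because $M\equiv 0\ (\mathrm{mod}\ N^{s+1})$ the vertices $v_{M+1},\dots,v_{M+k}$ occupy the first $k$ positions of a fresh $(s+1)$-block, so the contribution of levels $\le s+1$ coincides with the corresponding part of $\cH(\gamma_{k})-\cH(\boxminus)$, and then bound the contribution of the higher levels by its all-minus value. Your bookkeeping of the high levels is right (at distance $i>t$ exactly $b_{i-1}N^{i-1}$ of the $(N-1)N^{i-1}$ neighbours of $v_{M+j}$ are already $+1$, independently of $j$), and this correctly yields the first inequality.

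The second inequality is where your argument does not close. Write $\cH(\gamma_{M+k})-\cH(\gamma_{M})=A+B+C-hk$ and $\cH(\gamma_{k})-\cH(\boxminus)=A+B+D-hk$, where $A\ge 0$ is the common local part, $B\ge 0$ is the (identical) contribution of levels $s+2\le i\le t$, $C=k\sum_{i=t}^{n-1}J_{i+1}N^{i}(N-2b_{i}-1)$, and $D=(1-\tfrac1N)k\sum_{i=t+1}^{n}J_{i}N^{i}$, so that $|C|\le D$. Your two-sided bound on the high-level part only gives $|A+B+C-hk|\le A+B+D+hk=\bigl(\cH(\gamma_{k})-\cH(\boxminus)\bigr)+2hk$, i.e.\ an extra $hk$ beyond what is claimed. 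The asserted bound with a single $hk$ can genuinely fail when $A+B$ is small, $C$ is close to $-D$, and $h$ is comparable to $\sum_{i}J_{i}N^{i}$: for $n=3$, $N=10$, $J_{i}\equiv J$, $k=1$, $M=9N^{2}$, $h=500J$ one finds $|\cH(\gamma_{M+1})-\cH(\gamma_{M})|=1301J$ while $|\cH(\gamma_{1})-\cH(\boxminus)|+h=999J$. So "the extra $hk$ absorbing the difference" is an assertion, not a proof. To be fair, the paper's own proof is equally terse at this point (a one-line appeal to the triangle inequality), and the weaker bound with $2hk$ --- or, more usefully, with $A+B+D+hk$ on the right --- is all that is needed in (\ref{eq:check A2}), since under (A3) both $D$ and $hk$ are $o(\Gamma^{\star})$; but as written your step, like the paper's, overstates what the decomposition delivers.
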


\begin{proof}
Note that, during the move from $\gamma_{M}$ to $\gamma_{M+k}$, the total change in 
energy due to interacting pairs at distance $i$ is given by $\left(1-\frac{1}{N}\right)k
\sum_{i=s+2}^{t}J_{i}N^{i}$ for $s+2\leq i\leq t$, while for $i\geq t+1$ it is given by 
$k\sum_{i=t}^{n-1}J_{i+1}N^{i}\left(N-2b_{i}-1\right)$. Now, for $1\leq i\leq s+1$, this 
change is equal to 
\begin{equation}
J_{1}N^{0}a_{0}\left(N-a_{0}\right)+\sum_{i=1}^{s}J_{i+1}N^{i}\left(\left(N-a_{i}-1\right)
\left(\sum_{j=0}^{i}a_{j}N^{j}\right)+a_{i}\left(N^{i}-\sum_{j=0}^{i-1}a_{j}N^{j}\right)\right),
\end{equation}
which is also the same during the move from $\gamma_{\boxminus}$ to $\gamma_{k}$.
Thus, we get 
\begin{equation}
\begin{aligned}
&\cH\left(\gamma_{M+k}\right)-\cH\left(\gamma_{M}\right)\\ 
& = \sum_{i=0}^{s}J_{i+1}N^{i}\left(\left(N-a_{i}-1\right)\left(\sum_{j=0}^{i}a_{j}N^{j}\right)
+ a_{i}\left(N^{i}-\sum_{j=0}^{i-1}a_{j}N^{j}\right)\right)\\
&\qquad + \left(1-\frac{1}{N}\right)k\sum_{i=s+2}^{t}J_{i}N^{i}+k\sum_{i=t}^{n-1}J_{i+1}N^{i}
\left(N-2b_{i}-1\right)-hk\\
& \leq \sum_{i=0}^{s}J_{i+1}N^{i}\left(\left(N-a_{i}-1\right)\left(\sum_{j=0}^{i}a_{j}N^{j}\right)
+a_{i}\left(N^{i}-\sum_{j=0}^{i-1}a_{j}N^{j}\right)\right)\\
&\qquad  + \left(1-\frac{1}{N}\right)k\sum_{i=s+2}^{n}J_{i}N^{i}-hk\\
&= \cH\left(\gamma_{k}\right)-\cH\left(\gamma_{\boxminus}\right).
\label{eq:gamma-m+k}
\end{aligned}
\end{equation}
Note, furthermore, that the right-hand side of the first line of (\ref{eq:gamma-m+k}) is 
non-negative, as is the first sum in the second line and both sums in the third line.
Making use of the triangle inequality, we get the second claim of the lemma. 
\end{proof}

We will assume for now that $\hat{m}\geq1$ and consider the case $\hat{m}=0$ in 
Remark~\ref{mhat0}. It follows from Lemma~\ref{lem:shift-increment} and Assumption 
(A3) that, for any $0 \leq k < N^{\hat{m}}$ and $\ell \geq 1$, 
\begin{eqnarray}
\frac{|\cH(\gamma_{k+\ell N^{\hat{m}}})-\cH(\gamma_{\ell N^{\hat{m}}})|}
{|\cH(\gamma_{\left\lceil \hat{s}\right\rceil N^{\hat{m}}})-\cH(\boxminus)|} 
& \leq & \frac{|\cH(\gamma_{k})-\cH(\gamma_{\boxminus})|+hk}
{|\cH(\gamma_{\left\lceil \hat{s}\right\rceil N^{\hat{m}}})
-\cH(\boxminus)|} \to 0 \quad \mbox{as} \quad N\to\infty,
\label{eq:check A2}
\end{eqnarray}
since from (\ref{eq:gamma-m+k}) we see that the numerator in the right-hand 
side of (\ref{eq:check A2}) equals the numerator in the condition of Assumption 
(A3), and from (\ref{eq:alphavalues}) the same follows for the denominator. Thus,
using (\ref{eq:alphavalues}) we conclude the following.

\begin{corollary}[Proof of Theorem \ref{thm:Gamma-case1}]
\label{cor:Gammastar1}
Suppose that Assumption {\rm (A2)} holds. Then 
\begin{eqnarray}
\Gamma^{\star} 
& = & \left[1+o_{N}\left(1\right)\right]\left(\cH\left(\gamma_{\left\lceil \hat{s} \right\rceil N^{\hat{m}}}\right)
-\cH\left(\boxminus\right)\right)\label{eq:Gstarhlimit}\\
& = & \left[1+o_{N}\left(1\right)\right]\left\lceil \hat{s}\right\rceil 
\left(2\hat{s}-\left\lceil \hat{s}\right\rceil +1\right)J_{\hat{m}+1}
N^{2\hat{m}}\nonumber \\
& = & \left[1+o_{N}\left(1\right)\right]\hat{s}^{2}J_{\hat{m}+1}N^{2\hat{m}}.
\nonumber 
\end{eqnarray}
\end{corollary}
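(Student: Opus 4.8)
The plan is to combine the three main ingredients already assembled in this section: (i) the exact location and value of the maximum of the periodic subsequence $\{\cH(\gamma_{sN^{\hat m}})-\cH(\boxminus)\}_{s=0}^{N}$, namely $\cH(\gamma_{\lceil\hat s\rceil N^{\hat m}})-\cH(\boxminus)=\lceil\hat s\rceil(2\hat s-\lceil\hat s\rceil+1)J_{\hat m+1}N^{2\hat m}$ from \eqref{eq:maxHsubpath}; (ii) the estimate \eqref{eq:allatlevelm} showing that among all indices of the form $tN^{\hat m}$ with $t>N$ the subpath energy is strictly below this maximum, so that $\max_{0\le t\le N^{n-\hat m}}[\cH(\gamma_{tN^{\hat m}})-\cH(\boxminus)]$ is exactly $\cH(\gamma_{\lceil\hat s\rceil N^{\hat m}})-\cH(\boxminus)$; and (iii) the fluctuation bound \eqref{eq:check A2}, derived from Lemma~\ref{lem:shift-increment} and Assumption (A3), which controls the energy increments over sub-blocks of size $<N^{\hat m}$ relative to the size of that maximum.

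First I would write any index $k\in\{0,\dots,N^n\}$ as $k=k'+\ell N^{\hat m}$ with $0\le k'<N^{\hat m}$ and $0\le\ell\le N^{n-\hat m}$, and decompose
\begin{equation}
\cH(\gamma_k)-\cH(\boxminus)=\big(\cH(\gamma_{\ell N^{\hat m}})-\cH(\boxminus)\big)+\big(\cH(\gamma_{k'+\ell N^{\hat m}})-\cH(\gamma_{\ell N^{\hat m}})\big).
\end{equation}
The first term is at most $\cH(\gamma_{\lceil\hat s\rceil N^{\hat m}})-\cH(\boxminus)$ by (i)–(ii) above; the second term is, in absolute value, bounded by $|\cH(\gamma_{k'})-\cH(\boxminus)|+hk'$ by Lemma~\ref{lem:shift-increment}, which by \eqref{eq:check A2} is $o_N(1)$ times $\cH(\gamma_{\lceil\hat s\rceil N^{\hat m}})-\cH(\boxminus)$ uniformly in $k'$ and $\ell$. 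Taking the maximum over $k$ gives $\Gamma^\star\le[1+o_N(1)]\,\cH(\gamma_{\lceil\hat s\rceil N^{\hat m}})-\cH(\boxminus)$. For the matching lower bound I simply take $k=\lceil\hat s\rceil N^{\hat m}$: then $\Gamma^\star\ge\cH(\gamma_{\lceil\hat s\rceil N^{\hat m}})-\cH(\boxminus)$ directly, since $\Gamma^\star$ is the maximum of $\cH(\gamma_i)-\cH(\boxminus)$ over all $i$ (because $\gamma$ is a uniformly optimal path by Lemma~\ref{lem: unif opt. path}, so $\Phi(\boxminus,\boxplus)=\max_i\cH(\gamma_i)$). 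Combining the two bounds yields the first displayed equality \eqref{eq:Gstarhlimit}.

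The remaining two equalities are algebraic simplifications. Substituting \eqref{eq:maxHsubpath} gives the second line. For the third line, I would use Assumption (A2)(a): since $\lceil\hat s\rceil-\hat s$ stays bounded away from $0$ and $1$, we have $\lceil\hat s\rceil=\hat s+O(1)$ and $2\hat s-\lceil\hat s\rceil+1=\hat s+O(1)$, while $\hat s\to\infty$ as $N\to\infty$ (from \eqref{eq:sineq} and Assumption (A2)(b), which keeps $|\sum_{i=\hat m+1}^n J_iN^i-h|$ bounded away from $0$, forcing $\hat s=\tfrac{N}{2}(J_{\hat m+1}N^{\hat m+1})^{-1}[\,\cdot\,]\sim cN$). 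Hence $\lceil\hat s\rceil(2\hat s-\lceil\hat s\rceil+1)=[1+o_N(1)]\hat s^2$, giving the third line. Finally, to recover the form stated in Theorem~\ref{thm:Gamma-case1}, I would expand $\hat s^2 J_{\hat m+1}N^{2\hat m}$ using the definition \eqref{eq:sineq} of $\hat s$, which produces exactly $\tfrac14(J_{\hat m+1})^{-1}(\sum_{i=\hat m+1}^n J_iN^i-h)^2$ up to the $(1-1/N)$ factor absorbed into $[1+o_N(1)]$.

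The main obstacle is the uniformity in the fluctuation bound: one must ensure that the error term in \eqref{eq:check A2} is $o_N(1)$ \emph{uniformly over all $\ell$}, not just for the single relevant value $\ell=\lceil\hat s\rceil$. This is exactly what Lemma~\ref{lem:shift-increment} provides — its bound on $|\cH(\gamma_{M+k})-\cH(\gamma_M)|$ depends only on the low-order digits $a_0,\dots,a_s$ of $k$ and not on $M$ — so the obstacle is already dissolved by the way that lemma was phrased; the only care needed is to check that Assumption (A3) is stated with a supremum over all $k$ in the relevant range, which it is. A secondary point to handle cleanly is that the decomposition into $k'+\ell N^{\hat m}$ requires $\hat m\ge1$; the case $\hat m=0$ is trivial (there is no sub-block structure below level $0$) and is deferred to Remark~\ref{mhat0}.
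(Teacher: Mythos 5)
Your proposal is correct and follows essentially the same route as the paper: the maximum over multiples of $N^{\hat m}$ is located at $\lceil\hat s\rceil N^{\hat m}$ via \eqref{eq:maxHsubpath} and \eqref{eq:allatlevelm}, the within-block fluctuations are controlled by Lemma~\ref{lem:shift-increment} together with Assumption (A3) exactly as in \eqref{eq:check A2}, and the final algebraic simplifications use (A2) in the same way. Your write-up merely makes explicit the decomposition $k=k'+\ell N^{\hat m}$ and the upper/lower bound structure that the paper leaves implicit.
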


\begin{remark}
\label{mhat0}
{\rm The special case $\hat{m}=0$ can be considered seperately. By 
Lemma~\ref{lem:shift-increment} it follows, for any $0\leq t\leq N^{n}$ and with 
\begin{eqnarray}
\hat{s} & = & \left(2J_{1}\right)^{-1}\left[\left(1-\frac{1}{N}\right)
\sum_{i=0}^{n-1}J_{i+1}N^{i+1}-h\right],
\end{eqnarray} 
that
\begin{equation}
\cH\left(\gamma_{t}\right)-\cH\left(\boxminus\right)
\leq\cH\left(\gamma_{\left\lceil \hat{s}\right\rceil }\right)-\cH\left(\boxminus\right)
\end{equation}
and hence $\Gamma^{\star}=\cH(\gamma_{\left\lceil \hat{s}\right\rceil})
-\cH\left(\boxminus\right)=\left\lceil \hat{s}\right\rceil \left(2\hat{s}
-\left\lceil \hat{s}\right\rceil +1\right)J_{1}$.} 
\end{remark}


\subsection{Critical configurations}
\label{S3.3}

It is clear from (\ref{eq:variationalform}) that the prefactor $K^{\star}$ 
is closely related to the set of critical configurations $\cC^{\star}$, in particular, 
the cardinality of this set. The symmetry of $\Lambda_{N}^{n}$ implies that 
the image of any critical configuration under an isometric translation is also a critical configuration. Thus, we have to count the number of isometries 
that result in distinct elements of $\cC^{\star}$, which is a problem related to the 
$N$-ary decomposition of the size of a critical configuration. To do so, we first 
establish a result that determines the $N$-ary decomposition of any global 
maximum subject to Assumption (A3).

The following lemma gives us the asymptotic value of the terms in the $N$-ary 
decomposition of the size of a critical configuration.

\begin{lemma}
\label{lem:locmaxima} 
Suppose that {\rm (A1)--(A4)} holds, and that the path $\gamma$ attains a global 
maximum at $\gamma_{M}$. Let 
\begin{equation}
M=a_{n-1}N^{n-1}+\ldots+k_{1}N+a_{0}
\end{equation}
be the $N$-ary decomposition of the integer $M$. Then 
\begin{equation}
\lim_{N\to\infty}\frac{1}{N}\sum_{i=0}^{n-1}\left|a_{i}-\eta_{i}\right|=0,
\end{equation}
where $\eta_{i}=0$ for $\hat{m}< i \leq n-1$, $\eta_{\hat{m}}=\left\lceil \hat{s}\right\rceil$, 
and $\eta_{\hat{m}-1},\ldots,\eta_{0}$ are defined in \eqref{eq:zetamhat-1} and 
\eqref{eq:zetai+1} below. 
\end{lemma}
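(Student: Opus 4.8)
The plan is to exploit the fact that, along the reference path $\gamma$, the energy decomposes \emph{level by level}, with each level's contribution depending only on the higher $N$-ary digits of the current size.

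\textbf{Step 1 (level decomposition).} I would first group the single‑spin‑flips of $\gamma$, taken in their natural order, according to which hierarchical block is completed, obtaining the exact identity
\begin{equation*}
\cH(\gamma_{M})-\cH(\boxminus)=\sum_{i=0}^{n-1}F_{i}(a_{i};a_{i+1},\dots,a_{n-1}),\qquad
F_{i}(a;\vec a_{>i})=\sum_{t=1}^{a}\delta^{(i)}_{t-1}(\vec a_{>i}),
\end{equation*}
where $\delta^{(i)}_{t}(\vec a_{>i})=\cH(\gamma_{M'+N^{i}})-\cH(\gamma_{M'})$ is the energy increment of completing one more $i$‑block, given that the $(>i)$‑digits equal $a_{i+1},\dots,a_{n-1}$ and that $t$ $i$‑blocks are already filled. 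A count of interacting pairs (the interactions \emph{inside} each completed block cancel, as in Lemma~\ref{lem:switching}) gives $\delta^{(i)}_{t}(\vec a_{>i})=-hN^{i}+N^{i}\sum_{d=i+1}^{n}J_{d}N^{d-1}(N-1-2c_{d-1})$ with $c_{i}=t$, $c_{d-1}=a_{d-1}$ for $d-1>i$; by Lemma~\ref{lem:concave}, $t\mapsto\delta^{(i)}_{t}$ is arithmetic with common difference $-2J_{i+1}N^{2i}$, so each $F_{i}(\cdot;\vec a_{>i})$ is the restriction to $\Z$ of a downward parabola of curvature $2J_{i+1}N^{2i}$. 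The decisive point is that the level‑$i$ summand never depends on the lower digits $a_{0},\dots,a_{i-1}$.

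\textbf{Step 2 (the greedy optimum is essentially exact).} Since $\gamma_{M}$ is a global maximum, $(a_{0},\dots,a_{n-1})$ maximizes the sum above. Fixing $i$ and freezing the higher digits at their actual values, I would write the objective as $F_{i}(a_{i};\vec a_{>i})+\Phi_{i}(a_{i})$ with $\Phi_{i}(a_{i})=\max_{a_{0},\dots,a_{i-1}}\sum_{j<i}F_{j}$. Each $F_{j}$ with $j<i$ is affine in $a_{i}$ of slope $-2J_{i+1}N^{i+j}$ over at most $N$ summands, so $\Phi_{i}$ is Lipschitz in $a_{i}$ with constant $\le\sum_{j<i}2J_{i+1}N^{i+j+1}=2J_{i+1}N^{2i}(1+O(1/N))$ — here Assumption~(A4) is exactly what keeps this geometric sum of the same order as the level‑$i$ curvature. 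Comparing a discrete parabola of curvature $2J_{i+1}N^{2i}$ with a Lipschitz perturbation of comparable constant forces $|a_{i}-a_{i}^{\ast}|=O(1)$, where $a_{i}^{\ast}$ is the integer maximizer of $F_{i}(\cdot;\vec a_{>i})$ alone, i.e.\ the integer nearest the real root
\begin{equation*}
v_{i}=\frac{N-1}{2}-\frac{h}{2J_{i+1}N^{i}}+\frac12\sum_{d=i+2}^{n}\frac{J_{d}}{J_{i+1}}N^{d-1-i}\,(N-1-2a_{d-1})
\end{equation*}
of $\delta^{(i)}_{a}=0$; (A5) rules out $\delta^{(i)}_{a}=0$ exactly, so the maximizer is unique. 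For $i>\hat m$ the definition of $\hat m$ together with (A5) gives $\delta^{(i)}_{0}<0$, hence $a_{i}^{\ast}=0$ and $a_{i}=O(1)$ (downward induction on $i=n-1,\dots,\hat m+1$). For $i=\hat m$, inserting the $O(1)$ higher digits into the displayed formula and using (A4)+(A2)(b) to bound $h/(J_{\hat m+1}N^{\hat m+1})=O(1)$ gives $v_{\hat m}=\hat s+O(1)$ (compare \eqref{eq:sineq}), so $|a_{\hat m}-\lceil\hat s\rceil|=O(1)$; this is consistent with, and could alternatively be read off from, the analysis around \eqref{eq:allatlevelm}.

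\textbf{Step 3 (descent and conclusion).} I would then set $\eta_{i}=0$ for $i>\hat m$, $\eta_{\hat m}=\lceil\hat s\rceil$, and for $i\le\hat m-1$ define $\eta_{i}$ by the formula for $v_{i}$ with each $a_{d-1}$ replaced by $\eta_{d-1}$; this is precisely the recursion \eqref{eq:zetamhat-1}--\eqref{eq:zetai+1}. Descending $i=\hat m-1,\dots,0$ and assuming inductively $|a_{d-1}-\eta_{d-1}|=O(1)$ for $d-1>i$,
\begin{equation*}
|a_{i}-\eta_{i}|\le|a_{i}-v_{i}|+|v_{i}-\eta_{i}|
=O(1)+\sum_{d=i+2}^{n}\frac{J_{d}}{J_{i+1}}N^{d-1-i}\,|a_{d-1}-\eta_{d-1}|=O(1),
\end{equation*}
because by (A4) — and (A2)(b) to control the finitely many ratios with $d-1>\hat m$ — the coefficients $\frac{J_{d}}{J_{i+1}}N^{d-1-i}=\frac{J_{d}N^{d}}{J_{i+1}N^{i+1}}$ are $O(1)$. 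Summing over the $\le n$ levels gives $\sum_{i}|a_{i}-\eta_{i}|=O(1)$, hence $\frac1N\sum_{i=0}^{n-1}|a_{i}-\eta_{i}|\to0$; the case $\hat m=0$ needs no recursion (cf.\ Remark~\ref{mhat0}). I expect the main obstacle to be Step 2, namely proving that downstream levels exert only an $O(1)$ influence on the choice of each digit — i.e.\ that the greedy level‑by‑level optimum agrees with the global optimum up to $o(N)$ — which is the Lipschitz‑versus‑curvature estimate and relies squarely on (A4). A minor additional point, to be verified from (A1)--(A4), is that the $\eta_{i}$ stay in the range $[0,N)$, so that their interpretation as $N$‑ary digits remains consistent throughout the descent.
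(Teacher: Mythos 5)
Your proposal is correct in substance and follows the same overall strategy as the paper's proof: identify the $N$-ary digits of $M$ from the top level down, use the concavity of Lemma~\ref{lem:concave} (each level's contribution is a discrete downward parabola of curvature $2J_{i+1}N^{2i}$) to locate the maximizing digit, and propagate errors down the recursion using the ratios $J_{i+1}/J_i=O(1/N)$ from (A4) — your $v_i$ is exactly the quantity appearing in \eqref{eq:zetamhat-1} and \eqref{eq:zetai+1}, and reduces to $\hat{s}$ at level $\hat{m}$. Where you genuinely differ is in Step 2: the paper identifies $\eta_{\hat m-i-1}$ as the maximizer of $\sigma\mapsto\cH(\gamma_{s(i,\sigma)})$ with all \emph{lower} digits frozen at zero, and passes from this restricted maximizer to the actual digit of the global maximizer via the continuity estimate \eqref{eq:zeta-varphi} without isolating the influence of the lower levels; your curvature-versus-Lipschitz comparison of $F_i$ against $\Phi_i(a_i)=\max_{a_0,\dots,a_{i-1}}\sum_{j<i}F_j$ makes this "greedy equals global up to $O(1)$" step explicit, which is a worthwhile clarification of a point the paper treats rather lightly. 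Two remarks. First, your attribution of the bound $\sum_{j<i}2J_{i+1}N^{i+j+1}=2J_{i+1}N^{2i}(1+O(1/N))$ to (A4) is off: since $\partial F_j/\partial a_i$ carries the coefficient $J_{i+1}$ (not $J_{j+1}$), that geometric sum is automatically of the order of the level-$i$ curvature; (A4) enters only in Step 3, exactly as you use it there. Second, your Step 3 propagation needs the coefficients $J_dN^d/(J_{i+1}N^{i+1})$ to be $O(1)$ also for $d-1>\hat m$, and neither (A4) (which only covers $i\le\hat m$) nor monotonicity gives this; invoking "(A2)(b) for the finitely many remaining ratios" does not close it. The clean fix, which is what the paper implicitly uses via \eqref{eq:allatlevelm}, is to upgrade your conclusion $a_i=O(1)$ for $i>\hat m$ to $a_i=0$ for $N$ large (a nonzero digit above level $\hat m$ incurs a loss that, by the definition of $\hat m$ together with (A2)(b) and (A3), cannot be recouped at lower levels), after which those terms drop out of the recursion entirely. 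Your closing point about checking $\eta_i\in[0,N)$ is well taken and is handled in the paper by the $0\vee\lceil\cdot\rceil$ truncation in \eqref{eq:zetamhat-1} and \eqref{eq:zetai+1}.
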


\begin{proof}
From the definition of $\hat{m}$ in (\ref{eq:defm}) and the argument leading up to 
(\ref{eq:Gstarhlimit}), it is clear that $\lim_{N\to\infty}\left|a_{i}\right|=0$ for $i>\hat{m}$. 
Let $\eta_{\hat{m}}=\left\lceil \hat{s}\right\rceil $. Then, for any $0\leq\sigma<N$,
\begin{equation}
\begin{aligned}
&\cH\left(\gamma_{\eta_{\hat{m}}N^{\hat{m}}+\sigma N^{\hat{m}-1}}\right)
-\cH\left(\gamma_{\eta_{\hat{m}}N^{\hat{m}}}\right)\\
& = J_{\hat{m}}N^{2\hat{m}-2}\sigma\left(N-\sigma\right)+J_{\hat{m}+1}N^{2\hat{m}-1}
\sigma\left(N-\eta_{\hat{m}}-1\right)\\
& \qquad +\left(1-\frac{1}{N}\right)\sum_{i=\hat{m}+2}^{n}\sigma J_{i}N^{\hat{m}-1+i}
-J_{\hat{m}+1}\eta_{\hat{m}}\sigma N^{2\hat{m}-1}-h\sigma N^{\hat{m}-1}\\
& = J_{\hat{m}}N^{2\hat{m}-2}\sigma\left(N-\sigma\right)+J_{\hat{m}+1}N^{2\hat{m}-1}
\sigma\left(N-2\eta_{\hat{m}}-1\right)\\
& \qquad +\left(1-\frac{1}{N}\right)\sum_{i=\hat{m}+2}^{n}\sigma J_{i}N^{\hat{m}-1+i}
-h\sigma N^{\hat{m}-1}.
\end{aligned}
\label{eq:diffinzetas}
\end{equation}
By the concavity in Lemma~\ref{lem:concave}, $\cH(\gamma_{\eta_{\hat{m}}
N^{\hat{m}}+(\sigma+1)N^{\hat{m}-1}})-\cH(\gamma_{\eta_{\hat{m}}
N^{\hat{m}}+\sigma N^{\hat{m}-1}})\leq 0$ if and only if
\begin{equation}
\begin{aligned}
0\,\vee\,&\Bigg\lceil \frac{1}{2}\Bigg(\Bigg(\Big(\frac{J_{\hat{m}+1}}{J_{\hat{m}}}\Big)
N\big(N-2\eta_{\hat{m}}-1\big)\\
&\qquad +\Big(1-\frac{1}{N}\Big)
\sum_{i=2}^{n-\hat{m}}\Big(\frac{J_{\hat{m}+i}}{J_{\hat{m}}}\Big)N^{i+1}
-\frac{h}{J_{\hat{m}}N^{\hat{m}-1}}\Bigg)+N-1\Bigg)\Bigg\rceil 
=\eta_{\hat{m}-1}\leq\sigma.
\label{eq:zetamhat-1}
\end{aligned}
\end{equation}
Observe that (\ref{eq:zetamhat-1}) is continuous in $\eta_{\hat{m}}$. Hence, if $\varphi_{\hat{m}}
\in \left[\left\lceil \hat{s}\right\rceil \left(1-\epsilon\right),\left\lceil \hat{s}\right\rceil 
\left(1+\epsilon\right)\right]$ for some $\epsilon>0$, and $\varphi_{\hat{m}-1}$ is equal to 
(\ref{eq:zetamhat-1}) with $\eta_{\hat{m}}$ replaced by $\varphi_{\hat{m}}$, then 
\begin{equation}
\frac{1}{N}\left|\eta_{\hat{m}-1}-\varphi_{\hat{m}-1}\right|
\leq\left(\frac{J_{\hat{m}+1}}{J_{\hat{m}}}\right)\left|\eta_{\hat{m}}-\varphi_{\hat{m}}\right|
=\epsilon O\left(1\right)+\frac{2}{N}.
\label{eq:zeta-varphi}
\end{equation}
Since we already know from the reasoning leading up to Corollary~\ref{cor:Gammastar1}
that any global maximum $M$ must satisfy $a_{i}=0$ for $i>\hat{m}$ and $a_{\hat{m}}
\in\left[\left\lceil \hat{s}\right\rceil \left(1-\epsilon\right),\left\lceil \hat{s}\right\rceil
\left(1+\epsilon\right)\right]$, by (\ref{eq:zeta-varphi}) we also have that $a_{\hat{m}-1} \in
\left[\eta_{\hat{m}-1}\left(1-\epsilon'\right),\eta_{\hat{m}-1}\left(1+\epsilon'
\right)\right]$, with $\epsilon'$ allowed to be arbitrarily small as $N\to\infty$.
We can now repeat these computations recursively, to conclude the same for 
$a_{\hat{m}-2},\ldots,a_{0}$. 

Given $\eta_{\hat{m}},\ldots,\eta_{\hat{m}-i}$, let $0\leq\sigma<N$ and $s\left(i,j\right)=\sum_{t=0}^{i}\eta_{\hat{m}-t}N^{\hat{m}-t}+jN^{\hat{m}-i-1}$, and note that 
\begin{eqnarray}
\cH\left(\gamma_{s(i,\sigma)}\right) 
&=&  \cH\left(\gamma_{s(i,0)}\right)
+J_{\hat{m}-i}N^{2\left(\hat{m}-i-1\right)}\sigma\left(N-\sigma\right)\\
&& + \sum_{j=1}^{i+1}J_{\hat{m}-i+j}N^{2\left(\hat{m}-i-1\right)+j}
\sigma\left(N-2\eta_{\hat{m}-i+j-1}-1\right) \nonumber \\
&& + \left(1-\frac{1}{N}\right)\sum_{j=\hat{m}+2}^{n}\sigma J_{j}
N^{\hat{m}-i-1+j}-h\sigma N^{\hat{m}-i-1}. \nonumber
\end{eqnarray}
Thus, we have 
\begin{eqnarray}
\cH\left(\gamma_{s(i,\sigma +1)}\right) 
& = & \cH\left(\gamma_{s(i,\sigma)}\right)
+J_{\hat{m}-i}N^{2\left(\hat{m}-i-1\right)}\left(N-2\sigma-1\right)\\
&& +  \sum_{j=1}^{i+1}J_{\hat{m}-i+j}N^{2\left(\hat{m}-i-1\right)+j}
\left(N-2\eta_{\hat{m}-i+j-1}-1\right) \nonumber \\
&& +  \left(1-\frac{1}{N}\right)\sum_{j=\hat{m}+2}^{n}J_{j}N^{\hat{m}-i-1+j} -hN^{\hat{m}-i-1}, \nonumber
\label{eq:zetadiff2}
\end{eqnarray}
and hence $\cH(\gamma_{s(i,\sigma +1)})
-\cH(\gamma_{s(i,\sigma)})\leq0$ whenever 
\begin{equation}
\begin{aligned}
&0\vee \Bigg\lceil \frac{1}{2}\Bigg(\Bigg(\sum_{j=1}^{i+1}
\Big(\frac{J_{\hat{m}-i+j}}{J_{\hat{m}-i}}\Big)N^{j}\big(N-2\eta_{\hat{m}-i}-1\big)\\
&\qquad +\Big(1-\frac{1}{N}\Big) \sum_{j=2}^{n-\hat{m}}
\Big(\frac{J_{\hat{m}+j}}{J_{\hat{m}-i}}\Big)N^{i+j+1}
-\frac{h}{J_{\hat{m}-i}N^{\hat{m}-i-1}}\Bigg)+N-1\Bigg)\Bigg\rceil 
=\eta_{\hat{m}-i-1}\leq\sigma.
\label{eq:zetai+1}
\end{aligned}
\end{equation}
Again it follows that if $\varphi_{\hat{m}-i}\in\left\{ 0,\ldots,N-1\right\}$ and $\varphi_{\hat{m}-i-1}$ 
is equal to the left-hand side of (\ref{eq:zetai+1}) with $\eta_{\hat{m}-i}$ replaced by 
$\varphi_{\hat{m}-i}$ in (\ref{eq:zetai+1}), then 
\begin{equation}
\left|\eta_{\hat{m}-i-1}-\varphi_{\hat{m}-i-1}\right|
\leq\left(\frac{J_{\hat{m}-i+1}}{J_{\hat{m}-i}}\right)\left|\eta_{\hat{m}-i}
-\varphi_{\hat{m}-i}\right|+\frac{2}{N}.
\end{equation}
This proves the statement of the lemma.
\end{proof}

We need to look at the change in energy when we go from a critical
configuration in the set $\cC^{\star}$ to a neighbouring configuration obtained 
by changing the sign at one vertex. Our next observation concerns the sets 
$U_{\sigma}^{-}$ and $U_{\sigma}^{+}$ defined in the statement of 
Lemma~\ref{lem:variational-lemma}. 

\begin{lemma}
\label{lem:circumventiing-path}
Suppose that (A1) holds and that every $\xi\in\cC^{\star}$ has the same volume $\left|\xi\right|=k^{\star}$, 
and that every configuration of volume $k^{\star}$ has energy at least $\Phi\left(\boxminus,\boxplus\right)$. Suppose furthermore that for every configuration $\sigma \in U_{\xi}^{+}$, $\cH\left(\sigma\right) \neq \Phi\left(\boxminus,\boxplus\right)$. Then \eqref{eq:condition-lemma} is satisfied.
\end{lemma}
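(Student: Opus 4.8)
The plan is to reduce \eqref{eq:condition-lemma} to an energetic barrier argument together with the construction of two explicit ``circumventing'' paths. Throughout write $\phi=\Phi(\boxminus,\boxplus)$, and recall that every $\sigma\in\cC^{\star}$ has $|\sigma|=k^{\star}$ and $\cH(\sigma)=\phi$. Note also (from the definitions, cf.\ \cite[Chapter 16]{BdH15}) that $S_{\boxminus}\cap S_{\boxplus}=\emptyset$, that $\cC^{\star}\subseteq S^{\star}\setminus(S_{\boxminus}\cup S_{\boxplus})$, and that every $\zeta\in S^{\star}$ lies in $S_{\boxminus}$, or in $S_{\boxplus}$, or satisfies $\Phi(\zeta,\boxminus)=\Phi(\zeta,\boxplus)=\phi$. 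Since $U^{\pm}_{\sigma}\subseteq S^{\star}$, proving \eqref{eq:condition-lemma} amounts to showing, for each $\sigma\in\cC^{\star}$, that every $\eta\in U^{-}_{\sigma}$ lies in $S_{\boxminus}$ and every $\eta\in U^{+}_{\sigma}$ lies in $S_{\boxplus}$.

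First I would record the \emph{energetic barrier}: since every configuration of volume $k^{\star}$ has energy $\ge\phi$ and an allowed move changes the volume by one, any path from $\boxminus$ to $\boxplus$ passes through volume $k^{\star}$, hence has maximal energy $\ge\phi$. Consequently $\Phi(\zeta,\boxplus)\ge\phi$ whenever $|\zeta|<k^{\star}$ and $\Phi(\zeta,\boxminus)\ge\phi$ whenever $|\zeta|>k^{\star}$; in particular no $\eta\in U^{-}_{\sigma}$ lies in $S_{\boxplus}$ and no $\eta\in U^{+}_{\sigma}$ lies in $S_{\boxminus}$. By the trichotomy above it then suffices to produce, for $\eta\in U^{+}_{\sigma}$, a path $\eta\to\boxplus$ of maximal energy $<\phi$ (which forces $\eta\in S_{\boxplus}$), and for $\eta\in U^{-}_{\sigma}$ a path $\eta\to\boxminus$ of maximal energy $<\phi$ (which forces $\eta\in S_{\boxminus}$).

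For $\eta\in U^{+}_{\sigma}$ write $\eta=\sigma\cup\{v\}$. As $\eta\in S^{\star}$ we have $\cH(\eta)\le\Phi(\eta,\boxminus)\le\phi$, and the hypothesis on $U^{+}_{\sigma}$ gives $\cH(\eta)\neq\phi$, so $\cH(\eta)<\phi$. Let $\gamma=(\sigma=\gamma_{0},\ldots,\gamma_{L}=\boxplus)$ be a monotone optimal path that only flips spins upward; such a path exists because $\vec J$ is non-increasing (the tail of a uniformly optimal path through $\sigma$, cf.\ Lemma~\ref{lem: unif opt. path} and translation invariance), and $\cH(\gamma_{i})\le\phi$ for all $i$, with $\cH(\gamma_{i})<\phi$ for $i\ge1$ since $|\gamma_{i}|>k^{\star}$ and $k^{\star}$ is the unique maximiser of the energy profile under the present assumptions. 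Put $i_{0}=\min\{i:v\in\gamma_{i}\}\ge1$ and define $\gamma'_{i}=\gamma_{i}\cup\{v\}$ for $i<i_{0}$ and $\gamma'_{i}=\gamma_{i}$ for $i\ge i_{0}$; this is a sequence of allowed moves from $\eta$ to $\boxplus$. For $i<i_{0}$, $\cH(\gamma'_{i})=\cH(\gamma_{i})+\Delta_{i}$ with $\Delta_{i}=\cH(\gamma_{i}\cup\{v\})-\cH(\gamma_{i})$ the cost of carrying an extra up-spin at $v$; since $\gamma$ only adds up-spins this cost is non-increasing in $i$, so $\Delta_{i}\le\Delta_{0}=\cH(\eta)-\phi<0$ and hence $\cH(\gamma'_{i})<\phi$; for $i\ge i_{0}$, $\cH(\gamma'_{i})=\cH(\gamma_{i})<\phi$. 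Thus $\Phi(\eta,\boxplus)<\phi$. The case $\eta=\sigma\setminus\{w\}\in U^{-}_{\sigma}$ is the mirror image: assuming $\cH(\eta)<\phi$ (see below), take a monotone optimal path $\rho=(\boxminus=\rho_{0},\ldots,\rho_{L'}=\sigma)$ flipping spins upward and set $\rho'_{i}=\rho_{i}\setminus\{w\}$; the increment $\cH(\rho_{i}\setminus\{w\})-\cH(\rho_{i})$ is non-decreasing in $i$ and equals $\cH(\eta)-\phi<0$ at $i=L'$, so $\rho'$ is a sequence of allowed moves from $\boxminus$ to $\eta$ staying strictly below $\phi$, and $\Phi(\eta,\boxminus)<\phi$. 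This establishes \eqref{eq:condition-lemma}.

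The main obstacle is the one ingredient the stated hypothesis does not supply: $\cH(\eta)<\phi$ for $\eta\in U^{-}_{\sigma}$. I would derive it from the ambient non-degeneracy: the energy change of the single spin-flip $\eta\to\sigma$ equals $\pm h$ plus an integer combination of $J_{1},\ldots,J_{n}$, which is nonzero by {\rm (A5)}, so $\cH(\eta)\neq\cH(\sigma)=\phi$, and with $\cH(\eta)\le\phi$ this forces $\cH(\eta)<\phi$. If {\rm (A5)} is unavailable, one instead notes that an $\eta\in U^{-}_{\sigma}$ with $\cH(\eta)=\phi$ would, in view of the barrier, be a local minimum of $\cH$ other than $\boxminus$ and $\boxplus$, which {\rm (A1)} excludes via the proof of Theorem~\ref{thrm:hyp-H}. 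The remaining routine point is the monotonicity of the increments $\Delta_{i}$ and $\cH(\rho_{i}\setminus\{w\})-\cH(\rho_{i})$ along these paths, which is a bond-counting computation of the kind in Lemma~\ref{lem:switching}, using that $\vec J$ is ferromagnetic.
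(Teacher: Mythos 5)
Your construction is sound and close in spirit to the paper's, but the detour you build is different. The paper, for $\eta=\xi\setminus\{v_a\}\in U^{-}_{\xi}$, makes a two-step excursion $\eta\to\eta\setminus\{v_b\}\to\xi'=\xi\setminus\{v_b\}$ onto the configuration $\xi'$ that precedes $\xi$ on an optimal path, checks the single intermediate energy via the one inequality $\cH(\eta\setminus\{v_b\})-\cH(\eta)\le\cH(\xi')-\cH(\xi)<0$ (the same ferromagnetic monotonicity of the single-site flip cost that you use), and then rides the optimal path down to $\boxminus$; the $U^{+}_{\xi}$ case is mirrored. You instead carry the symmetric difference $\{v\}$ along the entire tail of a monotone optimal path and invoke monotonicity of the flip cost $\Delta_i$ at every step until the perturbed path rejoins the original one. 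Both arguments rest on the same two inputs: that the cost of the extra flipped spin is monotone along an upward path (correct, by ferromagnetism), and that the optimal path beyond the critical configuration stays strictly below $\Phi(\boxminus,\boxplus)$ --- a fact that neither you nor the paper derives from the lemma's literal hypotheses; both implicitly use the strict optimality / unique-maximum property supplied by (A5) in the application at the end of Section~\ref{S3.3}. Your version additionally requires the optimal path through the critical configuration to be monotone (upward flips only), which holds in the non-increasing setting of Section~\ref{S3} where the lemma is used, whereas the paper's two-step detour works for an arbitrary optimal path.

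The one place where your argument is genuinely weaker is the fallback justification of $\cH(\eta)<\Phi(\boxminus,\boxplus)$ for $\eta\in U^{-}_{\xi}$. The (A5) route is fine (and is what the application actually provides), but (A5) is not among the lemma's hypotheses. The (A1) route is wrong: a volume-$(k^{\star}-1)$ configuration with $\cH(\eta)=\Phi(\boxminus,\boxplus)$ need not be a local minimum, since its neighbours of volume $k^{\star}-2$ are unconstrained by the barrier; and in any case (A1) does not exclude local minima other than $\boxminus$ --- Remark~\ref{rem:no local minima} says the opposite, namely that \emph{without} (A1) there are no local minima at all. (The paper itself simply asserts $\cH(\eta)<\Phi(\boxminus,\boxplus)$ ``by definition'', in effect reading $U^{-}_{\xi}\subseteq S_{\boxminus}$ as in the proof of Lemma~\ref{lem:variational-lemma}, so you were right to flag the point; just drop the (A1) argument and either adopt that reading of $U^{-}_{\xi}$ or keep the (A5) justification.)
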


\begin{proof}
Let $\xi\in\cC^{\star}$, and suppose that $\sigma\in U_{\xi}^{-}$, so that $\sigma
=\xi\backslash\left\{ v_{a}\right\}$ for some $a<k^{\star}$. If $\sigma$ lies on some 
optimal path, then, by the assumption that this path has a unique maximum, 
(\ref{eq:condition-lemma}) is satisfied. Else, since $\xi$ lies on an optimal path, 
there exists some configuration $\xi'=\xi\backslash\left\{ v_{b}\right\}$ on 
the same path, of volume $\left|\xi'\right|=k^{\star}-1$ (note that by (A1) $k^{\star}>0$) and with $\Phi\left(\xi',\boxminus\right)<\Phi\left(\boxminus,\boxplus\right)$. We claim that the 
path $\sigma\to\sigma\cap\xi'\to\xi'$ stays strictly below $\Phi\left(\boxminus,\boxplus\right)$, which proves the statement of the lemma. Since by definition 
$\cH\left(\sigma\right)<\Phi\left(\boxminus,\boxplus\right)$ and $\cH\left(\xi'\right)<\Phi\left(\boxminus,\boxplus\right)$, we only need to show that 
$\cH\left(\sigma\cap\xi'\right)<\Phi\left(\boxminus,\boxplus\right)$.
However, note that 
\begin{equation}
\begin{aligned}
\cH\left(\sigma\cap\xi'\right)-\cH\left(\sigma\right) 
&= \sum_{\substack{i\leq k^{\star}\\i\neq b,a}}
J_{d\left(v_{b},v_{i}\right)}-\sum_{\substack{i>k^{\star}}}
J_{d\left(v_{b},v_{i}\right)}-J_{d\left(v_{b},v_{a}\right)}+h\\
& \leq \sum_{\substack{i\leq k^{\star}\\ i\neq b}}
J_{d\left(v_{b},v_{i}\right)}-\sum_{\substack{i>k^{\star}}}
J_{d\left(v_{b},v_{i}\right)}+h
= \cH\left(\xi'\right)-\cH\left(\xi\right)
< 0,
\end{aligned}
\end{equation}
where the last inequality uses the fact that $\Phi\left(\xi',\boxminus\right)<\Phi\left(\xi,\boxminus\right)$. This proves the claim for $\sigma \in U_{\xi}^{-}$. The argument for  $\sigma \in U_{\xi}^{+}$ makes use of the fact that by assumption $\cH\left(\sigma\right)\neq \Phi\left(\boxminus,\boxplus\right)$, and is otherwise identical to the argument above.
\end{proof}

Next, let us first consider any configuration $\gamma_{k}$ lying on the path 
$\gamma$, with $k=a_{n-1}N^{n-1}+\ldots+a_{0}$, and let $\sigma_{b}$ be 
a configuration obtained from $\gamma_{k}$ by flipping the sign at a vertex 
$w$ such that $d\left(w,v_{k}\right)=b$ for $b\in\left\{ 1,\ldots,n\right\} $. Note 
that by symmetry it makes no difference which particular vertex we select. If 
$\sigma_{b}\left(w\right)=-\gamma_{k}\left(w\right)=-1$, then for $b=1$ we have 
\begin{equation}
\cH\left(\sigma_{b}\right)-\cH\left(\gamma_{k}\right)=J_{1}\left(2a_{0}-N-1\right)
+\sum_{i=1}^{n-1}J_{i+1}N^{i}\left(2a_{i}-N+1\right)+h,
\label{eq:vertexflip-s1}
\end{equation}
while for $2\leq b\leq n$,
\begin{equation}
\begin{aligned}
&\cH\left(\sigma_{b}\right)-\cH\left(\gamma_{k}\right)\\
&=\sum_{i=1}^{b-1}J_{i}N^{i}\left(1-\frac{1}{N}\right)
+J_{b}\left(2\sum_{i=0}^{b-1}a_{i}N^{i}-N^{b}-N^{b-1}\right)
+\sum_{i=b}^{n-1}J_{i+1}N^{i}\left(2a_{i}-N+1\right)+h.
\label{eq:vertexflip-s2}
\end{aligned}
\end{equation}
Similarly, if $\sigma_{b}\left(w\right)=-\gamma_{k}\left(w\right)=+1$, then for 
$b=1$ we have 
\begin{equation}
\cH\left(\sigma_{b}\right)-\cH\left(\gamma_{k}\right)
=\sum_{i=0}^{n-1}J_{i+1}N^{i}\left(N-2a_{i}-1\right)-h,
\label{eq:vertexflip+s1}
\end{equation}
while for $2\leq b\leq n$,
\begin{equation}
\begin{aligned}
&\cH\left(\sigma_{b}\right)-\cH\left(\gamma_{k}\right)\\
&=\sum_{i=1}^{b-1}J_{i}N^{i}\left(1-\frac{1}{N}\right)+J_{b}\left(N^{b}
-2\sum_{i=0}^{b-1}a_{i}N^{i}-N^{b-1}\right)
+\sum_{i=b}^{n-1}J_{i+1}N^{i}\left(N-2a_{i}-1\right)-h.
\label{eq:vertexflip+s2}
\end{aligned}
\end{equation}

Under Assumption (A5), $\left\{ \cH\left(\gamma_{i}\right)\right\}_{i=0}^{N^{n}}$ 
attains a unique maximum. Indeed, this is immediate from (\ref{eq:genincrement}). 
Furthermore, from Assumption (A4) it follows that $\vec{J}$ is strictly monotone, 
and hence by Lemma~\ref{lem: unif opt. path} the path $\gamma$ is strictly optimal. 
This implies that all $\sigma\in\cC^{\star}$ must have the same volume, and that 
every other configuration of that volume has larger energy. Hence the conditions of 
Lemma~\ref{lem:circumventiing-path} are met. 


\subsection{Proof of Theorem~\ref{thm:Kstar-case1}}
\label{S3.4}

Define 
\begin{equation}
\begin{aligned}
&B_{d} = \Bigg\{ 1\leq b\leq\hat{m}\colon\,
\sum_{i=1}^{b-1}J_{i}N^{i}\Big(1-\frac{1}{N}\Big)
+J_{b}\Bigg(2\sum_{i=0}^{b-1}\eta_{i}N^{i}-N^{b}-N^{b-1}\Bigg)\\
&\qquad\qquad +\sum_{i=b}^{n-1}J_{i+1}N^{i}\big(2\eta_{i}-N+1\big)+h<0\Bigg\}\\
&B_{u} = \Bigg\{ 1\leq b\leq n\colon\,\sum_{i=1}^{b-1}J_{i}N^{i}\Big(1-\frac{1}{N}\Big)
+J_{b}\Bigg(N^{b}-2\sum_{i=0}^{b-1}\eta_{i}N^{i}-N^{b-1}\Bigg)\\
&\qquad\qquad +\sum_{i=b}^{n-1}J_{i+1}N^{i}\big(N-2\eta_{i}-1\big)-h<0\Bigg\}, 
\label{eq:Bd Bu}
\end{aligned}
\end{equation}
where $\left\{\eta_{i}\right\} _{i=0}^{n-1}$ is defined as in the statement of 
Lemma~\ref{lem:locmaxima}. By (\ref{eq:vertexflip-s2}) and (\ref{eq:vertexflip+s2}), 
$B_{d}$ gives the distances to the `critical' vertex of the vertices that are flipped 
in obtaining configurations that result in a lower energy than the critical configuration. 
Thus
\begin{equation}
\begin{aligned}
&N^{-}\left(\sigma\right) = \left|\left\{ \sigma\in U_{\sigma}^{-}\colon\,
\cH\left(\sigma\right)<\cH\left(\sigma\right)\right\} \right|
=\left[1+o_{N}\left(1\right)\right]
\sum_{i\in B_{d}}\eta_{i-1}N^{i-1},\\
&N^{+}\left(\sigma\right) = \left|\left\{ \sigma\in U_{\sigma}^{+}\colon\,
\cH\left(\sigma\right)<\cH\left(\sigma\right)\right\} \right|
=\left[1+o_{N}\left(1\right)\right]
\sum_{i\in B_{u}}\left(N^{i}-\eta_{i-1}N^{i-1}\right).
\end{aligned}
\end{equation}
Hence, by Lemma \ref{lem:variational-lemma}, we have
\begin{equation}
\begin{aligned}
\frac{1}{K^\star} 
&= \left[1+o_{N}\left(1\right)\right] \sum_{\sigma\in\cC^{\star}}
\frac{\Big(\sum_{i\in B_{d}}\eta_{i-1}N^{i-1}\Big)\Big(\sum_{i\in B_{u}}
(N^{i}-\eta_{i-1}N^{i-1})\Big)}{\Big(\sum_{i\in B_{d}}\eta_{i-1}N^{i-1}\Big)
+\Big(\sum_{i\in B_{u}}(N^{i}-\eta_{i-1}N^{i-1})\Big)}\\
& = \left[1+o_{N}\left(1\right)\right] \frac{\Big(\sum_{i\in B_{d}}\eta_{i-1}N^{i-1}\Big)
\Big(\sum_{i\in B_{u}}(N^{i}-\eta_{i-1}N^{i-1})\Big)}
{\Big(\sum_{i\in B_{d}}\eta_{i-1}N^{i-1}\Big)+\Big(\sum_{i\in B_{u}}(N^{i}-\eta_{i-1}N^{i-1})\Big)}\\
&\qquad \times \frac{N^{n-\hat{m}-1}}{N-\eta_{0}}
\prod_{i=0}^{\hat{m}}{N \choose \eta_{i}}\left(N-\eta_{i}\right).
\end{aligned}
\end{equation}


\subsection{Proof of Theorems~\ref{thm:Cstar-case1}--\ref{thm:Cstar-case2}}
\label{S3.5}

Let $\{\tilde{J}_{i}\} _{i=1}^{n}$ be such that $\tilde{J}_{i}/N \to 0$ for all $i\in
\left\{1,\ldots,n\right\}$ as $N\to\infty$, and take $J_{i}=\tilde{J}_{i}/N^{i}$. 
It is easy to check that Assumption (A3) is satisfied given that Assumption 
(A2)(b) is also satisfied. 

\begin{proof}[Proof of Theorem \ref{thm:Cstar-case1}]
From (\ref{eq:sineq}) and (\ref{eq:zetai+1}) we learn that
\begin{eqnarray}
\eta_{\hat{m}} & = & \left\lceil \hat{s}\right\rceil 
=\left\lceil \frac{N}{2\tilde{J}_{\hat{m}+1}}\left(\left(1-\frac{1}{N}\right)
\sum_{i=\hat{m}+1}^{n}\tilde{J}_{i}-h\right)\right\rceil \nonumber \\
& = & \left[1+o_{N}\left(1\right)\right]\frac{1}{2}\frac{N}{\tilde{J}_{\hat{m}+1}}
\left(\sum_{i=\hat{m}+1}^{n}\tilde{J}_{i}-h\right)
\label{eq:special-case-zetam}
\end{eqnarray}
and 
\begin{equation}
\begin{aligned}
&\eta_{\hat{m}-1} =  \left[1+o_{N}\left(1\right)\right]\frac{N}{2},\\
&\eta_{\hat{m}-i} = \left[1+o_{N}\left(1\right)\right]\frac{N}{2}
\left(\sum_{j=1}^{i+1}\left(\frac{\tilde{J}_{\hat{m}-i+j}}{\tilde{J}_{\hat{m}-i}}\right)
\left(1-\frac{2\eta_{\hat{m}-i+1}}{N}\right)+\sum_{j=2}^{n-\hat{m}}
\left(\frac{\tilde{J}_{\hat{m}+j}}{\tilde{J}_{\hat{m}-i}}\right)-\frac{h}
{\tilde{J}_{\hat{m}-i}}+1\right),
\label{eq:special-case-zetam-i}
\end{aligned}
\end{equation}
for $i=1,\ldots,\hat{m}$. This identifies the configurations announced in 
\eqref{eq:thrm - K decomp}.
\end{proof}

\begin{proof}[Proof of Theorem \ref{thm:Cstar-case2}]
Observe from (\ref{eq:sineq}) that 
\begin{eqnarray}
\hat{s} & = & \frac{N}{2\tilde{J}_{\hat{m}+1}}\left(\left(1-\frac{1}{N}\right)
\sum_{i=\hat{m}+1}^{n}\tilde{J}_{i}-h\right),
\end{eqnarray}
and by assumption (A1)(b) we have that 
\begin{equation}
\lim_{N\to\infty}\left(1-\frac{1}{N}\right)
\sum_{i=\hat{m}+1}^{n}\tilde{J}_{i}-h=\lim_{N\to\infty}\sum_{i=\hat{m}+1}^{n}
\tilde{J}_{i}-h>0.
\end{equation} 
Then 
\begin{equation}
\begin{aligned}
&\frac{k\sum_{i=\hat{m}+1}^{n}J_{i}N^{i}}{\left\lceil \hat{s}\right\rceil 
\left(2\hat{s}-\left\lceil \hat{s}\right\rceil +1\right)J_{\hat{m}+1}N^{2\hat{m}}}
=  \frac{k\sum_{i=\hat{m}+1}^{n}\tilde{J}_{i}}{\left\lceil \hat{s}\right\rceil 
\left(2\hat{s}-\left\lceil \hat{s}\right\rceil +1\right)\tilde{J}_{\hat{m}+1}N^{\hat{m}-1}}\\
&\leq \frac{N\sum_{i=\hat{m}+1}^{n}\tilde{J}_{i}}
{\left\lceil \hat{s}\right\rceil \left(2\hat{s}-\left\lceil \hat{s}\right\rceil +1\right)
\tilde{J}_{\hat{m}+1}}
= O\left(N^{-1}\right)\left(\tilde{J}_{\hat{m}+1}\right)^{-1}
\sum_{i=\hat{m}+1}^{n}\tilde{J}_{i},
\end{aligned}
\label{eq:A2 check1}
\end{equation}
and similarly 
\begin{equation}
\begin{aligned}
&\frac{\sum_{i=0}^{\hat{m}-1}J_{i+1}N^{i}\left(\left(N-a_{i}-1\right)
\left(\sum_{j=0}^{i}a_{j}N^{j}\right)+a_{i}\left(N^{i}
-\sum_{j=0}^{i-1}a_{j}N^{j}\right)\right)}{\left\lceil \hat{s}\right\rceil 
\left(2\hat{s}-\left\lceil \hat{s}\right\rceil +1\right)J_{\hat{m}+1}N^{2\hat{m}}}\\ 
&\leq \frac{N\sum_{i=0}^{\hat{m}-1}\tilde{J}_{i+1}}{\left\lceil \hat{s}\right\rceil 
\left(2\hat{s}-\left\lceil \hat{s}\right\rceil +1\right)\tilde{J}_{\hat{m}+1}}
= O\left(N^{-1}\right)\left(\tilde{J}_{\hat{m}+1}\right)^{-1}
\sum_{i=\hat{m}+1}^{n}\tilde{J}_{i}.
\end{aligned}
\label{eq: A2 check2}
\end{equation}
Summing (\ref{eq:A2 check1}) and (\ref{eq: A2 check2}) we get Assumption (A2). 
From (\ref{eq:Gstarhlimit}) we get 
\begin{equation}
\Gamma^{\star}=\left[1+o_{N}\left(1\right)\right]\frac{1}{4}N^{\hat{m}+1}
\left(\tilde{J}_{\hat{m}+1}\right)^{-1}\left(\left(1-\frac{1}{N}\right)
\sum_{i=\hat{m}+1}^{n}\tilde{J}_{i}-h\right)^{2}.
\end{equation}
\end{proof}


\section{Standard interaction}
\label{S4}

In this section we consider the special case 
\begin{equation}
J_{i}=\tilde{J}/N^{i}, \qquad 1 \leq i \leq n,
\label{eq:regulardef}
\end{equation}
for some $\tilde{J}>0$. The Hamiltonian in (\ref{eq:Hamiltonian})
becomes 
\begin{equation}
\cH\left(h;\sigma\right)=-\frac{\tilde{J}}{2}
\sum_{ {a,b \in \Lambda_N^n:} \atop {a\neq b} }N^{-d\left(v_a,v_a\right)}
\sigma(v_a)\sigma(v_a)-\frac{h}{2}\sum_{a \in \Lambda_N^n}\sigma(v_a),
\end{equation}
where  we exhibit the dependence on $h$. In Sections~\ref{S4.1} we show that 
the energy landscape has certain symmetries. In Section~\ref{S4.2} we exploit 
these symmetries to identify the location of the global maximum of the energy 
along the reference path $\gamma$. In Section~\ref{S4.3} we use these results
to prove Theorems~\ref{thm:Gamma-case3} and \ref{thm:Cstar-case3}. In 
Section~\ref{S4.4} we compute the prefactor and prove Theorem~\ref{thm:Kstar-case2}.


\subsection{Symmetries in the energy landscape}
\label{S4.1}

In this section we derive four lemmas
(Lemmas~\ref{lem:concavesymmetricsequence}--\ref{lem:2ndsymmetries} below) 
exhibiting certain symmetries in the energy landscape for the case of standard 
interaction (see Fig.~\ref{figplot}). These symmetries will be crucial later on.

For any $h_{1},h_{2}>0$ and $0\leq a,b\leq N^{n}$, 
\begin{equation}
\cH\left(h_{1};\gamma_{a}\right)-\cH\left(h_{1};\gamma_{b}\right)
=\cH\left(h_{2};\gamma_{a}\right)-\cH\left(h_{2};\gamma_{b}\right)
+\left(h_{2}-h_{1}\right)\left(a-b\right).
\label{eq:differenthdifference}
\end{equation}

\begin{definition}
\label{def-symmetric&concave}
A sequence $\left\{a_{i}\right\} _{i=1}^{M} \in \R^M$ is called symmetric when
\begin{equation}
a_{i}=a_{M-i+1}, \qquad 1\leq i\leq M,
\end{equation}
and concave when 
\begin{equation}
a_{i}-a_{i-1}\geq a_{i+1}-a_{i}, \qquad 2\leq i\leq M-1.
\label{eq:concavityproperty}
\end{equation}
\end{definition}

\noindent
The following lemma is elementary. 

\begin{lemma}
\label{lem:concavesymmetricsequence} 
Suppose that the sequence $\{a_{i}\} _{i=1}^{M}$ is symmetric and concave. Then 
\begin{equation}
\max_{1 \leq i \leq M} a_{i} = a_{\left\lceil \frac{M}{2}\right\rceil}.
\end{equation}
\end{lemma}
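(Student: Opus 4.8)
The plan is to argue from the two defining properties directly, without any heavy machinery. Concavity of $\{a_i\}_{i=1}^M$ means the difference sequence $\delta_i := a_{i+1}-a_i$, defined for $1 \le i \le M-1$, is non-increasing: $\delta_1 \ge \delta_2 \ge \cdots \ge \delta_{M-1}$. Symmetry means $a_i = a_{M-i+1}$, which translates into the difference sequence being anti-palindromic: $\delta_i = a_{i+1}-a_i = a_{M-i}-a_{M-i+1} = -\delta_{M-i}$ for all valid $i$. The first step is to record these two reformulations.

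The key step is then to locate where the difference sequence changes sign. Let $j^\star = \lceil M/2 \rceil$. From the anti-palindromic relation $\delta_i = -\delta_{M-i}$ one reads off, when $M$ is odd, that $\delta_{(M-1)/2} = -\delta_{(M+1)/2}$, so that consecutive differences straddling the centre have opposite signs; combined with monotonicity $\delta_{(M-1)/2} \ge \delta_{(M+1)/2}$ this forces $\delta_{(M-1)/2} \ge 0 \ge \delta_{(M+1)/2}$. When $M$ is even, $\delta_{M/2} = -\delta_{M/2}$, hence $\delta_{M/2} = 0$. In either case, monotonicity of $\{\delta_i\}$ gives $\delta_i \ge 0$ for all $i < j^\star$ and $\delta_i \le 0$ for all $i \ge j^\star$ (treating the two parities separately if one wants to be careful about the exact index ranges). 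Consequently $a_1 \le a_2 \le \cdots \le a_{j^\star}$ and $a_{j^\star} \ge a_{j^\star+1} \ge \cdots \ge a_M$, so $a_{j^\star}$ is the maximum.

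I expect no real obstacle here — the lemma is genuinely elementary, as the paper says. The only place requiring a moment's care is the bookkeeping of index ranges in the even-versus-odd case (making sure $\lceil M/2 \rceil$ is the correct turning point in both parities, and that the empty-range degenerate cases $M=1,2$ are covered), but this is routine. One could alternatively phrase the whole argument as: for $i < j^\star$, $a_{j^\star}-a_i = \sum_{k=i}^{j^\star-1}\delta_k \ge (j^\star-i)\,\delta_{j^\star-1} \ge 0$ using monotonicity and $\delta_{j^\star-1}\ge 0$, and symmetrically for $i>j^\star$ using $a_i = a_{M-i+1}$ to reduce to the already-handled case. I would likely present this compact version.
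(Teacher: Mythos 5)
Your proof is correct: the reformulation via the non-increasing difference sequence $\delta_i=a_{i+1}-a_i$ with the anti-palindromic relation $\delta_i=-\delta_{M-i}$, the sign change pinned at $\lceil M/2\rceil$ in both parities, and the resulting monotone increase up to and decrease after the midpoint all check out (including the degenerate cases $M=1,2$). The paper itself states the lemma without proof, calling it elementary, and your argument is precisely the standard elementary one that was intended.
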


Recall the definition of $\hat{m}$ from (\ref{eq:defm}), and note that now 
\begin{equation}
\hat{m}_{h}=\left\lfloor n-\frac{h}{\tilde{J}}\left(1-\frac{1}{N}\right)^{-1}\right\rfloor, 
\label{eq:mhatregular}
\end{equation}
where again we exhibit the dependence on $h$. It was shown in Section~\ref{S3.2} 
that, in the hierarchical limit $N\to\infty$, $\hat{m}_{h}$ gives the order of magnitude 
of a critical configuration (in particular, the asymptotic size of a critical configuration 
was shown to be $\hat{s}N^{\hat{m}}$). We will now show that for the standard 
interaction in (\ref{eq:regulardef}), $\hat{m}_{h}$ plays a similar role. 

Let $\gamma\colon\,\boxminus\to\boxplus$ be the optimal path defined in Section~\ref{S1.4}.
We begin by considering the Hamiltonian $i \mapsto \cH(h;\gamma_{i})$ for certain 
special values of $h$. Recall $h^{\left(m,s\right)}$ defined in (\ref{eq:hdagger}). In 
terms of this quantity, we have
\begin{equation}
\begin{aligned}
&\cH\left(h^{\left(m,s\right)};\gamma_{sN^{\hat{m}}}\right)
-\cH\left(h^{\left(m,s\right)};\boxminus\right)\\ 
&= \frac{\tilde{J}}{N}sN^{m}\left(N-s\right)+\tilde{J}sN^{m}\sum_{i=m+2}^{n}\left(1-\frac{1}{N}\right)
-sh^{\left(m,s\right)}N^{m}\\
&= sN^{m}\left(\tilde{J}\left(N-s\right)\frac{1}{N}+\tilde{J}\left(1-\frac{1}{N}\right)\left(n-m-1\right)
-h^{\left(m,s\right)}\right)=0
\end{aligned}
\end{equation}
and 
\begin{equation}
\hat{m}_{h^{\left(m,s\right)}}=\left\lfloor m+\left(s-1\right)\frac{1}{N}
\left(1-\frac{1}{N}\right)^{-1}\right\rfloor =m.
\label{eq:mhat-hms}
\end{equation}
A magnetic field that takes the form $h^{\left(m,s\right)}$ gives rise to symmetries 
in the energy landscape along the path $\gamma$, which we can exploit in order 
to find the values at which $i \mapsto \cH\left(h^{\left(m,s\right)};\gamma_{i}\right)$ 
attains its global maximum. Later we will use this information to find the location of 
the global maxima for general values of $h$. First we show that the global maximum 
of $i \mapsto \cH\left(h^{\left(m,s\right)};\gamma_{i}\right)$ is attained in the interval 
$\left[0,sN^{m}\right]$.

\begin{lemma}
\label{lem:maxisinsN}
For any $1\leq s\leq N$ and $0\leq m\leq n-1$,
\begin{equation}
\max_{1 \leq i \leq N^n} \cH\left(h^{\left(m,s\right)};\gamma_{i}\right)
=\max_{i\leq sN^{m}}\cH\left(h^{\left(m,s\right)};\gamma_{i}\right).
\label{eq:Hineq}
\end{equation}
\end{lemma}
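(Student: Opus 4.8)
The plan is the following. Write $h=h^{(m,s)}$ throughout and recall the two identities established just above the statement: $\hat m_{h}=m$ (equation \eqref{eq:mhat-hms}), so that by \eqref{eq:defm2} and \eqref{eq:base-increment} one has $\cH(\gamma_{N^{j}})\le\cH(\boxminus)$ for every $j$ with $m+1\le j\le n-1$, and $\cH(h;\gamma_{sN^{m}})=\cH(h;\boxminus)$. Since $\{\gamma_{i}:i\le sN^{m}\}$ is a subset of $\{\gamma_{i}:1\le i\le N^{n}\}$ (the extra index $i=0$ does no harm because $\cH(\gamma_{0})=\cH(\gamma_{sN^{m}})$), the inequality $\ge$ in \eqref{eq:Hineq} is trivial, and it remains to show $\cH(\gamma_{i})\le\max_{j\le sN^{m}}\cH(\gamma_{j})$ for every $i$. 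I would do this by reducing an arbitrary $i$ in two stages: first removing the contribution of the hierarchical levels above $m+1$, and then removing the part of the first $(m+1)$-block that sticks out past $sN^{m}$.

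\emph{Stage 1 (levels above $m+1$).} We may assume $i<N^{n}$, since $\gamma_{N^{n}}=\boxplus$ has energy below $\cH(\boxminus)$. Write $i=M+r$ with $r=i\bmod N^{m+1}\in[0,N^{m+1})$ and $M$ a (possibly zero) multiple of $N^{m+1}$. Lemma~\ref{lem:shift-increment}, applied with high part $M$ and low part $k=r$, gives $\cH(\gamma_{M+r})-\cH(\gamma_{M})\le\cH(\gamma_{r})-\cH(\boxminus)$, so it suffices to know that $\cH(\gamma_{M})\le\cH(\boxminus)$ for every positive multiple $M$ of $N^{m+1}$. This I would prove by induction on the number of nonzero base-$N$ digits of $M$: when $M=eN^{l}$ is a single digit, $m+1\le l\le n-1$, $1\le e\le N-1$, the sequence $t\mapsto\cH(\gamma_{tN^{l}})$ is concave (Lemma~\ref{lem:concave} with $a=l$), equals $\cH(\boxminus)$ at $t=0$, and has first increment $\cH(\gamma_{N^{l}})-\cH(\boxminus)\le0$ (by \eqref{eq:base-increment}, since $l>\hat m_{h}$), hence $\cH(\gamma_{eN^{l}})\le\cH(\boxminus)$; the inductive step peels off the highest nonzero digit of $M$ by the same application of Lemma~\ref{lem:shift-increment}. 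Conclusion of Stage 1: $\cH(\gamma_{i})\le\cH(\gamma_{r})$ with $0\le r<N^{m+1}$.

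\emph{Stage 2 (inside the first $(m+1)$-block).} Let $0\le r<N^{m+1}$; if $r\le sN^{m}$ we are done. Otherwise write $r=tN^{m}+r'$ with $t\ge s$ and $0\le r'<N^{m}$. Lemma~\ref{lem:shift-increment} gives $\cH(\gamma_{tN^{m}+r'})-\cH(\gamma_{tN^{m}})\le\cH(\gamma_{r'})-\cH(\boxminus)$, with $r'<N^{m}\le sN^{m}$, so it remains only to show $\cH(\gamma_{tN^{m}})\le\cH(\boxminus)$ for all $t\ge s$. For this I would invoke Lemma~\ref{lem:concave} with $a=m$: the finite sequence $(\cH(\gamma_{tN^{m}}))_{t=0}^{N}$ has successive increments in arithmetic progression with common difference $-2J_{m+1}N^{2m}<0$, hence is strictly concave. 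Since it takes the value $\cH(\boxminus)$ at $t=0$ and at $t=s$, concavity forces it to lie on or above $\cH(\boxminus)$ throughout $[0,s]$, so its increment across $t=s$ is $\le0$; as the increments are decreasing, the sequence then stays $\le\cH(\boxminus)$ for every $t\ge s$. (When $s=1$ this is immediate, the increment at $t=1$ being exactly $0$.) Combining the two stages, $\cH(\gamma_{i})\le\cH(\gamma_{r'})\le\max_{j\le sN^{m}}\cH(\gamma_{j})$ for every $i$, which is \eqref{eq:Hineq}.

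I expect the main obstacle to be the auxiliary claim in Stage 1, that $\cH(\gamma_{M})\le\cH(\boxminus)$ for every multiple $M$ of $N^{m+1}$: this is geometrically evident from $\hat m_{h}=m$ and the level-by-level concavity of the energy profile, but making the induction and the attendant index bookkeeping airtight takes care — in particular one must read Lemma~\ref{lem:concave} at the boundary index $j=0$ with the convention $\cH(\gamma_{0})=\cH(\boxminus)$, and one should note that the degenerate value $s=N$ is not a separate case, since $h^{(m,N)}=h^{(m+1,1)}$ and hence the assertion at level $m$ is identical to the assertion at level $m+1$. Everything else is a routine application of Lemmas~\ref{lem:concave} and~\ref{lem:shift-increment}.
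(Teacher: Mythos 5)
Your argument is correct and follows essentially the same route as the paper's proof: both decompose the index in base $N$, use Lemma~\ref{lem:shift-increment} to split off the digits at levels above $m$, use Lemma~\ref{lem:concave} together with \eqref{eq:base-increment} and $\hat m_{h^{(m,s)}}=m$ to show those truncations have energy at most $\cH(\boxminus)$, and then treat the case where the level-$m$ digit is at least $s$ by the same device. The only differences are cosmetic (you peel the highest nonzero digit while the paper strips the lowest remaining one, and you spell out the concavity argument for $\cH(\gamma_{tN^{m}})\le\cH(\boxminus)$, $t\ge s$, which the paper merely asserts).
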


\begin{proof}
Let $K=a_{n-1}N^{n-1}+\ldots+a_{0}$ and $u(i)=a_{n-1}N^{n-1}+\ldots+a_{i}N^{i}$, and note that, by Lemma \ref{lem:shift-increment},
\begin{equation}
\cH\left(h^{\left(m,s\right)};\gamma_{u(m+1)}\right)
\leq\cH\left(h^{\left(m,s\right)};\gamma_{u(m+2)}\right)+\cH\left(h^{\left(m,s\right)};\gamma_{a_{m+1}N^{m+1}}\right)
-\cH\left(h^{\left(m,s\right)};\boxminus\right).
\end{equation}
By Lemma \ref{lem:concave} and the definition of $m=\hat{m}$ in (\ref{eq:defm2}), we 
have, for $0 \leq m<n-1$, 
\begin{equation}
\begin{aligned}
&\cH\left(h^{\left(m,s\right)};\gamma_{a_{m+1}N^{m+1}}\right)
-\cH\left(h^{\left(m,s\right)};\gamma_{\left(a_{m+1}-1\right)N^{m+1}}\right)\\ 
&\qquad \leq \cH\left(h^{\left(m,s\right)};\gamma_{N^{m+1}}\right)
-\cH\left(h^{\left(m,s\right)};\boxminus\right)\leq0.
\end{aligned}
\end{equation}
Hence, by induction, 
\begin{equation}
\begin{aligned}
&\cH\left(h^{\left(m,s\right)};\gamma_{a_{m+1}N^{m+1}}\right)
\leq\cH\left(h^{\left(m,s\right)};\gamma_{\left(a_{m+1}-1\right)N^{m+1}}\right)\\
&\qquad \leq\ldots\leq\cH\left(h^{\left(m,s\right)};\gamma_{N^{m+1}}\right)
\leq\cH\left(h^{\left(m,s\right)};\boxminus\right)
\label{eq:inductsetp}
\end{aligned}
\end{equation}
and therefore
\begin{equation}
\cH\left(h^{\left(m,s\right)};\gamma_{u(m+1)}\right)
\leq\cH\left(h^{\left(m,s\right)};\gamma_{u(m+2)}\right).
\end{equation}
Once again it follows from inductive reasoning that 
\begin{equation}
\cH\left(h^{\left(m,s\right)};\gamma_{u(m+1)}\right)
\leq\cH\left(h^{\left(m,s\right)};\gamma_{a_{n-1}N^{n-1}}\right).
\end{equation}
By the same reasoning as in (\ref{eq:inductsetp}), we have
\begin{equation}
\cH\left(h^{\left(m,s\right)};\gamma_{a_{n-1}N^{n-1}}\right)
\leq\cH\left(h^{\left(m,s\right)};\gamma_{N^{n-1}}\right)
\leq\cH\left(h^{\left(m,s\right)};\boxminus\right)
\end{equation}
and hence 
\begin{equation}
\cH\left(h^{\left(m,s\right)};\gamma_{u(m+1)}\right)
\leq\cH\left(h^{\left(m,s\right)};\boxminus\right).
\end{equation}
Thus 
\begin{equation}
\begin{aligned}
&\cH\left(h^{\left(m,s\right)};\gamma_{K}\right)
-\cH\left(h^{\left(m,s\right)};\boxminus\right)\\
&= \cH\left(h^{\left(m,s\right)};\gamma_{K}\right)
-\cH\left(h^{\left(m,s\right)};\gamma_{u(m+1)}\right)
+ \cH\left(h^{\left(m,s\right)};\gamma_{u(m+1)}\right)
-\cH\left(h^{\left(m,s\right)};\boxminus\right)\\
& \leq \cH\left(h^{\left(m,s\right)};\gamma_{K}\right)
-\cH\left(h^{\left(m,s\right)};
\gamma_{u(m+1)}\right)
\leq \cH\left(h^{\left(m,s\right)};\gamma_{a_{m}N^{m}+\ldots+a_{0}}\right)
-\cH\left(h^{\left(m,s\right)};\boxminus\right),
\end{aligned}
\label{eq:slowing2-inequality}
\end{equation}
where the last inequality again follows from Lemma \ref{lem:shift-increment}. Moreover, for 
$m=n-1$ the inequality in (\ref{eq:slowing2-inequality}) is immediate. If $a_{m}<s$,  then the 
claim in \eqref{eq:Hineq} follows immediately. Otherwise we have $\cH\left(h^{\left(m,s\right)};
\gamma_{a_{m}N^{m}}\right)\leq\cH\left(h^{\left(m,s\right)};\boxminus\right)$ and hence, by 
Lemma~\ref{lem:shift-increment} and using the abbreviation $v(i)=a_{i}N^{i}+\ldots+a_{0}$, 
\begin{equation}
\begin{aligned}
&\cH\left(h^{\left(m,s\right)};\gamma_{v(m)}\right)
-\cH\left(h^{\left(m,s\right)};\boxminus\right)\\ 
&\leq \cH\left(h^{\left(m,s\right)};\gamma_{v(m)}\right)
-\cH\left(h^{\left(m,s\right)};\gamma_{a_{m}N^{m}}\right)
+ \cH\left(h^{\left(m,s\right)};\gamma_{a_{m}N^{m}}\right)
-\cH\left(h^{\left(m,s\right)};\boxminus\right) \\
&\leq \cH\left(h^{\left(m,s\right)};\gamma_{v(m-1)}\right)
-\cH\left(h^{\left(m,s\right)};\boxminus\right)\\
&\leq \max_{1 \leq i\leq sN^{m}}\cH\left(h^{\left(m,s\right)};\gamma_{i}\right)
-\cH\left(h^{\left(m,s\right)};\boxminus\right),
\end{aligned}
\end{equation}
which settles the claim.
\end{proof}

We next derive two results stating $\{\cH(h^{\left(m,s\right)},\gamma_{i})\} _{i=1}^{sN^n}$ 
(illustrated in Fig.~\ref{figplot}) is symmetric and fractal-like, which is used later to locate the 
global maxima of this sequence. 

\begin{lemma}
\label{lem: symmetry}
The sequence $\{\cH(h^{(m,s)};\gamma_{i})\} _{i=0}^{sN^{m}}$ is symmetric, i.e., 
\begin{equation}
\cH\left(h^{\left(m,s\right)};\gamma_{K}\right)=\cH\left(h^{\left(m,s\right)};
\gamma_{sN^{m}-K}\right), \qquad 0\leq K\leq sN^{m}. 
\end{equation}
\end{lemma}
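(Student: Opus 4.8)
The plan is to exploit the $N$-ary decomposition of the reference path together with the periodic-concave structure from Lemma~\ref{lem:concave}, and to work with the specific magnetic field $h^{(m,s)}$ for which the base increments vanish. First I would fix $0 \le K \le sN^m$ and write $K = \sum_{i=0}^{m-1} a_i N^i$ together with the leading digit so that $K \le sN^m$. The key identity to establish is that the increment $\cH(h^{(m,s)};\gamma_{K+1}) - \cH(h^{(m,s)};\gamma_K)$, i.e. the energy cost of flipping the vertex $v_{K+1}$ upward when the first $K$ vertices are already up, depends on $K$ only through the digits $a_0,\ldots,a_{m-1}$ of $K$ and through how many complete $N^m$-blocks have been filled — and that it is an \emph{odd} function under the reflection $K \mapsto sN^m - 1 - K$. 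Concretely, using the formula for $\cH(\gamma_k) - \cH(\boxminus)$ in \eqref{eq:HMDheight2} (or the increment formula \eqref{eq:genincrement}) specialised to $J_i = \tilde J/N^i$, one computes that the per-step increment at position $K$ equals the negative of the per-step increment at position $sN^m - 1 - K$. This is precisely where the choice $h = h^{(m,s)}$ is needed: it is the value that makes $\cH(h^{(m,s)};\gamma_{sN^m}) = \cH(h^{(m,s)};\boxminus)$ (as computed just above the lemma), so that the telescoping sum of increments over $[0,sN^m]$ is zero, which is the boundary condition forcing the symmetry.

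Next I would make the reflection structure precise. Writing $\Delta(K) = \cH(h^{(m,s)};\gamma_{K+1}) - \cH(h^{(m,s)};\gamma_K)$, the claim reduces to showing $\Delta(K) = -\Delta(sN^m - 1 - K)$ for all $0 \le K \le sN^m - 1$; granting this,
\begin{equation}
\cH(h^{(m,s)};\gamma_K) - \cH(h^{(m,s)};\gamma_{sN^m - K}) = \sum_{j=sN^m-K}^{K-1} \Delta(j) = 0,
\end{equation}
since the terms cancel in pairs $j \leftrightarrow sN^m - 1 - j$. To verify $\Delta(K) = -\Delta(sN^m-1-K)$, note that flipping $v_{K+1}$ upward contributes, at each hierarchical level $i+1$, a term proportional to $\tilde J N^{-(i+1)} N^i \big[(\text{number of already-up vertices at distance } i+1) - (\text{number of still-down vertices at distance } i+1)\big]$, minus $h$. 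When $K$ is replaced by $sN^m - 1 - K$, at every level the roles of "up" and "down" neighbours are interchanged (because the complement of an $i$-block within its $(i+1)$-block has the reflected fill pattern, and the count of filled $N^m$-blocks goes from $c$ to $s-1-c$), which flips the sign of each level's contribution; the $-h$ term flips sign against the constant $\tilde J(1-1/N)(n-m-1) - h^{(m,s)} + \tilde J/N \cdot (\text{something})$ bookkeeping, and here one uses the defining relation $h^{(m,s)} = \tilde J[(1-1/N)(n-m) - (s-1)/N]$ to see the constant parts match up so that only the antisymmetric parts survive.

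The main obstacle I expect is the careful combinatorial bookkeeping of the interaction counts at the intermediate hierarchical levels $1 \le i+1 \le m$: unlike the top levels, the within-block fill pattern at position $K$ is governed by the lower-order digits $a_0,\ldots,a_{i-1}$, and one must check that the reflection $K \mapsto sN^m-1-K$ induces exactly the digit-complementation $a_i \mapsto N-1-a_i$ (with the usual borrow when $K \equiv 0$), and that this complementation interacts correctly with the nested block structure so that each level's signed neighbour-count is genuinely negated. This is essentially a base-$N$ "nines-complement" argument, and the cleanest route is probably to prove the increment identity $\Delta(K) = -\Delta(sN^m-1-K)$ directly from the closed form \eqref{eq:HMDheight2} by substituting $k = K+1$ and $k = sN^m - K$ and simplifying, rather than re-deriving the neighbour counts by hand. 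Once the increment identity is in place, the symmetry of the sequence follows immediately by the telescoping argument above, and no further input is needed.
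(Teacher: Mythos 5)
Your proposal is correct, but it takes a genuinely different route from the paper. The paper's proof is \emph{global}: it writes $K$ in base $N$, derives the closed form \eqref{eq:HKregsymetric} for $\cH(h^{(m,s)};\gamma_K)-\cH(h^{(m,s)};\boxminus)$ by counting interacting pairs at every hierarchical distance, repeats the count for $\tilde K=sN^m-K$ (where only the level-$(m+1)$ count changes), and shows the difference vanishes exactly when $h=h^{(m,s)}$ as defined in \eqref{eq:hdagger}. You instead prove the \emph{local} antisymmetry $\Delta(K)=-\Delta(sN^m-1-K)$ of single-step increments and telescope; this only requires tracking the neighbourhood of the one vertex flipped at each step, and the base-$N$ complementation $a_i\mapsto N-1-a_i$ (for $i<m$, with $a_m\mapsto s-1-a_m$) is borrow-free since $sN^m-1$ has all lower digits equal to $N-1$. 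One caveat on your phrasing: the sign of the level contribution is literally negated only for levels $1,\dots,m$; at level $m+1$ the complement is taken with respect to $s-1$ rather than $N-1$, and at levels above $m+1$ the contribution is an unchanged constant, so what one actually gets is
\begin{equation*}
\Delta(K)+\Delta(sN^m-1-K)
=2\left[\frac{\tilde J}{N}(N-s)+\tilde J\Big(1-\frac{1}{N}\Big)(n-m-1)-h\right],
\end{equation*}
which vanishes precisely at $h=h^{(m,s)}$ --- you flag this bookkeeping, and it is indeed where the hypothesis enters, in both proofs. Your approach is arguably cleaner for the symmetry statement itself; the paper's heavier computation has the side benefit of producing the explicit formula \eqref{eq:HKregsymetric}, which is reused later in Lemma~\ref{lem:2ndsymmetries} and in the computation of $\Gamma^\star$.
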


\begin{proof}
Let $K=k_{n-1}N^{n-1}+\ldots+k_{0}$ , so that 
\begin{equation}
\begin{aligned}
&\cH\left(h^{\left(m,s\right)};\gamma_{K}\right)-\cH\left(h^{\left(m,s\right)};\boxminus\right)
+ h^{\left(m,s\right)}K\\ 
& = \sum_{i=0}^{n-1}J_{i+1}N^{i}\left(\left(\sum_{j=0}^{i}k_{j}N^{j}\right)
\left(N-k_{i}-1\right)+k_{i}\left(N^{i}-\sum_{j=0}^{i-1}k_{j}N^{j}\right)\right)\\
& = \sum_{i=0}^{n-1}J_{i+1}N^{i}\left(\left(\sum_{j=0}^{i-1}k_{j}N^{j}\right)
\left(N-k_{i}-1\right)+k_{i}N^{i}\left(N-k_{i}-1\right)+k_{i}N^{i}-k_{i}\sum_{j=0}^{i-1}k_{j}N^{j}\right)\\
& = \sum_{i=0}^{n-1}J_{i+1}N^{i}\left(\left(\sum_{j=0}^{i-1}k_{j}N^{j}\right)
\left(N-2k_{i}-1\right)+k_{i}N^{i}\left(N-k_{i}\right)\right)\\
& = \sum_{i=0}^{n-1}\frac{\tilde{J}}{N}\left(\left(\sum_{j=0}^{i-1}k_{j}N^{j}\right)
\left(N-2k_{i}-1\right)+k_{i}N^{i}\left(N-k_{i}\right)\right).
\end{aligned}
\end{equation}
Since $k_{i}=0$ for $i>m$ and $k_{m}<s$, this simplifies to 
\begin{equation}
\begin{aligned}
&\cH\left(h^{\left(m,s\right)};\gamma_{K}\right)-\cH\left(h^{\left(m,s\right)};\boxminus\right)\\ 
& = \sum_{i=0}^{m}\frac{\tilde{J}}{N}\left(\left(\sum_{j=0}^{i-1}k_{j}N^{j}\right)
\left(N-2k_{i}-1\right)+k_{i}N^{i}\left(N-k_{i}\right)\right)\\
&\qquad  + K\left(\tilde{J}\left(1-\frac{1}{N}\right)\left(n-m-1\right)-h^{\left(m,s\right)}\right).
\label{eq:HKregsymetric}
\end{aligned}
\end{equation}
Note that if $\tilde{K}=sN^{m}-K$, then the number of interacting pairs at distance 
$i=0,\ldots,m$ in the configuration $\gamma_{\tilde{K}}$ (i.e., vertices $v_{a}$, $v_{b}$
such that $\gamma_{\tilde{K}}\left(v_{a}\right)=-\gamma_{\tilde{K}}\left(v_{b}\right)$ 
and $d\left(v_{a},v_{b}\right)=i$) is the same as in the configuration $\gamma_{K}$. 
At distance $m+1$ this number is equal to 
\begin{equation}
N^{m}\left(K\left(s-k_{m}-1\right)+\left(N^{m}-\sum_{j=0}^{m-1}k_{j}N^{j}\right)k_{m}
+\left(sN^{m}-K\right)\left(N-s\right)\right)
\end{equation}
and therefore we conclude that 
\begin{equation}
\begin{aligned}
&\cH\left(h^{\left(m,s\right)};\gamma_{\tilde{K}}\right)-\cH\left(h^{\left(m,s\right)};\boxminus\right)\\ 
& = \sum_{i=0}^{m-1}\frac{\tilde{J}}{N}\left(\left(\sum_{j=0}^{i-1}k_{j}N^{j}\right)
\left(N-2k_{i}-1\right)+k_{i}N^{i}\left(N-k_{i}\right)\right)\\
&\qquad + \frac{\tilde{J}}{N}\left(K\left(s-k_{m}-1\right)+\left(N^{m}
-\sum_{j=0}^{m-1}k_{j}N^{j}\right)k_{m}+\left(sN^{m}-K\right)\left(N-s\right)\right)\\
&\qquad + \sum_{i=m+1}^{n-1}\tilde{J}\left(1-\frac{1}{N}\right)
\left(sN^{m}-\sum_{j=0}^{m}k_{j}N^{j}\right)-h^{\left(m,s\right)}\tilde{K}.
\end{aligned}
\end{equation}
Thus, we have
\begin{equation}
\begin{aligned}
&\cH\left(h^{\left(m,s\right)};\gamma_{\tilde{K}}\right)-\cH\left(h^{\left(m,s\right)};\gamma_{K}\right)
= \sum_{i=m+1}^{n-1}\tilde{J}\left(1-\frac{1}{N}\right)\left(sN^{m}-2K\right)\\
&\quad + \frac{\tilde{J}}{N}\left(K\left(s-k_{m}-1\right)+\left(sN^{m}-K\right)\left(N-s\right)
-K\left(N-k_{i}-1\right)\right)
- h^{\left(m,s\right)}\left(sN^{m}-2K\right),
\end{aligned}
\end{equation}
which is equal to $0$ if and only if
\begin{equation}
\begin{aligned}
&h^{\left(m,s\right)}\left(sN^{m}-2K\right)\\ 
& = \tilde{J}\left(1-\frac{1}{N}\right)\left(sN^{m}-2K\right)\left(n-m-1\right)\\
&\quad + \frac{\tilde{J}}{N}\left(K\left(s-k_{m}-1\right)+\left(sN^{m}-K\right)\left(N-s\right)
-K\left(N-k_{m}-1\right)\right)\\
& = \tilde{J}\left(1-\frac{1}{N}\right)\left(sN^{m}-2K\right)\left(n-m-1\right)
+\frac{\tilde{J}}{N}\left(K\left(s-N\right)+\left(sN^{m}-K\right)\left(N-s\right)\right)\\
& = \tilde{J}\left(sN^{m}-2K\right)\left(\left(1-\frac{1}{N}\right)\left(n-m\right)
-\left(s-1\right)\frac{1}{N}\right),
\end{aligned}
\end{equation}
which indeed is true by the definition of $h^{\left(m,s\right)}$ in (\ref{eq:hdagger}). 
\end{proof}

To state the second result we need some more notation. Let $Q\colon\,\mathbb{N}_{0}
\to\left\{ 0,1\right\}$ be defined by 
\begin{equation}
Q\left(a\right)=a\,\mathrm{mod}\,2.
\label{eq:def Q}
\end{equation}
For all integers $k\in\left\{1,\ldots,m\right\} $ taking the form $k=a(1+Q(N+1))-Q((N+1)(s+1))$ 
for some $a\in\left\{1,\ldots,m\right\}$, define the integer intervals $S_{k}=[S_{k}^{-},S_{k}^{+}]$, 
where
\begin{equation}
\begin{aligned}
S_{k}^{-} & =  \left(\left\lfloor \frac{s}{2}\right\rfloor -1+Q\left(s\left(N+1\right)\right)\right)N^{m}
+\sum_{j=1}^{k-1}a_{m-j}N^{m-j}+\left(1+Q\left(sN\right)\right)N^{m-k},\\
S_{k}^{+} & = \left(\left\lfloor \frac{s}{2}\right\rfloor -1+Q\left(s\left(N+1\right)\right)\right)N^{m}
+\sum_{j=1}^{k-1}a_{m-j}N^{m-j}+N^{m-k+1}, 
\end{aligned}
\label{eq:SetSk}
\end{equation}
and 
\begin{equation}
a_{m-j}=\left\lfloor \frac{N}{2}-Q\left(\left(j+s+1\right)\left(N+1\right)\right)\right\rfloor.
\end{equation}
The following clarification regarding (\ref{eq:SetSk}) is in order. For odd values of $N$, 
(\ref{eq:SetSk}) defines the sets $S_{1},\ldots,S_{m}$, and the coefficients $a_{m-j}$ 
are all equal to $\left\lfloor \frac{N}{2}\right\rfloor =\frac{N-1}{2}$. For even values of 
$N$ and even values of $s$, (\ref{eq:SetSk}) defines the odd-indexed sets $S_{1},S_{3},
\ldots,S_{2\left\lfloor \frac{m}{2}\right\rfloor +1}$ and the coefficients $a_{m-j}$ are given 
by $a_{m-1}=\frac{N}{2}$, $a_{m-2}=\frac{N}{2}-1$, etc. For even values of $N$ and 
odd values of $s$, (\ref{eq:SetSk}) defines the even-indexed sets $S_{2},S_{4},\ldots,
S_{2\left\lfloor \frac{m}{2}\right\rfloor }$ and the coefficients $a_{m-j}$ are given by 
$a_{m-1}=\frac{N}{2}-1$, $a_{m-2}=\frac{N}{2}$, etc.

\begin{lemma}
\label{lem:2ndsymmetries} 
For every $k\in\left\{1,\ldots,m\right\}$ that takes the form 
\begin{equation}
k=a\left(1+Q\left(N+1\right)\right)+Q\left(\left(N+1\right)\left(s+1\right)\right)
\end{equation}
for some $a\in\mathbb{N}_{0}$, the sequence $\{\cH(h^{(m,s)};\gamma_{i})\} _{i\in S_{k}}$ 
is symmetric.
\end{lemma}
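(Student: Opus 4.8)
The plan is to prove the reflection identity
\[
\cH\!\left(h^{(m,s)};\gamma_i\right)=\cH\!\left(h^{(m,s)};\gamma_{S_k^-+S_k^+-i}\right),\qquad S_k^-\le i\le S_k^+,
\]
which is exactly the statement that $\{\cH(h^{(m,s)};\gamma_i)\}_{i\in S_k}$ is symmetric. The first step is purely combinatorial: one verifies that $i\mapsto S_k^-+S_k^+-i$ is an involution of $S_k=[S_k^-,S_k^+]$. Writing $P=\left(\lfloor s/2\rfloor-1+Q(s(N+1))\right)N^m+\sum_{j=1}^{k-1}a_{m-j}N^{m-j}$, so that $P$ is a multiple of $N^{m-k+1}$ and $S_k^-=P+(1+Q(sN))N^{m-k}$, $S_k^+=P+N^{m-k+1}$, one checks from \eqref{eq:SetSk} that for $i\in S_k$ the $N$-ary digits of $i$ at all positions $\ge m-k+1$ are frozen (equal to those of $P$, with the digit at level $m-k+1$ incrementing by one only at the extreme point $i=S_k^+$), while the digits at positions $0,\dots,m-k$ are free subject to $i-P\in[(1+Q(sN))N^{m-k},N^{m-k+1}]$; a short calculation then shows that this range is preserved by $i\mapsto S_k^-+S_k^+-i$.

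The heart of the argument is a direct computation entirely parallel to the proof of Lemma~\ref{lem: symmetry}. Using \eqref{eq:genincrement}, or the explicit digit-formula \eqref{eq:HKregsymetric}, one expands $\cH(h^{(m,s)};\gamma_i)-\cH(h^{(m,s)};\boxminus)$ as a polynomial in the low $N$-ary digits of $i$ plus a linear drift in $i$ with coefficient $\tilde J(s-N)/N$; neither piece is symmetric on $S_k$ by itself, but, exactly as in Lemma~\ref{lem: symmetry}, the magnetic field $h^{(m,s)}$ from \eqref{eq:hdagger} is tuned so that the asymmetry of the polynomial part cancels the drift — here over the interval $S_k$ rather than over $[0,sN^m]$. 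Conceptually, one may split the interactions into those internal to the active $(m-k+1)$-block spanned by $v_{P+1},\dots,v_{P+N^{m-k+1}}$ (these reproduce the reference-path energy profile one hierarchical level down, to which Lemmas~\ref{lem:concave} and \ref{lem: symmetry} apply) and those between that block's spins and the frozen exterior together with the field (these combine into the linear drift); the point is that the offset $(1+Q(sN))N^{m-k}$ and the digits $a_{m-j}=\lfloor N/2-Q((j+s+1)(N+1))\rfloor$ place the centre of $S_k$ exactly where the two contributions balance. Substituting the value of $h^{(m,s)}$, the difference $\cH(h^{(m,s)};\gamma_i)-\cH(h^{(m,s)};\gamma_{S_k^-+S_k^+-i})$ collapses to a single linear identity in $N,m,s,k$ that is checked by inspection, mirroring the closing lines of the proof of Lemma~\ref{lem: symmetry}.

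A cleaner organisation, which I would also pursue, is by induction on $k$. One checks that $S_k\subset S_{k-1}$ (this uses $a_{m-k+1}\ge 1+Q(sN)$, which holds for the relevant $N$), takes Lemma~\ref{lem: symmetry} as the base case $k=0$ on $[0,sN^m]$, and in the inductive step applies Lemma~\ref{lem:concave} to the level-$(m-k+1)$--spaced subsequence inside the already-symmetric interval $S_{k-1}$: this subsequence is symmetric and concave, so its maximum sits in a well-defined sub-block, and one shows that the restriction of $\{\cH(h^{(m,s)};\gamma_i)\}$ to $S_k$ around that sub-block is symmetric by the same affine-cancellation computation, now with the effective field inherited from level $m-k+2$. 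This formulation also explains the admissibility condition on $k$ in the statement: the form $k=a(1+Q(N+1))+Q((N+1)(s+1))$ with $Q$ as in \eqref{eq:def Q} is precisely the case distinction ($N$ odd; $N$ even with $s$ odd; $N$ even with $s$ even) in which the inherited peak lands on a whole sub-block rather than straddling two, so that $S_k$ is well defined.

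The main obstacle is bookkeeping rather than any conceptual difficulty: carrying the floor functions and the parity corrections $Q(\,\cdot\,)$ accurately through the reflection, so as to confirm both that $i\mapsto S_k^-+S_k^+-i$ fixes $S_k$ (including the slightly irregular behaviour at the endpoint $i=S_k^+$) and that the midpoint of $S_k$ is either an integer or a half-integer lying between two equal values of the sequence. Once these are nailed down, the energy identity itself is the same kind of short linear verification as in Lemma~\ref{lem: symmetry}.
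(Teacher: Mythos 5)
Your plan is essentially the paper's own proof: you reflect each $K\in S_k$ to the mirrored point $S_k^-+S_k^+-K$ (which coincides with the paper's $\tilde K=K+N^{m-k+1}-2R+(1+Q(sN))N^{m-k}$), expand the energies via the $N$-ary digit formula \eqref{eq:HKregsymetric}, and observe that the block-internal polynomial part and the linear drift with coefficient $\tilde J(s-N)/N$ cancel precisely because of the value of $h^{(m,s)}$, with the parity terms $Q(\cdot)$ and the admissible form of $k$ handling the case distinction ($N$ odd; $N$ even with $s$ odd or even) exactly as in the paper's closing computation. The inductive reorganisation you sketch is only a packaging variant, since it bottoms out in the same affine-cancellation identity; the remaining work in your write-up is the bookkeeping the paper itself carries out.
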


\begin{proof}
Suppose that $K\in S_{k}$, so that 
\begin{equation}
K = \sum_{i=0}^{m}a_{i}N^{i}
= \left(\left\lfloor \frac{s}{2}\right\rfloor -1+Q\left(s\left(N+1\right)\right)\right)N^{m}
+\sum_{j=1}^{k-1}a_{m-j}N^{m-j}+R,
\end{equation}
where 
\begin{equation}
R=a_{m-k}N^{m-k}+a_{m-k-1}N^{m-k-1}+\ldots+a_{0}
\end{equation}
for $1+Q\left(sN\right)\leq a_{m-k}\leq N-1$ and $0\leq a_{i}\leq N-1$
for $0\leq i<m-k$. Also let 
\begin{equation}
\begin{aligned}
\tilde{K} &= \left(\left\lfloor \frac{s}{2}\right\rfloor -1+Q\left(s\left(N+1\right)\right)\right)N^{m}\\
&\qquad +\sum_{j=1}^{k-1}a_{m-j}N^{m-j}+N^{m-k+1}-R+\left(1+Q\left(sN\right)\right)N^{m-k}\\
& = K+N^{m-k+1}-2R+\left(1+Q\left(sN\right)\right)N^{m-k},
\end{aligned}
\end{equation}
so that $K$ and $\tilde{K}$ are mirrored points in $S_{k}$ (i.e., if $K$ is the $i^{th}$ point in 
$S_{k}$, then $\tilde{K}$ is the $(\left|S_{k}\right|-i)^{th}$ point). Note that, by 
(\ref{eq:HKregsymetric}),
\begin{equation}
\begin{aligned}
&\cH\left(h^{\left(m,s\right)};\gamma_{K}\right)-\cH\left(h^{\left(m,s\right)};\boxminus\right) 
= \sum_{i=0}^{m}\frac{\tilde{J}}{N}\left(\left(\sum_{j=0}^{i-1}a_{j}N^{j}\right)
\left(N-2a_{i}-1\right)+a_{i}N^{i}\left(N-a_{i}\right)\right)\\
&\qquad\qquad\qquad\qquad  
+ K\left(\tilde{J}\left(1-\frac{1}{N}\right)\left(n-m-1\right)-h^{\left(m,s\right)}\right).
\end{aligned}
\end{equation}
Observe that, for $1\leq i\leq m-k$, the total number of interacting pairs at distance 
$i$ in $\gamma_{\tilde{K}}$ (i.e., vertices $v,w$ such that $d\left(v,w\right)=i$ and 
$\gamma_{\tilde{K}}\left(v\right)=-\gamma_{\tilde{K}}\left(w\right)$), is the same as 
in $\gamma_{K}$. At distance $m-k+1$, the number of interacting pairs in 
$\gamma_{\tilde{K}}$ is equal to the number of interacting pairs in $\gamma_{K}$ 
plus $(1+Q(sN))N^{m-k}(R-(1+Q(sN))N^{m-k})$ minus $(1+Q(sN))N^{m-k}(N^{m-k+1}
-R)$. For $m-k+2\leq i$, the number of interacting pairs at distance $i$ in 
$\gamma_{\tilde{K}}$ is equal to the number of interacting pairs in $\gamma_{K}$ 
plus $a_{i}N^{i}(R-(1+Q(sN))N^{m-k})$ minus $a_{i}N^{i}(N^{m-k+1}-R)$, and 
plus $(N-a_{i}-1)N^{i}(N^{m-k+1}-R)$ minus $(N-a_{i}-1)N^{i}(R-(1+Q(sN))N^{m-k})$. 
Thus, we have
\begin{equation}
\begin{aligned}
&\left(\cH\left(h^{\left(m,s\right)};\gamma_{\tilde{K}}\right)
-\cH\left(h^{\left(m,s\right)};\boxminus\right)\right)\left(\frac{\tilde{J}}{N}\right)^{-1}\\
& = \sum_{i=0}^{m-k}\left(\left(\sum_{j=0}^{i-1}a_{j}N^{j}\right)
\left(N-2a_{i}-1\right)+a_{i}N^{i}\left(N-a_{i}\right)\right)\\
&\qquad + \left(1+Q\left(sN\right)\right)\left(2R-N^{m-k+1}
-\left(1+Q\left(sN\right)\right)N^{m-k}\right)\\
&\qquad + \sum_{i=m-k+1}^{m}\left(\left(\sum_{j=0}^{i-1}a_{j}N^{j}\right)
\left(N-2a_{i}-1\right)+a_{i}N^{i}\left(N-a_{i}\right)\right)\\
&\qquad + \sum_{i=m-k+1}^{m}\left(N-2a_{i}-1\right)\left(N^{m-k+1}-2R
+\left(1+Q\left(sN\right)\right)N^{m-k}\right)\\
&\qquad + \sum_{i=m+1}^{n-1}\tilde{J}\left(1-\frac{1}{N}\right)
\left(\sum_{j=0}^{m}a_{j}N^{j}+\left(N^{m-k+1}-2R
+\left(1+Q\left(sN\right)\right)N^{m-k}\right)\right)\\
&\qquad - h^{\left(m,s\right)}\tilde{K}.
\end{aligned}
\end{equation}
Hence it follows that 
\begin{equation}
\begin{aligned}
&\cH\left(h^{\left(m,s\right)};\gamma_{\tilde{K}}\right)
-\cH\left(h^{\left(m,s\right)};\gamma_{K}\right) 
= \frac{\tilde{J}}{N}\left(1+Q\left(sN\right)\right)\left(2R-N^{m-k+1}
-\left(1+Q\left(sN\right)\right)N^{m-k}\right)\\
&\qquad + \sum_{i=m-k+1}^{m}\frac{\tilde{J}}{N}\left(N-2a_{i}-1\right)\left(N^{m-k+1}-2R
+\left(1+Q\left(sN\right)\right)N^{m-k}\right)\\
&\qquad + \sum_{i=m+1}^{n-1}\tilde{J}\left(1-\frac{1}{N}\right)\left(N^{m-k+1}-2R
+\left(1+Q\left(sN\right)\right)N^{m-k}\right)-h^{\left(m,s\right)}\left(\tilde{K}-K\right).
\label{eq:HKtildeKdifference}
\end{aligned}
\end{equation}
Note that (\ref{eq:HKtildeKdifference}) is equal to zero if and only if
\begin{equation}
\begin{aligned}
&h^{\left(m,s\right)}\left(\tilde{K}-K\right) 
= h^{\left(m,s\right)}\left(N^{m-k+1}-2R+N^{m-k}\right)\\
& =\frac{\tilde{J}}{N}
\left(1+Q\left(sN\right)\right)\left(2R-N^{m-k+1}-\left(1+Q\left(sN\right)\right)N^{m-k}\right)\\
&\qquad + \sum_{i=m-k+1}^{m}\frac{\tilde{J}}{N}\left(N-2a_{i}-1\right)\left(N^{m-k+1}-2R
+\left(1+Q\left(sN\right)\right)N^{m-k}\right)\\
&\qquad + \sum_{i=m+1}^{n-1}\tilde{J}\left(1-\frac{1}{N}\right)\left(N^{m-k+1}-2R
+\left(1+Q\left(sN\right)\right)N^{m-k}\right),
\end{aligned}
\end{equation}
which holds whenever
\begin{equation}
\begin{aligned}
&h^{\left(m,s\right)} 
= -\frac{\tilde{J}}{N}\left(1+Q\left(sN\right)\right)+\sum_{i=m-k+1}^{m}\frac{\tilde{J}}{N}
\left(N-2a_{i}-1\right)+\sum_{i=m+1}^{n-1}\tilde{J}\left(1-\frac{1}{N}\right)\\
& = -\frac{\tilde{J}}{N}\left(1+Q\left(sN\right)\right)+\frac{\tilde{J}}{N}\sum_{i=m-k+1}^{m-1}
\left(N-2a_{i}-1\right)\\
&\qquad +\frac{\tilde{J}}{N}\left(N-2\left\lfloor \frac{s}{2}\right\rfloor 
+1-Q\left(s\left(N+1\right)\right)\right)
+ \left(n-m-1\right)\tilde{J}\left(1-\frac{1}{N}\right).
\end{aligned}
\end{equation}
If $N$ is odd, then $\left(N-2a_{i}-1\right)=(N-2\lfloor \frac{N}{2}\rfloor -1)=0$, and hence 
$\sum_{i=m-k+1}^{m-1}$ $\left(N-2a_{i}-1\right)=0$. If $N$ is even, then the terms 
$\left(N-2a_{i}-1\right)$ alternate between $-1$ and $1$. Thus, if $s$ is even, $k$ is odd 
and $\sum_{i=m-k+1}^{m-1}\left(N-2a_{i}-1\right)=0$ because the sum has an even number 
of terms, while if $s$ is odd, then the sum adds up to $\frac{\tilde{J}}{N}$. We can encode 
this as 
\begin{equation}
\frac{\tilde{J}}{N}\sum_{i=m-k+1}^{m-1}\left(N-2a_{i}-1\right)
=\frac{\tilde{J}}{N}Q\left(s\left(N+1\right)\right).
\end{equation}
Recalling \eqref{eq:hdagger}, it remains to show that 
\begin{equation}
\begin{aligned}
&\left(1-\frac{1}{N}\right)\left(n-m\right)-\left(s-1\right)\frac{1}{N} 
= -\frac{1}{N}\left(1+Q\left(sN\right)\right)+\frac{1}{N}Q\left(s\left(N+1\right)\right)\\
&\qquad + \frac{1}{N}\left(N-2\left\lfloor \frac{s}{2}\right\rfloor 
+1-2Q\left(s\left(N+1\right)\right)\right)+\left(n-m-1\right)\left(1-\frac{1}{N}\right),
\end{aligned}
\end{equation}
or equivalently
\begin{equation}
-\left(s-1\right)\frac{1}{N}=-\frac{1}{N}\left(2\left\lfloor \frac{s}{2}\right\rfloor 
+Q\left(sN\right)-1+Q\left(s\left(N+1\right)\right)\right)=-\left(s-1\right)\frac{1}{N},
\end{equation}
which is trivially true.
\end{proof}

The symmetries in Lemmas~\ref{lem:concavesymmetricsequence}--\ref{lem:2ndsymmetries}
are depicted in Fig.~\ref{figplot}.

\begin{figure}[htbp]
\begin{tikzpicture}[scale=0.7]
\begin{axis}[name=plot1,height=8cm,width=8cm,     	
title = {Energy profile along an optimal path},
xlabel = {$i$}, ylabel = {$\cH\left(\gamma_{i}\right)$}, ymin = -70000, ymax = 140000,
xmin = 0, minor y tick num = 1,]     	
\addplot[very thin, black] table {dataH1.txt};
\addplot[very thin, dashed, gray] table {dataH2.txt};
\draw[red,dashed,thin] (axis cs:19500,110000) rectangle (axis cs:27000,137000);
\end{axis}
\draw[red, dashed,->] (7,4.5)--(9,4.5);
\begin{axis}[name=plot2,at={($(plot1.east)+(3cm,0)$)},anchor=west,height=8cm,width=8cm,     	
title = {$\cH\left(\gamma_{i}\right)$ for $i\in S_1$},
xlabel = {$i$}, ylabel = {$\cH\left(\gamma_{i}\right)$}, ymin = 112000, ymax = 140000,    		
minor y tick num = 1,]     	
\addplot[very thin, black] table {dataS1.txt};
\addplot[very thin, dashed, gray] table {dataS1p.txt};
\addplot[very thin, dashed, gray] table {dataS1q.txt};
\end{axis}
\end{tikzpicture}
\label{figplot}
\caption{\small Plot of $i \mapsto \cH(\gamma_{i})$ for $\Lambda_{5}^{9}$, 
with $\tilde{J}=10.3$ and $h=h^{\left(m,s\right)}=\tilde{J}((1-\frac{1}{N})(n-m)
-\frac{(s-1)}{N})$ with $m=4$ and $s=8$. The solid-line in the left plot corresponds 
to values $i=0,1,\ldots,sN^{m}$, and is symmetric as shown in Lemma~\ref{lem: symmetry}. 
The solid-line in the right plot shows symmetry of $\cH(\gamma_{i})$ for values 
$i\in S_{1}$, as shown in Lemma~\ref{lem:2ndsymmetries}.}
\end{figure}
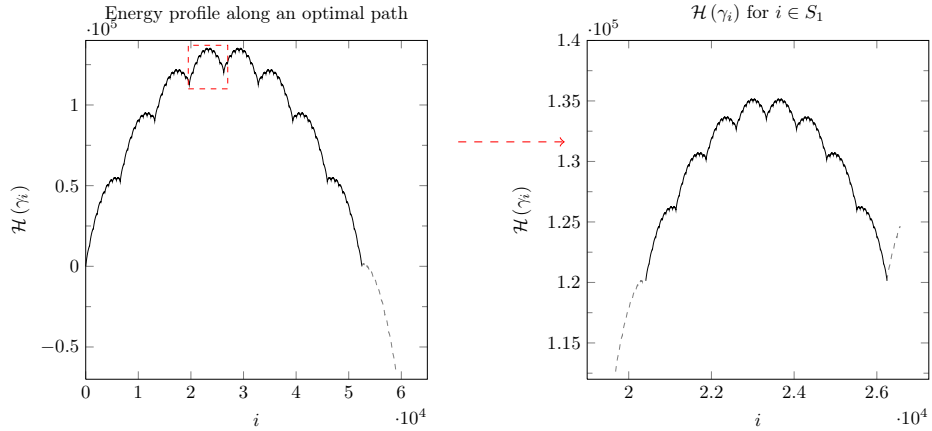


\subsection{Global maximum along the reference path}
\label{S4.2}

In this section we derive two propositions 
(Propositions~\ref{prop:hs-max-location}--\ref{prop:hs-max-location-even} below) 
identifying the location of the global maximum of $i\mapsto\cH(h^{(m,s)};\gamma_{i})$.

\begin{proposition}
\label{prop:hs-max-location}
Suppose that $N$ is odd. If $s$ is odd, then 
\begin{equation}
\begin{aligned}
&\cH\left(h^{\left(m,s\right)};\gamma_{\left\lfloor sN^{m}/2\right\rfloor }\right)
=\cH\left(h^{\left(m,s\right)};\gamma_{\left\lfloor sN^{m}/2\right\rfloor +1}\right)\\
&=\max_{1 \leq i\leq sN^{m}}\cH\left(h^{\left(m,s\right)};\gamma_{i}\right)
=\max_{1 \leq i \leq N^n}\cH\left(h^{\left(m,s\right)};\gamma_{i}\right),
\end{aligned}
\label{eq:maxloc}
\end{equation}
and for all $i<\left\lfloor sN^{m}/2\right\rfloor$, 
\begin{equation}
\cH\left(h^{\left(m,s\right)};\gamma_{i}\right)<\cH\left(h^{\left(m,s\right)};
\gamma_{\left\lfloor sN^{m}/2\right\rfloor }\right).\label{eq:1st max s odd}
\end{equation}
If $s$ is even, then 
\begin{equation}
\cH\left(h^{\left(m,s\right)};\gamma_{\left\lfloor \left(s-1\right)N^{m}/2\right\rfloor +1}\right)
=\max_{1 \leq i\leq sN^{m}}\cH\left(h^{\left(m,s\right)};\gamma_{i}\right)
=\max_{1 \leq i \leq N^n}\cH\left(h^{\left(m,s\right)};\gamma_{i}\right)
\label{eq:maxloc2}
\end{equation}
and for all $i<\left\lfloor \left(s-1\right)N^{m}/2\right\rfloor +1$,
\begin{equation}
\cH\left(h^{\left(m,s\right)};\gamma_{i}\right)<\cH\left(h^{\left(m,s\right)};
\gamma_{\left\lfloor \left(s-1\right)N^{m}/2\right\rfloor +1}\right).
\label{eq: 1st max s even}
\end{equation}
\end{proposition}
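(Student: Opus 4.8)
The plan is to reduce to an interval via Lemma~\ref{lem:maxisinsN}, exploit the two symmetries of Lemmas~\ref{lem: symmetry} and~\ref{lem:2ndsymmetries} together with the concavity of Lemma~\ref{lem:concave}, and then run a descending induction over the block scale $a=\hat{m},\hat{m}-1,\dots,1,0$ (recall from~\eqref{eq:mhat-hms} that $\hat{m}_{h^{(m,s)}}=m$). By Lemma~\ref{lem:maxisinsN} it suffices to locate $\max_{0\le i\le sN^{m}}\cH(h^{(m,s)};\gamma_{i})$, and by Lemma~\ref{lem: symmetry} the sequence $\{\cH(h^{(m,s)};\gamma_{i})\}_{i=0}^{sN^{m}}$ is symmetric about $sN^{m}/2$. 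The elementary building block is: for every $0\le a\le m$ and every $(a+1)$-block $B\subseteq[0,sN^{m}]$, the restriction of $i\mapsto\cH(h^{(m,s)};\gamma_{i})$ to the multiples of $N^{a}$ lying in $B$ is, by Lemma~\ref{lem:concave}, concave with constant second difference $-2J_{a+1}N^{2a}$; since this is \emph{strictly} negative, the sequence is strictly unimodal, so that — when it is also symmetric about its midpoint — it is strictly increasing up to its central point (or to the pair of central points, if it has an even number of terms) by Lemma~\ref{lem:concavesymmetricsequence}.

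Next I would iterate this from the top scale down. At scale $a=m$ the values $\{\cH(h^{(m,s)};\gamma_{kN^{m}})\}_{k=0}^{s}$ are symmetric (Lemma~\ref{lem: symmetry}) and strictly concave (Lemma~\ref{lem:concave}), so by Lemma~\ref{lem:concavesymmetricsequence} their maximum sits at $k=\tfrac{s\pm1}{2}$ when $s$ is odd (two tied values, the endpoints of the central $m$-block $[\tfrac{s-1}{2}N^{m},\tfrac{s+1}{2}N^{m}]$) and uniquely at $k=\tfrac s2$, i.e.\ at the block boundary $i=\tfrac s2 N^{m}$, when $s$ is even. Strict unimodality forces every $i$ lying in an $m$-block strictly to the left of the central one to be strictly below the maximum, so the global maximum is confined to the central $m$-block when $s$ is odd, and — after one further step, in which the last $N^{m-1}$-increment into the boundary $\tfrac s2 N^{m}$ is computed from Lemma~\ref{lem:concave} and seen to be negative (because the number of decreasing $N^{m-1}$-steps needed to pass from $\cH(h^{(m,s)};\gamma_{(\tfrac s2-1)N^{m}})$ to $\cH(h^{(m,s)};\gamma_{\tfrac s2 N^{m}})$ exceeds the threshold fixed by $h=h^{(m,s)}$) — to the block $[(\tfrac s2-1)N^{m},\tfrac s2 N^{m}]$ when $s$ is even. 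Inside the surviving block one repeats the argument at scale $a=m-1$, using the symmetry supplied by Lemma~\ref{lem:2ndsymmetries} (whose intervals $S_{k}$ are, for odd $N$, defined for all $k=1,\dots,m$), then at scale $a=m-2$, and so on. The recursion stops at $a=0$, where the surviving $1$-block has $N+1$ points: being strictly concave and symmetric about its midpoint, with $N$ odd so $N+1$ even, it is maximised at its two central points, and unwinding the offsets accumulated through the recursion identifies these as $\lfloor sN^{m}/2\rfloor$ and $\lfloor sN^{m}/2\rfloor+1$ when $s$ is odd; for $s$ even the extra half-block shift picked up at the top level relocates the surviving block so that one of these two central points is exactly $\lfloor(s-1)N^{m}/2\rfloor+1$. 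The strict inequalities~\eqref{eq:1st max s odd} and~\eqref{eq: 1st max s even} drop out of the same argument, since at every scale the profile is \emph{strictly} increasing to the left of the surviving central sub-block.

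The routine part is the explicit arithmetic: evaluating second differences (Lemma~\ref{lem:concave}), comparing configurations at distance up to $N^{m}$ (Lemma~\ref{lem:shift-increment}), and the closed form for $\cH(h^{(m,s)};\gamma_{K})-\cH(h^{(m,s)};\boxminus)$ derived around~\eqref{eq:HKregsymetric}; these already supply all the needed quantities. The main obstacle I anticipate is the parity bookkeeping through the $m$ levels of the recursion: one must track precisely how the parities of $s$, of $N$ (here odd), and of the running level index combine to decide, at each level, whether the symmetric concave sequence has an even or an odd number of grid points — hence whether its maximiser is a pair or a singleton — and to pin down the cumulative offset of the surviving central sub-block, so that the final index matches $\lfloor sN^{m}/2\rfloor$ (resp.\ $\lfloor(s-1)N^{m}/2\rfloor+1$) on the nose. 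This is exactly the information encoded in the endpoints $S_{k}^{\pm}$ of Lemma~\ref{lem:2ndsymmetries}, so the substance of the proof is to verify that iterating Lemmas~\ref{lem:2ndsymmetries} and~\ref{lem:concavesymmetricsequence} regenerates those endpoints and their common central point; a minor but essential point is to keep the concavity defect $-2J_{a+1}N^{2a}$ strictly negative at every scale, so that no point to the left of the maximiser can tie with it and the stated inequalities are genuinely strict.
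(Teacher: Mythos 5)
Your reduction via Lemma~\ref{lem:maxisinsN}, the use of the symmetry of Lemma~\ref{lem: symmetry}, and the ``symmetric $+$ concave $\Rightarrow$ maximised at the centre'' engine of Lemmas~\ref{lem:concave} and~\ref{lem:concavesymmetricsequence} all match the paper's strategy, and your unwinding of the nested central sub-blocks down to the pair $\lfloor sN^{m}/2\rfloor,\lfloor sN^{m}/2\rfloor+1$ is correct for $s$ odd. But there are two concrete problems. First, Lemma~\ref{lem:2ndsymmetries} cannot drive the recursion in the $s$-odd case: for $N$ odd and $s$ odd one has $Q(s(N+1))=0$ and $Q(sN)=1$, so $S_{1}=[\tfrac{s-3}{2}N^{m}+2N^{m-1},\,\tfrac{s-1}{2}N^{m}]$, which is centred near $\tfrac{(s-2)}{2}N^{m}+N^{m-1}$ and does not even contain $\lfloor sN^{m}/2\rfloor$. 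The intervals $S_{k}$ are built for the $s$-even case (where the maximiser sits at the centre of $S_{m}$, well to the left of the global symmetry point $sN^{m}/2$); for $s$ odd the recursion must run entirely off Lemma~\ref{lem: symmetry}, which works because every surviving central sub-block straddles $sN^{m}/2$.

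The more serious gap is the sentence ``strict unimodality forces every $i$ lying in an $m$-block strictly to the left of the central one to be strictly below the maximum.'' Unimodality of the restriction to the grid $\{kN^{a}\}$ compares only grid points; an interior point of an off-centre $a$-sub-block rises strictly above both of its block's endpoints (by concavity at finer scales), so it is not dominated by any grid value, and your recursion — which only ever descends \emph{into} the central sub-block — never compares it with anything. This cross-sub-block comparison is exactly where the paper does its work: for an arbitrary $z$ at hierarchical distance $r+1$ from the centre it forms the set $A$ consisting of the progression $\{z_{0}+tN^{r}\}$ intersected with $[0,\lfloor sN^{m}/2\rfloor]$ together with its mirror image under $K\mapsto sN^{m}-K$ in the right half; this union is symmetric by Lemma~\ref{lem: symmetry} and concave by Lemma~\ref{lem:concave} (each half lies in a single $(r+1)$-block), so its maximum sits at its midpoints, which are at distance $\leq r$ from the centre, and induction on $r$ finishes the odd case. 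For $s$ even the paper additionally disposes of all $i<\lfloor(s-1)N^{m}/2\rfloor+1$ by the monotone comparison \eqref{eq:differenthdifference} with the already-settled odd case $s-1$, since the $S_{k}$ only cover a neighbourhood of the maximiser on its right. Without these two devices (or equivalents) the confinement of the global maximum to the nested central sub-blocks, and hence the strict inequalities \eqref{eq:1st max s odd} and \eqref{eq: 1st max s even}, is unjustified.
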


\begin{proof}
The first equality in (\ref{eq:maxloc}) is immediate from Lemma~\ref{lem: symmetry} 
since $\left\lfloor sN^{m}/2\right\rfloor +1=sN^{m}-\left\lfloor sN^{m}/2\right\rfloor$, 
while the third equality follows from Lemma \ref{lem:maxisinsN}. We claim that the 
second equality in (\ref{eq:maxloc}) follows from both Lemma \ref{lem: symmetry}
and Lemma \ref{lem:concavesymmetricsequence}. Indeed, note that by
Lemma~\ref{lem:concave} the sequence
\begin{equation}
\left\lbrace \cH\left(h;\gamma_{i}\right)\right\rbrace, \quad \left\lfloor \frac{sN^{m}}{2}\right\rfloor -\left\lfloor \frac{N}{2}\right\rfloor +1 \leq i \leq \left\lfloor \frac{sN^{m}}{2}\right\rfloor +\left\lfloor \frac{N}{2}\right\rfloor +2.
\end{equation}
is concave, and by Lemma~\ref{lem: symmetry} is also symmetric. Therefore, by 
Lemma~\ref{lem:concavesymmetricsequence}, we have that $\cH(\gamma_{i})
\leq\cH(\gamma_{\lfloor sN^{m}/2\rfloor})$ for all $i$ such that $d(v_{i},v_{\lfloor sN^{m}/2
\rfloor})=d(v_{i},v_{\lfloor sN^{m}/2\rfloor +1})=1$. In fact, from Lemma~\ref{lem:concave} 
we have a strict form of concavity,
\begin{equation}
\begin{aligned}
&\cH\left(h^{(m,s)};\gamma_{\left\lfloor sN^{m}/2\right\rfloor }\right)
-\cH\left(h^{(m,s)};\gamma_{\left\lfloor sN^{m}/2\right\rfloor }\right)\\
&\qquad =\cH\left(h^{(m,s)};\gamma_{\left\lfloor sN^{m}/2\right\rfloor +1}\right)
-\cH\left(h^{(m,s)};\gamma_{\left\lfloor sN^{m}/2\right\rfloor }\right)+2\tilde{J}
=2\tilde{J},
\end{aligned}
\end{equation}
which shows that 
\begin{equation}
\cH(\gamma_{\lfloor sN^{m}/2\rfloor})>\cH(\gamma_{i}) 
\qquad \forall\, i<\lfloor sN^{m}/2\rfloor\colon\,d(v_{i},v_{\lfloor sN^{m}/2\rfloor})
=d(v_{i},v_{\lfloor sN^{m}/2\rfloor +1})=1.
\end{equation}
Suppose that this is also true for all $i$ such that $d(v_{i},v_{\lfloor sN^{m}/2\rfloor})=r$, 
and let $z$ be such that $d(v_{z},v_{\lfloor sN^{m}/2\rfloor})=r+1$. Note that if $r+1<m+1$, 
then $z$ belongs to a sequence of the form $\{z_{0}+tN^{r}\}_{t=0}^{N-1}$
for some $z_{0}$ such that all $N$ terms in the sequence belong to the same 
$\left(r+1\right)$-block, while if $r+1=m+1$, then $z\in\{z_{0}+tN^{r}\}_{t=0}^{s-1}$
such that again all $s$ terms belong to the first $\left(m+1\right)$-block. Observe 
that the sequence $\{\cH(h^{(m,s)};\gamma_{i})\} _{i\in A}$ is concave by 
Lemma~\ref{lem:concave} and symmetric by Lemma~\ref{lem: symmetry},
where 
\begin{equation}
A=\left\{ \left\{ z_{0}+tN^{r}\right\}_{t=0}^{N-1}\bigcap\left[0,\left\lfloor sN^{m}/2\right\rfloor \right],
\left\{ sN^{m}-z_{0}-tN^{r}\right\}_{t=0}^{N-1}\bigcap\left[\left\lfloor sN^{m}/2\right\rfloor +1,
sN^{m}\right]\right\} 
\end{equation}
if $r+1<m+1$, and 
\begin{equation}
A=\left\{ \left\{ z_{0}+tN^{r}\right\} _{t=0}^{s-1}\bigcap\left[0,\left\lfloor sN^{m}/2\right\rfloor \right],
\left\{ sN^{m}-z_{0}-tN^{r}\right\} _{t=0}^{s-1}\bigcap\left[\left\lfloor sN^{m}/2\right\rfloor +1,
sN^{m}\right]\right\} 
\end{equation}
if $r+1=m+1$. Hence it attains its maximum only at the two midpoints of the sequence $A$ 
(which has $N+1$ terms in total). At least one of these two points is at distance $r$ from 
$v_{\lfloor sN^{m}/2\rfloor}$. Thus, by the inductive hypothesis we have that $\cH(h^{(m,s)};
\gamma_{z})<\cH(h^{(m,s)}; \gamma_{\lfloor sN^{m}/2\rfloor})$. 

Next, we look at the case when $s$ is even. By (\ref{eq:differenthdifference}) and the above 
result for the odd value $s-1$, we have that, for $t<\lfloor (s-1)N^{m}/2\rfloor +1$,
\begin{equation}
\begin{aligned}
&\cH\left(h^{\left(m,s\right)};\gamma_{\left\lfloor \left(s-1\right)N^{m}/2\right\rfloor +1}\right)
-\cH\left(h^{\left(m,s\right)};\gamma_{t}\right)\\
&\geq\cH\left(h^{\left(m,s-1\right)};
\gamma_{\left\lfloor \left(s-1\right)N^{m}/2\right\rfloor +1}\right)
-\cH\left(h^{\left(m,s-1\right)};\gamma_{t}\right)>0,
\end{aligned}
\end{equation}
and thus we only need to show that 
\begin{equation}
\cH\left(h^{\left(m,s\right)};\gamma_{\left\lfloor \left(s-1\right)N^{m}/2\right\rfloor +1}\right)
\geq\cH\left(h^{\left(m,s\right)};\gamma_{t}\right)\quad \forall \,
\left\lfloor \left(s-1\right)N^{m}/2\right\rfloor +1\leq t\leq\left\lfloor 
sN^{m}/2\right\rfloor +1.
\label{eq:otherside}
\end{equation}
To do this, recall first that by Lemma~\ref{lem:2ndsymmetries} the sequence $\{\cH(h^{(m,s)};
\gamma_{i})\} _{i\in S_{m}}$ is symmetric, concave and of odd cardinality. Furthermore, 
$\cH(h^{(m,s)}; \gamma_{\lfloor (s-1)N^{m}/2\rfloor +1})$ is the midpoint of this sequence, 
and for all $i,j\in S_{m}$ we have $d(v_{i},v_{j})=1$. Hence
\begin{equation}
\cH\left(h^{\left(m,s\right)};\gamma_{\left\lfloor \left(s-1\right)N^{m}/2\right\rfloor +1}\right)
>\cH\left(h^{\left(m,s\right)};\gamma_{i}\right)
\end{equation} 
for all $i<\lfloor (s-1)N^{m}/2\rfloor +1$ such that $d(v_{i},v_{\lfloor (s-1)N^{m}/2\rfloor +1})=1$. 
Now observe that $S_{m}\subset S_{m-1} \subset\ldots\subset S_{1}$, and suppose that 
$\cH(h^{(m,s)}; \gamma_{\lfloor (s-1)N^{m}/2\rfloor +1})>\cH(h^{(m,s)}; \gamma_{i})$ for 
all $i<\lfloor (s-1)N^{m}/2\rfloor +1$ such that $d(v_{i},v_{\lfloor (s-1)N^{m}/2\rfloor +1})=r$. 
If $i$ is such that $d(v_{i},v_{\lfloor (s-1)N^{m}/2\rfloor +1})=r+1$, then like in the $s$-odd 
case we can construct a concave and symmetric sequence such that the midpoint (and 
hence maximum) of this sequence is at distance $r$ or less from $v_{\lfloor (s-1)N^{m}/2
\rfloor +1}$. It follows that (\ref{eq:maxloc2}) and (\ref{eq: 1st max s even}) hold. 
\end{proof}

\begin{proposition}
\label{prop:hs-max-location-even}
Suppose that $N$ is even, and let
\begin{equation}
r=\left(\frac{s-1}{2}\right)N^{m}+\sum_{j=1}^{m-1}a_{m-j}N^{m-j}
+\frac{N}{2},
\label{eq:max-index-r}
\end{equation}
where 
\begin{equation}
a_{m-j}=\frac{N}{2}-Q\left(j+s+1\right).
\label{eq:amj-Neven}
\end{equation}
If $s=2a+1$ for some $a\in\left\{ 0,\ldots,\frac{N}{2}-1\right\}$, then
\begin{equation}
\cH\left(h^{\left(m,s\right)};\gamma_{r}\right)
=\max_{1 \leq i\leq sN^{m}}\cH\left(h^{\left(m,s\right)};\gamma_{i}\right)
=\max_{1 \leq i \leq N^n}\cH\left(h^{\left(m,s\right)};\gamma_{i}\right)
\label{eq:maxloc-1}
\end{equation}
and, for all $i<r$,
\begin{equation}
\cH\left(h^{\left(m,s\right)};\gamma_{i}\right)
<\cH\left(h^{\left(m,s\right)};\gamma_{r}\right).
\end{equation}
Similarly, 
\begin{equation}
\cH\left(h^{\left(m,s+1\right)};\gamma_{r}\right)
=\max_{1 \leq i\leq\left(s+1\right)N^{m}}\cH\left(h^{\left(m,s+1\right)};\gamma_{i}\right)
=\max_{1 \leq i \leq N^n}\cH\left(h^{\left(m,s+1\right)};\gamma_{i}\right)
\label{eq:maxloc2-1}
\end{equation}
and, for all $i<r$,
\begin{equation}
\cH\left(h^{\left(m,s+1\right)};\gamma_{i}\right)
<\cH\left(h^{\left(m,s\right)};\gamma_{r}\right).
\end{equation}
\end{proposition}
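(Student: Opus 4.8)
The plan is to transcribe the proof of Proposition~\ref{prop:hs-max-location}; the only new feature is that, when $N$ is even, the discrete parabolas produced by the concavity in Lemma~\ref{lem:concave} have their vertices at half-integers, so the successive maximisers zig-zag about the centre of symmetry, and the index $r$ in \eqref{eq:max-index-r}--\eqref{eq:amj-Neven} is exactly the record of this zig-zag, the alternation $a_{m-j}=\tfrac N2-Q(j+s+1)$ encoding the rounding directions. First I would reduce the range: by Lemma~\ref{lem:maxisinsN}, applied to the pairs $(m,s)$ and $(m,s+1)$, it suffices to maximise over $0\le i\le sN^m$, respectively over $0\le i\le (s+1)N^m$. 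By Lemma~\ref{lem: symmetry} the profile $\{\cH(h^{(m,s)};\gamma_i)\}_{i=0}^{sN^m}$ is symmetric about $sN^m/2$; by Lemma~\ref{lem:2ndsymmetries} (for the pair $(m,s)$, whose admissible indices are the even ones since $N$ is even and $s$ is odd) the nested windows $S_m\subset\cdots\subset S_1$ carry symmetric sub-profiles; and by Lemma~\ref{lem:concave} each such window, as well as every run of $N$ (or $s$, at the top level) equally spaced indices lying in a common block, is concave. The first real task is then a digit computation: verify that $r$ is the common midpoint of the windows $S_k$, i.e.\ that the $N$-ary expansion \eqref{eq:max-index-r} matches the mid-index in \eqref{eq:SetSk} at every level; this is precisely where $a_{m-j}=\tfrac N2-Q(j+s+1)$ is used.

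For $s$ odd I would then run the distance induction of Proposition~\ref{prop:hs-max-location} on $\rho=d(v_i,v_r)$. For $\rho=1$, strict concavity from Lemma~\ref{lem:concave} yields $\cH(h^{(m,s)};\gamma_r)>\cH(h^{(m,s)};\gamma_i)$ for every $i<r$ with $d(v_i,v_r)=1$. For the step $\rho\to\rho+1$: if $d(v_z,v_r)=\rho+1$ then $z$ lies in a run $\{z_0+tN^{\rho}\}_{t=0}^{N-1}$ (or $\{z_0+tN^{\rho}\}_{t=0}^{s-1}$ at the top level) contained in one $(\rho+1)$-block; intersecting this run with $[0,sN^m/2]$ and appending its image under $i\mapsto sN^m-i$ gives, by Lemmas~\ref{lem:concave} and \ref{lem: symmetry}, a concave symmetric sequence of odd length $N+1$ (or $s+1$) whose unique maximiser is its midpoint, and that midpoint sits at distance $\le\rho$ from $v_r$ (again by the digit matching), so the induction hypothesis forces $\cH(h^{(m,s)};\gamma_z)<\cH(h^{(m,s)};\gamma_r)$. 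Together with Lemma~\ref{lem: symmetry} this proves \eqref{eq:maxloc-1} and the strict inequality below it.

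For $s+1$ even I would deduce the statement from the $s$-odd case via \eqref{eq:differenthdifference}. Since $h^{(m,s)}>h^{(m,s+1)}$, for every $i<r$,
\[
\cH(h^{(m,s+1)};\gamma_r)-\cH(h^{(m,s+1)};\gamma_i)
=\big[\cH(h^{(m,s)};\gamma_r)-\cH(h^{(m,s)};\gamma_i)\big]
+\big(h^{(m,s)}-h^{(m,s+1)}\big)(r-i)>0,
\]
the bracket being positive by the $s$-odd case and the last term positive since $r-i>0$. For $r<t\le(s+1)N^m/2$ I would repeat the distance induction, now invoking Lemma~\ref{lem:2ndsymmetries} for the pair $(m,s+1)$ (so that the nested symmetric windows are the odd-indexed $S_k$ of that pair, still centred at $r$) together with Lemma~\ref{lem: symmetry} for $(m,s+1)$; for $t>(s+1)N^m/2$ the symmetry of the $(m,s+1)$-profile reduces to a mirror index $\le(s+1)N^m/2$, and Lemma~\ref{lem:maxisinsN} for $(m,s+1)$ restores the range $1\le i\le N^n$. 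This yields \eqref{eq:maxloc2-1} and the inequality after it.

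The main obstacle is the $N$-ary bookkeeping announced above: checking by explicit arithmetic that the expansion \eqref{eq:max-index-r}--\eqref{eq:amj-Neven} of $r$ is simultaneously the midpoint of every window in \eqref{eq:SetSk} and, at each step of the induction, lies within distance $\rho$ of the vertex of the freshly constructed discrete parabola. Everything else is a faithful copy of the proof of Proposition~\ref{prop:hs-max-location}, with ``the two central indices'' replaced throughout by ``the index $r$''.
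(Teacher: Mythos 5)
Your proposal is correct and follows essentially the same route as the paper, whose own proof of this proposition consists of observing that $r$ is the central index of the smallest admissible window $S_k$ and then deferring to ``similar computations as those in the proof of Proposition~\ref{prop:hs-max-location}'' --- precisely the reduction via Lemma~\ref{lem:maxisinsN}, the symmetry/concavity input from Lemmas~\ref{lem: symmetry}, \ref{lem:2ndsymmetries} and \ref{lem:concave}, the distance induction, and the $h$-shift identity \eqref{eq:differenthdifference} for the even companion value $s+1$ that you describe. The only caveat is that, depending on the parity of $m$, the smallest window may have even cardinality, so $r$ is one of \emph{two} central (and equal-energy) indices rather than a unique midpoint; this does not affect the stated conclusion, which only concerns $i<r$.
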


\begin{proof}
The coordinates $a_{m-j}$ are defined below (\ref{eq:SetSk}). Noting that $r$ 
is the midpoint of the smallest of the sets $\left\{S_{k}\right\}$ (for odd or even 
indices $k$ depending on the case in question), we see that the claim follows
from similar computations as those in the proof of Proposition~\ref{prop:hs-max-location}. 
\end{proof}


\subsection{Proof of Theorems~\ref{thm:Gamma-case3} and \ref{thm:Cstar-case3}}
\label{S4.3}

We now use Propositions~\ref{prop:hs-max-location} and \ref{prop:hs-max-location-even} 
to determine the size of the critical configurations and prove Theorems~\ref{thm:Cstar-case3} 
and \ref{thm:Gamma-case3} (Propositions~\ref{prop:h-general-max-location} and 
\ref{prop:Gamma-star} below). Recall the definition of the index set $\mathbb{I}$ in ~\eqref{eq:indexI}.

\begin{proposition}[Proof of Theorem \ref{thm:Cstar-case3}]
\label{prop:h-general-max-location} 
Let $h>0$, and take let $\left(m,s\right)\in\mathbb{I}$ be such that 
\begin{equation}
h^{\left(m,s\right)}\leq h < h^{\left(m,s-1\right)}.
\label{eq:h-between-hs}
\end{equation}
If $s$ is odd, then for $N$ odd
\begin{equation}
\max_{1 \leq i \leq N^n}\cH\left(h;\gamma_{i}\right)
=\cH\left(h;\gamma_{\left\lfloor sN^{m}/2\right\rfloor }\right)
\label{eq:max-odd}
\end{equation}
and for $N$ even 
\begin{equation}
\max_{1 \leq i \leq N^n}\cH\left(h;\gamma_{i}\right)=\cH\left(h;\gamma_{r}\right),
\label{eq:max-even,even}
\end{equation}
where $r$ is given in \eqref{eq:max-index-r}. If $s$ is even, then for $N$ odd
\begin{equation}
\max_{1 \leq i \leq N^n}\cH\left(h;\gamma_{i}\right)
=\cH\left(h;\gamma_{\left\lfloor \left(s-1\right)N^{m}/2\right\rfloor +1}\right),
\label{eq:max-odd,even}
\end{equation}
and for $N$ even
\begin{equation}
\max_{1 \leq i \leq N^n}\cH\left(h;\gamma_{i}\right)
=\cH\left(h;\gamma_{r'}\right),
\label{eq:max-even,odd}
\end{equation}
where $r'$ is obtained by replacing $s$ by $s-1$ in the leading term 
in \eqref{eq:max-index-r}. If $h\geq \tilde{J}(1-\frac{1}{N})n$, then 
$\max_{1 \leq i \leq N^n}\cH(h;\gamma_{i})=\cH(h;\gamma_{0})$.
If the inequality on the left side of $h$ in \eqref{eq:h-between-hs} is also 
strict, then these are the unique maxima.
\end{proposition}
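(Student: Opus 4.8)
\emph{Plan.} The plan is to transport the information of Propositions~\ref{prop:hs-max-location} and \ref{prop:hs-max-location-even} --- which locate the global maximum of $i\mapsto\cH(h^{(m',s')};\gamma_i)$ at each special field $h^{(m',s')}$, $(m',s')\in\mathbb I$ --- to a general $h$ in the half-open interval $[h^{(m,s)},h^{(m,s-1)})$ by means of the tilting identity \eqref{eq:differenthdifference}. Write $M$ for the index appearing in the statement; inspecting the two propositions one sees that $\gamma_M$ is one of the global maximizers they produce at the left endpoint $h^{(m,s)}$, so that $\cH(h^{(m,s)};\gamma_i)\le\cH(h^{(m,s)};\gamma_M)$ for all $1\le i\le N^n$, with strict inequality for $i<M$. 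The key structural fact is that consecutive special fields differ by exactly $h^{(m,s-1)}-h^{(m,s)}=\tilde J/N$ (with $h^{(m,s-1)}$ read as $h^{(m-1,N-1)}$ when $s=1$; the remaining case $(m,s)=(0,1)$, together with all $h\ge\tilde J(1-\tfrac1N)n$, is handled separately below), so that the profile at a general $h$ in the interval is a tilt of the profile at $h^{(m,s)}$ by \emph{less than} $\tilde J/N$.

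For $i\ge M$ I apply \eqref{eq:differenthdifference} with $(h_1,h_2,a,b)=(h,h^{(m,s)},i,M)$:
\[
\cH(h;\gamma_i)-\cH(h;\gamma_M)=\bigl[\cH(h^{(m,s)};\gamma_i)-\cH(h^{(m,s)};\gamma_M)\bigr]+(h^{(m,s)}-h)(i-M).
\]
The bracket is $\le0$, and $(h^{(m,s)}-h)(i-M)\le0$ because $h\ge h^{(m,s)}$; the latter term is $<0$ once $h>h^{(m,s)}$ and $i>M$. Hence $\cH(h;\gamma_i)\le\cH(h;\gamma_M)$, strictly when $h>h^{(m,s)}$ and $i>M$. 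For $i<M$ the tilt correction now has the wrong sign, so I rewrite the same identity: $\cH(h;\gamma_i)\le\cH(h;\gamma_M)$ is equivalent to
\[
\cH(h^{(m,s)};\gamma_M)-\cH(h^{(m,s)};\gamma_i)\ \ge\ (h-h^{(m,s)})(M-i),
\]
and since $h-h^{(m,s)}<\tilde J/N$ on our interval it suffices to establish the \emph{steepness bound}
\[
\cH(h^{(m,s)};\gamma_M)-\cH(h^{(m,s)};\gamma_i)\ \ge\ \tfrac{\tilde J}{N}(M-i)\qquad\text{for all }0\le i<M ;
\]
comparing this against the strictly smaller quantity $(h-h^{(m,s)})(M-i)$ moreover yields $\cH(h;\gamma_i)<\cH(h;\gamma_M)$ for every $i<M$ throughout the (half-open) interval.

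The steepness bound is the crux. The point is that $M$ is the midpoint --- or, in the odd--odd case, one of the two midpoints --- of the finest symmetric concave block in the profile at $h^{(m,s)}$, a block of at most $N+1$ vertices at pairwise distance one that is furnished, in the course of proving Propositions~\ref{prop:hs-max-location}--\ref{prop:hs-max-location-even}, by Lemmas~\ref{lem:concave}, \ref{lem: symmetry} and \ref{lem:2ndsymmetries}. By the scale-zero instance of Lemma~\ref{lem:concave} together with the symmetry of that block about $M$, the first difference $\cH(h^{(m,s)};\gamma_M)-\cH(h^{(m,s)};\gamma_{M-1})$ equals exactly $J_1=\tilde J/N$ (it equals $2J_1$ in the odd--odd case); the concavity of the profile --- within each $1$-block by Lemma~\ref{lem:concave}, and across $1$-blocks because $M$ sits at the apex of the coarser concave profiles as well --- then forces every first difference $\cH(h^{(m,s)};\gamma_{l+1})-\cH(h^{(m,s)};\gamma_l)$ with $l<M$ to be at least $\tilde J/N$, and telescoping gives the bound. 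I expect the verification that $M$ really is this common midpoint --- in particular the $N$-even bookkeeping, which amounts to re-running the symmetry-and-concavity computations of the two preceding propositions one hierarchical level deeper --- to be the main obstacle.

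Granting the above, $\cH(h;\gamma_i)\le\cH(h;\gamma_M)$ for every $i$, so $\gamma_M$ is a maximizer along $\gamma$; and when in addition $h>h^{(m,s)}$ the inequality is strict for $i>M$ too, so $\gamma_M$ is then the unique maximizer, which is the final sentence of the statement. It remains to treat $h\ge\tilde J(1-\tfrac1N)n$ --- exactly the range in which (A1) fails, and which includes the pair $(m,s)=(0,1)$. Here I argue directly from \eqref{eq:HMDheight}: for $1\le k\le N^n$,
\[
\cH(h;\gamma_k)-\cH(h;\boxminus)=\sum_{i=1}^{k}\sum_{j=k+1}^{N^n}J_{d(v_i,v_j)}-hk\le k\sum_{b=1}^{n}(N-1)N^{b-1}\tfrac{\tilde J}{N^b}-hk=k\Bigl[\tilde J\bigl(1-\tfrac1N\bigr)n-h\Bigr]\le0,
\]
since for each fixed $i$ the inner sum is at most the total coupling $\sum_{b=1}^{n}(N-1)N^{b-1}J_b=\tilde J(1-\tfrac1N)n$ of a single vertex; hence the global maximum of $i\mapsto\cH(h;\gamma_i)$ is attained at $\gamma_0$, which gives the claim.
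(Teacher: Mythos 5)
Your overall strategy --- transporting the maxima of Propositions~\ref{prop:hs-max-location} and \ref{prop:hs-max-location-even} to general $h$ via the tilting identity \eqref{eq:differenthdifference} --- is exactly the paper's, and your treatment of the indices $i\geq M$ coincides with the paper's inequality \eqref{eq:h-gen-max-ineq2}. Your separate argument for $h\geq\tilde J(1-\tfrac1N)n$ is also fine. The problem is the half $i<M$. There you try to prove the steepness bound $\cH(h^{(m,s)};\gamma_M)-\cH(h^{(m,s)};\gamma_i)\geq\tfrac{\tilde J}{N}(M-i)$ by arguing that concavity forces \emph{every} unit first difference $\cH(h^{(m,s)};\gamma_{l+1})-\cH(h^{(m,s)};\gamma_l)$ with $l<M$ to be at least $\tilde J/N$. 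That pointwise claim is false: the profile $i\mapsto\cH(h^{(m,s)};\gamma_i)$ is not concave on $\{0,\dots,M\}$; Lemma~\ref{lem:concave} gives concavity only of the subsequence at spacing $N^a$ \emph{within} a single $(a+1)$-block. Within each $1$-block preceding the one containing $M$ the unit-spacing profile is concave and (by Lemma~\ref{lem: symmetry}) essentially symmetric about that block's own midpoint, so its second half has strictly negative first differences; the profile oscillates through many local maxima and minima before reaching $M$ (this is visible in Figs.~\ref{figplot-0} and the left panel of the last figure). So the telescoping argument as you state it does not go through.

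The telescoped bound itself is true, but it is \emph{equivalent} (since $h^{(m,s-1)}-h^{(m,s)}=\tilde J/N$) to the statement that $\gamma_M$ maximizes $\cH(h^{(m,s-1)};\cdot)$ over $\{0,\dots,M\}$, and that is precisely the extra input the paper uses: in \eqref{eq:h-gen-max-ineq} it applies \eqref{eq:differenthdifference} with reference field $h^{(m,s-1)}$ (the \emph{upper} endpoint) for $i\leq M$, and with $h^{(m,s)}$ only for $i\geq M$, so that both tilt corrections have the favourable sign and no quantitative rate is needed. To repair your proof you must therefore verify that $M$ is also a global maximizer at the field $h^{(m,s-1)}$. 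For $s$ even this is immediate from Proposition~\ref{prop:hs-max-location} applied to the odd value $s-1$ (which produces the two tied maximizers $\lfloor(s-1)N^m/2\rfloor$ and $\lfloor(s-1)N^m/2\rfloor+1=M$). For $s$ odd it is not immediate, because the stated maximizer at $h^{(m,s-1)}$ (with $s-1$ even) is $\lfloor(s-2)N^m/2\rfloor+1\neq M=\lfloor sN^m/2\rfloor$; one has to observe that these two indices are mirror images under the symmetry $K\mapsto(s-1)N^m-K$ of Lemma~\ref{lem: symmetry} at level $(m,s-1)$, hence have equal energy there, and that Proposition~\ref{prop:hs-max-location} asserts strictness only \emph{below} its maximizer. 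This tie is the missing ingredient; without it (or the equivalent steepness bound correctly established) the case $i<M$ is not proved.
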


\begin{proof}
We give the proof for $N$ odd and $s$ even, the proof for all other cases being similar. From 
(\ref{eq:differenthdifference}) and Proposition~\ref{prop:hs-max-location}
we have that, for all $i\leq\left\lfloor \left(s-1\right)N^{m}/2\right\rfloor +1$,
\begin{equation}
\cH\left(h;\gamma_{\left\lfloor \left(s-1\right)N^{m}/2\right\rfloor +1}\right)
-\cH\left(h;\gamma_{i}\right)\geq\cH\left(h^{\left(m,s-1\right)};
\gamma_{\left\lfloor \left(s-1\right)N^{m}/2\right\rfloor +1}\right)
-\cH\left(h^{\left(m,s-1\right)};\gamma_{i}\right)\geq0
\label{eq:h-gen-max-ineq}
\end{equation}
and, for $i\geq\left\lfloor \left(s-1\right)N^{m}/2\right\rfloor +1$,
\begin{equation}
\cH\left(h;\gamma_{\left\lfloor \left(s-1\right)N^{m}/2\right\rfloor +1}\right)
-\cH\left(h;\gamma_{i}\right)\geq\cH\left(h^{\left(m,s\right)};
\gamma_{\left\lfloor \left(s-1\right)N^{m}/2\right\rfloor +1}\right)
-\cH\left(h^{\left(m,s\right)};\gamma_{i}\right)\geq0.
\label{eq:h-gen-max-ineq2}
\end{equation}
This proves the first claim. If the inequalities in (\ref{eq:h-between-hs}) are both 
strict, then both (\ref{eq:h-gen-max-ineq}) and (\ref{eq:h-gen-max-ineq2})
are strict whenever $i\neq\left\lfloor \left(s-1\right)N^{m}/2\right\rfloor +1$.
\end{proof}

\begin{remark}
{\rm It is easy to check that if we take $h=\tilde{J}((1-\frac{1}{N})(n-m)
-(s-1)\frac{1}{N})=h^{(m,s)}$ or $h=h^{(m,s-1)}=\tilde{J}((1-\frac{1}{N})
(n-m)-(s-2)\frac{1}{N})$ in (\ref{eq:h-between-hs}), then (\ref{eq:max-odd,even})
and (\ref{eq:max-odd}) remain true.}
\end{remark}

\begin{proposition}[Proof of Theorem \ref{thm:Gamma-case3}]
\label{prop:Gamma-star}
Let $h>0$, and let $m$ and $s$ satisfy \eqref{eq:h-between-hs}.\\ 
{\rm (1)} Suppose that $N$ is odd. For $s$ even,  
\begin{equation}
\begin{aligned}
\Gamma^{\star} 
& = \frac{\tilde{J}}{4N}\left(N^{m}\left[2s\left(N-\frac{s}{2}+1\right)-N
-1\right]+N-2s+1\right)\\
&\qquad +\frac{1}{2}\left(\tilde{J}\left(1-\frac{1}{N}\right)
\left(n-m-1\right)-h\right)\left(N^{m}\left(s-1\right)+1\right)
\end{aligned}
\end{equation}
while for $s$ odd
\begin{equation}
\begin{aligned}
\Gamma^{\star}
& =\frac{\tilde{J}}{4N}\left(N^{m}\left[2s\left(N-\frac{s}{2}\right)+N\right]+N-2s-1\right)\\
&\qquad +\frac{1}{2}\left(\tilde{J}\left(1-\frac{1}{N}\right)\left(n-m-1\right)-h\right)
\left(sN^{m}+1\right).
\end{aligned}
\end{equation}
{\rm (2)} Suppose that $N$ is even. For $s$ odd,
\begin{equation}
\begin{aligned}
&\Gamma^{\star} = \Gamma_{s}^{\star}\\
& = \frac{\tilde{J}}{2}N^{1+Q\left(m+1\right)}\left(\frac{N^{m-2+Q\left(m\right)}-1}{N^{2}-1}\right)
+ \tilde{J}\left(\frac{1}{2}\left(\frac{N^{m}-1}{N-1}\right)
- N^{Q\left(m\right)}\left(\frac{N^{m-Q\left(m\right)}-1}{N^{2}-1}\right)\right)\\
&\qquad \qquad \times \left(N-s\right)\\
&\qquad + \left[\frac{N}{4}\left(\frac{N^{m}-1}{N-1}\right)-N^{Q\left(m\right)}
\left(\frac{N^{m-Q\left(m\right)}-1}{N^{2}-1}\right)+N^{m-1}\left(\frac{s-1}{2}\right)
\left(N-\frac{s-1}{2}\right)\right]\\
&\qquad + \left(\left(\frac{s-1}{2}\right)N^{m}+\frac{N}{2}\left(\frac{N^{m}-1}{N-1}\right)
-N^{1+Q\left(m\right)}\left(\frac{N^{m-Q\left(m\right)}-1}{N^{2}-1}\right)\right)\\
&\qquad \qquad \times \left(\tilde{J}\left(1-\frac{1}{N}\right)\left(n-m-1\right)-h\right),
\end{aligned}
\end{equation}
while for $s$ even, 
\begin{equation}
\begin{aligned}
&\Gamma^{\star}=\Gamma_{s-1}^{\star}+\left(h^{\left(s-1\right)}-h\right)\\
&\times \left(sN^{m}
-\left(\frac{s-1}{2}\right)N^{m}-\left(\frac{N}{2}\right)\left(\frac{N^{m}-1}{N-1}\right)
+N^{1+Q\left(m\right)}\left(\frac{N^{m-Q\left(m\right)}-1}{N^{2}-1}\right)\right).
\end{aligned}
\label{eq:shifted-Gamma}
\end{equation}
\end{proposition}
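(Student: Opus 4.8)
The plan is to reduce the computation of $\Gamma^{\star}$ to evaluating the energy at a single, explicitly known configuration on the reference path, and then to carry out the resulting finite sum in each parity case. Since $J_i=\tilde J/N^i$ is strictly decreasing, Lemma~\ref{lem: unif opt. path} gives that $\gamma$ is (uniformly, hence) optimal, so
\[
\Gamma^{\star}=\max_{0\le i\le N^{n}}\bigl[\cH(h;\gamma_i)-\cH(h;\boxminus)\bigr].
\]
By Proposition~\ref{prop:h-general-max-location} the maximizer is $\gamma_M$ with $M$ given by the case distinction in \eqref{eq: K in Cstar3}: for $N$ odd it is $\lfloor sN^{m}/2\rfloor$ (resp.\ $\lfloor(s-1)N^{m}/2\rfloor+1$) when $s$ is odd (resp.\ even), and for $N$ even it is the midpoint $r$ of the smallest symmetry block of Proposition~\ref{prop:hs-max-location-even} (resp.\ $r'$, the same index with the leading term carrying $s-1$ in place of $s$). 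Thus it suffices to compute $\cH(h;\gamma_M)-\cH(h;\boxminus)$ for this one $M$; and if $h\ge\tilde J(1-\tfrac1N)n$, i.e.\ outside the metastable regime of (A1), the maximizer is $\gamma_0$ and $\Gamma^{\star}=0$, consistent with Remark~\ref{rem:no local minima}.

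For the evaluation I would combine the two identities already in hand. Identity \eqref{eq:differenthdifference} (with $b=0$) gives, for any auxiliary field $h'=h^{(m,s')}$,
\[
\cH(h;\gamma_M)-\cH(h;\boxminus)=\bigl[\cH(h';\gamma_M)-\cH(h';\boxminus)\bigr]+(h'-h)\,M,
\]
while \eqref{eq:HKregsymetric} expresses $\cH(h';\gamma_M)-\cH(h';\boxminus)$ as an explicit function of the $N$-ary digits $(k_0,\dots,k_m)$ of $M$ (the higher digits vanish because $M<sN^{m+1}$ and $k_m<s$), namely
\[
\cH(h';\gamma_M)-\cH(h';\boxminus)=\frac{\tilde J}{N}\sum_{i=0}^{m}\Bigl[\Bigl(\sum_{j=0}^{i-1}k_jN^{j}\Bigr)(N-2k_i-1)+k_iN^{i}(N-k_i)\Bigr]+M\bigl[\tilde J(1-\tfrac1N)(n-m-1)-h'\bigr].
\]
Adding the two displays, the $h'$-terms cancel and one is left with a closed expression in the digits of $M$ and in $h$, in which the factor $\tilde J(1-\tfrac1N)(n-m-1)-h$ appears exactly as in the statement. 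For $N$ odd and $s$ odd one has $k_m=\tfrac{s-1}{2}$ and $k_i=\tfrac{N-1}{2}$ for $i<m$; inserting these and summing the geometric series yields \eqref{eq:thrm-Gamma-case3a}, and the even-$N$, odd-$s$ subcase is identical except that the digits alternate between $\tfrac N2$ and $\tfrac N2-1$ (as recorded below \eqref{eq:SetSk}), so that the relevant sums $\sum_j Q(j+s+1)N^{m-j}$ are geometric with ratio $N^{2}$ and produce precisely the quantities $A_m$ and $B_m$ of the statement, hence \eqref{eq:thrm-Gamma-case3b}.

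For $s$ even I would not recompute from scratch: by Proposition~\ref{prop:h-general-max-location} the maximizer $\gamma_M$ on $[h^{(m,s)},h^{(m,s-1)})$ is, up to its leading term, the same configuration as the maximizer in the adjacent odd regime, so applying \eqref{eq:differenthdifference} once more with reference field $h^{(m,s-1)}$ gives
\[
\Gamma^{\star}=\bigl[\cH(h^{(m,s-1)};\gamma_M)-\cH(\boxminus)\bigr]+(h^{(m,s-1)}-h)\,M,
\]
and identifying the first bracket with $\Gamma^{\star}_{m,s-1}$ and $M$ with the bracketed size in \eqref{eq:thrm-Gamma-case3c} produces that formula. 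The same scheme, with the floor identities for $r$ and $r'$, handles the remaining even-$N$/even-$s$ and odd-$N$/even-$s$ cases.

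The main obstacle is purely bookkeeping rather than conceptual: one must track the $N$-ary digits of the various floor expressions for $M$ through all four parity combinations of $N$ and $s$, keep the parity functions $Q(\cdot)$ aligned when the even-$N$ digit pattern alternates, and resum several geometric series into the compact $A_m,B_m$ form — a calculation with no structural difficulty but ample room for sign and off-by-one errors, which is why it is worth anchoring every subcase to the two clean identities \eqref{eq:HKregsymetric} and \eqref{eq:differenthdifference} rather than manipulating \eqref{eq:gammastarMD} directly.
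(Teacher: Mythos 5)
Your proposal is correct and follows essentially the same route as the paper: identify the maximizing index $M$ on the reference path via Proposition~\ref{prop:h-general-max-location}, write out its $N$-ary digits, evaluate the energy there through \eqref{eq:HKregsymetric} combined with the field-shift identity \eqref{eq:differenthdifference}, and obtain the even-$s$ formulas by shifting from the adjacent odd-$s$ regime. The paper compresses the digit bookkeeping into ``a fair deal of tedious computations,'' which is exactly the part you flag as the only remaining (purely mechanical) work.
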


\begin{proof}
From Proposition~\ref{prop:h-general-max-location} we have that, for $N$ odd 
and $s$ even, 
\begin{equation}
\Gamma^{\star}=\cH\left(h;\gamma_{\left\lfloor \left(s-1\right)N^{m}/2\right\rfloor +1}\right)
-\cH\left(h;\boxminus\right),
\label{eq:GammastarNoddseven}
\end{equation}
where we also note that 
\begin{equation}
\left\lfloor \left(s-1\right)N^{m}/2\right\rfloor +1 
= \left\lfloor \left(s-1\right)/2\right\rfloor N^{m}+1
+\sum_{i=0}^{m-1}\left\lfloor \frac{N}{2}\right\rfloor N^{i}
= \left(\frac{s}{2}-1\right)N^{m}+\frac{1}{2}\left(N^{m}+1\right).
\end{equation}
We can now use this decomposition together with (\ref{eq:HKregsymetric}) to calculate 
$\Gamma^{\star}$ (after a fair deal of tedious computations). For the case where $N$ 
is odd and $s$ is odd, $\Gamma_{s}^{\star}$ is calculated in the same manner, while 
(\ref{eq:shifted-Gamma}) follows immediately from (\ref{eq:differenthdifference}).
\end{proof}


\subsection{Proof of Theorem~\ref{thm:Kstar-case2}}
\label{S4.4}

In this section we identify the configurations in the sets $U_{\sigma}^{-}$ and $U_{\sigma}^{+}$ 
defined in Lemma~\ref{lem:variational-lemma} and compute the prefactor $K^{\star}$ 
(Corollary ~\ref{cor:Prefactor simple} and Proposition ~\ref{prop:Prefactor other case} below).  

Let $M$ be the volume of the critical configurations, whose value was determined 
in Proposition \ref{prop:h-general-max-location} (i.e., $M=\lfloor sN^{\hat{m}}/2\rfloor$ 
if $N$ is odd and $s$ is odd, etc.). Recall that $v_{M}$ is the last vertex flipped (from 
$-1$ to $+1$) in obtaining the configuration $\gamma_{M}$ along the path $\gamma$. 
Let $b\geq1$ and let $w$ be any vertex such that $d\left(w,v_{M}\right)=b$. Define the 
configuration $\sigma_{b}$ by
\begin{equation}
\sigma_{b}\left(v\right)
= \begin{cases}
\gamma_{M}\left(v\right), & v\neq w,\\
-\gamma_{M}\left(v\right), & v=w.
\end{cases}\label{eq:sigma_b}
\end{equation}
Assuming that $h$ satisfies (\ref{eq:h-between-hs}) with strict inequalities, we know 
from Proposition \ref{prop:h-general-max-location} that any uniformly optimal path 
attains a unique global maximum. Hence if $b=1$, then $\cH\left(\sigma_{b}\right)
<\cH\left(\gamma_{M}\right)$, since $\cH\left(\sigma_{b}\right)\in\left\{\cH\left(\gamma_{M-1}\right),
\cH\left(\gamma_{M+1}\right)\right\}$. The following lemma shows that if $N\not\neq2,4$ 
and $m\geq1$, then the only neighbours of  $\gamma_{M}$ with lower energy are those 
obtained by flipping a vertex at distance $b=1$.

\begin{lemma}
\label{lem:energy sigma_b}Let $\sigma_{b}$ be defined as in \eqref{eq:sigma_b}.
Suppose that $N \neq 2,4$ and $m=\hat{m}\geq1$. Then $\cH\left(\sigma_{b}\right)
>\cH\left(\gamma_{M}\right)$ whenever $b\geq2$. 
\end{lemma}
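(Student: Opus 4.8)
The plan is to reduce the statement to the explicit flip formulas already derived in Section~\ref{S3.3} and to show each relevant expression is positive when $N\neq 2,4$ and $m=\hat m\geq 1$. First I would recall that in the standard-interaction case $J_i=\tilde J/N^i$ the $N$-ary decomposition of $M$ is known from Proposition~\ref{prop:h-general-max-location}: the top coordinate is $a_m=\lfloor (s-1-(s+1)\bmod 2)/2\rfloor$ (roughly $s/2$), the lower coordinates $a_{m-1},\dots,a_1$ are all close to $N/2$ (alternating between $\lfloor N/2\rfloor$ and $\lceil N/2\rceil$ according to the parity patterns in \eqref{eq:amj-Neven}), $a_0$ is close to $N/2$, and $a_i=0$ for $i>m$. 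The case $b=1$ is disposed of by the uniqueness of the global maximum (already noted in the text above the lemma), so the work is entirely for $2\le b\le n$.

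Next I would split into the two sub-cases governed by the sign of $\gamma_M(w)$. For $\sigma_b(w)=-\gamma_M(w)=-1$ (flipping a $+1$ down), substitute the decomposition of $M$ into \eqref{eq:vertexflip-s2}: with $J_i=\tilde J/N^i$ this becomes
\begin{equation}
\cH(\sigma_b)-\cH(\gamma_M)
=\tilde J\Big(1-\tfrac1N\Big)(b-1)
+\tilde J N^{-b}\Big(2\textstyle\sum_{i=0}^{b-1}a_iN^i-N^b-N^{b-1}\Big)
+\tilde J\Big(1-\tfrac1N\Big)(n-1-b)+h.
\end{equation}
Using $a_i\approx N/2$ one checks the middle term is $O(1)$ in $\tilde J$ and in fact, because each $a_i\le\lceil N/2\rceil$, it is bounded below by $-\tilde J N^{-b}(N^{b-1}+1)\ge -\tilde J$; combined with the positive terms $\tilde J(1-\tfrac1N)(n-2)+h$ this is $>0$ for $n\ge 2$ (and the edge case $n=\hat m+1$, i.e.\ $m=n-1$, is handled by $h^{(m,s)}\le h$ directly). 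For $\sigma_b(w)=-\gamma_M(w)=+1$ (flipping a $-1$ up), substitute into \eqref{eq:vertexflip+s2}:
\begin{equation}
\cH(\sigma_b)-\cH(\gamma_M)
=\tilde J\Big(1-\tfrac1N\Big)(b-1)
+\tilde J N^{-b}\Big(N^b-2\textstyle\sum_{i=0}^{b-1}a_iN^i-N^{b-1}\Big)
+\tilde J\Big(1-\tfrac1N\Big)(n-1-b)-h.
\end{equation}
Here the sign of $h$ is against us, so one must be more careful: group the $\tilde J(1-\tfrac1N)(n-1-b)$ term with $-h$ using $h<h^{(m,s-1)}=\tilde J[(1-\tfrac1N)(n-m)-(s-2)\tfrac1N]$, so that for $b\ge m$ the quantity $\tilde J(1-\tfrac1N)(n-1-b)-h$ is bounded below by something of order $-\tilde J$, while for $b<m$ (so $b\le m-1$) the term $\tilde J N^{-b}(N^b-2\sum a_iN^i-N^{b-1})$ is the critical one: since $\sum_{i=0}^{b-1}a_iN^i\le (N/2)(N^{b-1}+\dots+1)\approx N^b/2$, the leading $N^b$ nearly cancels $2\sum a_iN^i$ and what survives is controlled by the $O(1)$ corrections to the $a_i$ together with $\tilde J(1-\tfrac1N)(b-1)$; the precise bookkeeping of the parity-dependent $\pm1$ corrections in $a_i$ is where $N\neq 2,4$ enters (for $N=2$ one has $\lfloor N/2\rfloor=1$ with no slack, and for $N=4$ the correction terms $(s+j+1)\bmod 2$ can swamp the main term).

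The main obstacle I expect is exactly this last point: showing that in the "flip up at distance $b<m$" case the nearly-cancelling expression $N^b-2\sum_{i=0}^{b-1}a_iN^i-N^{b-1}$ stays strictly positive after all the parity corrections are accounted for, and that $N\neq 2,4$ is precisely the threshold making the residual positive. Concretely I would write $a_{m-j}=\frac{N}{2}-\varepsilon_j$ with $\varepsilon_j\in\{0,1\}$ alternating, sum the geometric-type series exactly, and verify the inequality $N^b-N^{b-1}-2\sum\varepsilon\cdot N^{i}>0$ reduces to a bound like $N\ge 5$ (odd $N$) after collecting terms — for odd $N$ the $\varepsilon_j$ are all $(N-1)/2$ style with a clean closed form, and for even $N$ the alternation must be summed in pairs. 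I would also double-check the boundary layers $b\in\{m,m+1\}$ separately since the decomposition of $M$ changes character there (the coordinate $a_m$ being roughly $s/2$ rather than $N/2$), and confirm that Assumption (A5) — ensuring the global maximum is unique so that $b=1$ gives strict decrease — is the only place uniqueness is invoked. Once all of $2\le b\le n$ and both parities of $\gamma_M(w)$ are covered, the lemma follows.
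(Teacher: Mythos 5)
Your overall strategy is the same as the paper's: substitute the explicit $N$-ary decomposition of $M$ into the single-flip formulas \eqref{eq:vertexflip-s2} and \eqref{eq:vertexflip+s2}, reduce to an inequality in $b$, and show it fails for $b\geq 2$ once $N\neq 2,4$. However, there is a concrete computational error that breaks your argument. In both of your displayed formulas you evaluate the tail sum $\sum_{i=b}^{n-1}J_{i+1}N^{i}\left(2a_{i}-N+1\right)$ (respectively its negative) as $\tilde J\left(1-\tfrac1N\right)(n-1-b)$. With $J_{i+1}=\tilde J N^{-(i+1)}$ this sum equals $\tfrac{\tilde J}{N}\sum_{i=b}^{n-1}(2a_i-N+1)$, and your value would require $a_i\approx N-1$ for all $i\geq b$. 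In fact $a_i=0$ for $i>m$ (each such term contributes $-\tilde J(1-\tfrac1N)$) and $a_i\approx N/2$ for $b\leq i\leq m$ (each such term contributes $O(\tilde J/N)$), so the tail sum is approximately $-\tilde J\left(1-\tfrac1N\right)(n-m-1)$ in the flip-down case and $+\tilde J\left(1-\tfrac1N\right)(n-m-1)$ in the flip-up case. Consequently your conclusion for the flip-down case --- that the difference is ``$>0$ for $n\ge 2$'' because the positive terms $\tilde J(1-\tfrac1N)(n-2)+h$ dominate a loss of at most $\tilde J$ --- does not follow: the correct expression contains the large negative term $-\tilde J(1-\tfrac1N)(n-m-1)$, which must be cancelled against $+h$ using the lower bound $h\geq h^{(m,s)}$, after which only $O(\tilde J/N)$ and $O(\tilde J s/N)$ residuals remain. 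It is precisely this bookkeeping of residuals that produces the condition ``$b<2$'' and the thresholds $N\geq 3$ (odd $N$) and $N\geq 6$ (even $N$) in the paper's proof; there is no shortcut of the kind you describe.

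A secondary point: your identification of the critical step in the flip-up case at $b<m$ as showing $N^{b}-2\sum_{i=0}^{b-1}a_{i}N^{i}-N^{b-1}>0$ is also off target. Since $2\sum_{i=0}^{b-1}a_iN^i\approx N^b-1$, that quantity is approximately $1-N^{b-1}+O(1)<0$ for $b\geq 2$; what is actually needed is that the inequality characterising $\cH(\sigma_b)\leq\cH(\gamma_M)$, once rearranged as an upper bound on $b$ of the form $b\leq 1+(\cdots)$, has right-hand side strictly below $2$, and in the flip-up case with $N$ odd this in fact holds for all $N\geq 2$ --- the restrictive cases are the even-$N$ branches and the odd-$N$ flip-down branch. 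Finally, uniqueness of the maximum along $\gamma$ (used for $b=1$) comes here from the strict inequalities $h^{(m,s)}<h<h^{(m,s-1)}$ via Proposition~\ref{prop:h-general-max-location}, not from Assumption (A5).
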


\begin{proof}
We first consider $\sigma_{b}\left(w\right)=-1$, where $w$ is the vertex at which 
$\sigma_{b}$ differs from $\gamma_{M}$. Note that $b\leq m+1$, since there are no 
$+1$-valued vertices in $\gamma_{M}$ that are at distance larger than $m+1$ from 
each other. By (\ref{eq:vertexflip-s2}) we have that
\begin{eqnarray}
\nonumber
\cH\left(\sigma_{b}\right)-\cH\left(\gamma_{M}\right) 
& = & \tilde{J}\left(b-1\right)\left(1-\frac{1}{N}\right)
+\tilde{J}N^{-b}\left(2\sum_{i=0}^{b-1}a_{i}N^{i}-N^{b}-N^{b-1}\right)\\
&& \qquad + \frac{\tilde{J}}{N}\sum_{i=b}^{n-1}\left(2a_{i}-N+1\right)+h.
\end{eqnarray}
If $b=m+1$, then the right-hand side gives
\begin{eqnarray}
\nonumber
&\tilde{J}\left(\left(b-1\right)\left(1-\frac{1}{N}\right)
+N^{-b}\left(2\sum_{i=0}^{b-1}a_{i}N^{i}-N^{b}-N^{b-1}\right)
-\left(1-\frac{1}{N}\right)\left(n-m-1\right)+\frac{h}{\tilde{J}}\right)\\ 
&\geq \tilde{J}\left(\left(m+1\right)\left(1-\frac{1}{N}\right)+N^{-m-1}
\left(2M-N^{m+1}-N^{m}\right)-\left(s-1\right)\frac{1}{N}\right),
\end{eqnarray}
where the inequality follows from the bounds on $h$ in (\ref{eq:h-between-hs}).
It is easy to see from the value of $M$ in Theorem~\ref{thm:Cstar-case3}
that the above is strictly larger than 
\begin{equation}
\begin{aligned}
&\tilde{J}\left(\left(m+1\right)\left(1-\frac{1}{N}\right)
+\frac{1}{N}\left(\left(s-1-Q\left(s+1\right)\right)-N-1\right)
-\left(s-1\right)\frac{1}{N}\right)\\
&\qquad \geq \tilde{J}\left(m\left(1-\frac{1}{N}\right)-\frac{2}{N}\right) \geq 0.
\end{aligned}
\end{equation}
Hence we conclude that for $b=m+1$ and $\sigma_{b}\left(w\right)=-1$ the claim of the 
lemma holds. 

Now assume that $b\leq m$. If $N$ is odd, then $a_{0}=\frac{N-1}{2}+Q\left(s+1\right)$ 
and $a_{i}=\frac{N-1}{2}$ for $1\leq i\leq m-1$, while $a_{m}=\lfloor \frac{s-1}{2}\rfloor 
=\frac{s-1-Q\left(s+1\right)}{2}$ and $a_{i}=0$ for $i>m$. If $h$ satisfies (\ref{eq:h-between-hs}) 
for some $1\leq s\leq N-1$ and $2\leq m\leq n-1$ (we do not need to consider the case $m=1$ 
because $m\geq b\geq2$), then this implies 
\begin{equation}
\begin{aligned}
&\cH(\sigma_{b})-\cH(\gamma_{M})
=\tilde{J}\Big((b-n+m)\left(1-\frac{1}{N}\right)+N^{-b}\left(2Q(s+1)-1\right)\\
&\qquad -\frac{1}{N}\left(N-s+1+Q\left(s+1\right)\right)+\frac{h}{\tilde{J}}\Big)
\end{aligned}
\end{equation}
and hence $\cH\left(\sigma_{b}\right)\leq\cH\left(\gamma_{k}\right)$
if and only if 
\begin{equation}
\begin{aligned}
&b \leq 1+ \left(1-\frac{1}{N}\right)^{-1}\Bigg(\frac{1}{N}\left(N-s+1+Q\left(s+1\right)\right)\\
&\qquad + \left(1-\frac{1}{N}\right)\left(n-m-1\right)-N^{-b}\left(2Q\left(s+1\right)-1\right)
-\frac{h}{\tilde{J}}\Bigg).
\end{aligned}
\label{eq:bineq-Nodd}
\end{equation}
From (\ref{eq:h-between-hs}) we have that the right-hand side of (\ref{eq:bineq-Nodd}) is 
less than or equal to 
\begin{equation}
\frac{Q\left(s+1\right)+1}{N}-N^{-b}\left(2Q\left(s+1\right)-1\right)
\end{equation}
and hence is less than $2$ when $N\geq3$. This implies that $\cH(\sigma_{b})>\cH(\gamma_{k})$ 
when $b\geq2$. If $N$ is even, then 
\begin{equation}
\begin{aligned}
\left(\cH\left(\sigma_{b}\right)-\cH\left(\gamma_{M}\right)\right)\tilde{J}^{-1} 
& = \left(b-1\right)\left(1-\frac{1}{N}\right)
+N^{-b}\left(2\sum_{i=0}^{b-1}a_{i}N^{i}-N^{b}-N^{b-1}\right)\\
&\qquad -\frac{1}{N}\left(N-s+1+Q\left(s+1\right)\right)
-\frac{1}{N}\left(N-1\right)\left(n-m-1\right)\\
&\qquad + \frac{1}{N}\left(1-Q\left(s+m\right)-Q\left(s+b+1\right)\right)+\frac{h}{\tilde{J}},
\end{aligned}
\end{equation}
and hence $\cH\left(\sigma_{b}\right)\leq\cH\left(\gamma_{k}\right)$ if and only if
\begin{equation}
\begin{aligned}
\left(b-1\right)\left(1-\frac{1}{N}\right) 
& \leq  1-\frac{s}{N}+\frac{Q\left(s+1\right)}{N}+\left(1-\frac{1}{N}\right)\left(n-m-1\right)
\label{eq:bineq-Neven}\\
&\qquad +  \frac{Q\left(s+m\right)}{N}+\frac{Q\left(s+b+1\right)}{N}-\frac{h}{\tilde{J}}
-N^{-b}\left(2\sum_{i=0}^{b-1}a_{i}N^{i}-N^{b}-N^{b-1}\right).
\end{aligned} 
\end{equation}
Since $h$ satisfies (\ref{eq:h-between-hs}), this implies 
\begin{equation}
\begin{aligned}
b & \leq 1+\left(1-\frac{1}{N}\right)^{-1}
\Bigg(\frac{Q\left(s+1\right)+Q\left(s+b+1\right)+Q\left(s+m\right)}{N}\\
&\qquad -N^{-b}\left(2\sum_{i=0}^{b-1}a_{i}N^{i}-N^{b}-N^{b-1}\right)\Bigg)\\
& \leq  1+\left(1-\frac{1}{N}\right)^{-1}\left(\frac{Q\left(s+1\right)
 +Q\left(s+b+1\right)+Q\left(s+m\right)+2Q\left(s+m-b\right)}{N}-R_{b}\right),
\end{aligned}
\end{equation}
where $R_{b}=N^{-b}((\frac{1}{N-1})(N^{b-1}-N-2N^{b-2}))$. The right-hand side is less than 
$2$ when $N \geq 6$, in which case $\cH\left(\sigma_{b}\right)>\cH\left(\gamma_{k}\right)$
when $b \geq 2$. 

Now suppose that $\sigma_{b}\left(w\right)=+1$. Let us first consider the case when $N$ is 
odd. Suppose that $b>m$. Then by (\ref{eq:vertexflip+s2})
\begin{equation}
\begin{aligned}
&\cH\left(\sigma_{b}\right)-\cH\left(\gamma_{k}\right)\\
& = \tilde{J}\left(1-\frac{1}{N}\right)\left(b-1\right)
+\tilde{J}N^{b-1}\left(N^{b}-2\sum_{i=0}^{b-1}a_{i}N^{i}-N^{b-1}\right)
+\sum_{i=b}^{n-1}\tilde{J}_{i+1}N^{i}\left(N-2a_{i}-1\right)-h\\
& = \tilde{J}\Bigg(\left(1-\frac{1}{N}\right)\left(b-1\right)+N^{b-1}\left(N^{b}
-\left(s-Q\left(s+1\right)\right)N^{m}+1-2Q\left(s+1\right)-N^{b-1}\right)\\
&\qquad  + \left(1-\frac{1}{N}\right)\left(n-b\right)-\frac{h}{\tilde{J}}\Bigg).
\end{aligned}
\end{equation}
From (\ref{eq:h-between-hs}) it follows that this is larger than or equal to 
\begin{equation}
\begin{aligned}
&\tilde{J}\Bigg(N^{-b}\left(N^{b}-\left(s-Q\left(s+1\right)\right)N^{m}
+1-2Q\left(s+1\right)-N^{b-1}\right)\\
&\qquad +\left(1-\frac{1}{N}\right)\left(m-1\right)
+\left(s-2\right)\frac{1}{N}\Bigg) > 0.
\end{aligned}
\end{equation}
Hence, the inequality $\cH\left(\sigma_{b}\right)\leq\cH\left(\gamma_{k}\right)$ is at most 
possible for $b\leq m$. In this case we get that $\cH\left(\sigma_{b}\right)\leq\
cH\left(\gamma_{k}\right)$ if and only if
\begin{equation}
\begin{aligned}
b &\leq 1+\left(1-\frac{1}{N}\right)^{-1}\Bigg(\frac{h}{\tilde{J}}
-\left(1-\frac{1}{N}\right)\left(n-m-1\right)\\
&\qquad -\frac{1}{N}\left(N-s+Q\left(s+1\right)-1\right)
-N^{-b}\left(1-2Q\left(s+1\right)\right)\Bigg).
\end{aligned}
\label{eq:bineq+Nodd}
\end{equation}
Once again, from (\ref{eq:h-between-hs}) it follows that (\ref{eq:bineq+Nodd}) is satisfied 
if and only if
\begin{equation}
b\leq1+\left(1-\frac{1}{N}\right)^{-1}\left(-\frac{1}{N}\left(Q\left(s+1\right)-1\right)
-N^{-b}\left(1-2Q\left(s+1\right)\right)\right)<2 \qquad \forall\,N \geq 2.
\end{equation}
Similarly, if $N$ is even, then for $b>m$ we get
\begin{equation}
\begin{aligned}
\cH\left(\sigma_{b}\right)-\cH\left(\gamma_{M}\right) 
& \geq \tilde{J}\Bigg(\left(1-\frac{1}{N}\right)\left(b-1\right)-\frac{1}{N\left(N-1\right)}
+\frac{Q\left(s+1\right)}{N}+\frac{Q\left(s+m\right)}{N}\\
& \qquad + \frac{Q\left(s+b+1\right)}{N}+\frac{Q\left(s+m-b\right)}{N}-\frac{2}{N}\Bigg),
\end{aligned}
\end{equation}
which is larger than $0$ when $N\geq4$. Thus, once again we only need to consider $b\leq m$, 
for which $\cH\left(\sigma_{b}\right)-\cH\left(\gamma_{k}\right) \leq 0$ if and only if 
\begin{equation}
\begin{aligned}
b & \leq 1+\left(1-\frac{1}{N}\right)^{-1}\Bigg(-\frac{1}{N}
\left(N-s+1+Q\left(s+1\right)\right)-\left(1-\frac{1}{N}\right)\left(n-m-1\right)\\
&\qquad  + \frac{1}{N}\left(1-Q\left(s+m\right)-Q\left(s+b+1\right)\right)
+\frac{h}{\tilde{J}}-N^{-b}\left(N^{b}-2\sum_{i=0}^{b-1}a_{i}N^{i}-N^{b-1}\right)\Bigg)\\
& \leq 1+\left(1-\frac{1}{N}\right)^{-1}\Bigg(\frac{1}{N}
\Big(1-Q\left(s+m\right)-Q\left(s+b+1\right)-Q\left(s+1\right)\\
&\qquad -2Q\left(s+m-b\right)\Big)+\frac{1}{N\left(N-1\right)}\Bigg),
\end{aligned} 
\label{eq:bineq+Neven}
\end{equation}
which is less than $2$ when $N \geq 4$.
\end{proof}

The prefactor $K^\star$ can now be easily computed.

\begin{corollary}
\label{cor:Prefactor simple}
Suppose that $N \neq 2,4$ and $m\geq1$. Then
\begin{equation}
\frac{1}{K^\star} = a_{0}N^{n-\hat{m}-2}\prod_{i=0}^{\hat{m}}
{N \choose a_{i}}\left(N-a_{i}\right).
\label{eq:corollary 1/K}
\end{equation}
\end{corollary}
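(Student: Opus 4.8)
The plan is to combine the reduced variational formula \eqref{eq:reduced-varform-0} of Lemma~\ref{lem:variational-lemma} with the single-spin-flip estimate of Lemma~\ref{lem:energy sigma_b} and a level-by-level count of $|\cC^{\star}|$ afforded by Theorem~\ref{thm:Cstar-case3}. First one checks that Lemma~\ref{lem:variational-lemma} applies: since $J_i=\tilde{J}/N^i$ is strictly decreasing, $\gamma$ is strictly optimal by Lemma~\ref{lem: unif opt. path}, and since $h$ satisfies \eqref{eq:h-between-hs} with strict inequalities, $i\mapsto\cH(h;\gamma_i)$ has a unique global maximum at $\gamma_M$ by Proposition~\ref{prop:h-general-max-location}. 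As established at the end of Section~\ref{S3.3}, this forces every $\sigma\in\cC^{\star}$ to have the common volume $k^{\star}=M$, with every other configuration of that volume having strictly larger energy, so Lemma~\ref{lem:circumventiing-path} applies and the hypotheses of Lemma~\ref{lem:variational-lemma} are met. Hence $1/K^{\star}=\sum_{\sigma\in\cC^{\star}}|U^{-}_{\sigma}|\,|U^{+}_{\sigma}|/(|U^{-}_{\sigma}|+|U^{+}_{\sigma}|)$.

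Next I would identify $U^{\pm}_{\sigma}$ for $\sigma=\gamma_M$; the general $\sigma\in\cC^{\star}$ then follows by isometry-invariance of $\cH$. Writing the $N$-ary decomposition $M=a_{n-1}N^{n-1}+\dots+a_0$, the explicit value of $M$ in Theorem~\ref{thm:Cstar-case3} gives $a_0\geq1$, so the $1$-block of $v_M$ carries exactly the $+1$-spins $v_{M-a_0+1},\dots,v_M$ and the $-1$-spins $v_{M+1},\dots,v_{M-a_0+N}$. Flipping $v_M$ produces $\gamma_{M-1}$ and flipping $v_{M+1}$ produces $\gamma_{M+1}$, both of strictly lower energy than $\gamma_M$ because the maximum is unique; since all vertices of a $1$-block are equivalent under isometries, the same holds for any vertex at distance $1$ from $v_M$. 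Lemma~\ref{lem:energy sigma_b}---this is exactly where $N\neq2,4$ and $\hat{m}\geq1$ enter---shows that flipping a vertex at distance $b\geq2$ from $v_M$ raises the energy strictly above $\Phi(\boxminus,\boxplus)$, so such neighbours lie outside $S^{\star}$. Therefore $|U^{-}_{\sigma}|=a_0$ (the $a_0-1$ distance-$1$ vertices carrying $+1$, together with $v_M$ itself) and $|U^{+}_{\sigma}|=N-a_0$, which gives $|U^{-}_{\sigma}|\,|U^{+}_{\sigma}|/(|U^{-}_{\sigma}|+|U^{+}_{\sigma}|)=a_0(N-a_0)/N$.

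Finally I would count $\cC^{\star}$. By Theorem~\ref{thm:Cstar-case3} it is the orbit of $\gamma_M$ under the isometries of $\Lambda_N^n$, and such a configuration is described by a nested chain of blocks: the smallest block containing the support (an $(\hat{m}+1)$-block, of which there are $N^{n-\hat{m}-1}$), inside which one chooses which $a_{\hat{m}}$ sub-blocks are all-$+1$ and which one is ``partial'', and so on recursively down to a $1$-block in which one chooses the $a_0$ sites carrying $+1$. Multiplying the independent choices gives $|\cC^{\star}|=N^{n-\hat{m}-1}\binom{N}{a_0}\prod_{i=1}^{\hat{m}}\binom{N}{a_i}(N-a_i)$, with the natural reading $\binom{N}{0}(N-0)=N$ absorbing the degenerate case $a_{\hat{m}}=0$. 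Combining with the previous step yields $1/K^{\star}=|\cC^{\star}|\cdot a_0(N-a_0)/N=a_0\,N^{n-\hat{m}-2}\prod_{i=0}^{\hat{m}}\binom{N}{a_i}(N-a_i)$, which is \eqref{eq:corollary 1/K}.

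The two places requiring genuine care are the level-by-level orbit count (deciding at each level which sub-block is the partial one, and the bookkeeping when $a_{\hat{m}}=0$ or $a_{\hat{m}}=N-1$) and the identification $|U^{-}_{\sigma}|=a_0$: the point easily missed is that flipping $v_M$ itself, at distance $0$, also contributes to $U^{-}_{\sigma}$, which is precisely why the prefactor carries a factor $a_0$ rather than $a_0-1$. Everything else is routine, since Lemma~\ref{lem:energy sigma_b} has already done the hard analytic work of ruling out $b\geq2$.
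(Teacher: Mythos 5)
Your proof is correct and follows essentially the same route as the paper's: the reduced variational formula of Lemma~\ref{lem:variational-lemma} (justified via Lemma~\ref{lem:circumventiing-path}), the identification $|U^-_\sigma|=a_0$, $|U^+_\sigma|=N-a_0$ via Lemma~\ref{lem:energy sigma_b} together with uniqueness of the maximum along the reference path, and the orbit count $|\cC^\star|=N^{n-\hat m-1}(N-a_0)^{-1}\prod_{i=0}^{\hat m}\binom{N}{a_i}(N-a_i)$. Your explicit remark that $v_M$ itself (the distance-$0$ flip) must be counted in $U^-_\sigma$ to get $a_0$ rather than $a_0-1$ is a point the paper glosses over, but otherwise the arguments coincide.
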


\begin{proof}
By Lemma~\ref{lem:energy sigma_b} we have that, for all $\sigma\in\cC^{\star}$, 
\begin{equation}
\begin{aligned}
\left|U_{\sigma}^{-}\right| 
&= \left|w\in\Lambda_{N}^{n}\colon\, d\left(w,v_{M}\right)=1,\,\gamma_{M}\left(w\right)=-1\right|
= a_{0},\\
\left|U_{\sigma}^{-}\right| 
& = \left|w\in\Lambda_{N}^{n}\colon\, d\left(w,v_{M}\right)=1,\,\gamma_{M}\left(w\right)=-1\right|
= N-a_{0}.
\end{aligned}
\end{equation}
Furthermore, it is a simple combinatorial fact that
\begin{equation}
\begin{aligned}
\left|\cC^{\star}\right| 
& = \left|\left\{ \sigma\in\Omega\colon\,\sigma=\varphi\left(\gamma_{M}\right)
\mbox{ for some isometric bijection }\varphi\colon\Lambda_{N}^{n}\to
\Lambda_{N}^{n}\right\} \right|\\
& = N^{n-\hat{m}-1}\left(N-a_{0}\right)^{-1}
\prod_{i=0}^{\hat{m}}{N \choose a_{i}}\left(N-a_{i}\right).
\end{aligned}
\end{equation}
Equation \eqref{eq:corollary 1/K} now follows from Lemmas~\ref{lem:variational-lemma} and \ref{lem:circumventiing-path}. 
\end{proof}

We can also investigate what the prefactor amounts to when the precondition of 
Corollary~\ref{cor:Prefactor simple} is not satisfied. For this, we define 
\begin{eqnarray}
O_{d} & = & \left\{ 1\right\} \bigcup\left\{ 2\leq b\leq\hat{m}\colon\, 
b\mbox{ satisfies inequality (\ref{eq:bineq-Nodd}) }\right\}, \nonumber \\
O_{u} & = & \left\{ 1\right\} \bigcup\left\{ 2\leq b\leq\hat{m}\colon\, 
b\mbox{ satisfies inequality (\ref{eq:bineq+Nodd}) }\right\}, 
\label{eq:OdOu}
\end{eqnarray}
and 
\begin{eqnarray}
E_{d} & = & \left\{ 1\right\} \bigcup\left\{ 2\leq b\leq\hat{m}\colon\, 
b\mbox{ satisfies inequality (\ref{eq:bineq-Neven}) }\right\}, \nonumber \\
E_{u} & = & \left\{ 1\right\} \bigcup\left\{ 2\leq b\leq\hat{m}\colon\, 
b\mbox{ satisfies inequality (\ref{eq:bineq+Neven}) }\right\}. 
\label{eq:EdEu}
\end{eqnarray}
Then the following is immediate from Lemmas~\ref{lem:variational-lemma}
and \ref{lem:circumventiing-path}.

\begin{proposition}
\label{prop:Prefactor other case}
Suppose that $h$ satisfies 
\[
h^{(m,s)}<h<h^{(m,s-1)}
\]
for some $(m,s) \in \mathbb{I}$. If $N$ is odd, then the prefactor $K^\star$ is given
by 
\begin{equation}
\frac{1}{K^{\star}} = 
\frac{\left[\sum_{i\in O_{d}} a_{i-1}N^{i-1}\right]
\left[\sum_{i\in O_{u}}\left(N^{i}-a_{i-1}N^{i-1}\right)\right]}
{\left[\sum_{i\in O_{d}}a_{i-1}N^{i-1}\right]
+\left[\sum_{i\in O_{u}}\left(N^{i}-a_{i-1}N^{i-1}\right)\right]}
\,\frac{N^{n-\hat{m}-1}}{N-a_{0}}\,\prod_{i=0}^{\hat{m}}
{N \choose a_{i}}\left(N-a_{i}\right),
\label{eq:1/Kstar}
\end{equation}
where $a_{0}=\frac{N-1}{2}+1$, $a_{i}=\frac{N-1}{2}$ for $i=1,\ldots,\hat{m}-1$ and 
$a_{\hat{m}}=\frac{s-1-\left(s+1\right)\mod2}{2}$. If $N$ is even, then the summations 
in \eqref{eq:1/Kstar} are over $E_{d}$ and $E_{u}$, respectively, and the terms $a_{i}$ 
are replaced by $\bar{a}_{i}$ defined in \eqref{eq:max-index-r}.
\end{proposition}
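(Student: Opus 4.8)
The plan is to reduce to Lemma~\ref{lem:variational-lemma} and then read off $|U_\sigma^-|$ and $|U_\sigma^+|$ from the single-flip energy formulas \eqref{eq:vertexflip-s1}--\eqref{eq:vertexflip+s2}, following the scheme of Corollary~\ref{cor:Prefactor simple} but without the simplification afforded by $N\neq2,4$. First I would note that, since the inequalities $h^{(m,s)}<h<h^{(m,s-1)}$ are strict, Proposition~\ref{prop:h-general-max-location} gives a \emph{unique} global maximum of $i\mapsto\cH(h;\gamma_i)$, attained at $\gamma_M$ with $M$ as in Theorem~\ref{thm:Cstar-case3}, and that $J_i=\tilde{J}/N^i$ is strictly decreasing, so by Lemma~\ref{lem: unif opt. path}(3) the path $\gamma$ is strictly optimal. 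Strict optimality together with uniqueness of the maximum shows that $\cC^\star$ is precisely the set of isometric translates of $\gamma_M$, that these all have common volume $k^\star=M$, that every other configuration of volume $M$ has strictly larger energy, and that no neighbour $\sigma\in U_\xi^+$ of a critical configuration has $\cH(\sigma)=\Phi(\boxminus,\boxplus)$. Hence the hypotheses of Lemma~\ref{lem:circumventiing-path} are met, \eqref{eq:condition-lemma} holds, and Lemma~\ref{lem:variational-lemma} applies, giving
\[
\frac{1}{K^\star}=\sum_{\sigma\in\cC^\star}\frac{|U_\sigma^-|\,|U_\sigma^+|}{|U_\sigma^-|+|U_\sigma^+|}.
\]

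Next I would use the $\Lambda_N^n$-symmetry: every $\sigma\in\cC^\star$ is an isometric translate of $\gamma_M$, so the summand is independent of $\sigma$, the number of translates being $|\cC^\star|=N^{n-\hat{m}-1}(N-a_0)^{-1}\prod_{i=0}^{\hat{m}}{N\choose a_i}(N-a_i)$, read off from the $N$-ary digits $a_i$ of $M$ exactly as in the proof of Corollary~\ref{cor:Prefactor simple}. It then remains to compute $|U_{\gamma_M}^-|$ and $|U_{\gamma_M}^+|$. A neighbour of $\gamma_M$ obtained by flipping one spin at hierarchical distance $b$ from $v_M$ has energy change given by \eqref{eq:vertexflip-s1}--\eqref{eq:vertexflip+s2}, which depends only on $b$ and on the digits $a_i$. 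By \eqref{eq:condition-lemma} every volume-$(M-1)$ neighbour that lies in $S^\star$ already lies in $S_\boxminus$, so $U_{\gamma_M}^-$ is the set of $+1$-valued vertices $w$ for which flipping $w$ to $-1$ does not raise the energy above $\Phi(\boxminus,\boxplus)$, equivalently (by uniqueness of the maximum) keeps it strictly below; the sign analysis identifying the admissible distances $b$ is precisely the one carried out in the proof of Lemma~\ref{lem:energy sigma_b}, which up to the derivation of \eqref{eq:bineq-Nodd}/\eqref{eq:bineq-Neven} (and \eqref{eq:bineq+Nodd}/\eqref{eq:bineq+Neven}) never uses $N\neq2,4$, so these distances are exactly $O_d$ for $N$ odd and $E_d$ for $N$ even (and $O_u$, resp.\ $E_u$, for the $-1\to+1$ flips). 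Counting the eligible vertices at each such distance from the $N$-ary digits of $M$ yields $|U_{\gamma_M}^-|=\sum_{i\in O_d}a_{i-1}N^{i-1}$ and $|U_{\gamma_M}^+|=\sum_{i\in O_u}(N^i-a_{i-1}N^{i-1})$ for $N$ odd (with $E_d,E_u$ and the coefficients $\bar{a}_i$ of \eqref{eq:max-index-r} for $N$ even). Substituting these together with $|\cC^\star|$ into the displayed sum gives \eqref{eq:1/Kstar}; for $N$ odd one then inserts the explicit digits $a_0=\tfrac{N-1}{2}+1$, $a_i=\tfrac{N-1}{2}$ $(1\le i\le\hat{m}-1)$ and $a_{\hat{m}}=\tfrac{s-1-(s+1)\,\mathrm{mod}\,2}{2}$ coming from Theorem~\ref{thm:Cstar-case3}.

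The main obstacle is the sign analysis in the second step: one has to go through the four cases ($N$ odd or even, flip to $-1$ or to $+1$) and decide, at the specific $M$, the sign of each expression \eqref{eq:vertexflip-s1}--\eqref{eq:vertexflip+s2} for every candidate distance $b$; for $N\neq2,4$ and $\hat{m}\ge1$ this collapses to $b=1$ (Lemma~\ref{lem:energy sigma_b}, Corollary~\ref{cor:Prefactor simple}), but in general one must retain the inequalities \eqref{eq:bineq-Nodd}--\eqref{eq:bineq+Neven} as the defining conditions of $O_d,O_u,E_d,E_u$ and keep careful track of which digits $a_i$ enter each of the two sums. Once Lemma~\ref{lem:energy sigma_b} and Lemma~\ref{lem:circumventiing-path} are in place this bookkeeping is the only remaining work, which is what the remark preceding the proposition means by calling the statement immediate.
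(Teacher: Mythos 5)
Your proposal is correct and follows essentially the same route as the paper: the paper's proof is exactly the combination of Lemma~\ref{lem:variational-lemma} and Lemma~\ref{lem:circumventiing-path}, with the sets $O_d,O_u,E_d,E_u$ defined through the sign inequalities \eqref{eq:bineq-Nodd}--\eqref{eq:bineq+Neven} and the count of $|\cC^{\star}|$ taken from Corollary~\ref{cor:Prefactor simple}. Your additional remarks (strictness of $h^{(m,s)}<h<h^{(m,s-1)}$ giving a unique maximum, strict optimality of $\gamma$, and the observation that the derivation of the defining inequalities does not use $N\neq 2,4$) are precisely the details the paper leaves implicit.
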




\end{document}